\documentclass[11pt,twoside]{amsart}
\usepackage[latin1]{inputenc}
\usepackage[OT1]{fontenc}
\usepackage{amsmath}
\usepackage{amsthm}
\usepackage{amssymb}
\usepackage[all,cmtip]{xy}
\usepackage[usenames,dvipsnames]{xcolor}

\definecolor{prussian}{RGB}{0,49,83}
\definecolor{hunter}{RGB}{53,94,59}

\usepackage{hyperref}
\hypersetup{
    colorlinks,
    linkcolor={red!50!black},
    citecolor={blue!50!black},
    urlcolor={blue!80!black}
}
\usepackage{aliascnt} 
%

\newcommand{\hyref}[1]{\hyperref[#1]{\ref{#1}}}

\usepackage{bbm}
\usepackage{amscd}
\usepackage{enumitem}
\usepackage{graphicx}
\usepackage{etoolbox}
\usepackage{a4wide}
\usepackage{graphicx}

\usepackage{scalerel} 
\newcommand{\scalemath}[2]{\hstretch{#1}{\vstretch{#1}{#2}}}

\newcommand{\inv}{^{-1}}
\newcommand{\orth}{^{\perp}}
\newcommand{\lorth}{{}\orth}
\newcommand{\dual}{^\vee}

\DeclareMathOperator{\Spec}{Spec}

\DeclareMathOperator{\id}{id}

\DeclareMathOperator{\Hom}{Hom}

\DeclareMathOperator{\sHom}{\mathcal{H}\textit{om}}

\DeclareMathOperator{\End}{End}

\DeclareMathOperator{\Coh}{Coh}
\DeclareMathOperator{\QCoh}{QCoh}

\DeclareMathOperator{\Ho}{H}

\DeclareMathOperator{\Hilb}{Hilb}

\DeclareMathOperator{\codim}{codim}

\DeclareMathOperator{\rank}{rank}
\DeclareMathOperator{\supp}{\mathsf{supp}}

\DeclareMathOperator{\triv}{\mathsf{triv}}

\DeclareMathOperator{\ord}{ord}

\DeclareMathOperator{\Bl}{Bl}

\DeclareMathOperator{\cone}{cone}

\DeclareMathOperator{\CH}{\mathcal{H}}
\DeclareMathOperator{\CY}{\mathrm{CY}}
\DeclareMathOperator{\WC}{\mathrm{WCN}}
\DeclareMathOperator{\Dperf}{D^{\mathrm{perf}}}
\DeclareMathOperator{\DperfG}{D^{\mathrm{perf}}_{\mathit{G}}}

\DeclareMathOperator{\disc}{\mathrm{disc}}

\newcommand{\Db}{{\rm D}^{\rm b}}
\newcommand{\D}{{\rm D}}

\DeclareMathOperator{\Aut}{\mathrm{Aut}}

\DeclareMathOperator{\Pic}{\mathrm {Pic}}




\newcommand{\into}{\hookrightarrow}

\newcommand{\mor}[1][]{\xrightarrow{#1}}
\newcommand{\isomor}{\mor[\sim]}  

\newcommand{\ka}{{\mathcal A}}

\newcommand{\kc}{{\mathcal C}}
\newcommand{\kd}{{\mathcal D}}
\newcommand{\ke}{{\mathcal E}}

\newcommand{\kh}{{\mathcal H}}

\newcommand{\cI}{{\mathcal I}}

\newcommand{\kl}{{\mathcal L}}

\newcommand{\ko}{{\mathcal O}}

\newcommand{\kt}{{\mathcal T}}
\newcommand{\ku}{{\mathcal U}}

\newcommand{\wPhi}{{\widetilde \Phi}}

\newcommand{\pt}{\mathsf{pt}}

\newcommand{\IA}{\mathbb{A}}
\newcommand{\IC}{\mathbb{C}}

\newcommand{\IN}{\mathbb{N}}
\newcommand{\IP}{\mathbb{P}}

\newcommand{\IZ}{\mathbb{Z}}

\DeclareMathOperator{\Ind}{\mathsf{Ind}}
\DeclareMathOperator{\Res}{\mathsf{Res}}

\DeclareMathOperator{\FM}{\mathsf{FM}}
\DeclareMathOperator{\TT}{\mathsf{T}\!}
\DeclareMathOperator{\MM}{\mathsf{M}}

\DeclareMathOperator{\Fix}{\mathsf{Fix}}
\newcommand{\Serre}{\mathsf{S}}

\newcommand{\sym}{\mathfrak S}

\newcommand{\cA}{\mathcal A}
\newcommand{\cB}{\mathcal B}

\newcommand{\cF}{\mathcal F}

\newcommand{\cE}{\mathcal E}

\newcommand{\cX}{\mathcal X}
\newcommand{\cY}{\mathcal Y}
\newcommand{\cZ}{\mathcal Z}

\newcommand{\cC}{\mathcal C}
\newcommand{\cD}{\mathcal D}

\newcommand{\cP}{\mathcal P}

\newcommand{\cT}{\mathcal T}
\newcommand{\cL}{\mathcal L}

\newcommand{\cS}{\mathcal S}

\newcommand{\cV}{\mathcal V}
\newcommand{\cW}{\mathcal W}

\newcommand{\fm}{\mathfrak m}

\newcommand{\fn}{\mathfrak n}
\newcommand{\reg}{\mathcal O}

\newcommand{\wcT}{{\widetilde{\mathcal T}} }

\newcommand{\wwY}{{\scalemath{0.9}{\widetilde Y}}}
\newcommand{\wY}{{\widetilde Y}}
\newcommand{\wwX}{{\scalemath{0.9}{\widetilde X}}}
\newcommand{\wX}{{\widetilde X}}

\renewcommand{\theta}{\vartheta}
\renewcommand{\rho}{\varrho}
\renewcommand{\phi}{\varphi}
\renewcommand{\_}{\underline{\,\,\,\,}}

\newcommand{\red}{{\mathrm{red}}}

\newcommand{\sod}[1]{\langle #1\rangle}
\newcommand{\Sod}[1]{\big\langle #1\big\rangle}
\newcommand{\gen}[1]{\langle #1\rangle}

\newcommand{\perf}{\text{perf}}

\usepackage{mathtools}

\newcommand{\coloneqq}{\mathrel{\mathop:}=}
\newcommand{\eqqcolon}{=\mathrel{\mathop:}}

\newtheorem{theorem}{Theorem}[section]

  \newaliascnt{proposition}{theorem}
  \newtheorem{proposition}[proposition]{Proposition}
  \aliascntresetthe{proposition}

  \newaliascnt{lemma}{theorem}
  \newtheorem{lemma}[lemma]{Lemma}
  \aliascntresetthe{lemma}

  \newaliascnt{corollary}{theorem}
  \newtheorem{corollary}[corollary]{Corollary}
  \aliascntresetthe{corollary}

\newtheorem*{propositionA}{Proposition A}
\newtheorem*{theoremB}{Theorem B}
\newtheorem*{theoremC}{Theorem C}

\theoremstyle{definition}


  \newaliascnt{definition}{theorem}
  \newtheorem{definition}[definition]{Definition}
  \aliascntresetthe{definition}

  \newaliascnt{remark}{theorem}
  \newtheorem{remark}[remark]{Remark}
  \aliascntresetthe{remark}

  \newaliascnt{condition}{theorem}
  \newtheorem{condition}[condition]{Condition}
  \aliascntresetthe{condition}

  \newaliascnt{question}{theorem}
  \newtheorem{question}[question]{Question}
  \aliascntresetthe{question}

  \newaliascnt{example}{theorem}
  
  \aliascntresetthe{example}


\begin{document}

\title[Derived categories of resolutions]{Derived categories of resolutions of cyclic quotient singularities}
\author[A.\ Krug]{Andreas Krug}
\author[D.\ Ploog]{David Ploog}
\author[P.\ Sosna]{Pawel Sosna}

\begin{abstract}
For a cyclic group $G$ acting on a smooth variety $X$ with only one character occurring in the $G$-equivariant decomposition of the normal bundle of the fixed point locus, we study the derived categories of the orbifold $[X/G]$ and the blow-up resolution $\wY\to X/G$. 
  
Some results generalise known facts about $X=\IA^n$ with diagonal $G$-action, while other results are new also in this basic case.
In particular, if the codimension of the fixed point locus equals $|G|$, we study the induced tensor products under the equivalence $\Db(\wY)\cong \Db([X/G])$ and give a 'flop-flop=twist' type formula.
We also introduce candidates for general constructions of categorical crepant resolutions inside the derived category of a given geometric resolution of singularities and test these candidates on cyclic quotient singularities.    
\end{abstract}

\begingroup\renewcommand\thefootnote{}%
\footnote{MSC 2010: 14F05, 14E16, 14E15}
%
\footnote{Keywords: cyclic quotient singularity, McKay correspondence, derived category, categorical resolution}
\addtocounter{footnote}{-2}\endgroup

\maketitle

\noindent
\setcounter{tocdepth}{1}
{\centering\parbox{0.7\textwidth}{\tableofcontents}\par}

\addtocontents{toc}{\protect\setcounter{tocdepth}{-1}}     
\section*{Introduction}
\addtocontents{toc}{\protect\setcounter{tocdepth}{1}}      

\noindent
For geometric, homological and other reasons, it has become commonplace to study the bounded derived category of a variety. 
One of the many intriguing aspects are connections, some of them conjectured, some of them proven, to birational geometry. 

One expected phenomenon concerns a birational correspondence
\[ \xymatrix{
                  & Z\ar_q[dl] \ar^p[dr] &  \\ 
  X \ar@{-->}[rr] &                      & X'
} \]
of smooth varieties. Then we should have:
\begin{itemize}
  \item\label{conj1} A fully faithful embedding $\Db(X)\hookrightarrow \Db(X')$, if $q^*K_X\le p^* K_{X'}$.
  \item\label{conj2} A fully faithful embedding $\Db(X')\hookrightarrow \Db(X)$, if $q^*K_X\ge p^* K_{X'}$.
  \item An equivalence $\Db(X')\cong \Db(X)$, in the \emph{flop} case $q^*K_X= p^* K_{X'}$.
\end{itemize}
This is proven in many instances; see \cite{Bondal-Orlov2}, \cite{Bridgelandflops}, \cite{KawDK}, \cite{NamikawaMukaiflops}.

Another very interesting aspect of derived categories is their occurrence in the context of the McKay correspondence. Here, one of the key expectations is that the derived category of a crepant resolution $\wY\to X/G$ of a Gorenstein quotient variety is derived equivalent to the corresponding quotient orbifold:
 $\Db(\wY) \cong \Db([X/G]) = \Db_G(X)$.     
In \cite{BKR}, this expectation is proven in many cases under the additional assumption that $\wY\cong \Hilb^{G}(X)$ is the fine moduli space of $G$-clusters on $X$.
It is enlightening to view the derived McKay correspondence as an orbifold version of the conjecture on derived categories under birational correspondences described above; for more information on this point of view, see \cite[Sect.~2]{Kaw16}, where the conjecture is called the \textit{DK-Hypothesis}. Indeed, if we denote the universal family of $G$-clusters by $\cZ\subset \wY\times X$, we have the following diagram of birational morphisms of orbifolds
\begin{equation}
\begin{aligned}\label{orbiflop}
\xymatrix@!C=2em{
                               & [\cZ/G]\ar_q[dl] \ar^p[dr] &  \\ 
\wY \ar@{-->}[rr] \ar_\rho[dr] & & [X/G] \ar^\pi[dl] \\
                               & X/G
\,.}
\end{aligned}
\end{equation}
Since the pullback of the canonical sheaf of $X/G$ under $\pi$ is the canonical sheaf of $[X/G]$, the condition that $\rho$ is a crepant resolution amounts to saying that \eqref{orbiflop} is a flop of orbifolds. 

In many situations, a crepant resolution of $X/G$ does not exist. However, given a resolution $\rho\colon \wY\to X$, the DK-Hypothesis still predicts the behavior of the categories $\Db(\wY)$ and $\Db_G(X)$ if $\rho^*K_{X/G}\ge K_{\wwY}$ or $\rho^*K_{X/G}\le K_{\wwY}$.
Another related idea is that, even though a crepant resolution does not exist in general, there should always be a \textit{categorical crepant resolution} of $\Db(X/G)$; see \cite{KuzLefschetz}.
The hope is to find such a categorical resolution as an admissible subcategory of the derived category $\Db(\wY)$ of a geometric resolution.

Besides dimensions 2 and 3, one of the most studied testing grounds for the above, and related, ideas is the isolated quotient singularity $\IA^n/\mu_m$. Here, the cyclic group $\mu_m$ of order $m$ acts on the affine space 
by multiplication with a primitive $m$-th root of unity $\zeta$. In this paper, we consider the following straight-forward generalisation of this set-up. Namely, let $X$ be a quasi-projective smooth complex variety acted upon by the finite cyclic group $\mu_m$. We assume that only $1$ and $\mu_m$ occur as the isotropy groups of the action and write $S\coloneqq\Fix(\mu_m)\subset X$ for the fixed point locus. Fix a generator $g$ of $\mu_m$ and assume that $g$ acts on the normal bundle $N\coloneqq N_{S/X}$ by multiplication with some fixed primitive $m$-th root of unity $\zeta$. Then the blow-up $\wY\to X/\mu_m$ with center $S$ is a resolution of singularities; see \autoref{sec:setup} for further details. There are four particular cases we have in mind:   

\begin{enumerate}[label=(\alph*)]
\item\label{basiccase}
$X=\IA^n$ with the diagonal action of any $\mu_m$.
\item
$X=Z^2$, where $Z$ is a smooth projective variety of  arbitrary dimension, and $\mu_2=\sym_2$ acts by permuting the factors. Then $\wY\cong Z^{[2]}$, the Hilbert scheme of two points.
\item
$X$ is an abelian variety, $\mu_2$ acts by $\pm 1$. In this case, $\wY$ is known as the \textit{Kummer resolution}.
\item $X \to Y=X/\mu_m$ is a cyclic covering of a smooth variety $Y$, branched over a divisor. Here, $n=1$ and $\wX=X$, $\wY=Y$. This case has been studied in \cite{KuzPerrycyclic}.
\end{enumerate} 

First, we prove the following result in \autoref{sub:clusters}. This is probably well-known to experts, but we could not find it in the literature. Write $G \coloneqq \mu_m$.

\begin{propositionA}[=\ \autoref{prop:Hilbert-resolution}]
The resolution obtained by blowing up the fixed point locus in $X/G$ is isomorphic to the $G$-Hilbert scheme: 
$\wY \cong \Hilb^{G}(X)$.
\end{propositionA}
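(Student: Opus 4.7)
The plan is to prove this via the universal property of the Hilbert scheme: I would construct a flat family of $G$-clusters on $\wY$, obtain the induced morphism $\wY\to\Hilb^G(X)$, and show it is an isomorphism. For the first step I would reduce to an étale-local model. Away from the image $\bar S\subset X/G$ of the fixed locus, the quotient map is étale of degree $m$ and $\rho\colon\wY\to X/G$ is an isomorphism, so the identification with $\Hilb^G(X)$ there is tautological via the canonical free-orbit family. Near $\bar S$, a $G$-equivariant tubular neighborhood (or Luna-type slice) argument, together with the hypothesis that $g$ acts on $N=N_{S/X}$ by the single scalar $\zeta$, replaces $X$ étale-locally by the product $S\times\IA^n$ with $G=\mu_m$ acting trivially on $S$ and by $\zeta$-multiplication on $\IA^n$. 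Since both $\Hilb^G(-)$ and blow-up commute with products with $S$, one reduces further to the basic case $X=\IA^n$.

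In this basic case $\wY$ is the total space of $\ko_{\P^{n-1}}(-m)$, realized as $\Bl_0(\IA^n)/G$. In an affine chart with coordinates $(u_2,\ldots,u_n,w)$, where $u_j=z_j/z_1$ and $w=z_1^m$, define $\cZ\subset\wY\times\IA^n$ by the ideal
\[
\cI = (\,z_2-u_2z_1,\ \ldots,\ z_n-u_nz_1,\ z_1^m-w\,).
\]
Then $\cI$ is $G$-invariant; the projection $\cZ\to\wY$ is finite and flat of degree $m$ (being a complete intersection in a smooth source of the correct dimension); and on each fibre the classes $1,z_1,\ldots,z_1^{m-1}$ form a basis on which $g$ acts through the characters $\chi_0,\ldots,\chi_{m-1}$, i.e.\ constitute the regular representation. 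The descriptions in the various affine charts of $\wY$ agree on overlaps, producing a global flat family of $G$-clusters on $\wY$. The universal property of $\Hilb^G(X)$ then yields a morphism $\Phi\colon\wY\to\Hilb^G(X)$ extending the tautological identification over $(X\setminus S)/G$; it is bijective on closed points since a $G$-cluster supported on $\bar S$ is determined by the corresponding tangent line, i.e.\ a point of the exceptional divisor. As $\wY$ is smooth and $\Phi$ is a birational bijection to the normal variety $\Hilb^G(X)$, Zariski's main theorem forces $\Phi$ to be an isomorphism.

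The main technical obstacle I anticipate is the equivariant local-to-global step: one must verify that the Luna-type étale models can be chosen compatibly and that the local ideals $\cI$ glue to a single global subscheme of $\wY\times X$ using the transition data of $N=N_{S/X}$. Once this gluing is in place, the rest of the argument runs as outlined.
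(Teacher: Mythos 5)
Your construction of the classifying morphism is essentially the paper's (the paper takes the family to be $\wX\cong(\wY\times_Y X)_\red\subset\wY\times X$ with flatness of $q\colon\wX\to\wY$ coming from the Chevalley--Shephard--Todd theorem, which sidesteps the chart-by-chart gluing you worry about), and your bijectivity argument matches the paper's in outline. But there is a genuine gap at the very last step: you conclude by calling $\Hilb^G(X)$ a \emph{normal} variety and invoking Zariski's main theorem. Nothing in the definition of the $G$-Hilbert scheme guarantees normality --- or even reducedness or irreducibility --- and for general finite groups $G$-Hilbert schemes are known to be singular, reducible, or non-reduced. A bijective birational morphism from a smooth variety onto a non-normal (or non-reduced) target is not an isomorphism, so without an independent argument for normality or smoothness of $\Hilb^G(X)$ your proof does not close.

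This missing step is precisely where the paper spends most of its effort: after establishing that $\phi$ is a bijection, it proves that $\Hilb^G(X)$ is smooth by showing
\[
\Hom^1_{\Db_G(X)}(\reg_\xi,\reg_\xi)=\dim X
\]
for every $G$-cluster $\xi$ supported on a fixed point, via the local linearisation, a K\"unneth reduction to the one-dimensional model $\eta=V(y_1^m,y_2,\dots,y_n)\subset\IA^n$, Koszul resolutions, and a count of $G$-invariants. Since the tangent space at every point then equals the dimension of the (irreducible, because it is the bijective image of $\wY$) $G$-Hilbert scheme, the target is smooth and the bijection is an isomorphism. To repair your argument you would need to supply this tangent-space computation (or some other proof of normality of $\Hilb^G(X)$ under \autoref{cond:main}); the rest of your outline, including the explicit local equations for the family and the curvilinearity of clusters at fixed points, is sound.
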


We set $n \coloneqq \codim(S\hookrightarrow X)$ and find the following dichotomy, in accordance with the DK-Hypothesis. We keep the notation from diagram \eqref{orbiflop}. In particular, for $n=m$, we obtain new instances of BKR-style derived equivalences between orbifold and resolution.

\begin{theoremB}[=\ \autoref{thm:mainprecise}]\noindent\label{thm:main}
\begin{enumerate}
 \item The functor $\Phi \coloneqq p_*q^*\colon \Db(\wY)\to \Db_G(X)$ is fully faithful for $m\ge n$ and an equivalence for $m=n$. For $m>n$, there is a semi-orthogonal decomposition 
of $\Db_G(X)$ consisting of $\Phi(\Db(\wY))$ and $m-n$ pieces equivalent to $\Db(S)$. 
\item The functor $\Psi \coloneqq q_*p^*\colon\Db_G(X)\to \Db(\wY) $ is fully faithful for $n\ge m$ and an equivalence for $n=m$. For $n>m$, there is a semi-orthogonal decomposition 
of $\Db(\wY)$ consisting of $\Psi(\Db_G(X))$ and $n-m$ pieces equivalent to $\Db(S)$.
\end{enumerate}
\end{theoremB}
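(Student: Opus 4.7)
My plan is to compare two semi-orthogonal decompositions of the $G$-equivariant derived category of the blow-up $\wX \coloneqq \Bl_S X$ along the fixed locus $S$. The $G$-action lifts to $\wX$, fixing the exceptional divisor $E \cong \P(N)$ pointwise and acting by $\zeta$ in the normal direction. A local computation in the model $X = S\times\IA^n$ identifies both $\wX/G$ and $\wY = \Bl_S(X/G)$ with the total space of $\ko_{\P(N)}(-m)$ over $S$; hence $\wX/G = \wY$ and the quotient map $\alpha\colon\wX\to\wY$ is a cyclic cover of degree $m$ branched along the smooth divisor $E_{\wY}\cong\P(N)\subset\wY$.

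The first decomposition will come from applying the $G$-equivariant version of Orlov's blow-up formula to $p'\colon\wX\to X$:
\[
\Db_G(\wX) = \Big\langle (p')^*\Db_G(X),\ \Db_G(S)_0,\ \ldots,\ \Db_G(S)_{n-2}\Big\rangle,
\]
where the $n-1$ extra pieces are embedded along $E$ via different $\ko_E$-twists. Since $G$ acts trivially on $S$, there is a further SOD $\Db_G(S) = \big\langle \Db(S)_\chi : \chi\in\widehat{\mu_m}\big\rangle$ into $m$ pieces each equivalent to $\Db(S)$. In total, this ``blow-up SOD'' presents $\Db_G(\wX)$ as $\Db_G(X)$ together with $(n-1)m$ copies of $\Db(S)$.

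The second decomposition will be given by the Kuznetsov--Perry cyclic-cover theorem of \cite{KuzPerrycyclic}, i.e.\ case~(d) of the present theorem, applied to the cover $\alpha$:
\[
\Db_G(\wX) = \Big\langle \alpha^*\Db(\wY),\ \Db(E_{\wY})_1,\ \ldots,\ \Db(E_{\wY})_{m-1}\Big\rangle,
\]
with $m-1$ summands equivalent to $\Db(\P(N))$. Beilinson's formula for the projective bundle $\P(N)\to S$ further decomposes each such summand into $n$ copies of $\Db(S)$, so this ``cover SOD'' presents $\Db_G(\wX)$ as $\Db(\wY)$ together with $(m-1)n$ copies of $\Db(S)$.

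The two decompositions contain $(n-1)m$ and $(m-1)n$ copies of $\Db(S)$ respectively, differing by $(n-1)m-(m-1)n = n-m$. A mutation argument then matches the common $\min\{(n-1)m,(m-1)n\}$ copies of $\Db(S)$ and isolates the extra $|n-m|$ copies together with the main pieces, yielding all three conclusions at once: the fully faithful embeddings for $m\ge n$ and $n\ge m$, the equivalence at $n=m$, and the SODs of $\Db_G(X)$ and $\Db(\wY)$ with exactly $|n-m|$ extra $\Db(S)$-summands. The main technical obstacle is tracking the Fourier--Mukai kernels through these mutations, so that the resulting fully faithful embeddings coincide with the specified functors $\Phi = p_*q^*$ and $\Psi = q_*p^*$ defined via the universal cluster family $\cZ\subset\wY\times X$; for this one identifies $\cZ$ in terms of $\wX$ through the natural morphism $\wX\to\wY\times X$ induced by $\alpha$ and $p'$, and computes each mutation at the level of kernels.
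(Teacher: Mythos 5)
Your global strategy --- computing $\Db_G(\wX)$ in two ways, once via the $G$-equivariant blow-up formula for $p\colon\wX\to X$ and once via the Kuznetsov--Perry decomposition for the $\mu_m$-cover $q\colon\wX\to\wY$ branched over $Z$, and then comparing --- is genuinely different from the paper's route, and the numerology does come out right: $(n-1)m$ versus $(m-1)n$ copies of $\Db(S)$, differing by $n-m$. The paper instead proves full faithfulness of $\Phi$ directly on skyscrapers (identifying $\wY\cong\Hilb^G(X)$ and computing orthogonality of $G$-clusters), proves full faithfulness of $\Psi$ via the relative tilting bundle $\reg_X\otimes\IC[G]$ over $Y$, writes down the complementary embeddings $\Xi_\alpha$, $\Theta_\beta$ explicitly, and checks generation against relative spanning classes.

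The genuine gap is the sentence ``a mutation argument then matches the common copies and isolates the extra ones.'' This is not a formal consequence of having two semi-orthogonal decompositions with numerically matching pieces: semi-orthogonal decompositions satisfy no Jordan--H\"older property, and two decompositions of the same category into abstractly equivalent pieces need not be related by any sequence of mutations. To make the comparison work you must prove concrete vanishing statements between the pieces of the blow-up decomposition and those of the cover decomposition --- for instance, that $q^*\Db(\wY)$ lies in the correct orthogonal to enough of the pieces $j_*(\nu^*\Db(S)\otimes\reg_\nu(k)\otimes\chi^\ell)$ so that the projection $p_*$ restricted to it is fully faithful, and dually for $p^*\Db_G(X)$ and $q_*^G$. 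These vanishings are exactly where \autoref{cond:main}(iii) enters, through the $G$-linearisations carried by the twists $\reg_\nu(k)$ on the exceptional divisor; in the paper this bookkeeping is the content of \autoref{lem:pushforward-of-p} ($p_*\cL_\wX^\ell\cong\reg_X\otimes\chi^\ell$), \autoref{lem:canonical} ($\omega_p\cong\cL_\wX^{n-1}\otimes\chi^{1-n}$), and \autoref{lem:sky} (the linearisations of $\CH^{-r}(p^*\reg_s)$). The counting alone cannot detect which pieces are orthogonal to which --- the counts would match for any choice of characters on $N_{S/X}$, whereas the orthogonality pattern, and hence the theorem, depends on them --- so until these computations are carried out (and the positions of the main pieces are fixed so that the projections really are $p_*$ and $q_*^G$), the proposal asserts the conclusion rather than proving it. The kernel-tracking you flag as the main obstacle is, by comparison, the easier part once the orthogonality relations are established.
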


For a more exact statement with an explicit description of the embeddings of the $\Db(S)$ components into $\Db(\wY)$ and $\Db_G(X)$, see \autoref{sec:main-proof}.
In particular, for $m>n$, the push-forward $a_*\colon \Db(S)\to \Db_G(X)$ along the embedding $a\colon S\hookrightarrow X$ of the fixed point divisor is fully faithful.

In the basic affine case \ref{basiccase}, the result of the theorem is also stated in \cite[Ex.~4]{Kaw16} and there are related results in the more general toroidal case in \cite[Sect.~3]{Kaw16}. Proofs, in the basic case, are given in \cite[Sect.~4]{Abuaf-catres} for $n\ge m$ and in \cite{IUspecial} for $n=2$. If $n=1$, the quotient is already smooth and we have $\wY=X/G$ --- here the semi-orthogonal decomposition categorifies the natural decomposition of the orbifold cohomology; compare \cite{PolvdB}. The $n=1$ case is also proven in \cite[Thm.~3.3.2]{Lim}.

We study the case $m=n$, where $\Phi$ and $\Psi$ are equivalences, in more detail.  
On both sides of the equivalence, we have distinguished line bundles. The line bundle $\reg_{\widetilde Y}(Z)$ on $\wY$, corresponding to the exceptional divisor, admits an $m$-th root $\cL$. On $[X/G]$, there are twists of the trivial line bundle by the group characters $\reg_X\otimes \chi^i$. For $i=-m+1, \dots, -1,0$, we have $\Psi(\reg_X\otimes \chi^i)\cong \cL^i$.  
Furthermore, we see that the functors $\Db(S)\to \Db(\wY)$ and $\Db(S)\to \Db_G(X)$, which define fully faithful embeddings in the $n>m$ and $m<n$ cases, respectively, become spherical for $m=n$ and hence induce twist autoequivalences; see \autoref{sub:spherical} for details on spherical functors and twists.
We show that the tensor products by the distinguished line bundles correspond to the spherical twists under the equivalences $\Psi$ and $\Phi$. In particular, one part of \autoref{thm:n=m} is the following formula.

\begin{theoremC}\label{thm:tensor}
There is an isomorphism
  $\Psi^{-1}(\Psi(\_)\otimes \cL^{-1}) \cong \TT^{\;-1}_{a_*}(\_\otimes \chi^{-1})$
of autoequivalences of $\Db_G(X)$ where the inverse spherical twist $\TT^{\;-1}_{a_*}$ is defined by the exact triangle of functors
\[
   \TT^{\;-1}_{a_*} \to \id \to a_*(a^*(\_)^G) \to
\,. \]
\end{theoremC}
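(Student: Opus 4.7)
My plan is to exhibit both autoequivalences in a common distinguished triangle and match it with the defining triangle of $\TT^{-1}_{a_*}$.

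The first step is to produce a canonical natural transformation
\[
\eta_E \colon \Psi(E) \otimes \cL^{-1} \longrightarrow \Psi(E \otimes \chi^{-1}), \qquad E \in \Db_G(X).
\]
The morphism $\eta_E$ encodes the failure of $\Psi$ to be monoidal. Concretely, the Fourier--Mukai description of $\Psi$ via $\cZ$ and the projection formula realise $\Psi(E) \otimes \cL^{-1}$ and $\Psi(E \otimes \chi^{-1})$ as different isotypical pieces of the $G$-equivariant pushforward of $p^*E$ from $\cZ$ to $\wY$: the $G$-invariant summand tensored with $\cL^{-1}$, and the $\chi$-isotypical summand, respectively. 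Combined with the identification $q_*\reg_\cZ \cong \bigoplus_{j=0}^{m-1}\cL^{-j}$ dual to the formula $\Psi(\chi^{-j}) \cong \cL^{-j}$ quoted above the theorem, this pushforward is naturally a module over the $G$-equivariant $\reg_{\wY}$-algebra $q_*\reg_\cZ$. Multiplication by the $\chi$-summand $\cL^{-1}$ on the $G$-invariant summand lands in the $\chi$-isotypical summand; this multiplication map is $\eta_E$.

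Next I would identify $\cone(\eta_E) \cong \Psi\bigl(a_*((a^*E)_\chi)\bigr)$, where $(a^*E)_\chi$ denotes the $\chi$-isotypical piece of the $G$-equivariant sheaf $a^*E$ on $S$. On the free locus $\wY \setminus Z$ the map $q$ is \'etale, the algebra $q_*\reg_\cZ$ is locally the group algebra of $G$, and multiplication by its $\chi$-component is invertible, so $\eta_E$ is an isomorphism there and $\cone(\eta_E)$ is supported on $Z$. A local computation along $Z$ -- using the cyclic-cover model of $\cZ \to \wY$ given by $\Spec(\reg_{\wY}[t]/(t^m-w))$ with $w$ a local equation of $Z$, in which $\eta_E$ becomes multiplication by $t$ on the character-graded module $q_*p^*E$ -- identifies the cone with the claimed expression; the key geometric input is that the restriction of $p^*E$ to the central fibre $p^{-1}(S) = q^{-1}(Z)$ recovers $a^*E$ together with its isotypical decomposition.

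Applying $\Psi^{-1}$ to the triangle $\Psi(E) \otimes \cL^{-1} \xrightarrow{\eta_E} \Psi(E \otimes \chi^{-1}) \to \cone(\eta_E) \to$ yields a triangle
\[
\Psi^{-1}(\Psi(E) \otimes \cL^{-1}) \to E \otimes \chi^{-1} \to a_*((a^*E)_\chi) \to
\]
in $\Db_G(X)$, which shares its second and third terms with the defining triangle of $\TT^{-1}_{a_*}(E \otimes \chi^{-1})$, using $(a^*(E \otimes \chi^{-1}))^G = (a^*E)_\chi$. Functoriality of $\eta_E$ in $E$ together with the universal property of the adjunction unit $\id \to a_*((a^*(\_))^G)$ then forces the middle arrows of the two triangles to coincide, whence the leftmost objects are isomorphic, yielding the claimed identity of autoequivalences. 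The main obstacle is the cone identification: while the morphism $\eta_E$ is essentially dictated by the module structure, matching its cone precisely with $\Psi(a_*((a^*E)_\chi))$ -- without extra shifts or twisting automorphisms -- requires the local cyclic-cover computation together with a compatibility check between the $q_*\reg_\cZ$-module structure of $q_*p^*E$ and the isotypical decomposition of $a^*E$ along $S$. A more pedestrian alternative is to verify the natural isomorphism on a generating set of $\Db_G(X)$ (for instance, characters $\reg_X \otimes \chi^i$ and pushforwards from $S$), but the triangle-matching strategy delivers functoriality for free.
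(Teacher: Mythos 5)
Your strategy is genuinely different from the paper's. The paper proves this as part of \autoref{thm:n=m}(iv): it evaluates both autoequivalences on the summands $\reg_X\otimes\chi^\alpha$ of the relative tilting bundle $V$ (obtaining $\reg_X\otimes\chi^{\alpha-1}$ for $\alpha\neq1$ and $\cI_S$ for $\alpha=1$ on both sides) and then invokes the relative tilting formalism of \autoref{cor:relative-tilting-functor-iso} to promote these object-wise isomorphisms to an isomorphism of functors; the required compatibility of the chosen isomorphisms with the endomorphism algebra $\Lambda_V$ is checked by restricting to the free locus, where both functors are visibly $\MM_{\chi^{-1}}$, and extending sections over the codimension $\ge 2$ fixed locus. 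Your triangle, by contrast, does exist and can be obtained more cleanly than via your local cyclic-cover model: tensor the equivariant Koszul sequence \eqref{eq:exact-wX} by $p^*E\otimes\chi^{-1}$ and apply $q_*^G$; using $q^*\cL_\wY\cong\cL_\wX$, the projection formula, $j^*p^*E\cong\nu^*a^*E$ and $q\circ j=i$, one gets a functorial exact triangle
\[
  \Psi(E)\otimes\cL^{-1}\longrightarrow \Psi(E\otimes\chi^{-1})\longrightarrow i_*\nu^*\bigl((a^*(E\otimes\chi^{-1}))^G\bigr)\longrightarrow
\]
whose third term is $\Psi\Xi\Xi^L(E\otimes\chi^{-1})$ by \autoref{thm:n=m}(ii). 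This globalises your multiplication-by-$t$ picture, is automatically natural in $E$, and also takes care of the fact that $a^*$ must be derived here (your description of $(a^*E)_\chi$ as an isotypical piece of a sheaf ignores the higher Tor terms of $a^*E$ for general $E$). So the cone identification, which you single out as the main obstacle, is actually the unproblematic part.

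The genuine gap is your final step. Two exact triangles with isomorphic second and third terms need not have isomorphic first terms: one must show that the two maps from the second to the third term agree up to an automorphism of the third term, and ``functoriality plus the universal property of the unit'' does not force this --- a priori your connecting map $E\otimes\chi^{-1}\to a_*\bigl((a^*(E\otimes\chi^{-1}))^G\bigr)$ could correspond under the adjunction $\Xi^L\dashv\Xi$ to a non-invertible endomorphism, or even be zero, in which case the cone would be the direct sum rather than $\TT^{\;-1}_{a_*}$. What actually has to be verified is that the second map of the Koszul triangle, namely $q_*^G$ of the restriction $p^*E\otimes\chi^{-1}\to j_*j^*(p^*E\otimes\chi^{-1})$, is identified with $\Psi$ of the unit of $\Xi^L\dashv\Xi$ under the base-change isomorphism $\Psi\Xi\cong\Theta$; this is a (doable but nontrivial) diagram chase comparing the units of $j^*\dashv j_*$ and $a^*\dashv a_*$ through the morphism $p^*a_*\to j_*\nu^*$. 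Failing that, one retreats to your ``pedestrian alternative'' of checking on the relative generators $\reg_X\otimes\chi^\alpha$ --- which is exactly what the paper does, including the care needed to make the object-wise identifications into a natural transformation. Either way, this is where the real content of the proof lies, and it is the step your write-up dismisses as coming for free.
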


The tensor powers of the line bundle $\cL$ form a strong generator of $\Db(\wY)$, thus \autoref{thm:tensor}, at least theoretically, completely describes the tensor product 
\[
 \_\widehat\otimes \_ \coloneqq \Psi^{-1}(\Psi(\_)\otimes \Psi(\_))\colon \Db_G(X)\times \Db_G(X)\to \Db_G(X)
\]
induced by $\Psi$ on $\Db_G(X)$. 
There is related unpublished work on induced tensor products under the McKay correspondence in dimensions 2 and 3 by T. Abdelgadir, A. Craw, J. Karmazyn, and A. King.
In \autoref{cor:flopfloptwistcor}, we also get a formula which can be seen as a stacky instance of the 'flop-flop = twist' principle as discussed in \cite{ADMflop}.

In \autoref{sec:categorical-resolutions}, we introduce a general candidate for a weakly crepant categorical resolution (see \cite{KuzLefschetz} or \autoref{sub:catresdef} for this notion), namely the \textit{weakly crepant neighbourhood} $\WC(\rho)\subset \Db(\wY)$, inside the derived category of a given resolution $\rho\colon\wY\to Y$ of a rational Gorenstein variety $Y$. The idea is pretty simple: by Grothendieck duality, there is a canonical section $s\colon \reg_\wwY\to \reg_\rho$ of the relative dualising sheaf, and this induces a morphism of Fourier--Mukai transforms
$
  t \coloneqq \rho_*(\_\otimes s)\colon \rho_* \to \rho_!  \, .
$

Set $\rho_+ \coloneqq \cone(t)$ and $\WC(\rho) \coloneqq \ker(\rho_+)$. Then, by the very construction, we have $\rho_{*\mid \WC(\rho)}\cong \rho_{!\mid \WC(\rho)}$ which amounts to the notion of categorical weak crepancy. There is one remaining condition needed to ensure that $\WC(\rho)$ is a categorical weakly crepant resolution: whether it is actually a smooth category; this holds as soon as it is an admissible subcategory of $\Db(\wY)$ which means that its inclusion has adjoints. 
We prove that, in the Gorenstein case $m\mid n$ of our set-up of cyclic quotients, $\WC(\rho)\subset \Db(\wY)$ is an admissible subcategory; see \autoref{thm:cyclicWC}. 

In \autoref{sub:non-unicity}, we observe that there are various weakly crepant resolutions inside $\Db(\wY)$. However, a strongly crepant categorical resolution inside $\Db(\wY)$ is unique, as we show in \autoref{prop:unicity}. Our concept of weakly crepant neighbourhoods was motivated by the idea that some non-CY objects possess `CY neighbourhoods'' (a construction akin to the spherical subcategories of spherelike objects in \cite{HKP}), i.e.\ full subcategories in which they become Calabi--Yau. This relationship is explained in \autoref{sub:CY-neighbourhoods}.

In the final \autoref{sec:Kummer}, we construct Bridgeland stability conditions on Kummer threefolds as an application of our results; see \autoref{cor:stability}.

\bigskip
\noindent
\textbf{Conventions.} 
We work over the complex numbers. All functors are assumed to be derived. 
We write $\kh^i(E)$ for the $i$-th cohomology object of a complex $E\in \Db(Z)$ and $\Ho^*(E)$ for the complex $\oplus_i \Ho^i(Z,E)[-i]$. If a functor $\Phi$ has a left/right adjoint, they are denoted $\Phi^L$, $\Phi^R$.

There are a number of spaces, maps and functors repeatedly used in this text. For the convenience of the reader, we collect our notation at the very end of this article, on \autopageref{notation_recap}.

\bigskip
\noindent
\textbf{Acknowledgements.}
It is a pleasure to thank
Tarig Abdelgadir, Martin Kalck, S\"onke Rollenske and Evgeny Shinder
for comments and discussions.
We are grateful to the anonymous referee for very careful inspection.

\section{Preliminaries}\label{sec:preliminaries}

\subsection{Fourier--Mukai transforms and kernels}\label{sub:fmtransforms}
Recall that given an object $\ke$ in $\Db(Z\times Z')$, where $Z$ and $Z'$ are smooth and projective, we get an exact functor $\Db(Z)\to \Db(Z')$, $F \mapsto p_{Z'*}(\ke\otimes p_Z^* F)$. Such a functor, denoted by $\FM_\ke$, is called a \emph{Fourier--Mukai transform} (or FM transform) and $\ke$ is its kernel. See \cite{Huy} for a thorough introduction to FM transforms. For example, if $\Delta\colon Z\to Z\times Z$ is the diagonal map and $\kl$ is in $\Pic(Z)$, then $\FM_{\Delta_*\kl}(F) = F\otimes\kl$. In particular, $\FM_{\ko_\Delta}$ is the identity functor.
\smallskip

\noindent
{\textbf{Convention.}} We will write $\MM_\kl$ for the functor $\FM_{\Delta_*\kl}$. 
\smallskip

The calculus of FM transforms is, of course, not restricted to smooth and projective varieties. Note that $f_*$ maps $\Db(Z)$ to $\Db(Z')$ as soon as $f\colon Z\to Z'$ is proper. In order to be able to control the tensor product and pullbacks, one can restrict to perfect complexes. Recall that a complex of sheaves on a quasi-projective variety $Z$ is called \emph{perfect} if it is locally quasi-isomorphic to a bounded complex of locally free sheaves. The triangulated category of perfect complexes on $Z$ is denoted by $\Dperf(Z)$. It is a full subcategory of $\Db(Z)$. These two categories coincide if and only if $Z$ is smooth.  

We will sometimes take cones of morphisms between FM transforms. Of course, one needs to make sure that these cones actually exist. Luckily, if one works with FM transforms, this is not a problem, because the maps between the functors come from the underlying kernels and everything works out, even for (reasonable) schemes which are not necessarily smooth and projective; see \cite{AL}.

\subsection{Group actions and derived categories}\label{sub:equivariant} 
Let $G$ be a finite group acting on a smooth variety $X$. Recall that a $G$-equivariant coherent sheaf is a pair $(F,\lambda_g)$, where $F \in \Coh(X)$ and $\lambda_g\colon F\isomor g^*F$ are isomorphisms satisfying a cocycle condition.
The category of $G$-equivariant coherent sheaves on $X$ is denoted by $\Coh^G(X)$. It is an abelian category. The \emph{equivariant derived category}, denoted by $\Db_G(X)$, is defined as $\Db(\Coh^G(X))$, see, for example, \cite{Plo} for details.  
Recall that for every subgroup $G'\subset G$ the restriction functor $\Res\colon \Db_G(X)\to \Db_{G'}(X)$ has the induction functor $\Ind\colon \Db_{G'}(X)\to \Db_G(X)$ as a left and right adjoint (see e.g.\ \cite[Sect.~1.4]{Plo}). It is given for $F\in \Db(Z)$ by
\begin{align}\label{eq:induction-definition}
  \Ind(F) = \bigoplus_{[g]\in G'\setminus G}g^* F
\end{align}
with the $G$-linearisation given by the $G'$-linearisation of $F$ together with appropriate permutations of the summands.

If $G$ acts trivially on $X$, there is also the functor $\triv\colon\Db(X)\to \Db_{G}(X)$ which equips an object with the trivial $G$-linearisation. Its left and right adjoint is the functor $(\_)^G\colon \Db_G(X)\to \Db(X)$ of invariants.

Given an equivariant morphism $f\colon X\to X'$ between varieties endowed with $G$-actions, there are equivariant pushforward and pullback functors, see, for example, \cite[Sect.~1.3]{Plo} for details. We will sometimes write $f_*^G$ for $(\_)^G\circ f_*$. It is also well-known that the category $\Db_G(X)$ has a tensor product and the usual formulas, e.g.\ the adjunction formula, hold in the equivariant setting. 

Finally, we need to recall that a character $\kappa$ of $G$ acts on the equivariant category by twisting the linearisation isomorphisms with $\kappa$. If $F\in \Db_G(X)$, we will write $F\otimes \kappa$ for this operation. We will tacitly use that twisting by characters commutes with the equivariant pushforward and pullback functors along $G$-equivariant maps.

\subsection{Semi-orthogonal decompositions}\label{sub:semiorth}
References for the following facts are, for example, \cite{Bon} and \cite{Bondal-Orlov2}. 

Let $\kt$ be a Hom-finite triangulated category. A \emph{semi-orthogonal decomposition} of $\kt$ is a sequence of full triangulated subcategories $\ka_1,\ldots,\ka_m$ such that (a) if $A_i\in \cA_i$ and $A_j\in \cA_j$, then $\text{Hom}(A_i,A_j[l])=0$ for $i>j$ and all $l$, and (b) the $\cA_i$ generate $\kt$, that is, the smallest triangulated subcategory of $\kt$ containing all the $\cA_i$ is already $\kt$. We write $\kt=\sod{\ka_1,\ldots,\ka_m}$. If $m=2$, these conditions boil down to the existence of a functorial exact triangle $A_2\to T\to A_1$ for any object $T\in \kt$.

A subcategory $\ka$ of $\kt$ is \emph{right admissible} if the embedding functor $\iota$ has a right adjoint $\iota^R$, \emph{left admissible} if $\iota$ has a left adjoint $\iota^L$, and \emph{admissible} if it is left and right admissible. 

Given any triangulated subcategory $\ka$ of $\kt$, the full subcategory $\ka\orth\subseteq\kt$ consists of objects $T$ such that $\Hom(A,T[k])=0$ for all $A\in \ka$ and all $k\in \IZ$. If $\ka$ is right admissible, then $\kt=\sod{\ka\orth,\ka}$ is a semi-orthogonal decomposition. Similarly, $\kt=\sod{\ka, \lorth\ka}$ is a semi-orthogonal decomposition if $\ka$ is left admissible, where $\lorth\ka$ is defined in the obvious way.

Examples typically arise from so-called exceptional objects. Recall that an object $E\in \Db(Z)$ (or any $\IC$-linear triangulated category) is called \emph{exceptional} if $\Hom(E,E)=\IC$ and $\Hom(E,E[k])=0$ for all $k\neq 0$. The smallest triangulated subcategory containing $E$ is then equivalent to $\Db(\Spec(\IC))$ and this category, by abuse of notation again denoted by $E$, is admissible, leading to a semi-orthogonal decomposition $\Db(Z)=\sod{ E\orth, E}$. A sequence of objects $E_1,\ldots,E_n$ is called an \emph{exceptional collection} if
  $\Db(Z) = \sod{ (E_1,\ldots,E_n)\orth,E_1,\ldots,E_n }$
and all $E_i$ are exceptional.
The collection is called \emph{full} if $(E_1\ldots,E_n)\orth=0$.

Note that any fully faithful FM transform $\Phi\colon \Db(X)\to \Db(X')$ gives a semi-orthogonal decomposition $\Db(X')=\sod{ \Phi(\Db(X))\orth,\Phi(\Db(X))}$, because any FM transform has a right and a left adjoint, see \cite[Prop.~5.9]{Huy}.

\subsection{Dual semi-orthogonal decompositions}
Let $\cT$ be a triangulated category together with a semi-orthogonal decomposition $\cT=\sod{\cA_1,\dots,\cA_n}$ such that all $\cA_i$ are admissible. Then there is the \textit{left-dual} semi-orthogonal decomposition $\cT = \sod{ \cB_n,\dots,\cB_1}$ given by $\cB_i \coloneqq \sod{\cA_1,\dots,\cA_{i-1}, \cA_{i+1},\dots,\cA_n}\orth$. There is also a right-dual decomposition but we will always use the left-dual and refer to it simply as the \textit{dual} semi-orthogonal decomposition. We summarise the properties of the dual semi-orthogonal decomposition needed later on in the following 

\begin{lemma}\label{lem:dual-sod}
Let $\cT=\sod{\cA_1,\dots,\cA_n}$ be a semi-orthogonal decomposition with dual semi-orthogonal decomposition $\cT=\sod{\cB_n,\dots,\cB_1}$. 
\begin{enumerate}
\item $\sod{\cA_1,\dots,\cA_r}=\sod{\cB_r,\dots,\cB_1}$ and $\sod{\cA_1,\dots,\cA_r}\orth = \sod{\cB_n,\dots,\cB_{r+1}}$ for $1\le r\le n$.
\item If $\sod{\cA_1,\dots,\cA_n}$ is given by an exceptional collection, i.e.\ $\cA_i=\sod{E_i}$, then its dual is also given by an exceptional collection $\cB_i=\sod{F_i}$ such that $\Hom^*(E_i,F_j)=\delta_{ij}\IC[0]$.
\end{enumerate}
\end{lemma}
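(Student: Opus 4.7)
The key elementary observation to establish first is $\cB_i\subset\sod{\cA_1,\dots,\cA_i}$: by definition $\cB_i$ is right-orthogonal to $\sod{\cA_{i+1},\dots,\cA_n}$, and in the original SOD this orthogonal equals $\sod{\cA_1,\dots,\cA_i}$ (the standard two-piece identity for a SOD). The definition of $\cB_j$ simultaneously yields $\Hom^*(\cA_k,\cB_j)=0$ for all $k\neq j$, which will be used throughout.

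For (i), one inclusion is immediate: from $\cB_i\subset\sod{\cA_1,\dots,\cA_i}\subseteq\sod{\cA_1,\dots,\cA_r}$ for $i\le r$, one reads off $\sod{\cB_r,\dots,\cB_1}\subseteq\sod{\cA_1,\dots,\cA_r}$. For the converse I plan to use the dual two-piece identity $\sod{\cB_r,\dots,\cB_1}\orth=\sod{\cB_n,\dots,\cB_{r+1}}$: to place $\cA_i$ into $\sod{\cB_r,\dots,\cB_1}$ for $i\le r$ it suffices to check $\Hom^*(\cA_i,\cB_j)=0$ for all $j>r$, which is precisely the defining orthogonality of $\cB_j$ applied to $k=i\neq j$. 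The second equality $\sod{\cA_1,\dots,\cA_r}\orth=\sod{\cB_n,\dots,\cB_{r+1}}$ follows by taking $\orth$ of the just-established equality of subcategories.

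For (ii), part (i) upgrades $\sod{\cA_1,\dots,\cA_j}$ to the two-piece decomposition
\[
  \sod{\cA_1,\dots,\cA_j}=\sod{\cB_j,\sod{\cA_1,\dots,\cA_{j-1}}},
\]
into which I would decompose $E_j$ functorially, obtaining a triangle $A\to E_j\to F_j$ with $A\in\sod{\cA_1,\dots,\cA_{j-1}}$ and $F_j\in\cB_j$. Applying $\Hom(E_j,-)$ and $\Hom(-,F_j)$ to this triangle, together with the two vanishings $\Hom^*(E_j,A)=0$ (semi-orthogonality of the original SOD, since $E_j\in\cA_j$ and $A\in\sod{\cA_1,\dots,\cA_{j-1}}$) and $\Hom^*(A,F_j)=0$ (defining orthogonality of $\cB_j$), will yield $\Hom^*(E_j,F_j)\cong\Hom^*(E_j,E_j)=\IC[0]$ and then $\Hom^*(F_j,F_j)\cong\Hom^*(E_j,F_j)=\IC[0]$, so $F_j$ is exceptional with the stated diagonal Hom. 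The vanishings $\Hom^*(E_i,F_j)=0$ for $i\neq j$ are direct: for $i<j$ from $E_i\in\sod{\cA_1,\dots,\cA_{j-1}}$ and $F_j\in\cB_j$, and for $i>j$ from $F_j\in\sod{\cA_1,\dots,\cA_j}$ together with the original semi-orthogonality.

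Finally, to conclude $\cB_j=\sod{F_j}$ I would take any $T\in\cB_j$ with $\Hom^*(F_j,T)=0$ and apply $\Hom(-,T)$ to the same triangle: $\Hom^*(A,T)=0$ because $T\in\cB_j$ is right-orthogonal to each $\cA_k$ with $k\neq j$, so the triangle gives $\Hom^*(E_j,T)=0$; combined with $\Hom^*(E_i,T)=0$ for $i\neq j$ (same two sources as above, using $T\in\cB_j\subset\sod{\cA_1,\dots,\cA_j}$), this says $T$ is right-orthogonal to the full exceptional collection $E_1,\dots,E_n$ generating $\cT$, forcing $T=0$. The main obstacle throughout is purely bookkeeping --- keeping straight whether the orthogonal is taken inside $\cT$ or a sub-SOD and which direction ($\orth$ versus $\lorth$) is meant; once that is set up carefully, all remaining content is routine triangle manipulation.
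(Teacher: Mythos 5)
Your proof is correct, but it is genuinely more self-contained than what the paper does: the paper disposes of part (i) by citing \cite[Prop.~2.7(i)]{Efimov} and then declares part (ii) ``clear'', whereas you reconstruct the whole argument from the two elementary facts that $\sod{\cA_{i+1},\dots,\cA_n}\orth=\sod{\cA_1,\dots,\cA_i}$ and that $\Hom^*(\cA_k,\cB_j)=0$ for $k\neq j$ by the very definition of $\cB_j$. Your treatment of (ii) --- decomposing $E_j$ in the two-piece decomposition $\sod{\cA_1,\dots,\cA_j}=\sod{\cB_j,\sod{\cA_1,\dots,\cA_{j-1}}}$ to produce $F_j$, then running the two long exact sequences and the generation argument --- is the standard construction of the dual exceptional collection (essentially left mutation of $E_j$ across $\sod{E_1,\dots,E_{j-1}}$), and all the orthogonality inputs you invoke are available where you use them. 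Two small bookkeeping remarks: in the converse inclusion of (i) the identity you actually use is $\sod{\cB_r,\dots,\cB_1}=\lorth\sod{\cB_n,\dots,\cB_{r+1}}$ (the left-orthogonal version of the two-piece identity you quote), and in the final step of (ii) the conclusion $\cB_j=\sod{F_j}$ from the vanishing of the right orthogonal of $F_j$ inside $\cB_j$ tacitly uses that $\sod{F_j}$ is right admissible in $\cB_j$ because $F_j$ is exceptional --- a fact the paper itself records in \autoref{sub:semiorth}. Neither affects the validity of the argument; what your route buys is independence from the external reference, at the cost of a page of routine triangle manipulation the authors chose to outsource.
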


\begin{proof}
Part (i) is \cite[Prop.~2.7(i)]{Efimov}. Part (ii) is then clear.
\end{proof}

An important classical example is the following

\begin{lemma}\label{lem:easypdual}
There are dual semi-orthogonal decompositions 
\begin{align*}
\Db(\IP^{n-1}) &= \sod{ \reg,\reg(1),\dots,\reg(n-1) } \,, \\
\Db(\IP^{n-1}) &= \sod{ \Omega^{n-1}(n-1)[n-1],\dots,\Omega^1(1)[1],\reg } \,.
\end{align*}
\end{lemma}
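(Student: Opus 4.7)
The plan is to take the first decomposition as Beilinson's classical theorem and obtain the second by identifying it as the abstract dual via \autoref{lem:dual-sod}(ii), rather than verifying the semi-orthogonality of the second collection directly.

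First I would recall that $\reg,\reg(1),\dots,\reg(n-1)$ is a full exceptional collection on $\IP^{n-1}$; this is Beilinson's original result. Exceptionality and the semi-orthogonality relations follow from the standard identification
\[ \Hom^*(\reg(i),\reg(j)) = \Ho^*(\IP^{n-1},\reg(j-i)), \]
together with the vanishing of $\Ho^*(\IP^{n-1},\reg(k))$ for $-n+1\le k<0$. Fullness is the only nontrivial input and may be quoted from Beilinson's resolution of the diagonal on $\IP^{n-1}\times \IP^{n-1}$. Once the first decomposition is established, \autoref{lem:dual-sod}(ii) produces an abstract full exceptional collection $F_1,\dots,F_n$ dual to $E_i\coloneqq \reg(i-1)$, characterised up to isomorphism by the biorthogonality $\Hom^*(E_i,F_j)=\delta_{ij}\IC[0]$.

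It remains to check that the candidates $F_j \coloneqq \Omega^{j-1}(j-1)[j-1]$ satisfy this condition; note that $F_1=\reg$ and $F_n=\Omega^{n-1}(n-1)[n-1]$, matching the order $\sod{\cB_n,\dots,\cB_1}$ in which the dual is written. The computation reduces to
\[ \Hom^*(\reg(i-1),\Omega^{j-1}(j-1)[j-1]) = \Ho^{*+j-1}(\IP^{n-1},\Omega^{j-1}(j-i)), \]
and I would apply Bott's formula for $\Ho^q(\IP^{n-1},\Omega^p(k))$ with $p=j-1$, $k=j-i$. Of the three regimes in which Bott's formula is nonzero, the $q=0$ case requires $k>p$, i.e.\ $i<1$, and the $q=n-1$ case requires $k<p-n+1$, i.e.\ $i>n$; both are excluded by $1\le i\le n$. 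Only the middle regime $q=p$, $k=0$ survives, which forces $i=j$ and gives $\Ho^{j-1}(\IP^{n-1},\Omega^{j-1})=\IC$. After the shift by $j-1$ this lies in degree $0$ of the Hom-complex, so the biorthogonality holds as required.

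The main obstacle is essentially bookkeeping with Bott's formula over the range $1\le i,j\le n$, which is straightforward. Should one wish to avoid citing Bott, an alternative is to resolve $\Omega^{j-1}(j-1)$ by the wedge powers of the Euler sequence $0\to \Omega^1(1)\to \reg^{\oplus n}\to \reg(1)\to 0$, reducing everything to cohomology of line bundles on $\IP^{n-1}$; this bypasses the formula but lengthens the calculation. Either way, the argument is short once \autoref{lem:dual-sod}(ii) is in hand.
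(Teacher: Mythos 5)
Your proof is correct and takes essentially the same route as the paper's, which likewise quotes Beilinson for fullness and reduces the duality statement to a direct cohomology computation, citing Bridgeland--Stern (Lem.~2.5) for the fact that the biorthogonality relations $\Hom^*(E_i,F_j)=\delta_{ij}\IC[0]$ characterise the dual collection. Your Bott-formula bookkeeping checks out, and the uniqueness assertion you invoke in the final step is exactly the content of that cited lemma, so the argument is complete.
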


\begin{proof}
The fact that both sequences are indeed full goes back to Beilinson, see \cite[Sect.~8.3]{Huy} for an account. The fact that they are dual is classical and follows by a direct computation, for instance using \cite[Lem.~2.5]{Bri-Ste}.
\end{proof}

The following relative version is the example of dual semi-orthogonal decompositions which we will need throughout the text.

\begin{lemma}\label{lem:Pdual}
Let $\nu\colon Z\to S$ be a $\IP^{n-1}$-bundle. There is the 
semi-orthogonal decomposition
\begin{align*}
  \Db(Z) &= \Sod{ \nu^*\Db(S), \nu^*\Db(S) \otimes \reg_\nu(1), \dots, \nu^*\Db(S)\otimes \reg_\nu(n-1) }
\shortintertext{whose dual decomposition is given by}
  \Db(Z) &=
\Sod{ \nu^*\Db(S)\otimes \Omega_\nu^{n-1}(n-1), \dots, \nu^*\Db(S)\otimes \Omega_\nu^{1}(1), \nu^*\Db(S) }
\, .
\end{align*}
\end{lemma}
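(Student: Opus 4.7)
The plan is to recognise both collections as instances of Orlov's projective-bundle SOD and then verify their mutual duality via a direct $\Hom$ computation based on the relative Bott formula.

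Both the $\reg_\nu(i)$-collection and the $\Omega^j_\nu(j)$-collection yield SODs of $\Db(Z)$ by the Beilinson--Orlov projective bundle theorem: fibrewise they reduce to the two decompositions of \autoref{lem:easypdual}, and the relative statements follow since $\reg_\nu(1)$ and $\Omega^k_\nu$ restrict fibrewise to the corresponding bundles on $\IP^{n-1}$. In particular, each piece $\nu^*\Db(S) \otimes \reg_\nu(i)$ and $\nu^*\Db(S) \otimes \Omega^j_\nu(j)$ is admissible, being equivalent to $\Db(S)$ via $F \mapsto \nu^*F \otimes (\_)$.

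To show that the second SOD is left-dual to the first, I would check the orthogonality
\[
  \Hom^*_Z\bigl(\nu^*F \otimes \reg_\nu(i),\, \nu^*G \otimes \Omega^j_\nu(j)\bigr) = 0
\]
for all $F, G \in \Db(S)$ and $0 \le i, j \le n-1$ with $i \neq j$. Adjunction for $\nu^* \dashv \nu_*$ and the projection formula identify this with $\Hom^*_S\bigl(F,\, G \otimes \nu_*\Omega^j_\nu(j-i)\bigr)$. The relative Bott formula asserts that $\nu_*\Omega^p_\nu(k) = 0$ unless $k = 0$, $k > p$, or $k \le p-n$; with $p = j$ and $k = j-i$ in the given range, none of the nonvanishing cases occurs since $0 < |k| \le n-1$ and $0 \le j \le n-1$ force $k$ to lie in $[p-n+1,p]\setminus\{0\}$.

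This orthogonality places each $\nu^*\Db(S) \otimes \Omega^{i-1}_\nu(i-1)$ inside the corresponding dual piece $\cB_i = \sod{\cA_1, \dots, \widehat{\cA_i}, \dots, \cA_n}\orth$ furnished by \autoref{lem:dual-sod}. Since the $\Omega$-collection itself forms an SOD of $\Db(Z)$ with pieces equivalent to $\Db(S)$, these inclusions must be equalities by uniqueness of SOD-decomposition: any object of $\cB_i$ decomposes via the $\Omega$-SOD into summands lying in the various $\cB_j$, and semi-orthogonality of the $\cB_j$ forces all but the $\cB_i$-summand to vanish. I expect this last uniqueness step---upgrading mutual inclusion to equality---to be the main subtlety, rather than the Bott calculation itself, which is standard; it can alternatively be circumvented by exhibiting both SODs and their duality in one stroke via a relative Beilinson resolution of the diagonal in $Z \times_S Z$.
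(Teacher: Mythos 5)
Your proposal is correct and follows essentially the same route as the paper: the paper likewise obtains both decompositions from Orlov's projective-bundle theorem and deduces the duality from the fibrewise statement of \autoref{lem:easypdual}, which is exactly the Bott-formula computation you carry out in its relative form. The uniqueness step you flag as the main subtlety is the standard argument that the two filtrations coming from the nested SODs must coincide, so your write-up just makes explicit what the paper leaves implicit.
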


\begin{proof}
Part (i) is \cite[Thm.~2.6]{Orlov-projbund}. Part (ii) follows from \autoref{lem:easypdual}.
\end{proof}

The following consequence will be used in \autoref{sub:n-geq-m}.

\begin{corollary}\label{cor:sodequal}
If $m<n$, there is the equality of subcategories of $\Db(Z)$
\begin{align*}
       & \Sod{ \nu^*\Db(S)\otimes \reg_\nu(m-n),\dots, \nu^*\Db(S)\otimes \reg_\nu(-1) } \\
  =\;  & \Sod{ \nu^*\Db(S)\otimes \Omega_\nu^{n-1}(n-1), \dots, \nu^*\Db(S)\otimes \Omega_\nu^{m}(m) }  \, . 
\end{align*}
\end{corollary}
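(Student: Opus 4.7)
The strategy is to show that both subcategories appearing in the claimed equality coincide with the right orthogonal
\[
\cC \coloneqq \Sod{\nu^*\Db(S),\, \nu^*\Db(S)\otimes\reg_\nu(1),\, \dots,\, \nu^*\Db(S)\otimes \reg_\nu(m-1)}\orth
\]
inside $\Db(Z)$.

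For the right-hand side, I would apply \autoref{lem:dual-sod}(i) directly to the pair of dual semi-orthogonal decompositions of \autoref{lem:Pdual}. Writing $\cA_i \coloneqq \nu^*\Db(S)\otimes \reg_\nu(i-1)$ and $\cB_i \coloneqq \nu^*\Db(S)\otimes \Omega_\nu^{i-1}(i-1)$, the statement $\Sod{\cA_1,\dots,\cA_m}\orth = \Sod{\cB_n,\dots,\cB_{m+1}}$ is precisely that the right-hand side of the corollary equals $\cC$.

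For the left-hand side, I would twist the standard decomposition of \autoref{lem:Pdual} by the line bundle $\reg_\nu(m-n)$ to obtain the new semi-orthogonal decomposition
\[
\Db(Z) = \Sod{\nu^*\Db(S)\otimes \reg_\nu(m-n),\, \dots,\, \nu^*\Db(S)\otimes \reg_\nu(-1),\, \nu^*\Db(S),\, \dots,\, \nu^*\Db(S)\otimes \reg_\nu(m-1)}.
\]
The last $m$ components form exactly $\sod{\cA_1,\dots,\cA_m}$, so grouping the decomposition as a two-piece SOD and using the general fact that in $\Db(Z) = \sod{\cK_1,\cK_2}$ one has $\cK_1 = \cK_2\orth$ (under the paper's convention for the orthogonal), the first $n-m$ components are identified with $\sod{\cA_1,\dots,\cA_m}\orth = \cC$.

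There is no real obstacle here beyond careful bookkeeping: one has to match the paper's convention on the side of the orthogonal (so that the left factor of a two-term SOD is the right orthogonal of the right factor), and one must ensure that the twist by $\reg_\nu(m-n)$ preserves the semi-orthogonal decomposition property, which is immediate because tensoring with an invertible sheaf is an autoequivalence of $\Db(Z)$. Combining the two identifications yields the desired equality.
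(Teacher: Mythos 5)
Your proposal is correct and follows essentially the same route as the paper: the paper's proof is exactly the chain identifying both sides with $\Sod{\nu^*\Db(S),\dots,\nu^*\Db(S)\otimes\reg_\nu(m-1)}\orth$, using \autoref{lem:dual-sod}(i) applied to the dual decompositions of \autoref{lem:Pdual} for the $\Omega$-side and the twist of the standard decomposition for the $\reg_\nu$-side. You merely make explicit the justification of the second equality (twisting by $\reg_\nu(m-n)$ and reading off the left block of a two-term semi-orthogonal decomposition as the right orthogonal of the right block), which the paper leaves implicit.
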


\begin{proof}
Applying \autoref{lem:dual-sod}(i) to the dual decompositions of \autoref{lem:Pdual} gives the equalities
\begin{align*}
      & \Sod{\nu^*\Db(S)\otimes \Omega_\nu^{n-1}(n-1), \dots, \nu^*\Db(S)\otimes \Omega_\nu^{m}(m)} \\
  =\; & \Sod{\nu^*\Db(S),\dots,\nu^*\Db(S)\otimes\reg_\nu(m-1)}^\bot \\
  =\; & \Sod{\nu^*\Db(S)\otimes\reg_\nu(m-n),\dots,\nu^*\Db(S)\otimes\reg_\nu(-1)}\,.\qedhere
\end{align*}
\end{proof}

\subsection{Linear functors and linear semi-orthogonal decompositions} \label{sec:relative}
Let $\cT$ be a tensor triangulated category, i.e.\ a triangulated category with a compatible symmetric monoidal structure. Moreover, let $\cX$ be a \emph{triangulated module category over $\cT$}, i.e.\ there is an exact functor $\pi^*\colon \cT\to \cX$ and a tensor product
  $\otimes \colon \cT \times \cX \to \cX$,
that is an assignment $\pi^*(A)\otimes E$ functorial in $A\in\cT$ and $E\in\cX$.

We will take $\cT=\DperfG(Y)$ for some variety $Y$ with an action by a finite group $G$. Note that $\DperfG(Y)$ has a (derived) tensor product, and it is compatible with $G$-linearisations.

For $\cX$, we have several cases in mind.
If $X$ is a smooth $G$-variety $X$ with a $G$-equivariant morphism $\pi\colon X\to Y$, then we take $\cX=\Db_G(X)=\DperfG(X)$; this is a tensor triangulated category itself and $\pi^*$ preserves these structures.

If $\Lambda$ is a finitely generated $\reg_Y$-algebra, then let $\cX=\Db(\Lambda)$ be the bounded derived category of finitely generated right $\Lambda$-modules with $\pi^*(A)=A\otimes_{\reg_Y} \Lambda$ and
  $\pi^*(A)\otimes E = A\otimes_{\reg_Y}\Lambda\otimes_\Lambda E = A\otimes_{\reg_Y}E\in\cX$.
Note that if $\Lambda$ is not commutative, then $\cX$ is not a tensor category.

We say that a full triangulated subcategory $\cA\subset \cX$ is 
\textit{$\cT$-linear} (since in our cases we have $\cT=\Dperf(Y)$ we will also speak of \textit{Y-linearity}) if
\[
   \pi^*(A)\otimes E\in \cA \qquad \text{for  all $A\in \cT$ and $E\in \cA$.} 
\]
We say that a semi-orthogonal decomposition $\cX = \sod{\cA_1,\dots,\cA_n}$ is \textit{$\cT$-linear}, if all the $\cA_i$ are $\cT$-linear subcategories.

We call a class of objects $\cS\subset \cX$ (left/right) \textit{spanning over $\cT$} if $\pi^*\cT\otimes \cS$ is a (left/right) spanning class of $\cX$ in the non-relative sense.
Recall that a subset $\cC \subset \cX$ is \emph{generating} if $\cX=\Sod{\cC}$ is the smallest triangulated category closed under direct summands containing $\cC$. The subset $\cC \subset \cX$ is called \emph{generating over $\cT$} if $\cC \otimes \pi^*\cT$ generates $\Db(\cX)$.

Let $\cX'$ be a further tensor triangulated category together with a tensor triangulated functor $\pi'^*\colon \cT\to \cX'$. We say that an exact functor $\Phi\colon \cX\to \cX'$ is \textit{$\cT$-linear} if there are functorial isomorphisms 
\[
   \Phi(\pi^*(A)\otimes E) \cong \pi'^*(A)\otimes \Phi(E)
   \qquad \text{for all $A\in \cT$ and $E\in \cX$.} 
\]
The verification of the following lemma is straight-forward.

\begin{lemma} \makeatletter\hyper@anchor{\@currentHref}\makeatother \label{lem:relativespanninggeneral}
\begin{enumerate}
\item If $\Phi\colon \cX\to \cX'$ is $\cT$-linear, then $\Phi(\cX)$ is a
      $\cT$-linear subcategory of $\cX'$.
\item Let $\cA\subset \Db(\cY)$ be a $\cT$-linear (left/right) admissible subcategory.
      Then the essential image of $\cA$ is $\Db(\cY)$ if and only if $\cA$ contains a
      (left/right) spanning class over $\cT$. 
\end{enumerate}
\end{lemma}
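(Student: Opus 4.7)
The plan is to treat both parts as a direct unwinding of definitions; the statement is essentially bookkeeping about how $\cT$-linearity propagates through images and spanning classes, and I do not expect a genuinely difficult step.

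For part (i), I would take the essential image $\Phi(\cX) \subset \cX'$ and simply check closure under the $\cT$-action. Given an object $F' \cong \Phi(E)$ with $E \in \cX$ and any $A \in \cT$, the functorial isomorphism expressing the $\cT$-linearity of $\Phi$ yields
\[
   \pi'^*(A) \otimes F' \;\cong\; \pi'^*(A) \otimes \Phi(E) \;\cong\; \Phi\bigl(\pi^*(A) \otimes E\bigr),
\]
so $\pi'^*(A) \otimes F'$ again lies in $\Phi(\cX)$. Since $\Phi$ is exact, its essential image is automatically a triangulated subcategory, hence $\Phi(\cX)$ is a $\cT$-linear triangulated subcategory of $\cX'$.

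For part (ii), the ``only if'' direction is immediate from the inclusion $\cA \subset \Db(\cY)$. For the ``if'' direction, I plan to match the side of admissibility with the corresponding spanning property: say $\cA$ is right admissible and contains a spanning class $\cS$ over $\cT$ of the type that witnesses vanishing of right orthogonals. Because $\cA$ is $\cT$-linear and $\cS \subset \cA$, the enlarged set $\pi^*\cT \otimes \cS$ is still contained in $\cA$, and by hypothesis it is a spanning class of $\Db(\cY)$ in the non-relative sense. Right admissibility gives the semi-orthogonal decomposition $\Db(\cY) = \sod{\cA\orth, \cA}$; for any $T \in \cA\orth$ the vanishing $\Hom(A, T[k]) = 0$ for all $A \in \pi^*\cT \otimes \cS$ and all $k \in \IZ$, together with the spanning property, forces $T = 0$. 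Hence $\cA\orth = 0$ and $\cA = \Db(\cY)$. The left admissible case is entirely symmetric, using the decomposition $\Db(\cY) = \sod{\cA, \lorth\cA}$ and the dual spanning property.

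The only mildly subtle point is the passage from ``spanning over $\cT$'' to ``spanning'' in the usual sense once $\cT$ has been absorbed into $\cA$ via $\cT$-linearity, but this is precisely the content of the definition given just above the lemma. Accordingly, I do not anticipate any real obstacle beyond keeping the adjectives ``left'' and ``right'' straight.
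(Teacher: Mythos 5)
Your proof is correct and is precisely the straightforward verification the paper intends; the paper in fact omits the proof entirely, remarking only that the verification is straightforward. One minor imprecision: the essential image of an exact functor need not be closed under cones unless the functor is full (so one should either assume fullness or read $\Phi(\cX)$ as the triangulated closure of the image), but this does not affect the substance of (i), whose real content is the closure under the $\cT$-action, which you verify correctly, and in (ii) you correctly match the side of admissibility with the corresponding one-sided spanning property.
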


For the following, we consider the case that $\cX=\Db(X)$ for some smooth variety $X$ together with a proper morphism $\pi\colon X\to Y$.

\begin{lemma}\label{lem:relativesod}
Let $\cA, \cB\subset \Db(\cX)$ be $Y$-linear full subcategories. Then 
\[
   \cA\subset \cB^\perp  \iff  \pi_*\sHom(B,A)=0 \quad \forall\, A\in \cA, B\in \cB
\,. \]
\end{lemma}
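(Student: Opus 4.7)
The plan is to prove the two implications separately. The $(\Leftarrow)$ direction is immediate from the factorisation of global sections through $\pi_*$, while the $(\Rightarrow)$ direction uses $Y$-linearity of $\cA$ (or, symmetrically, of $\cB$) to upgrade the given Hom-vanishing from a single object to a whole family, from which we conclude that the pushforward sheaf itself vanishes.

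For $(\Leftarrow)$, for any $A\in\cA$, $B\in\cB$ and $k\in\IZ$, the canonical isomorphisms
\[
  R\Hom_X(B,A[k]) \cong R\Gamma(X,\sHom(B,A))[k] \cong R\Gamma(Y,\pi_*\sHom(B,A))[k]
\]
show that $\pi_*\sHom(B,A)=0$ forces $\Hom_X(B,A[k])=0$ for every $k$, i.e.\ $A\in\cB^\perp$.

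For $(\Rightarrow)$, fix $A\in\cA$, $B\in\cB$ and set $E\coloneqq \pi_*\sHom(B,A)\in\Db(Y)$; the goal is $E=0$. For any $F\in\Dperf(Y)$, the $Y$-linearity of $\cA$ yields $\pi^*F\otimes A\in \cA$, so by hypothesis $\Hom_X(B,\pi^*F\otimes A[k])=0$ for every $k$. Since $F$ (hence $\pi^*F$) is perfect, the standard compatibilities
\[
  \sHom(B,\pi^*F\otimes A)\cong \pi^*F\otimes \sHom(B,A),
  \qquad
  \pi_*\bigl(\pi^*F\otimes \sHom(B,A)\bigr)\cong F\otimes E
\]
combine to give $R\Hom_X(B,\pi^*F\otimes A)\cong R\Gamma(Y,F\otimes E)$. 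Hence $R\Gamma(Y,F\otimes E)=0$ for every $F\in\Dperf(Y)$.

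Specialising to $F=\reg_Y(n)$ for an ample line bundle $\reg_Y(1)$ on the quasi-projective $Y$ and $n\gg 0$, this yields $R\Gamma(Y,E(n))=0$ for all $n\gg 0$, which forces $E=0$ by the usual ample-twist / hypercohomology spectral sequence argument applied to the cohomology sheaves $\kh^i(E)$. The only mild obstacle is precisely this last step: passing from vanishing of $R\Gamma$ against a sufficiently rich class of perfect complexes to the vanishing of $E$ itself, which is where quasi-projectivity of $Y$ (implicit in the setup) enters.
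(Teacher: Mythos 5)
Your proof is correct, and the $(\Leftarrow)$ direction is identical to the paper's. In the $(\Rightarrow)$ direction you take a mildly different route: the paper argues by contraposition, using the $Y$-linearity of $\cB$ to twist $B$ by $\pi^*E^\vee$ and invoking the abstract fact that $\Dperf(Y)$ spans $\D(\QCoh(Y))$, so that $\pi_*\sHom(B,A)\neq 0$ immediately produces a nonzero $\Hom^*(B\otimes\pi^*E,A)$. You instead twist $A$ (using the $Y$-linearity of $\cA$), reduce to $R\Gamma(Y,F\otimes E)=0$ for all perfect $F$, and then finish by hand with ample twists. Your final step does go through: taking the lowest nonvanishing cohomology sheaf $\kh^{q_0}(E)$, the term $E_2^{0,q_0}=H^0(Y,\kh^{q_0}(E)(n))$ is nonzero for $n\gg0$ by global generation and receives and emits only zero differentials, so it survives to $H^{q_0}(Y,E(n))$ — note this needs no Serre vanishing, which is good since $Y$ is only quasi-projective. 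The trade-off is that the paper's spanning argument is shorter and works for any quasi-compact quasi-separated $Y$, whereas yours is more elementary and explicit but genuinely uses the existence of an ample line bundle on $Y$ (harmless in the paper's setting, as you note).
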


\begin{proof}
The direction $\Longleftarrow$ follows immediately from
 $\Hom^*(B,A) \cong \Gamma(Y, \pi_*\sHom(B,A))$; recall that all our functors are the derived versions.

Conversely, assume that there are $A\in \cA$ and $B\in \cB$ such that $\pi_*\sHom(B,A)\neq 0$. Since $\Dperf(Y)$ spans $\D(\QCoh(Y))$, this implies that there is an $E\in \Dperf(Y)$ such that 
\begin{align*}
  0 \neq  \Hom^*(E, \pi_*\sHom(B,A))\cong \Gamma(Y, \pi_*\sHom(B,A)\otimes E^\vee)
  &\cong  \Gamma(Y, \pi_*(\sHom(B,A)\otimes \pi^*E^\vee)) \\
  &\cong  \Gamma(Y, \pi_*\sHom(B\otimes \pi^*E, A)) \\
  &\cong  \Hom^*(B\otimes \pi^*E,A) \,.
\end{align*}
By the $Y$-linearity, we have $B\otimes \pi^*E\in \cB$ and hence $\cA\not\subset \cB^\perp$. 
\end{proof}

\subsection{Relative Fourier--Mukai transforms}
Let $\pi\colon X \to Y$ and $\pi'\colon X'\to Y$ be proper morphisms of varieties with $X$ and $X'$ being smooth. We denote the closed embedding of the fibre product into the product by $i\colon X\times_YX' \into X\times X'$.
 
We call $\Phi\colon \Db(X)\to \Db(X')$ a \emph{relative FM transform} if $\Phi = \FM_{\iota_*\cP}$ for some object $\cP\in\Db(X\times_YX)$. It is a standard computation that a relative FM transform is linear over $Y$, with respect to the pullbacks $\pi^*$ and $\pi'^*$. Furthermore, we have $\Phi\cong p_*(q^*(\_)\otimes \cP)$ where $p$ and $q$ are the projections of the fibre diagram
\begin{equation}
\begin{aligned}\label{diag:fibre} \xymatrix{
 & X\times_Y X' \ar[dr]^{p} \ar[dl]_q & \\
X \ar[dr]_{\pi}&   & X'\,. \ar[dl]^{\pi'} \\
      & Y &
}
\end{aligned}
\end{equation}

The right adjoint of $\Phi$ is given by $\Phi^R \coloneqq q_*(p^!(\_)\otimes \cP^\vee)\colon \Db(X')\to \Db(X)$. We also have the following slightly stronger statement which one could call \textit{relative adjointness}.

\begin{lemma}
For $E\in \Db(X)$ and $F\in \Db(X')$, there are functorial isomorphisms 
\[
   \pi'_*\sHom(\Phi(E), F)\cong \pi_*\sHom(E,\Phi^R(F))
\,. \]
\end{lemma}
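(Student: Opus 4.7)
The statement is a sheafified version of the standard adjunction $\Hom(\Phi(E),F)\cong\Hom(E,\Phi^R(F))$, so the plan is to produce a chain of canonical isomorphisms that refines the usual proof of this adjunction, keeping track of everything over $Y$ rather than taking global sections. The ingredients needed are all standard: relative Grothendieck duality, tensor--Hom adjunction together with dualizability of the kernel, the sheafy version of the $(q^*,q_*)$ adjunction, and the commutativity $\pi'\circ p=\pi\circ q$ of the fibre square \eqref{diag:fibre}.

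More concretely, I would start from the definition $\Phi(E)=p_*(q^*E\otimes\cP)$ and compute
\[
  \pi'_*\sHom\bigl(p_*(q^*E\otimes\cP),F\bigr).
\]
First, since $\pi$ is proper, so is its base change $p$, so relative Grothendieck duality gives
$\sHom(p_*(-),F)\cong p_*\sHom(-,p^!F)$. Applying this and using $\pi'\circ p=\pi\circ q$ turns the expression into $\pi_*q_*\sHom(q^*E\otimes\cP,p^!F)$. Next, tensor--Hom adjunction rewrites the inner sheaf-Hom as $\sHom(q^*E,\sHom(\cP,p^!F))$, and dualizability of $\cP$ identifies $\sHom(\cP,p^!F)$ with $p^!F\otimes\cP^\vee$. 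Finally, the sheafy adjunction $q_*\sHom(q^*E,-)\cong\sHom(E,q_*-)$ yields $\pi_*\sHom(E,q_*(p^!F\otimes\cP^\vee))=\pi_*\sHom(E,\Phi^R(F))$, which is the desired right hand side. Functoriality is automatic since every isomorphism in the chain is natural in $E$ and $F$.

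The main obstacle I would expect is the justification of the intermediate steps on the fibre product $X\times_Y X'$, which in general is singular: one must check that $p^!$ is well behaved and that $\cP$ is sufficiently dualizable for the tensor--Hom step. The cleanest way around this is to transport everything to the smooth ambient product $X\times X'$ via the closed immersion $i\colon X\times_Y X'\hookrightarrow X\times X'$, apply the classical versions of duality and adjunction to $\FM_{i_*\cP}$ there, and then use the projection formula for $i$ to recognize the outputs as $\Phi(E)$ and $\Phi^R(F)$. Since all functors are derived and everything is built out of standard six-functor formalism adjunctions, once one commits to this route the verification reduces to assembling well-known identities in the correct order; see \cite{AL} for the kernel-level story that guarantees the cones and adjunctions behave as expected in the non-smooth setting of the fibre product.
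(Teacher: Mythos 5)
Your proposal is correct and follows essentially the same chain of isomorphisms as the paper: relative Grothendieck duality for $p$, the identity $\pi'\circ p=\pi\circ q$, the tensor--Hom/dualizability step to rewrite $\sHom(q^*E\otimes\cP,p^!F)$ as $\sHom(q^*E,p^!F\otimes\cP^\vee)$, and finally the sheafified $(q^*,q_*)$ adjunction via the projection formula. Your additional remarks on handling the possibly singular fibre product are a sensible precaution but do not change the argument, which is the one the paper gives.
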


\begin{proof}
 This follows by Grothendieck duality, commutativity of \eqref{diag:fibre}, and projection formula:
\begin{align*}
 \pi'_*\sHom(\Phi(E), F)\cong \pi'_*\sHom(p_*(q^*E\otimes \cP), F) &\cong \pi'_*p_*\sHom(q^*E\otimes \cP, p^!F)\\
&\cong \pi_*q_*\sHom(q^*E, p^!F\otimes \cP^\vee)\\
&\cong \pi_*\sHom(E, q_*(p^!F\otimes \cP^\vee))\\
&\cong \pi_*\sHom(E,\Phi^R( F))\,.\qedhere
\end{align*}
\end{proof}

For $E,F\in \Db(X)$, using the isomorphism of the previous lemma, we can construct a natural morphism $\widetilde \Phi\colon \pi_*\sHom(E,F)\to \pi'_*\sHom(\Phi(E),\Phi(F))$ as the composition
\begin{align}\label{eq:wPhi}
  \widetilde \Phi = \widetilde \Phi(E,F) \colon
  \pi_*\sHom(E,F)\to\pi'_*\sHom(E,\Phi^R\Phi(F))\cong \pi'_*\sHom(\Phi(E),\Phi(F))
\end{align}
where the first morphism is induced by the unit of adjunction $F\to \Phi^R\Phi(F)$. Note that taking global sections gives back the functor $\Phi$ on morphisms, i.e.\ $\Phi=\Gamma(Y,\widetilde \Phi)$ as maps
\[
  \Hom^*(E,F) \cong \Gamma(Y,\pi_*\sHom(E,F)) \to
  \Gamma(Y,\pi'_*\sHom(\Phi(E),\Phi(F))) \cong \Hom^*(\Phi(E),\Phi(F)) \,.
\]
More generally, $\Phi$ induces functors 
for open subsets $U\subseteq Y$, 
\[
   \Phi_U \colon \Db(W) \to \Db(W'), \qquad\text{where }
   W = \pi^{-1}(U)\subseteq X \text{ and } W' = \pi'^{-1}(U)\subseteq X' ,
\]
given by restricting the FM kernel $\iota_*\cP$ to $W\times W'$ and we have $\Phi_U=\Gamma(U, \widetilde\Phi)$. From this we see that $\widetilde \Phi$ is compatible with composition which means that the following diagram, for $E,F,G\in \Db(X)$, commutes
\begin{equation}
\begin{aligned}\label{diag:relativecomposition}
\xymatrix{
  \pi_*\sHom(F,G) \otimes \pi_*\sHom(E,F) \ar[r]\ar[d]_{\widetilde \Phi\otimes \widetilde\Phi}
& \pi_*\sHom(E,G) \ar[d]^{\widetilde \Phi}  \\
  \pi'_*\sHom(\Phi(F),\Phi(G)) \otimes \pi'_*\sHom(\Phi(E),\Phi(F)) \ar[r]
& \pi'_*\sHom(\Phi(E),\Phi(G))  \,.
} 
\end{aligned}
\end{equation}

\subsection{Relative tilting bundles} \label{sub:relative-tilting}
Let $\pi\colon X \to Y$ be a proper morphism of varieties and let $X$ be smooth.
Later on, $X$ and $Y$ will have $G$-actions, and $\Db(X)$ will be replaced by $\Db_G(X)$.

We say that $V\in\Coh(X)$ is a \emph{relative tilting sheaf} if $\Lambda_V \coloneqq \Lambda \coloneqq \pi_*\sHom(V,V)$ is cohomologically concentrated in degree 0 and $V$ is a spanning class over $Y$. For a more general theory of relative tilting bundles, see \cite{BB-tilting}. Note that $\Lambda$ is a finitely generated $\reg_Y$-algebra. We denote the bounded derived category of coherent right modules over $\Lambda$ by $\Db(\Lambda)$. It is a triangulated module category over $\Dperf(Y)$ via $\pi^*A = A\otimes_{\reg_Y}\Lambda$, and $\Lambda$ is a relative generator.
In particular, for $A\in\Db(X)$ and $M\in\Db(\Lambda)$, the tensor product $A\otimes M$ is over the base $\reg_Y$. 

The functor $\pi_*\sHom(V,\_)\colon \Coh(X)\to \Coh(Y)$ factorises over $\Coh(\Lambda)$. Since it is left exact, we can consider its right-derived functor $\pi_*\sHom(V,\_) \colon \Db(X) \to \Db(\Lambda)$. This yields a relative tilting equivalence:

\begin{proposition} \label{prop:relative-tilting}
Let $V\in\Db(X)$ be a relative tilting sheaf over $Y$. Then $V$ generates $\Db(X)$ over $Y$, and the following functor defines a $Y$-linear exact equivalence:
\[
   t_V \coloneqq \pi_*\sHom(V,\_) \colon \Db(X) \isomor \Db(\Lambda)  \,.
\]
\end{proposition}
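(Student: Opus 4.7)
The plan is to execute the standard tilting argument relativized to the base $Y$. The natural candidate for a quasi-inverse is the derived tensor product
\[
   s_V \colon \Db(\Lambda) \to \Db(X), \qquad M \mapsto M \otimes_\Lambda V,
\]
with $V$ carrying its canonical left $\Lambda = \pi_*\sHom(V,V)$-module structure. A standard Hom-tensor computation shows $s_V \dashv t_V$, and both functors are $Y$-linear --- the former by associativity of the tensor product, the latter by the projection formula.

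I would first verify that $s_V$ is fully faithful, i.e.\ that the unit $\id \to t_V s_V$ is an isomorphism. On $\Lambda$ itself the unit reduces to the tilting identity $t_V s_V(\Lambda) = t_V(V) = \Lambda$, placed in cohomological degree zero by hypothesis. The full subcategory of $\Db(\Lambda)$ on which the unit is an isomorphism is triangulated and closed under summands, and since $\Lambda$ is a thick generator of $\Db(\Lambda)$, the unit is an isomorphism throughout. Consequently the essential image $\cC \coloneqq s_V(\Db(\Lambda)) \subset \Db(X)$ is a $Y$-linear right-admissible subcategory that contains $V \otimes \pi^*A = s_V(\Lambda \otimes_{\reg_Y} A)$ for every $A \in \Dperf(Y)$, i.e.\ the whole relative spanning class. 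By \autoref{lem:relativespanninggeneral}(ii) this forces $\cC = \Db(X)$; equivalently, the counit $s_V t_V \to \id$ is an isomorphism, so $t_V$ is fully faithful.

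The essential image of $t_V$ is a $Y$-linear subcategory of $\Db(\Lambda)$ containing $\Lambda = t_V(V)$, and $\Lambda$ on its own generates $\Db(\Lambda)$ as a thick subcategory. Hence $t_V$ is essentially surjective, and therefore an equivalence. The generation claim for $V$ over $Y$ in $\Db(X)$ is then inherited by transport across $t_V^{-1}$ from the evident generation of $\Lambda$ over itself. The main subtlety of the argument lies in the second paragraph: it is precisely here that the spanning hypothesis --- rather than a bare generation statement --- is needed in order to invoke \autoref{lem:relativespanninggeneral}(ii); the remaining steps are standard relative tilting theory.
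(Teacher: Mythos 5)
Your architecture (exhibit the left adjoint $s_V=\_\otimes^{\mathbf L}_\Lambda V$, check the unit on a generator, then identify the essential image via \autoref{lem:relativespanninggeneral}(ii)) is parallel to the paper's, but it breaks at both places where you invoke generation of $\Db(\Lambda)$. You twice assert that $\Lambda$ \emph{on its own} is a thick generator of $\Db(\Lambda)$ --- once to propagate the unit isomorphism from $\Lambda$ to all of $\Db(\Lambda)$, and once to get essential surjectivity of $t_V$. This is false unless $Y$ is affine: already for $\Lambda=\reg_Y$ with $Y=\IP^1$ (so $\Db(\Lambda)=\Db(\IP^1)$), the thick closure of $\Lambda$ is $\sod{\reg}\simeq\Db(\IC)$, which contains neither $\reg(1)$ nor any skyscraper. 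The whole point of the relative setup is that $\Lambda$ is only asserted to generate \emph{over} $\Dperf(Y)$, i.e.\ one must throw in all twists $A\otimes_{\reg_Y}\Lambda$ with $A\in\Dperf(Y)$. The first occurrence can be repaired --- the unit $\id\to t_Vs_V$ is a morphism of $Y$-linear functors, so an isomorphism on $\Lambda$ yields one on every $A\otimes_{\reg_Y}\Lambda$ and hence on their thick closure --- but then you are leaning on the relative generation statement for $\Db(\Lambda)$, which is itself a finiteness assertion (essentially finite global dimension of $\Lambda$ over $Y$, something one only knows \emph{after} the equivalence is proved). The paper deliberately uses only the weaker \emph{relative spanning} property of $\Lambda$, combined with admissibility, precisely to avoid this circularity.

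A second, related gap: you never address why $s_V=\_\otimes^{\mathbf L}_\Lambda V$ lands in $\Db(X)$ rather than $\D^-(X)$. Boundedness of this derived tensor product requires $V$ to have finite Tor-dimension over $\Lambda$, which is not among the hypotheses and is again only known a posteriori. The paper's proof avoids constructing $s_V$ altogether: it restricts $t_V$ to $\cV=\sod{V\otimes\pi^*\Dperf(Y)}\subset\Db(X)$, proves full faithfulness there by a direct Hom computation, identifies the image with $\Db(\Lambda)$ using the spanning property of $\Lambda$ over $\Dperf(Y)$, exhibits $t'^{-1}_Vt_V$ as an explicit right adjoint to the inclusion $\cV\hookrightarrow\Db(X)$, and only then concludes $\cV=\Db(X)$ from the relative spanning class $V$. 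To salvage your route you would need to either establish the finiteness statements above or systematically replace ``generator'' by ``spanning class plus admissibility'', at which point you have essentially reconstructed the paper's argument.
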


\begin{proof}
The $Y$-linearity of $t_V$ is due to the projection formula
\[
      t_V(\pi^*A\otimes E)
    = \pi_*(\pi^*A\otimes\sHom(V,E))
\cong A\otimes\pi_*\sHom(V,E)
    = A\otimes t_V(E)  \,.  
\]
Consider the restricted functor $t'_V\colon \cV \coloneqq \sod{V\otimes\pi^*\Dperf(Y)} \to \Db(\Lambda)$. We show that $t'_V$ is fully faithful, using the adjunctions $\pi^*\dashv\pi_*$ and $\_\otimes_{\reg_Y}\Lambda \dashv \mathsf{For}$ where $\mathsf{For}\colon \Db(\Lambda) \to \Db(Y)$ is scalar restriction, the projection formula, and the $Y$-linearity of $t'_V$:
\begin{align*}
       \Hom_{\reg_X}(\pi^*A\otimes V,\pi^*B\otimes V) 
&\cong \Hom_{\reg_X}(\pi^*A, \pi^*B\otimes\sHom(V,V)) \\
&\cong \Hom_{\reg_Y}(A, \pi_*(\pi^*B\otimes\sHom(V,V))) \\
&\cong \Hom_{\reg_Y}( A, B\otimes\Lambda) \\
&\cong \Hom_{\Lambda}(A\otimes \Lambda, B\otimes \Lambda) \\
&\cong \Hom_{\Lambda}(t'_V(\pi^*A\otimes V), t'_V(\pi^*B\otimes V))
\end{align*}
Since objects of type $ \pi^*A\otimes V$ generate $\cV$, this shows that $t'_V$ is fully faithful. We have $t'_V(V)=\Lambda$. Since $\Lambda$ is a relative generator, hence a relative spanning class, of $\Db(\Lambda)$,  we get an equivalence $\cV\cong\Db(\Lambda)$; see \autoref{lem:relativespanninggeneral}.

We now claim that the inclusion $\cV\hookrightarrow \Db(X)$ has a right adjoint, namely
\[
   t'^{-1}_Vt_V\colon \Db(X) \to \Db(\Lambda)\to \cV  \,.
\]
For this, take $A\in\Dperf(Y)$, $F\in \Db(X)$ and compute
\begin{align*}
       \Hom_{\reg_X}(\pi^*A\otimes V,F)
 \cong \Hom_{\reg_Y}(A, \pi_*\sHom(V,F))
&\cong \Hom_\Lambda(A\otimes \Lambda, t_V(F)) \\
&\cong \Hom_{\cV}(t'^{-1}_V(A\otimes \Lambda),t'^{-1}_Vt_V(F)) \\
&\cong \Hom_{\cV}(\pi^*A\otimes V,t'^{-1}_Vt_V(F))   
\end{align*}
where we use the projection formula, the adjunction $\Lambda\otimes_{\reg_Y}\_ \dashv \mathsf{For}$, the fact that $t'^{-1}_V$ is an equivalence, hence fully faithful, and the $Y$-linearity of $t'^{-1}_V$. 

Since the right-admissible $Y$-linear subcategory $\cV\subset \Db(X)$ contains the relative spanning class $V$, we get $\cV=\Db(X)$ by \autoref{lem:relativespanninggeneral}. This shows that $V$ is a relative generator and that $t_V=t_V'$ is an equivalence.   
\end{proof}

Let $\pi'\colon X'\to Y$ be a second proper morphism and let $\Phi\colon\Db(X)\isomor\Db(X')$ be a relative FM transform.

\begin{lemma} \label{lem:relative-tiling--commutation}
If 
\[
  \wPhi_\Lambda \coloneqq \wPhi(V,V) \colon
  \Lambda_V = \pi_*\sHom(V,V) \to \pi'_*\sHom(\Phi(V),\Phi(V)) = \Lambda_{\Phi(V)}
\]
is an isomorphism, then the following diagram of functors commutes:
\begin{equation}
\begin{aligned}\label{diag:t} \xymatrix@C=4em{
  \Db(X)  \ar[r]^{t_V} \ar[d]_\Phi & \Db(\Lambda_V) \ar[d]^{\_\otimes_{\Lambda_V}\Lambda_{\Phi(V)}} \\
  \Db(X') \ar[r]_{t_{\Phi(V)}}      & \Db(\Lambda_{\Phi(V)})
}
\end{aligned}
\end{equation}
\end{lemma}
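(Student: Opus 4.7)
The plan is to upgrade the natural transformation $\widetilde\Phi(V,\_)$ of \eqref{eq:wPhi} to a morphism between the two composite functors in the square, and then verify it is an isomorphism on a relative generator of $\Db(X)$.

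For every $E\in\Db(X)$, I would first show that $\widetilde\Phi(V,E)\colon t_V(E)\to t_{\Phi(V)}(\Phi(E))$ is right $\Lambda_V$-linear when the target is given its $\Lambda_V$-module structure through the assumed ring isomorphism $\widetilde\Phi_\Lambda$. This is exactly the commutativity of diagram \eqref{diag:relativecomposition} applied with $(E',F',G')=(V,V,E)$, since the right $\Lambda_V$-action on $t_V(E)$ arises as the pushdown of the composition pairing. Applying the same diagram with $(V,V,V)$ and observing that $\widetilde\Phi(\id_V)=\id_{\Phi(V)}$ — a consequence of functoriality and the fact that $\Phi=\Gamma(Y,\widetilde\Phi)$ on morphisms — shows in addition that $\widetilde\Phi_\Lambda$ is a unital ring homomorphism in the first place. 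By tensor-hom adjunction we then obtain a natural $\Lambda_{\Phi(V)}$-linear morphism
$$\alpha_E\colon t_V(E)\otimes_{\Lambda_V}\Lambda_{\Phi(V)}\to t_{\Phi(V)}(\Phi(E)),$$
whose naturality in $E$ follows from the functoriality of $\widetilde\Phi$ in its second argument.

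To conclude, I would check $\alpha_E$ is an isomorphism on the class $\{\pi^*A\otimes V\mid A\in\Dperf(Y)\}$, which generates $\Db(X)$ by \autoref{prop:relative-tilting}. By the projection formula, $t_V(\pi^*A\otimes V)\cong A\otimes_{\reg_Y}\Lambda_V$, so the source of $\alpha_{\pi^*A\otimes V}$ reduces to $A\otimes_{\reg_Y}\Lambda_{\Phi(V)}$. Since relative FM transforms are $Y$-linear, $\Phi(\pi^*A\otimes V)\cong \pi'^*A\otimes\Phi(V)$, so the target is also $A\otimes_{\reg_Y}\Lambda_{\Phi(V)}$, and under these identifications $\alpha_{\pi^*A\otimes V}$ is induced by $\widetilde\Phi_\Lambda$, hence an isomorphism by hypothesis. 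Both composite functors in the diagram are exact, so the full subcategory of $\Db(X)$ on which $\alpha_E$ is an isomorphism is triangulated and closed under summands; containing the above generating class, it equals $\Db(X)$.

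The main technical obstacle is the $Y$-linearity of $\widetilde\Phi$ in its second argument, implicitly invoked to identify $\widetilde\Phi(V,\pi^*A\otimes V)$ with $A\otimes\widetilde\Phi_\Lambda$. This follows from combining $Y$-linearity of $\Phi$, that of $\Phi^R$ (a consequence of relative adjointness for relative FM transforms), compatibility of the unit of $\Phi\dashv\Phi^R$ with these $Y$-linear structures, and the projection formula; each ingredient is routine, but tracking them through the definition \eqref{eq:wPhi} of $\widetilde\Phi$ is where some care is required.
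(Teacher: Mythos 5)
Your proposal is correct and follows essentially the same route as the paper: both use $\widetilde\Phi(V,\_)$ as the comparison map, verify it on the class $\pi^*A\otimes V$ via the projection formula and $Y$-linearity, extend to all of $\Db(X)$ by d\'evissage from the relative generator, and invoke the commutativity of \eqref{diag:relativecomposition} for $\Lambda$-linearity. The only (harmless) difference is the order of steps — you establish linearity first so as to have an honest natural transformation $\alpha_E$ of exact functors before checking it is an isomorphism, whereas the paper proves the isomorphism statement first and records linearity afterwards.
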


\begin{proof}
We first show that $\wPhi(V, E)\colon t_V(E)\to t_{\Phi(V)}(\Phi(E))$ is an isomorphism in $\Db(Y)$ for every $E\in \D(X)$. Assume first that there is an exact triangle $\pi^*A\otimes V\to E\to \pi^*B\otimes V$ for some $A,B\in \Dperf(Y)$ and consider the induced morphism of triangles
\[
\xymatrix{
  \pi_*\sHom(V,\pi^*A\otimes V) \ar[r] \ar[d]^{\wPhi(V,\pi^*A\otimes V)} & 
  \pi_*\sHom(V,E) \ar[r] \ar[d]^{\wPhi(V,E)} &  
  \pi_*\sHom(V,\pi^*B\otimes V) \ar[d]^{\wPhi(V,\pi^*B\otimes V)}
  \\
  \pi'_*\sHom(\Phi(V), \Phi(\pi^*A\otimes V)) \ar[r] &
  \pi'_*\sHom(\Phi(V),\Phi(E)) \ar[r] &
  \pi'_*\sHom(\Phi(V), \Phi(\pi^*B\otimes V))
\,.} \]
The outer vertical arrows are isomorphisms because they decompose as
\begin{align*}
           \pi_*\sHom(V,V\otimes\pi^*A)
  &\isomor \pi_*\sHom(V,V)\otimes A \xrightarrow[\wPhi_\Lambda]{\sim} 
           \pi'_*\sHom(\Phi(V),\Phi(V)) \otimes A \\
  &\isomor \pi'_*\sHom(\Phi(V),\Phi(V) \otimes \pi'^*A)
   \isomor \pi'_*\sHom(\Phi(V),\Phi(V \otimes \pi^* A))  \,.
\end{align*}
Therefore, the middle vertical arrow is an isomorphism as well. 
Since $V$ is a relative generator, we can show that $\wPhi(V,E)$ is an isomorphism for arbitrary $E\in \Db(X)$ by repeating the above argument. 

Using the commutativity of \eqref{diag:relativecomposition} with $E$ plugged in for $G$ and $V$ plugged in for all other arguments, we see that $\wPhi(V,E)$ induces an $\Lambda_{\Phi(V)}$-linear isomorphism $\pi_*\sHom(V,E)\otimes_{\Lambda_V}\Lambda_{\Phi(V)}\isomor \pi'_*\sHom(\Phi(V),\Phi(E))$.  
\end{proof}

\begin{lemma} \label{lem:relative-tilting-fully-faithful}
The functor $\Phi$ is fully faithful if and only if $\wPhi_\Lambda\colon \Lambda_V\to \Lambda_{\Phi(V)}$ is an isomorphism.
\end{lemma}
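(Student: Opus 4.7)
My plan is to prove both directions using the relative adjointness formula from the preceding lemma together with a devissage argument based on the relative generation by $V$.

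For the forward direction, assume $\Phi$ is fully faithful, so that the unit of adjunction $\eta\colon\id\to\Phi^R\Phi$ is an isomorphism. By the construction \eqref{eq:wPhi}, the morphism $\wPhi_\Lambda=\wPhi(V,V)$ is obtained by applying $\pi_*\sHom(V,\_)$ to $\eta_V\colon V\to\Phi^R\Phi(V)$ and composing with the relative adjointness isomorphism $\pi_*\sHom(V,\Phi^R\Phi(V))\cong\pi'_*\sHom(\Phi V,\Phi V)$ from the preceding lemma. Since $\eta_V$ is an isomorphism, $\pi_*\sHom(V,\eta_V)$ is one too, and hence so is $\wPhi_\Lambda$.

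For the converse, assume $\wPhi_\Lambda$ is an isomorphism. I will show that $\wPhi(E',E)\colon\pi_*\sHom(E',E)\to\pi'_*\sHom(\Phi E',\Phi E)$ is an isomorphism in $\Db(Y)$ for all $E,E'\in\Db(X)$; taking global sections over $Y$ and using $\Phi=\Gamma(Y,\wPhi)$ on morphisms then immediately yields fully faithfulness of $\Phi$. The first half of the argument is already done: the proof of \autoref{lem:relative-tiling--commutation} shows that $\wPhi_\Lambda$ being an isomorphism forces $\wPhi(V,E)$ to be an isomorphism for every $E\in\Db(X)$. To extend this, fix $E\in\Db(X)$ and consider the full subcategory
\[
   \cC_E \coloneqq \bigl\{E'\in\Db(X) : \wPhi(E',E)\text{ is an isomorphism}\bigr\} \subset \Db(X).
\]
This is a triangulated subcategory closed under direct summands by the five-lemma applied to the naturality triangles of $\wPhi(\_,E)$. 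Using $Y$-linearity of $\Phi$ (it is a relative FM transform), the projection formula for both $\pi$ and $\pi'$, and the compatibility of $\wPhi$ with tensoring by pulled-back perfect complexes, one checks that $\cC_E$ is $Y$-linear, i.e.~stable under $\pi^*A\otimes(\_)$ for $A\in\Dperf(Y)$. Since $V\in\cC_E$ and $V$ is a relative generator of $\Db(X)$ by \autoref{prop:relative-tilting}, \autoref{lem:relativespanninggeneral}(ii) gives $\cC_E=\Db(X)$, completing the argument.

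The main obstacle I expect is the verification that $\wPhi$ is natural in its first argument in the way required for $Y$-linearity of $\cC_E$. Concretely, one must check that under the canonical isomorphisms $\pi_*\sHom(V\otimes\pi^*A,E)\cong\pi_*\sHom(V,E)\otimes A^\vee$ and $\pi'_*\sHom(\Phi(V\otimes\pi^*A),\Phi E)\cong\pi'_*\sHom(\Phi V,\Phi E)\otimes A^\vee$, the map $\wPhi(V\otimes\pi^*A,E)$ corresponds to $\wPhi(V,E)\otimes\id_{A^\vee}$. This should follow from a routine but slightly tedious diagram chase using the unit-of-adjunction description of $\wPhi$ in \eqref{eq:wPhi} and the fact that relative FM transforms commute with $\pi^*A\otimes(\_)$ on the nose.
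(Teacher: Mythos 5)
Your forward direction is exactly the paper's. For the converse, however, you take a genuinely different route. The paper does not try to show that $\wPhi(E',E)$ is an isomorphism for all pairs $(E',E)$; instead it invokes \autoref{lem:relative-tiling--commutation} to get the commutative square relating $\Phi$, the tilting functors $t_V$ and $t_{\Phi(V)}$, and the base-change $\_\otimes_{\Lambda_V}\Lambda_{\Phi(V)}$, and then concludes by two-out-of-three: $t_V$ is an equivalence by \autoref{prop:relative-tilting}, $t_{\Phi(V)}\colon\sod{\Phi(V)}\to\Db(\Lambda_{\Phi(V)})$ is an equivalence, and the base-change functor is an equivalence because $\wPhi_\Lambda$ is an isomorphism of algebras; hence $\Phi\colon\Db(X)\to\sod{\Phi(V)}$ is an equivalence and in particular fully faithful into $\Db(X')$. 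This only ever requires the d\'evissage in the \emph{second} argument of $\wPhi$, which is already carried out in \autoref{lem:relative-tiling--commutation}. Your argument instead performs an additional d\'evissage in the \emph{first} argument, which forces you to verify that $\wPhi(\_,E)$ is a natural transformation of contravariant exact functors and, more delicately, that it is compatible with $\pi^*A\otimes(\_)$ in the first slot -- precisely the "tedious diagram chase" you flag. That verification does go through (it follows from the $Y$-linearity of relative FM transforms and the naturality of the Grothendieck-duality and projection-formula isomorphisms), so your proof is correct, and it even yields the stronger statement that $\wPhi(E',E)$ is always an isomorphism; but the paper's route avoids this extra bookkeeping entirely by reusing the tilting equivalences. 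Two small points: the conclusion $\cC_E=\Db(X)$ should be drawn from the fact that $V$ is a relative \emph{generator} (so $\sod{V\otimes\pi^*\Dperf(Y)}=\Db(X)$ and $\cC_E$ is a thick triangulated subcategory containing $V\otimes\pi^*\Dperf(Y)$), not from \autoref{lem:relativespanninggeneral}(ii), which concerns admissible subcategories containing a spanning class -- $\cC_E$ is not known to be admissible; and note that your argument as written only needs $E'$ to range over a generating set while $E$ is arbitrary, so the first-argument d\'evissage could in principle be avoided by symmetrizing, but then one is essentially back to the paper's argument.
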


\begin{proof}
If $\Phi$ is fully faithful, the unit $\id\to \Phi^R\Phi$ is an isomorphism. Hence, $\wPhi_\Lambda$ is an isomorphism; see \eqref{eq:wPhi}.

Conversely, let $\wPhi_\Lambda$ be an equivalence. By \autoref{lem:relative-tiling--commutation}, we get a commutative diagram
\[ \xymatrix@C=4em{
   \Db(X)        \ar[r]^{t_V} \ar[d]_\Phi  & \Db(\Lambda_V) \ar[d]^{\_\otimes_{\Lambda_V}\Lambda_{\Phi(V)}} \\
   \sod{\Phi(V)} \ar[r]_{t_{\Phi(V)}}      & \Db(\Lambda_{\Phi(V)})
\,. } \]
In this diagram, the horizontal functors are tilting equivalences. The right-hand vertical functor is an equivalence, too, by assumption on $\wPhi_\Lambda$. Hence, $\Phi\colon \Db(X) \to \sod{\Phi(V)}$ is an equivalence, which implies that $\Phi\colon \Db(X) \to \Db(X')$ is fully faithful.
\end{proof}

\begin{lemma} \label{lem:relative-FM}
Let $V\in\Db(X)$ be a relative tilting sheaf, $\Phi\colon\Db(X)\isomor\Db(X)$ a relative FM autoequivalence, and $\nu\colon V\isomor \Phi(V)$ an isomorphism such that
\[
    \wPhi_\Lambda = \nu\circ\_\circ\nu^{-1} \colon
    \pi_*\sHom(V,V) \to \pi_*\sHom(\Phi(V),\Phi(V))  \,,
\]
i.e.\ $\Phi_U(\phi )\circ\nu = \nu\circ\phi$ for all open subsets $U\subset Y$ and $\phi\in\Lambda_V(U)$.
Then there exists an isomorphism of functors $\id\isomor \Phi$ restricting to $\nu$.
\end{lemma}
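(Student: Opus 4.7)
The plan is to descend the problem to the algebra side via the relative tilting equivalence of \autoref{prop:relative-tilting}. Set $\Lambda \coloneqq \Lambda_V$, $\Lambda' \coloneqq \Lambda_{\Phi(V)}$ and $\tau \coloneqq \wPhi_\Lambda\colon \Lambda\isomor \Lambda'$, so the hypothesis reads $\nu\circ\phi = \tau(\phi)\circ\nu$ for local sections $\phi$ of $\Lambda$. Since $\tau$ is an isomorphism, \autoref{lem:relative-tiling--commutation} supplies a natural isomorphism
\[
  \beta\colon T_\tau\circ t_V\isomor t_{\Phi(V)}\circ\Phi,\qquad \beta_E(f\otimes y) = \Phi(f)\circ y,
\]
of functors $\Db(X)\to\Db(\Lambda')$, where $T_\tau = \_\otimes_\Lambda \Lambda'$ and $\Lambda'$ is regarded as a $(\Lambda,\Lambda')$-bimodule with left $\Lambda$-action via $\tau$.

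The heart of the argument is to construct a second natural isomorphism $\alpha\colon T_\tau\circ t_V \isomor t_{\Phi(V)}$ by the formula
\[
  \alpha_E\colon \pi_*\sHom(V,E)\otimes_\Lambda \Lambda' \to \pi_*\sHom(\Phi(V),E),\qquad f\otimes y\mapsto f\circ\nu^{-1}\circ y.
\]
The one subtle point, which I expect to be the main obstacle, is well-definedness across the tensor: the relation $(f\circ\phi)\otimes y = f\otimes(\tau(\phi)y)$ must yield the same image, which amounts to $f\circ\phi\circ\nu^{-1}\circ y = f\circ\nu^{-1}\circ\tau(\phi)\circ y$, and this is exactly the hypothesis. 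Right $\Lambda'$-linearity and naturality in $E$ are immediate from the formula. That $\alpha_E$ is an isomorphism is first checked on $E=V$, where $\alpha_V\colon \Lambda\otimes_\Lambda \Lambda' = \Lambda' \to \pi_*\sHom(\Phi(V),V)$ is left multiplication by $\nu^{-1}$, and then extended to all of $\Db(X)$ via $Y$-linearity (\autoref{lem:relativespanninggeneral}), using that $V$ is a relative generator and both sides are exact in $E$.

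Since $t_{\Phi(V)}$ is an equivalence, the composite natural isomorphism $\beta\circ\alpha^{-1}\colon t_{\Phi(V)}\isomor t_{\Phi(V)}\circ\Phi$ corresponds to a unique natural isomorphism $\eta\colon \id\isomor \Phi$ with $t_{\Phi(V)}(\eta) = \beta\circ\alpha^{-1}$. To confirm $\eta_V = \nu$, I would compute for $g\in \pi_*\sHom(\Phi(V),V)$ that $\alpha_V^{-1}(g) = \id_V\otimes(\nu\circ g)$, whence $\beta_V(\alpha_V^{-1}(g)) = \Phi(\id_V)\circ(\nu\circ g) = \nu\circ g = t_{\Phi(V)}(\nu)(g)$; faithfulness of $t_{\Phi(V)}$ then forces $\eta_V = \nu$. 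Apart from the well-definedness of $\alpha_E$, which directly encodes the hypothesis, the rest is formal manipulation within the tilting framework.
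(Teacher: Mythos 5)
Your proposal is correct and follows essentially the same route as the paper: the paper also builds a natural isomorphism between $t_{\Phi(V)}$ and $t_V\otimes_{\Lambda_V}\Lambda_{\Phi(V)}$ via $f\mapsto f\nu\otimes 1$ (your $\alpha$ is exactly its inverse), with $\Lambda_{\Phi(V)}$-linearity/well-definedness encoding the hypothesis, and then concludes by comparison with the square of \autoref{lem:relative-tiling--commutation}. Your explicit verification that the resulting functor isomorphism restricts to $\nu$ is a welcome detail the paper leaves implicit.
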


\begin{proof}
We claim that, under our assumptions, the following diagram of functors commutes 
\begin{equation}
\begin{aligned}\label{diag:id} \xymatrix@C=4em{
\Db(X) \ar[r]^{t_V} \ar[d]_{\id} & \Db(\Lambda_V) \ar[d]^{\_\otimes_{\Lambda_V}\Lambda_{\Phi(V)}} \\
\Db(X) \ar[r]_{t_{\Phi(V)}}       & \Db(\Lambda_{\Phi(V)})
}
\end{aligned}
\end{equation}
We construct a natural isomorphism $\eta\colon t_{\Phi V} \isomor t_V \otimes_{\Lambda_V}\Lambda_{\Phi V}$ as follows. For $E\in\Db(X)$, there is a natural $\reg_Y$-linear isomorphism $\pi_*\sHom(\Phi(V),E) \isomor \pi_*\sHom(V,E) \otimes_{\Lambda_V}\Lambda_{\Phi (V)}$ given by $f\mapsto f\nu \otimes 1$; the inverse map is $g\otimes1\mapsto g\nu^{-1}$. This map is linear over $\Lambda_{\Phi V}$ because, for a local section $\lambda\in \pi_*\sHom(\Phi(V), \Phi(V))$, we have by our assumption, setting $\phi=\wPhi^{-1}(\lambda)$:
\[
    \eta(f\lambda) = f\lambda\nu \otimes 1 = f\nu \wPhi^{-1}(\lambda)\otimes1
  = f\nu\otimes\lambda = (f\nu\otimes1)\lambda
\,. \]
Comparing the diagrams \eqref{diag:id} and \eqref{diag:t} shows that $\Phi\cong\id$.
\end{proof}

\begin{corollary} \label{cor:relative-tilting-functor-iso}
Let $V\in\Db(X)$ be a relative tilting sheaf, $\Phi_1,\Phi_2\colon\Db(X)\isomor\Db(X')$ relative FM equivalences, and $\nu\colon \Phi_1(V)\isomor \Phi_2(V)$ an isomorphism such that
 $\Phi_{2,U}(\phi )\circ\nu = \nu\circ \Phi_{1,U}(\phi)$ for all $\phi\in\Lambda_V(U)$ and $U\subset Y$ open.
Then there exists a isomorphism of functors $\Phi_1\isomor \Phi_2$ restricting to $\nu$.

Moreover, if $V=L_1\oplus\cdots\oplus L_k$ decomposes as a direct sum, then the above condition is satisfied by specifying isomorphisms $\nu_i\colon \Phi_1(L_i) \isomor \Phi_2(L_i)$ inducing functor isomorphisms $\Phi_{1,U} \isomor \Phi_{2,U}$ on the full finite subcategory  $\{L_{1\mid U},\ldots,L_{k\mid U}\}$ of $\Db(\pi^{-1}(U))$. 
\end{corollary}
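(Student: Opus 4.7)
The plan is to reduce the statement to the preceding \autoref{lem:relative-FM} by passing to the autoequivalence $\Phi \coloneqq \Phi_2^{-1}\circ\Phi_1$ of $\Db(X)$. Since compositions and quasi-inverses of relative FM transforms are again relative FM transforms, this $\Phi$ is itself a relative FM autoequivalence, and its local versions satisfy $\Phi_U \cong \Phi_{2,U}^{-1}\circ\Phi_{1,U}$ for $U\subset Y$ open. The input required by \autoref{lem:relative-FM} is an isomorphism $\tilde\nu\colon V\isomor \Phi(V)$ satisfying $\Phi_U(\phi)\circ\tilde\nu = \tilde\nu\circ\phi$ for every $\phi\in\Lambda_V(U)$. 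I would take $\tilde\nu \coloneqq \Phi_2^{-1}(\nu^{-1})$, using the canonical identification $\Phi_2^{-1}\Phi_2(V)\cong V$.

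To verify the intertwining condition, I apply the equivalence $\Phi_{2,U}$ to both sides of the desired equation $\Phi_U(\phi)\circ\tilde\nu = \tilde\nu\circ\phi$; it becomes $\Phi_{1,U}(\phi)\circ\nu^{-1} = \nu^{-1}\circ\Phi_{2,U}(\phi)$, which is exactly the given intertwining relation $\Phi_{2,U}(\phi)\circ\nu = \nu\circ\Phi_{1,U}(\phi)$ rearranged. Invoking \autoref{lem:relative-FM} then produces a functor isomorphism $\alpha\colon \id\isomor \Phi_2^{-1}\Phi_1$ whose component at $V$ is $\tilde\nu$. Whiskering on the left with $\Phi_2$ yields a functor isomorphism $\Phi_2\isomor\Phi_1$ whose component at $V$ equals $\Phi_2(\tilde\nu) = \nu^{-1}$; inverting gives the desired $\Phi_1\isomor\Phi_2$ whose value on $V$ is $\nu$.

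For the \emph{moreover} statement, additivity of $\Phi_{1,U}$ and $\Phi_{2,U}$ means that an endomorphism $\phi\in\Lambda_V(U)$ decomposes into components $\phi_{ij}\colon L_{j|U}\to L_{i|U}$ under the direct sum decomposition of $V$, and these components are treated blockwise by the functors. Consequently, the single intertwining identity for $\nu = \bigoplus_i \nu_i$ splits into the family of identities $\Phi_{2,U}(\phi_{ij})\circ \nu_j = \nu_i\circ\Phi_{1,U}(\phi_{ij})$, which is precisely the assertion that the $\nu_i$ assemble into a natural transformation on the full subcategory with objects $L_{1|U},\ldots,L_{k|U}$.

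I do not anticipate a serious obstacle; the argument is a formal reduction to \autoref{lem:relative-FM}. The only point deserving verification is that the restriction $\Phi_U$ really coincides with $\Phi_{2,U}^{-1}\Phi_{1,U}$, but this is a standard base-change compatibility of Fourier--Mukai kernels under restriction to the open subsets $\pi^{-1}(U)\subset X$ and $\pi'^{-1}(U)\subset X'$.
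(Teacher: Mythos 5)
Your reduction is correct: the paper states this corollary without proof, treating it as an immediate consequence of \autoref{lem:relative-FM}, and your argument --- passing to the relative FM autoequivalence $\Phi_2^{-1}\Phi_1$ with the intertwining isomorphism $\Phi_2^{-1}(\nu^{-1})\colon V\isomor \Phi_2^{-1}\Phi_1(V)$, then whiskering back --- is exactly the intended formal derivation. The blockwise decomposition of the intertwining condition for the \emph{moreover} part is also right.
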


\begin{remark}
All the results of this subsection remain valid in an equivariant setting, where a finite group $G$ acts on $X$ and $\pi\colon X\to Y$ is $G$-invariant. Then the correct sheaf of $\reg_Y$-algebras is $\Lambda_V=\pi_*^G\sHom(V,V)$.
\end{remark}

\subsection{Spherical functors} \label{sub:spherical}
An exact functor $\varphi\colon \kc\to \kd$ between triangulated categories is called \emph{spherical} if it admits both adjoints, if the cone endofunctor $F[1] \coloneqq \cone(\id_\kc\to\varphi^R\varphi)$ is an autoequivalence of $\kc$, and if the canonical functor morphism $\varphi^R \to F\varphi^L[1]$ is an isomorphism. A spherical functor is called \emph{split} if the triangle defining $F$ is split. The proper framework for dealing with functorial cones are dg-categories; the triangulated categories in this article are of geometric nature, and we can use Fourier--Mukai transforms.
See \cite{ALdg} for proofs in great generality.

Given a spherical functor $\varphi\colon \kc\to \kd$, the cone of the natural transformation $\TT = \TT_\varphi \coloneqq \cone(\varphi\varphi^R \to \id_\kd)$ is called the \emph{twist around $\varphi$}; it is an autoequivalence of $\kd$. 

The following lemma follows immediately from the definition, since an equivalence has its inverse functor as both left and right adjoint.

\begin{lemma}\label{lem:spherical}
Let $\varphi\colon \kc\to \kd$ be a spherical functor and let $\delta\colon \kd\to\kd'$ be an equivalence. Then $\delta\circ\varphi\colon\cC\to\cD' $ is a spherical functor with associated twist functor
 $\TT_{\delta\varphi} = \delta \TT_\varphi \delta\inv$.
\end{lemma}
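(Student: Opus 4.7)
The plan is to unpack the definition of spherical functor for the composition $\delta\varphi$, using that an equivalence has both adjoints given by its inverse, namely $\delta^L=\delta^R=\delta^{-1}$.

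First I would verify that $\delta\varphi$ admits both adjoints and identify them. Since adjoints compose (contravariantly in the expected way), one has $(\delta\varphi)^R=\varphi^R\delta^{-1}$ and $(\delta\varphi)^L=\varphi^L\delta^{-1}$. In particular $(\delta\varphi)^R(\delta\varphi)=\varphi^R\delta^{-1}\delta\varphi\cong\varphi^R\varphi$ via the counit $\delta^{-1}\delta\isomor\id_{\cD}$. Under this identification, the unit $\id_{\cC}\to(\delta\varphi)^R(\delta\varphi)$ corresponds to the unit $\id_\cC\to\varphi^R\varphi$ (this is the standard behaviour of units under composition of adjunctions). Hence the cotwist
\[
  F'[1]\;=\;\cone\bigl(\id_\cC\to(\delta\varphi)^R(\delta\varphi)\bigr)
  \;\cong\;\cone\bigl(\id_\cC\to\varphi^R\varphi\bigr)\;=\;F[1],
\]
so $F'\cong F$ is an autoequivalence of $\cC$. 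This handles the second condition.

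Next I would check the remaining spherical condition, namely that the canonical morphism $(\delta\varphi)^R\to F'\,(\delta\varphi)^L[1]$ is an isomorphism. Using $(\delta\varphi)^R=\varphi^R\delta^{-1}$, $(\delta\varphi)^L=\varphi^L\delta^{-1}$ and $F'\cong F$, the morphism in question is obtained from the canonical morphism $\varphi^R\to F\varphi^L[1]$ for $\varphi$ by whiskering on the right with $\delta^{-1}$. Since $\varphi$ is spherical, $\varphi^R\to F\varphi^L[1]$ is an isomorphism, and whiskering with the equivalence $\delta^{-1}$ preserves isomorphisms of functors. Thus $\delta\varphi$ is spherical.

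Finally I would compute the twist. By definition,
\[
  \TT_{\delta\varphi}\;=\;\cone\bigl(\delta\varphi\,(\delta\varphi)^R\to\id_{\cD'}\bigr)
  \;=\;\cone\bigl(\delta\varphi\varphi^R\delta^{-1}\to\id_{\cD'}\bigr),
\]
where the counit of $(\delta\varphi)\dashv(\delta\varphi)^R$ factors as the conjugation of the counit $\varphi\varphi^R\to\id_\cD$ by $\delta$ and $\delta^{-1}$, composed with the counit $\delta\delta^{-1}\isomor\id_{\cD'}$. Since $\delta$ is an equivalence, taking cones commutes with pre- and post-composition by $\delta^{-1}$ and $\delta$ respectively, giving
\[
  \TT_{\delta\varphi}\;\cong\;\delta\circ\cone\bigl(\varphi\varphi^R\to\id_\cD\bigr)\circ\delta^{-1}
  \;=\;\delta\,\TT_\varphi\,\delta^{-1}.
\]

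The main obstacle is purely bookkeeping: matching the unit/counit of the composed adjunction $(\delta\varphi)\dashv(\delta\varphi)^R$ with the units/counits of $\varphi\dashv\varphi^R$ and $\delta\dashv\delta^{-1}$, and ensuring that the canonical morphism between adjoints that appears in the spherical condition is really the whiskering of the one for $\varphi$. Once this is granted, the conclusion about $\TT_{\delta\varphi}$ is immediate from the fact that an equivalence commutes with the formation of functorial cones.
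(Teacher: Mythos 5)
Your proposal is correct and follows exactly the route the paper intends: the paper's proof consists of the single remark that the statement ``follows immediately from the definition, since an equivalence has its inverse functor as both left and right adjoint,'' and your argument is precisely the careful unpacking of that remark (composing adjoints, identifying the cotwist, whiskering the canonical morphism $\varphi^R\to F\varphi^L[1]$ with $\delta^{-1}$, and conjugating the counit to compute the twist).
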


\section{The geometric setup}\label{sec:setup}

\noindent
Let $X$ be a smooth quasi-projective variety together with an action of a finite group $G$. Let $S\coloneqq\Fix(G)$ be the locus of fixed points. Then $S\subset X$ is a closed subset, which is automatically smooth since, locally in the analytic topology, the action can be linearised by Cartan's lemma, see \cite[Lem.~2]{Cartan-Quot}. Also note that $X/G$ has rational singularities, like any quotient singularity over $\IC$ \cite{Kovacs}.

\begin{condition} \noindent\label{cond:main}
We make strong assumptions on the group action:
\begin{enumerate}
\item $G \cong \mu_m$ is a cyclic group. Fix a generator $g\in G$.
\item Only the trivial isotropic groups $1$ and $\mu_m$ occur.
\item The generator $g$ acts on the normal bundle $N\coloneqq N_{S/X}$ by multiplication with some fixed primitive $m$-th root of unity $\zeta$. 
\end{enumerate}
\end{condition}

Condition (ii) obviously holds if $m$ is prime.

Condition (iii) can be rephrased: there is a splitting $T_{X\mid S}= T_S\oplus N_{S/X}$ because $T_S$ is the subsheaf of $G$-invariants of $T_{X\mid S}$ and we work over characteristic 0. By (iii), this is even the splitting into the eigenbundles corresponding to the eigenvalues $1$ and $\zeta$. 
We denote by $\chi\colon G\to\IC^*$ the character with $\chi(g)=\zeta^{-1}$. Hence, we can reformulate (iii) by saying that $G$ acts on $N$ via $\chi^{-1}$. 

From these assumptions we deduce the following commutative diagram 

\begin{equation}
\begin{aligned}\label{eq:maindiagram}
%
\xymatrix@C=2.0em@R=5ex{%
       \wX \ar@{->>}^p_{\text{blow-up in $S$}}[rrr] \ar@{->>}_q[dd]
 & & & X   \ar@{->>}^\pi[dd] \\
   & Z=\IP(N) \ar_j[ul] \ar^i[dl] \ar@{->>}^-\nu[r] & S\ar^a[ur] \ar_b[dr] \\
       \wY \save[]+<-2.3em,0.1em>*{\wX/G=}\restore \ar@{->>}_\rho^{\text{blow-up in $S$}}[rrr]
 & & & Y   \save[]+<2.2em,0em>*{=X/G}\restore &
}

\end{aligned}
\end{equation}
where $a$, $b$, $i$, and $j$ are closed embeddings and $\pi$ is the quotient morphism. The $G$-action on $X$ lifts to a $G$-action on $\wX$. Since, by assumption, $G$ acts  
diagonally on $N$, it acts trivially on the exceptional divisor $Z=\IP(N)$. In particular, the fixed point locus of the $G$-action on $\wX$ is a divisor. Hence, the quotient variety $\wY$ is again smooth and the  
quotient morphism $q$ is flat due to the Chevalley--Shephard--Todd theorem. Since the composition $\pi\circ p$ is $G$-invariant, it induces the morphism $\rho\colon \wY\to Y$ which is easily seen to be birational, hence a resolution of singularities. The preimage $\rho^{-1}(S)$ of the singular locus is a divisor in $\wY$. Hence, by the universal property of the blow-up, we get a morphism $\wY\to \Bl_SY$ which is easily seen to be an isomorphism.

\subsection{The resolution as a moduli space of $\boldsymbol{G}$-clusters} \label{sub:clusters}
The result of this section might be of independent interest.
Let $X$ be a smooth quasi-projective variety and $G$ a finite group acting on $X$.
A \textit{$G$-cluster} on $X$ is a closed zero-dimensional $G$-invariant subscheme $W\subset X$ such that the $G$-representation $H^0(W,\reg_W)$ is isomorphic to the regular representation of $G$. There is a fine moduli space $\Hilb^G(X)$ of $G$-clusters, called the \textit{$G$-Hilbert scheme}. It is equipped with the \textit{equivariant Hilbert--Chow morphism} $\tau\colon \Hilb^G(X)\to X/G, W\mapsto\supp(W)$, mapping $G$-clusters to their underlying $G$-orbits.

\begin{proposition} \label{prop:Hilbert-resolution}
Let $G$ be a finite cyclic group acting on $X$ such that all isotropy groups are either 1 or $G$, and such that $G$ acts on the normal bundle $N_{\Fix(G)/X}$ by scalars which means that \autoref{cond:main} is satisfied. Then there is an isomorphism 
\[
   \phi\colon \wY\xrightarrow \cong \Hilb^G(X)
   \quad \text{with} \quad \tau\circ \phi=\rho  \,.
\]
\end{proposition}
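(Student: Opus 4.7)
The plan is to produce $\phi$ by writing down an explicit flat family of $G$-clusters on $\wY$, and then to upgrade $\phi$ to an isomorphism via a reduction to the affine diagonal model. For the construction, I would work in a standard affine chart $U_\alpha\subset \wX$ of the blow-up $p\colon \wX\to X$, obtained by singling out one of the normal coordinates $y_\alpha$, so that $U_\alpha$ carries coordinates $(x_1,\dots,x_k,s,w_1,\dots,\widehat{w_\alpha},\dots,w_n)$ with $s=p^*y_\alpha$, $w_\beta=y_\beta/y_\alpha$, and $p^*y_\beta=w_\beta s$. Writing $x_i',\, y_j'$ for the pullbacks to the second factor of $\wX\times X$, I would consider the ideal
\[
  \widetilde\cI_\alpha \coloneqq \bigl(\,x_i'-x_i,\; y_\beta'-w_\beta y_\alpha',\; (y_\alpha')^m-s^m\,\bigr)
\]
with $i$ ranging over $1,\dots,k$ and $\beta$ over indices distinct from $\alpha$. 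The checks to carry out are: the $\widetilde\cI_\alpha$ agree on overlaps and so glue to a global ideal $\widetilde\cI\subset \reg_{\wX\times X}$; the quotient $\reg_{\wX\times X}/\widetilde\cI$ is free of rank $m$ over $\reg_\wX$ with basis $1,y_\alpha',\dots,(y_\alpha')^{m-1}$, exhibiting the regular representation of $G=\mu_m$ on each fibre; over a free-orbit point of $\wX$ the fibre is the set-theoretic $G$-orbit in $X$, while over the exceptional divisor it is the length-$m$ subscheme $\IC[t]/(t^m)$ along the line in $N$ determined by the $w_\beta$. Because $\zeta^m=1$, the ideal $\widetilde\cI$ is invariant under the $G$-action on the first factor $\wX$ alone, so $V(\widetilde\cI)$ descends to a flat $G$-cluster family $\cW\subset \wY\times X$, and the universal property of the $G$-Hilbert scheme produces $\phi\colon \wY\to \Hilb^G(X)$ with $\tau\circ\phi=\rho$ by construction.

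To upgrade $\phi$ to an isomorphism, observe first that over $Y\setminus (S/G)$ both $\wY$ and $\Hilb^G(X)$ identify canonically with $(X\setminus S)/G$, so $\phi$ is a birational proper $Y$-morphism. Both constructions commute with \'etale base change on $Y$ (blow-ups with flat base change in general, and $G$-Hilbert schemes because their clusters are supported at a single point of $Y$) and are compatible with trivial smooth factors, so Luna's \'etale slice theorem, applicable in view of \autoref{cond:main}, reduces the problem to proving that $\phi\colon \Bl_0(\IA^n/\mu_m)\to \Hilb^{\mu_m}(\IA^n)$ is an isomorphism in the local model $X=\IA^n$, $G=\mu_m$ diagonal. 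There I would classify $\mu_m$-clusters $W$ supported at the origin: the one-dimensionality of the $\chi^{-1}$-weight space of $\reg_W$ singles out a unique line $\ell\subset V=\IA^n$, and the remaining weight conditions then force the ideal of $W$ to be generated by the linear forms vanishing on $\ell$ together with $t^m$ for a linear coordinate $t$ along $\ell$. This yields a bijection of closed points $\Hilb^{\mu_m}_0(\IA^n)\leftrightarrow \IP(V)$ compatible with the chart description of $\phi$, so $\phi$ is bijective on closed points.

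The main obstacle I foresee is upgrading this pointwise bijection to a scheme isomorphism. My preferred route is to read off the inverse chart-by-chart on $\Hilb^{\mu_m}(\IA^n)$: on the standard open where the classes $1,y_\alpha',\dots,(y_\alpha')^{m-1}$ generate the respective weight spaces of the universal cluster, the universal ideal admits a unique presentation in the normal form $(y_\beta'-w_\beta y_\alpha',\,(y_\alpha')^m - t)$ used in the construction of $\phi$, and the resulting functions $w_\beta,\, t$ define the inverse to the corresponding chart of $\Bl_0(\IA^n/\mu_m)$. Alternatively one can verify irreducibility and normality of the main component of $\Hilb^{\mu_m}(\IA^n)$ from the chart description and invoke Zariski's main theorem on the proper bijective birational morphism $\phi$. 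Either way, the careful bookkeeping between the two chart presentations is the step that actually has to be written out.
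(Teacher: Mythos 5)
Your construction is sound and the overall strategy works, but it is genuinely different from the paper's. The paper builds the universal family abstractly: it identifies $\wX$ with $(\wY\times_Y X)_\red\subset\wY\times X$ and uses flatness of $q\colon\wX\to\wY$ (a consequence of Chevalley--Shephard--Todd, since the fixed locus in $\wX$ is a divisor) to see that every fibre is a $G$-cluster; your explicit chart ideals $\widetilde\cI_\alpha$ are a hands-on version of the same family, and your verification of freeness of rank $m$ with basis $1,y_\alpha',\dots,(y_\alpha')^{m-1}$ replaces the appeal to Chevalley--Shephard--Todd. The fibrewise classification of $G$-clusters at a fixed point (one-dimensionality of each weight space forces the ideal to contain $n-1$ linear forms cutting out a line $\ell$, plus $t^m$, so the cluster is curvilinear and parametrised by $\IP(N(s))$) is essentially identical in both arguments, the paper using Cartan's lemma where you use a Luna slice. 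The real divergence is the last step. The paper proves that $\Hilb^G(X)$ is \emph{smooth} by computing $\Ext^1_{\Db_G(X)}(\reg_\xi,\reg_\xi)=\dim X$ via Koszul resolutions, the K\"unneth formula and the Arinkin--C\u{a}ld\u{a}raru normal-bundle formula, and then concludes from bijectivity. Your preferred route instead constructs the inverse morphism chart-by-chart on $\Hilb^{\mu_m}(\IA^n)$ from the normal form of the universal ideal; this is correct and self-contained (two finite flat degree-$m$ families, one containing the other, must coincide, so the universal ideal really is $(y_\beta'-w_\beta y_\alpha',(y_\alpha')^m-t)$ on the relevant open), and it has the advantage of needing no deformation theory while yielding smoothness of $\Hilb^G(X)$ as a corollary rather than an input. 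The price is the \'etale base-change bookkeeping for both $\Bl$ and $\Hilb^G$ needed to reduce to the linear model, which the paper avoids because it only uses the local model for pointwise statements.

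One caution about your fallback alternative: invoking Zariski's main theorem on the proper bijective birational morphism $\phi$ requires normality of the \emph{target}, and the chart description only controls the image of $\phi$, i.e.\ a priori only the reduced main component of $\Hilb^{\mu_m}(\IA^n)$. A pointwise bijection does not rule out non-reducedness or embedded components of the Hilbert scheme, so "irreducibility and normality of the main component'' is not enough to conclude $\wY\cong\Hilb^G(X)$; you would need reducedness and normality of the whole $G$-Hilbert scheme, which is exactly what the paper's $\Ext^1$ computation (smoothness) or your explicit inverse supplies. Stick with the explicit-inverse route.
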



\begin{proof}
We use the notation from \eqref{eq:maindiagram}.
  One can identify $\wX$ with the reduced fibre product $(\wY\times_Y X)_\red$ which gives a canonical embedding $\wX\subset \wY\times X$. Under this embedding, the generic fibre of $q$ is a reduced free $G$-orbit of the action on $X$. In particular, it is a $G$-cluster. By the flatness of $q$, every fibre is a $G$-cluster and we get the classifying morphism $\phi\colon \wY \to \Hilb^G(X)$ which is easily seen to satisfy $\tau\circ \phi=\rho$.

Let $s\in S$ and $z\in Z$ with $\nu(z)\in s$. Let $\ell\subset N(s)$ be the line corresponding to $z$. Then, one can check that the tangent space of the $G$-cluster  $q^{-1}(i(z))\subset X$ is exactly $\ell$. Hence, the $G$-clusters in the family $\wX$ are all different so that the classifying morphism $\phi$ is injective. For the bijectivity of $\phi$, it is only left to show that the $G$-orbits supported on a given fixed point $s\in S$ are parametrised by $\IP(N(s))$. Let $\xi\subset X$ be such a $G$-cluster. In particular, $\xi$ is a length $m=|G|$ subscheme concentrated in $s$ and hence can be identified with an ideal $I\subset \reg_{X,s}/\fm_{X,s}^m$ of codimension $m$.  
By Cartan's lemma, the $G$-action on $X$ can be linearised in an analytic neighbourhood of $s$. Hence, there is an $G$-equivariant isomorphism
\[
   \reg_{X,s}/\fm_{X,s}^m \cong \IC[x_1,\dots,x_k,y_1,\dots,y_n]/(x_1,\dots,x_k,y_1,\dots,y_n)^m \eqqcolon R
\]
where $G$ acts trivially on the $x_i$ and by multiplication by $\zeta^{-1}$ on the $y_i$. Furthermore, $n=\rank N_{S/X}$ and $k=\rank T_S=\dim X- n$. By assumption, $\reg(\xi)$ is the regular $\mu_m$-representation. In other words, 
\begin{align} \label{eq:eigendecomp}
  \reg(\xi) \cong R/I \cong \chi^0 \oplus \chi \oplus \dots \oplus \chi^{m-1}
\end{align}
where $\chi$ is the character given by multiplication by $\zeta^{-1}$. In particular, $R/I$ has a one-dimensional subspace of invariants. It follows that every $x_i$ is congruent to a constant polynomial modulo $I$. Hence, we can make an identification $\reg(\xi)\cong R'/J$ where $J$ is a $G$-invariant ideal in $R'=\IC[y_1,\dots,y_k]/\fn^m$ where $\fn=(y_1,\dots,y_n)$. The decomposition of the $G$-representation $R'$ into eigenspaces is exactly the decomposition into the spaces of homogeneous polynomials. Hence, an ideal $J\subset R'$ is $G$-invariant if and only if it is homogeneous. Furthermore, \eqref{eq:eigendecomp} implies that 
\[
   \dim_\IC\bigl( \fn^i/(J\cap\fn^i +\fn^{i+1}) \bigr) = 1 \quad \text{for all $i=0,\dots,m-1$}
\]
which means that $\xi$ is curvilinear.  
In summary $\xi$ can be identified with a homogeneous curvilinear ideal $J$ in $R'$. The choice of such a      
$J$ corresponds to a point in $\IP((\fn/\fn^2)\dual) \cong \IP(N(s))$; see \cite[Rem.~2.1.7]{Goettschebook}.

Hence, $\phi$ is a bijection and we only need to show that $\Hilb^G(X)$ is smooth. The smoothness in points representing free orbits is clear since the $G$-Hilbert--Chow morphism is an isomorphism on the locus of these points. 
So it is sufficient to show that
\[
   \Hom^1_{\Db_G(X)}(\reg_\xi,\reg_\xi) = \dim X = n+k
\]
for a $G$-cluster $\xi$ supported on a fixed point. 
Following the above arguments, we have 
\[
   \Hom^*_{\Db_G(X)}(\reg_\xi,\reg_\xi) \cong \Hom^*_{\Db_G(\IA^k\times \IA^n)}(\reg_{\xi'},\reg_{\xi'})
\]
where $G$ acts trivially on $\IA^k$ and by multiplication by $\zeta$ on $\IA^n$. Furthermore, by a transformation of coordinates, we may assume that
\[
   \xi' = V(x_1,\dots,x_k, y_1^m,y_2,\dots,y_{n}) \subset \IA^k\times \IA^n  \,.
\]
We have $\reg_\xi'\cong \reg_0\boxtimes \reg_\eta$ where
\[
   \eta=V(y_1^m,y_2,\dots,y_n)\subset \IA^n  \,.
\]
By K\"unneth formula, we get
\begin{align*}
        \Hom^*_{\Db_G(\IA^k\times \IA^n)}(\reg_{\xi'},\reg_{\xi'})
 &\cong \Hom^*_{\Db(\IA^k)}(\reg_0,\reg_0) \otimes \Hom^*_{\Db_G(\IA^n)}(\reg_\eta,\reg_\eta) \\
 &\cong \wedge^*(\IC^k)\otimes \Hom^*_{\Db_G(\IA^n)}(\reg_\eta,\reg_\eta)  \,.
\end{align*}
Furthermore, $\Hom^0_{\Db_G(\IA^n)}(\reg_\eta,\reg_\eta)\cong H^0(\reg_\eta)^G\cong \IC$. Hence, it is sufficient to show that $\Hom^1_{\Db_G(\IA^n)}(\reg_\eta,\reg_\eta)\cong \IC^n$. 
Note that $\eta$ is contained in the line $\ell=V(y_2,\dots,y_n)$.
On $\ell$ we have the Koszul resolution
\[
   0 \to \reg_\ell \xrightarrow{\cdot y_1^m} \reg_\ell \to \reg_\eta \to 0  \,.
\]
Using this, we compute 
\[
   \Hom^*_{\Db(\ell)}(\reg_\eta,\reg_\eta) \cong \reg_\eta[0] \oplus \reg_\eta[-1]  \,.
\]
Note that the normal bundle of $\ell$, as an equivariant bundle, is given by $N_{\ell/\IA^n}\cong (\reg_\ell\otimes \chi^{-1})^{\oplus n-1}$. By \cite[Thm.~1.4]{AC}, we have
\begin{align*}
         \Hom^*_{\Db(\IA^n)}(\reg_\eta, \reg_\eta)
  &\cong \Hom^*_{\Db(\ell)}(\reg_\eta, \reg_\eta \otimes \wedge^* N_{\ell/\IA^n}) \\
  &\cong \Hom^*_{\Db(\ell)}(\reg_\eta, \reg_\eta)\otimes \wedge^*((\reg_\ell\otimes \chi^{-1})^{\oplus n-1})  \,. 
\end{align*}
Evaluating in degree 1 gives 
\[
 \Hom^1_{\Db(\IA^n)}(\reg_\eta,\reg_\eta)\cong \reg_\eta\oplus (\reg_\eta\otimes \chi^{-1})^{\oplus n-1}\,.
\]
Since, as a $G$-representation, $\reg_\eta\cong \chi^0\oplus\chi^1\oplus\dots\oplus \chi^{m-1}$, we get an $n$-dimensional space of invariants
\[
        \Hom^1_{\Db_G(\IA^n)}(\reg_\eta,\reg_\eta)
  \cong \Hom^1_{\Db(\IA^n)}(\reg_\eta,\reg_\eta)^G
  \cong \IC^{n}
\,.\qedhere \]
\end{proof}

The following lemma is needed later in \autoref{sub:m-geq-n} but its proof fits better into this section.

\begin{lemma}\label{lem:orthogonal-clusters}
Assume that $m=|G|\ge n=\codim(S\hookrightarrow X)$. Let $\xi_1,\xi_2\subset X$ be two different $G$-clusters supported on the same point $s\in S$.
Then $\Hom^*_{\Db_G(X)}(\reg_{\xi_1}, \reg_{\xi_2})=0$.
\end{lemma}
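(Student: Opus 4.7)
The plan is to reduce the vanishing to a direct computation on $\IA^2$ and then exploit the hypothesis $m\ge n$ via a character count. As a first step, I would invoke Cartan's lemma exactly as in the proof of \autoref{prop:Hilbert-resolution} to $G$-equivariantly identify a neighbourhood of $s$ with $\IA^k\times\IA^n$ where $G$ acts trivially on $\IA^k$ and by the scalar $\zeta$ on $\IA^n$, so that each $\xi_i=\{0\}\times \eta_i$ for a $G$-cluster $\eta_i\subset\IA^n$ supported at the origin and curvilinear along a line $\ell_i$. The K\"unneth formula, applied to $\reg_{\xi_i}\cong \reg_{0}\boxtimes\reg_{\eta_i}$, then reduces the problem to showing that $\Hom^*_{\Db_G(\IA^n)}(\reg_{\eta_1},\reg_{\eta_2})=0$.

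Since $\xi_1\neq \xi_2$ forces $\ell_1\neq \ell_2$, both clusters sit in the $2$-plane $P\coloneqq \ell_1+\ell_2\subset \IA^n$. Writing $\iota\colon P\hookrightarrow \IA^n$, I would next use Grothendieck duality together with the Koszul splitting
\[
  \iota^*\iota_*\reg_P \;\cong\; \bigoplus_{k=0}^{n-2}\bigl(\reg_P\otimes \wedge^k N^\vee_{P/\IA^n}\bigr)[k]
\]
(valid because $N_{P/\IA^n}$ is $G$-equivariantly trivial of scalar character $\chi^{-1}$) and the dualizing formula $\omega_\iota\cong \det N_{P/\IA^n}[-(n-2)]$. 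Bookkeeping shifts and characters against the conormal $G$-character $(n-2)\chi$ yields the natural decomposition
\[
  \Hom^i_{\Db_G(\IA^n)}(\reg_{\eta_1},\reg_{\eta_2}) \;\cong\; \bigoplus_{j=0}^{n-2}\binom{n-2}{j}\, \Hom^{i-j}_{\Db_G(P)}(\reg_{\eta_1},\reg_{\eta_2})\otimes \chi^{-j}.
\]

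This reduces everything to a hand calculation on $P=\IA^2$. After a linear change of coordinates I may take $\eta_1=V(y_1^m,y_2)$, while $\reg_{\eta_2}=\IC[t_2]/(t_2^m)$ satisfies $y_1^m=0$ and $y_2\in \IC^\times\cdot t_2$. The Koszul resolution of $\reg_{\eta_1}$ on the two generators $(y_1^m,y_2)$, whose $G$-characters are $\chi^0$ and $\chi$ respectively, is a short complex whose cohomology computes directly to
\[
  \Hom^0\cong \chi^{m-1},\qquad \Hom^1\cong 2\chi^{m-1},\qquad \Hom^2\cong \chi^{m-1},
\]
each being a pure power $\chi^{m-1}$.

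Plugging this back into the reduction formula, every $G$-character appearing in $\Hom^*_{\Db_G(\IA^n)}(\reg_{\eta_1},\reg_{\eta_2})$ has the form $\chi^{m-1-j}$ with $0\le j\le n-2$. The trivial character $\chi^0$ appears only if $j\equiv m-1\pmod m$, which in the range $0\le j\le n-2$ forces $m-1\le n-2$, i.e.\ $m<n$. The hypothesis $m\ge n$ excludes this, so no summand has nonzero $G$-invariants; as $\Hom^i_{\Db_G}(-,-)=\Hom^i(-,-)^G$ in characteristic zero, this completes the proof. I expect the main obstacle to be step two: correctly aligning the sign conventions for the normal bundle, relative dualizing complex, and character twists. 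The calculation itself is short, but the equivariant shifts produced by $\iota^*\iota_*$ and $\omega_\iota$ must be tracked without error for the final character count to come out exactly on the threshold $m=n$.
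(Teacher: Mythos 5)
Your proof is correct and follows essentially the same route as the paper's: reduce via Cartan's lemma and K\"unneth to the local model $\IA^n$, extract the factor $\wedge^*(\chi^{-1})^{\oplus(n-2)}$ coming from the normal bundle of the $2$-plane spanned by the two tangent lines, compute the residual piece explicitly, and conclude by the character count $\chi^{m-1-j}$, $0\le j\le n-2$, which misses $\chi^0$ exactly when $m\ge n$. The only (cosmetic) difference is that the paper reduces further from the plane to the line $\ell_2$ using the derived-intersection lemma \cite[Lem.~3.3]{Kru4}, whereas you perform the Koszul computation directly on $\IA^2$, obtaining the same answer $\chi^{m-1}$ in degrees $0,1,1,2$.
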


\begin{proof}
By the same arguments as in the proof of the previous proposition we can reduce to the claim that 
\[
   \Hom^*_{\Db_G(\IA^n)}(\reg_{\eta_1},\reg_{\eta_2}) = 0
\]
where $\eta_1=V(y_1^m, y_2,\dots,y_n)$ and $\eta_2=V(y_1,y_2^m,y_3,\dots,y_n)$. Set $\ell_1=V(y_2,\dots, y_n)$, $\ell_2=V(y_1,y_3,\dots,y_n)$, $E=\gen{\ell_1,\ell_2}=V(y_3,\dots,y_n)$ and consider the diagram of closed embeddings
\begin{align*}\xymatrix{
                               & \ell_2 \ar^{\iota_2}[dr] \ar^{r}[d]  &             \\
  \{0\} \ar^{u}[ur] \ar_{v}[dr] & E      \ar^t[r]                    &  \IA^n \, . \\
                               & \ell_1 \ar^{s}[u] \ar_{\iota_1}[ur] 
} \end{align*} 
where $N_t\cong (\reg_E\otimes \chi^{-1})^{\oplus n-2}$. By \cite[Lem.~3.3]{Kru4} (alternatively, one may consult \cite{Gri} or \cite{ACHderivedint} for more general results on derived intersection theory), we get
\begin{equation} \label{eq:gradedHom}
\begin{split} 
       \Hom^*_{\Db(\IA^n)}(\reg_{\eta_1},\reg_{\eta_2})
&=     \Hom^*_{\Db(\IA^n)}(\iota_{1*}\reg_{\eta_1},\iota_{2*}\reg_{\eta_2})\\ 
&\cong \Hom^*_{\Db(\ell_2)}(\iota_2^*\iota_{1*}\reg_{\eta_1},\reg_{\eta_2})\\ 
&\cong \Hom^*_{\Db(\ell_2)}(u_*v^*\reg_{\eta_1},\reg_{\eta_2})\otimes \wedge^*N_{t\mid \ell_2}\\
&\cong \Hom^*_{\Db(\ell_2)}(u_*v^*\reg_{\eta_1},\reg_{\eta_2})\otimes \wedge^*(\reg_{\ell_2}\otimes \chi^{-1})^{\oplus n-2}
\end{split}
\end{equation}
We consider the Koszul resolution 
$
0\to \reg_{\ell_1}\xrightarrow{y_1^m}\reg_{\ell_1}\to \reg_{\eta_1}\to 0 
$
of $\reg_{\eta_1}$.
Note that this is an equivariant resolution when we consider $\reg_{\ell_1}$ equipped with the canonical linearisation since $y_1^m$ is a $G$-invariant function. Applying $u_*v^*$, we get an equivariant isomorphism
\begin{align}\label{eq:uv}
u_*v^*\reg_{\eta_1}\cong \reg_0\oplus \reg_0[1]\,. 
\end{align}
Similarly, we have the equivariant Koszul resolution
$
 0\to \reg_{\ell}\otimes \chi\xrightarrow{\cdot y} \reg_{\ell} \to \reg_0 \to 0
$
of $\reg_0$, where we set $\ell\coloneqq\ell_2$ and $y\coloneqq y_2$. Applying $\Hom(\_, \reg_{\eta_2})$ to the resolution, we get
\[
   0 \to \IC[y]/y^{m} \otimes \xrightarrow{\cdot y} \IC[y]/y^m \otimes \chi^{-1} \to 0 
\]
and taking cohomology yields
\begin{align} \label{eq:Hom}
       \Hom^*_{\Db(\ell_2)}(\reg_{0},\reg_{\eta_2})
 \cong \IC\gen{y^{m-1}}[0] \oplus \IC\gen{1} \otimes \chi^{-1}[-1]
 \cong \reg_0\otimes \chi^{-1}[0] \oplus \reg_0\otimes \chi^{-1}[-1]  \,. 
\end{align}

Plugging \eqref{eq:uv} and \eqref{eq:Hom} into \eqref{eq:gradedHom} gives
\[
      \Hom^*_{\Db(\IA^n)}(\reg_{\eta_1},\reg_{\eta_2})
\cong \bigl(\reg_0\otimes \chi^{-1}[0] \oplus
      \reg_0^{\oplus 2} \otimes \chi^{-1}[-1] \oplus
      \reg_0 \otimes \chi^{-1}[-2]\bigr) \otimes \wedge^*(\chi^{-1})^{\oplus n-2}
\,. \]
The irreducible representations occuring are $\chi^{-1},\chi^{-2},\dots,\chi^{-(n-1)}$, hence the invariants vanish (recall that $m\ge n$).
\end{proof}

\section{Proof of the main result} \label{sec:main-proof}

\noindent
In this section, we will study the derived categories $\Db(\wY)$ and $\Db_G(X)$ in the setup described in the previous section.
In particular, we will prove Theorems \ref{thm:main} and \ref{thm:tensor}. 

We set $n=\codim(S\hookrightarrow X)$ and $m=|G|$, in other words $G=\mu_m$. We consider, for $\alpha\in \IZ/m\IZ$ and $\beta\in\IZ$, the exact functors
\begin{align*}
\Phi\coloneqq p_*\circ q^*\circ\triv &\colon \Db(\wY)\to \Db_G(X)\\
\Psi\coloneqq (-)^G\circ q_*\circ p^*&\colon \Db_G(X)\to \Db(\wY)\\
\Theta_\beta\coloneqq i_*(\nu^*(\_)\otimes \reg_\nu(\beta)) &\colon \Db(S)\to \Db(\wY)\\
\Xi_\alpha \coloneqq (a_*\circ\triv)\otimes \chi^\alpha &\colon \Db(S)\to \Db_G(X).
\end{align*}
With this notation, the precise version of \autoref{thm:main} is

 \begin{theorem} \noindent \label{thm:mainprecise}
\begin{enumerate}
\item
The functor $\Phi$ is fully faithful for $m\ge n$ and an equivalence for $m=n$. For $m>n$, all the functors $\Xi_\alpha$ are fully faithful and there is a semi-orthogonal decomposition 
\[
   \Db_G(X) = \Sod{ \Xi_{n-m}(\Db(S)), \ldots, \Xi_{-1}(\Db(S)), \Phi(\Db(\wY)) }  \,.
\]
\item
The functor $\Psi$ is fully faithful for $n\ge m$ and an equivalence for $n=m$. For $n>m$, all the functors $\Theta_\beta$ are fully faithful and there is a semi-orthogonal decomposition 
\[
   \Db(\wY) = \Sod{ \Theta_{m-n}(\Db(S)), \ldots, \Theta_{-1}(\Db(S)), \Psi(\Db_G(X)) }  \,.
\]
\end{enumerate}
\end{theorem}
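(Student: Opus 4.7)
The plan is to treat $\Phi$ and $\Psi$ as relative Fourier--Mukai transforms over $Y=X/G$ and apply the relative tilting machinery of \autoref{sub:relative-tilting}. On the orbifold side I take the relative tilting sheaf $V_X \coloneqq \bigoplus_{\alpha=0}^{m-1} \reg_X\otimes\chi^\alpha$ (the regular $G$-sheaf); on the resolution side, $V_\wY \coloneqq \bigoplus_{j=0}^{m-1} \cL^{-j}$, where $\cL$ is the $m$-th root of $\reg_\wY(Z)$ introduced before Theorem~C. Both can be shown to be relative tilting: the affineness of $\pi$ handles $V_X$, while the higher vanishing $R^{>0}\rho_*\cL^{-j}=0$ for $0\le j\le m-1$ follows, via Cartan's lemma, from an explicit toric computation on the local model $\IA^n/\mu_m$.

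Since $q\colon\wX\to\wY$ is a cyclic cover of degree $m$ totally ramified along the exceptional divisor $Z$, one has $q^*\cL\cong\reg_\wX(Z)$ as $G$-equivariant line bundles for suitable linearisations, so $\Phi(\cL^{-j})\cong p_*\reg_\wX(-jZ)$ is an object of $\Db_G(X)$ with character $\chi^{-j}$. For $0\le j\le n-1$ this pushforward equals $\reg_X\otimes\chi^{-j}$ by the standard blow-up calculation, while for $n\le j\le m-1$ it is a nontrivial complex supported on $S$. \autoref{lem:relative-tilting-fully-faithful} and \autoref{cor:relative-tilting-functor-iso} then reduce the problem to showing that the comparison map $\wPhi_\Lambda\colon\Lambda_{V_\wY}\to\Lambda_{\Phi(V_\wY)}$ is an isomorphism, which turns out to be the case precisely when $m=n$. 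In that balanced case $\Phi(V_\wY)\cong V_X$ on the nose and the tilting framework immediately yields the equivalence $\Phi$ in part~(i); the corresponding claim for $\Psi$ in part~(ii) is symmetric.

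For $m>n$, the same computation shows $\Phi(\Db(\wY))$ is the $Y$-linear subcategory of $\Db_G(X)$ generated by $\reg_X\otimes\chi^\alpha$ for $\alpha\in\{-n+1,\ldots,0\}$, so the missing characters $\alpha\in\{n-m,\ldots,-1\}$ must generate the orthogonal complement. Fully faithfulness of each $\Xi_\alpha$ follows from the self-intersection formula $a^*a_*(\_)\cong (\_)\otimes\wedge^*N^\vee$ (up to shifts) combined with the fact that $\wedge^k N^\vee$ carries the character $\chi^k$: the $\chi^\alpha$-twists cancel in $\Hom^*(\Xi_\alpha E,\Xi_\alpha F)$, leaving $G$-invariants of $\bigoplus_k \Hom^*(E,F\otimes\wedge^k N^\vee)\otimes\chi^k$, which reduce to the $k=0$ summand since $0<k<n\le m$. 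Semi-orthogonality among the $\Xi_\alpha(\Db(S))$ follows analogously: $\pi_*^G\sHom(\Xi_\alpha E,\Xi_\beta F)$ involves $\wedge^* N^\vee\otimes\chi^{\beta-\alpha}$, whose invariants vanish for $\beta-\alpha\notin\{0,1,\ldots,n-1\}$. Orthogonality against $\Phi(\Db(\wY))$ is verified via relative adjunction (\autoref{lem:relativesod}): $\Phi^R\Xi_\alpha$ is computed by lifting to $\wX$ and applying the Beilinson resolution on the $\IP^{n-1}$-fibers of $\nu\colon Z\to S$, and it vanishes for $\alpha\in\{n-m,\ldots,-1\}$. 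Generation of $\Db_G(X)$ by the combined pieces follows from a relative spanning-class argument (\autoref{lem:relativespanninggeneral}) using skyscrapers at free orbits (hit by $\Phi$) and at fixed points (hit jointly by the $\Xi_\alpha$ and $\Phi$, with \autoref{lem:orthogonal-clusters} ensuring distinct $G$-clusters over the same point are independent).

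Part~(ii) for $n>m$ is obtained by the dual argument, replacing the Beilinson resolution in the orthogonality check with its dual from \autoref{lem:easypdual} via \autoref{cor:sodequal}; this explains why the exceptional components in part~(ii) are the twists $\reg_\nu(\beta)$ rather than character twists. The main obstacle will be the careful bookkeeping of $G$-linearisations when computing $\wPhi_\Lambda$ and $\Phi^R\Xi_\alpha$: an incorrect linearisation on $\cL$ or on $\reg_\wX(kZ)$ would shift every semi-orthogonality range and spoil the match with the stated indexing of the $\Xi_\alpha$ and $\Theta_\beta$.
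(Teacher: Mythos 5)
Your treatment of the semi-orthogonality and generation steps broadly parallels the paper's (self-intersection formula for $\Xi_\beta^R\Xi_\alpha$ as in \autoref{prop:Hff}, vanishing of $\Phi^R\Xi_\alpha$ as in \autoref{lem:C}, a relative spanning-class argument for generation), but the core of part~(i) has a genuine gap: full faithfulness of $\Phi$ for $m>n$ is never established. You propose to get it from \autoref{lem:relative-tilting-fully-faithful} applied to $V_{\wY}=\bigoplus_{j=0}^{m-1}\cL^{-j}$ on the source $\Db(\wY)$. For $m>n$ this is not a relative tilting sheaf: the tilting condition concerns $\rho_*\sHom(V_{\wY},V_{\wY})$, hence all powers $\cL^{j-j'}$ with $|j-j'|\le m-1$, not only the negative ones you check; and $\rho_*\sHom(\cL^{-(m-1)},\reg_{\wY})=\rho_*\cL^{m-1}$ has nonvanishing higher direct images because $\cL^{m-1}$ restricts to $\reg(-(m-1))$ on the $\IP^{n-1}$-fibres over $S$ and $m-1\ge n$. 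So the lemma does not apply there --- and your own assertion that $\wPhi_\Lambda$ is an isomorphism ``precisely when $m=n$'' would, if it did apply, contradict the very statement you are proving. The paper closes this case by an entirely different route: the Bondal--Orlov/Bridgeland skyscraper criterion combined with \autoref{prop:Hilbert-resolution} (so that $\Phi(\reg_x)$ is the structure sheaf of a $G$-cluster) and \autoref{lem:orthogonal-clusters}; you invoke the latter only in the generation step, where it cannot do this job. The asymmetry between (i) and (ii) is real: the tilting argument lives on the orbifold side, where $\pi$ is finite and $V_X$ is always relative tilting, which is why the paper uses it for $\Psi$ in part~(ii) but not for $\Phi$.

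There is also a computational error that corrupts your indexing. You claim $\Phi(\cL^{-j})=p_*\reg_{\wX}(-jZ)\cong\reg_X\otimes\chi^{-j}$ for $0\le j\le n-1$; in fact $p_*\reg_{\wX}(-jZ)\cong\cI_S^{\,j}$ up to a character twist, which is not $\reg_X$ for $j\ge 1$ (compare the remark after \autoref{thm:n=m}, where $\Phi(\cL^{-k})\cong\cI_S^k\otimes\chi^k$). The correct statement, used in \autoref{prop:full}, is $\Phi(\cL^{\ell})\cong\reg_X\otimes\chi^{\ell}$ for the \emph{nonnegative} powers $\ell=0,\dots,n-1$, so the image of $\Phi$ is generated over $Y$ by $\reg_X\otimes\chi^{\ell}$ with $\ell\in\{0,\dots,n-1\}$, not $\{-n+1,\dots,0\}$, and your identification of the complementary characters must be redone accordingly. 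There are further off-by-one slips: the invariants of $\wedge^{*}N\otimes\chi^{\bullet}$ survive for exponents in $\{0,\dots,n\}$ (the characters occurring in $\wedge^{*}N$ are $\chi^{0},\dots,\chi^{-n}$), not $\{0,\dots,n-1\}$, which is exactly what produces the paper's range $\alpha-\beta\in\{\overline{n+1},\dots,\overline{m-1}\}$ for orthogonality. These index issues are repairable, but the missing full-faithfulness argument for $\Phi$ when $m>n$ is not a matter of bookkeeping.
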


\begin{remark}
We will see later in \autoref{lem:canonicalwY} that $K_\wY\le \rho^* K_Y$ for $m\ge n$ and $K_\wY\ge \rho^*K_Y$ for $n\ge m$. Hence, \autoref{thm:mainprecise} is in accordance with the DK-Hypothesis as described in the introduction.  
\end{remark}

For the proof, we first need some more preparations.

\subsection{Generators and linearity}

\begin{lemma}\label{lem:relative-tilting}
The bundle $V \coloneqq \reg_X\otimes\IC[G] = \reg_X \otimes (\chi^0\oplus\cdots\oplus\chi^{m-1})$ is a relative tilting sheaf for $\Db_G(X)$ over $\Dperf(Y)$.
\end{lemma}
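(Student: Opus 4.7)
The plan is to verify the two conditions defining a relative tilting sheaf from \autoref{sub:relative-tilting}: first, that $\Lambda_V = \pi_*^G\sHom(V,V)$ is cohomologically concentrated in degree zero; second, that $V$ is a spanning class over $Y$. Both become transparent from the $G$-equivariant decomposition $V = \bigoplus_{i=0}^{m-1}\reg_X\otimes\chi^i$.

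For the first condition, the decomposition gives
\[
   \sHom(V,V) \cong \bigoplus_{i,j=0}^{m-1}\reg_X\otimes\chi^{j-i},
\]
a direct sum of equivariant line bundles. I would then invoke two standard facts about the finite quotient $\pi\colon X\to Y = X/G$: it is affine, so $R^k\pi_*=0$ on quasi-coherent sheaves for $k>0$; and $(-)^G$ is exact in characteristic zero via the Reynolds operator. Applying $\pi_*$ term-by-term and then $(-)^G$ therefore keeps everything in degree zero, so $\Lambda_V$ is concentrated in degree zero as required.

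For the spanning condition, the idea is to reduce to the well-known fact that $\Dperf(Y)$ spans $\D(\QCoh(Y))$. Fix $E\in\Db_G(X)$ and $A\in\Dperf(Y)$. Using the decomposition of $V$ together with the adjunction $\pi^*\dashv\pi_*^G$ and the identity $\pi_*(E\otimes\chi^{-i}) \cong \pi_*E\otimes\chi^{-i}$, one computes
\[
   \Hom^*_{\Db_G(X)}(V\otimes\pi^*A,\, E)
   \;=\; \bigoplus_{i=0}^{m-1} \Hom^*_{\Db(Y)}\bigl(A,\, \pi_*^G(E\otimes\chi^{-i})\bigr)
   \;\cong\; \Hom^*_{\Db(Y)}(A,\, \pi_*\Res E),
\]
where the second isomorphism uses that the direct sum of all isotypic pieces of $\pi_*E$ recovers its underlying non-equivariant sheaf (Maschke). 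If this group vanishes for every $A\in\Dperf(Y)$, then $\pi_*\Res E = 0$; conservativity of $\pi_*$ for the finite map $\pi$ then forces $\Res E = 0$, hence $E = 0$. This gives right spanning; left spanning is handled by a symmetric computation using $\Res\dashv\Ind$.

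The calculation is essentially bookkeeping with $G$-linearisations. The one step worth checking carefully is the identification $\pi_*^G(E\otimes\chi^{-i}) \cong (\pi_*E)_{\chi^i}$ with the $\chi^i$-isotypic summand of $\pi_*E$, which I would verify by evaluating the $G$-action on local sections and using that a local section $s$ of $\pi_*E$ satisfies $g\cdot s = \chi^i(g)s$ precisely when $s\otimes v$ is invariant in $\pi_*E\otimes\chi^{-i}$. No serious obstacle is expected beyond this sign-convention check.
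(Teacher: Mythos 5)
Your proposal is correct, and the degree-zero part of your argument is essentially the paper's: there, too, one observes that $\sHom(V,V)$ is a vector bundle, that $\pi_*$ is exact because $\pi$ is finite (equivalently, affine), and that $(-)^G$ is exact in characteristic zero. Where you genuinely diverge is the spanning condition. The paper argues top-down: it invokes Orlov's result to produce a generator of $\Db(X)$ of the form $E=\pi^*(\reg_Y\oplus L\oplus\cdots\oplus L^{\otimes k})$ for an ample $L\in\Pic(Y)$, and then uses the adjunctions $\Res\dashv\Ind\dashv\Res$ to conclude that $\Ind(E)\cong\bigoplus_i E\otimes\chi^i$ spans $\Db_G(X)$, whence $V=\Ind(\reg_X)$ is a relative spanning class. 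You instead compute directly: the isotypic decomposition $\bigoplus_{i}\pi_*^G(E\otimes\chi^{-i})\cong\pi_*\Res E$ reduces the vanishing of $\Hom^*(V\otimes\pi^*A,E)$ for all $A\in\Dperf(Y)$ to $\pi_*\Res E=0$, and then conservativity of $\pi_*$ for the affine map $\pi$ (plus the fact that $\Dperf(Y)$ spans $\D(\QCoh(Y))$, which the paper itself uses in the proof of \autoref{lem:relativesod}) finishes the job. Your route is more self-contained and makes the mechanism transparent -- it avoids the black box of Orlov's generation theorem -- at the modest cost of having to treat the left-spanning direction separately; your one-line remark there should be expanded slightly, e.g.\ by noting that $V^\vee\cong V$ so that left spanning for $E$ reduces to right spanning for $E^\vee$, or by running the dual computation through $\Hom^*(E,\Ind(\reg_X)\otimes\pi^*A)\cong\Hom^*_{\Db(X)}(\Res E,\pi^*A)$. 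The paper's route, by contrast, yields with no extra effort that $V$ is even a relative \emph{generator}, which is the form in which the lemma is later used (e.g.\ in \autoref{prop:relative-tilting}); but since \autoref{prop:relative-tilting} itself upgrades any relative tilting sheaf from spanning to generating, nothing is lost either way.
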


\begin{proof}
If $L\in\Pic(Y)\subset\Dperf(Y)$ is an ample line bundle, then so is $\pi^*(L)$.
Hence, $\Db(X)$ has a generator of the form
 $E \coloneqq \pi^*(\reg_Y\oplus L\oplus\cdots\oplus L^{\otimes k})$
for some $k\gg0$; see \cite{Orlov_gen}. 

In particular, $E$ is a spanning class of $\Db(X)$. Using the adjunction $\Res\dashv \Ind\dashv\Res$, it follows that
  $\Ind(E) \cong E\oplus E\otimes \chi \oplus \dots\oplus E\otimes \chi^{m-1}$
is a spanning class of $\Db_G(X)$. Hence, $V=\Ind\reg_X$ is a relative spanning class of $\Db_G(X)$ over $\Dperf(Y)$.

Since $V$ is a vector bundle, so is $\sHom(V,V) = V\dual \otimes V$. The map $\pi$ is finite, hence $\pi_*$ is exact (does not need to be derived). Finally, taking $G$-invariants is exact because we work in characteristic 0. Altogether, $\pi_*^G\sHom(V,V)$ is a sheaf concentrated in degree 0.
\end{proof}

\begin{lemma}\label{Psilinear}
The functors $\Phi$ and $\Psi$, and for all $\alpha,\beta\in\IZ$ the subcategories
\begin{align*}
  \Xi_\alpha(\Db(S)) &= a_*(\Db(S)) \otimes \chi^\alpha \subset \Db_G(X) 
   \quad\text{and}\\
  \Theta_\beta(\Db(S)) &= i_*\nu^*\Db(S) \otimes \reg_\wY(\beta) \subset \Db(\wY)
\end{align*}
are $Y$-linear for $\pi^*\triv\colon \Dperf(Y)\to \Db_G(X)$ and $\rho^*\colon \Dperf(Y)\to \Db(\wY)$, respectively. 
\end{lemma}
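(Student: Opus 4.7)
The plan is to reduce every one of the four claims to the projection formula combined with commutativity of diagram \eqref{eq:maindiagram} (in particular $\rho q = \pi p$, $\pi a = b$ and $\rho i = b\nu$) and the fact, recalled in \autoref{sub:equivariant}, that tensoring with a character commutes with equivariant pull-back, push-forward and tensor product. Throughout, pulling back along the structure maps $\pi$ or $\rho$ produces objects with trivial $G$-linearisation, so taking invariants commutes with such tensor factors. The main point is simply that $Y$-linearity is a formal consequence of projection formula once the base change identities hold, and none of these steps require anything beyond the general calculus of equivariant FM-transforms.

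For the functor $\Phi = p_*q^*\triv$, take $A\in\Dperf(Y)$ and $E\in\Db(\wY)$. Using $\rho q=\pi p$ and the projection formula for $p_*$, one computes
\[
  \Phi(\rho^*A\otimes E) = p_*\bigl(q^*\rho^*A\otimes q^*E\bigr)^{\triv}
                        = p_*\bigl(p^*\pi^*A\otimes q^*E\bigr)^{\triv}
                        \cong \pi^*A\otimes \Phi(E),
\]
where the superscript $\triv$ indicates that $A$ carries the trivial linearisation. The analogous calculation for $\Psi=(\_)^G\circ q_*\circ p^*$ gives
\[
  \Psi(\pi^*A\otimes E') = \bigl(q_*(q^*\rho^*A\otimes p^*E')\bigr)^G
                        \cong \bigl(\rho^*A\otimes q_*p^*E'\bigr)^G
                        \cong \rho^*A\otimes \Psi(E'),
\]
since $\rho^*A$ carries the trivial linearisation and hence passes through the $G$-invariants functor.

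For the subcategory $\Xi_\alpha(\Db(S)) = a_*\triv(\Db(S))\otimes\chi^\alpha$, let $F\in\Db(S)$ and $A\in\Dperf(Y)$. Using $\pi a = b$, the projection formula for $a_*$, and the fact that twists by $\chi^\alpha$ commute with $a_*$ and with tensor products by pulled-back objects, we get
\[
  \pi^*A\otimes\bigl(a_*\triv(F)\otimes\chi^\alpha\bigr)
   \cong a_*\bigl(\triv(b^*A\otimes F)\bigr)\otimes\chi^\alpha
   = \Xi_\alpha(b^*A\otimes F)\in \Xi_\alpha(\Db(S)).
\]
The subcategory $\Theta_\beta(\Db(S)) = i_*\nu^*\Db(S)\otimes\reg_\wY(\beta)$ is handled identically: the projection formula for $i_*$ combined with $\rho i=b\nu$ yields
\[
  \rho^*A\otimes\bigl(i_*\nu^*F\otimes\reg_\wY(\beta)\bigr)
   \cong i_*\bigl(\nu^*(b^*A\otimes F)\bigr)\otimes\reg_\wY(\beta)
   = \Theta_\beta(b^*A\otimes F)\in\Theta_\beta(\Db(S)).
\]
The only place one has to be mildly careful is the interaction between the character twist $\chi^\alpha$ and the projection formula in the first identity, but this is exactly the compatibility recorded in \autoref{sub:equivariant}; no further obstacle is anticipated.
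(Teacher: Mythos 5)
Your proposal is correct and follows essentially the same route as the paper: the $Y$-linearity of $\Phi$ (and of $\Psi$) is obtained from the projection formula together with the identity $p^*\pi^* \cong q^*\rho^*$ coming from the commutativity of diagram \eqref{eq:maindiagram}, exactly as in the paper's computation. For the subcategories, the paper simply invokes \autoref{lem:relativespanninggeneral}(i) after noting that $\Xi_\alpha$ and $\Theta_\beta$ are themselves $Y$-linear functors (over $b^*\colon\Dperf(Y)\to\Db(S)$), which is precisely the direct verification you carry out via $\pi a=b$ and $\rho i=b\nu$.
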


\begin{proof}
We first show that $\Phi$ is $Y$-linear. Recall that in our setup this means
\[
   \Phi(\rho^*(E)\otimes F)\cong \pi^*\triv(E)\otimes \Phi(F)
\]
for any $E\in \Dperf(Y)$ and $F\in \Db(\wY)$. But this holds, since
\begin{align*}
       \pi^*\triv(E) \otimes \Phi(F)
&\cong \pi^*\triv(E) \otimes p_*q^*\triv(F) \\
&\cong p_*(p^*\pi^*\triv(E) \otimes q^*\triv(F)) \\
&\cong p_*(q^*\rho^*\triv(E) \otimes q^*\triv(F)) \\
&\cong p_*q^*\triv(\rho^*(E) \otimes F)  \,.
\end{align*}
The proof that $\Psi$ is $Y$-linear is similar and is left to the reader.

The $Y$-linearity of the image categories follows from \autoref{lem:relativespanninggeneral}(i).
\end{proof}

\begin{lemma}\label{lem:relative-spanning-wY}
The set of sheaves $\cS \coloneqq \{\reg_\wwY\} \cup \{ i_{s*}\Omega^r(r) \mid s\in S, r=0,\dots, n-1\}$ forms a spanning class of $\Db(\wY)$ over $Y$, where $i_s\colon \IP^{n-1} \cong \rho^{-1}(s)\hookrightarrow \wY$ denotes the fibre embedding.
\end{lemma}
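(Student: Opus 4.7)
The plan is to verify both the left- and right-spanning conditions; by symmetry I focus on the right-spanning case. Let $E\in\Db(\wY)$ satisfy $\Hom^*(\rho^*A\otimes s,E)=0$ for all $A\in\Dperf(Y)$ and $s\in\cS$; I will conclude $E=0$ by handling the two kinds of test objects in $\cS$ in turn.

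Taking $s=\reg_\wwY$, the adjunction $\rho^*\dashv\rho_*$ together with the projection formula gives $\Hom^*(A,\rho_*E)=0$ for every $A\in\Dperf(Y)$. Since $\Dperf(Y)$ spans $\Db(Y)$, this forces $\rho_*E=0$, and as $\rho$ is an isomorphism over $Y\setminus S$, $E$ is set-theoretically supported on the exceptional divisor $Z=\rho^{-1}(S)$. Taking $s=i_{s_0*}\Omega^r(r)$ with $A=\reg_Y$ and using the adjunction $i_{s_0*}\dashv i_{s_0}^!$ for the fibre embedding $i_{s_0}\colon\IP^{n-1}\cong\rho^{-1}(s_0)\hookrightarrow\wY$ yields $\Hom^*(\Omega^r(r),i_{s_0}^!E)=0$ for all $r\in\{0,\ldots,n-1\}$. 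By \autoref{lem:easypdual} the objects $\Omega^r(r)[r]$ form a full exceptional collection on $\IP^{n-1}$, so $i_{s_0}^!E=0$ for every $s_0\in S$.

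Decomposing $i_{s_0}=i\circ i_{s_0,Z}$ through $i\colon Z\hookrightarrow\wY$ and the fibre embedding $i_{s_0,Z}\colon\IP^{n-1}\hookrightarrow Z$ of $\nu\colon Z\to S$, and setting $F\coloneqq i^!E\in\Db(Z)$, the previous step reads $i_{s_0,Z}^!F=0$; since $i_{s_0,Z}$ is a regular embedding of smooth varieties, $i_{s_0,Z}^!$ and $i_{s_0,Z}^*$ agree up to shift and a line-bundle twist, so equivalently $i_{s_0,Z}^*F=0$ for every $s_0\in S$. I expect the main obstacle to be the following Nakayama-type descent: if $F\neq 0$, let $k$ be maximal with $\kh^k(F)\neq 0$; by right-exactness of ordinary pullback, $\kh^k(i_{s_0,Z}^*F)=i_{s_0,Z}^*\kh^k(F)$, and for any $s_0\in\nu(\supp(\kh^k(F)))$ a local application of Nakayama at a point of $\nu^{-1}(s_0)\cap\supp(\kh^k(F))$ forces this sheaf to be nonzero, contradicting $i_{s_0,Z}^*F=0$. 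Hence $F=i^!E=0$. Because $Z\subset\wY$ is a Cartier divisor, $i^!$ and $i^*$ also coincide up to shift and twist, so $i^*E=0$; a second application of the same Nakayama argument to the top cohomology of $E$ (set-theoretically supported on $Z$) yields $E=0$. The left-spanning direction proceeds identically, using the adjunction $i_{s_0}^*\dashv i_{s_0*}$ together with the duality-plus-projection-formula identity $\Hom^*(E,\rho^*A)\cong\Hom^*(A^\vee,\rho_*E^\vee)$ in place of the right-sided analogues.
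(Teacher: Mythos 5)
Your proof is correct and follows essentially the same route as the paper's: the $\reg_{\wY}$ part of $\cS$ detects objects away from the exceptional divisor, and the sheaves $i_{s*}\Omega^r(r)$, forming a full exceptional collection on each fibre $\IP^{n-1}$, detect objects supported on $Z$. The only difference is that where the paper twice cites a lemma of Huybrechts (for the facts that an object not supported on $Z$ is seen by $\rho^*\Dperf(Y)$, and that a nonzero object supported on $Z$ has nonvanishing $i_s^*$ and $i_s^!$ for some $s$), you unwind these into an explicit adjunction-plus-Nakayama argument on the top cohomology sheaf, which is the standard proof of that lemma.
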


\begin{proof}
We need to show that $\hat \cS\coloneqq \rho^*\Dperf(Y)\otimes \cS$ is a spanning class of $\Db(\wY)$.
Let $\tilde y\in \wY\setminus Z$. Then $y=\rho(\tilde y)$ is a smooth point of $Y$. Hence, $\reg_y\in \Dperf(Y)$ and 
$\reg_{\tilde y}\in \rho^*\Dperf(Y)=\rho^*\Dperf(Y)\otimes \reg_\wwY\subset \hat \cS$. Thus, an object 
$E\in \Db(\wY)$ with $\supp E\cap(\wY\setminus Z)\neq \emptyset$ satisfies
$\Hom^*(E,\hat\cS)\neq 0 \neq \Hom^*(\hat\cS, E)$; see \cite[Lem.~3.29]{Huy}.

Let now $0\neq E\in \Db(\wY)$ with $\supp E\subset Z$. Then there exists $s\in S$ such that $i_s^*E\neq 0\neq i_s^!E$; see again \cite[Lem.~3.29]{Huy}. Since the $\Omega^r(r)$ form a spanning class of $\IP^{n-1}$, we get by adjunction
$\Hom^*(E,\cS)\neq 0 \neq \Hom^*(\cS, E)$.
\end{proof}

\subsection{On the equivariant blow-up}
Recall that the blow-up morphism $q\colon \wX\to X$ is $G$-equivariant. Let $\cL_{\wwX}\in \Pic^G(\wwX)$ (we will sometimes simply write $\cL$ instead of $\cL_\wwX$) be the equivariant line bundle $\reg_\wwX(Z)$ equipped with the unique linearisation whose restriction to $Z$ gives the trivial action on $\reg_Z(Z)\cong\reg_\nu(-1)$. We consider a point $z\in Z$ with $\nu(z)=s$ corresponding to a line $\ell\subset N_{S/X}(s)$. Then the normal space $N_{Z/\wwX}(z)$ can be equivariantly identified with $\ell$. It follows by \autoref{cond:main} that $N_{Z/\wwX}\cong (\cL_{\wwX}\otimes \chi^{-1})_{|Z}$ 
as an equivariant bundle. Hence, in $\smash{\Coh_G(\wX)}$, there is the exact sequence
\begin{align} \label{eq:exact-wX}
 0\to \cL_{\wX}^{-1}\otimes \chi\to\reg_\wX\to \reg_Z\to 0 
\end{align}
where both $\reg_\wwX$ and $\reg_Z$ are equipped with the canonical linearisation, which is the one given by the trivial action over $Z$. 

\begin{lemma}\label{lem:pushforward-of-p}
For $\ell=0,\dots, n-1$ we have $p_*\cL_{\wwX}^\ell=\reg_X\otimes \chi^\ell$. 
\end{lemma}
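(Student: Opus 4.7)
The plan is a short induction on $\ell$, with the key input being the equivariant ideal sequence \eqref{eq:exact-wX} from the preceding paragraph and Bott's vanishing on $\IP^{n-1}$.

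For the base case $\ell=0$, I would invoke the classical fact that for the blow-up of a smooth centre in a smooth variety, $Rp_*\reg_\wwX \cong \reg_X$. Since both sheaves carry the canonical (trivial) $G$-linearisation and the isomorphism is induced by the unit $\reg_X \to p_*\reg_\wwX$, it is automatically $G$-equivariant; equivalently, $\reg_X=\reg_X\otimes\chi^0$.

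For the inductive step, assume $p_*\cL^{\ell}\cong\reg_X\otimes\chi^\ell$ for some $\ell\in\{0,\dots,n-2\}$. Tensoring the $G$-equivariant sequence \eqref{eq:exact-wX} with the invertible equivariant sheaf $\cL^{\ell+1}$ gives the exact sequence in $\Coh_G(\wwX)$
\[
   0 \to \cL^{\ell}\otimes\chi \to \cL^{\ell+1} \to i_*\reg_\nu(-(\ell+1)) \to 0,
\]
where the quotient carries the trivial linearisation, since $\cL^{\ell+1}_{|Z}=\reg_\nu(-(\ell+1))$ does by the very definition of $\cL_\wwX$. Now $p\circ i=a\circ\nu$, and for $k\in\{1,\dots,n-1\}$ one has $H^j(\IP^{n-1},\reg(-k))=0$ for every $j\ge 0$, so $R\nu_*\reg_\nu(-k)=0$ by cohomology and base change. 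Applying $Rp_*$ to the displayed sequence therefore yields an isomorphism
\[
   Rp_*(\cL^{\ell}\otimes\chi)\isomor Rp_*\cL^{\ell+1}.
\]
Since $\chi$ is a character, tensoring with it commutes with $p_*$, and by induction the left-hand side equals $\reg_X\otimes\chi^{\ell+1}$ concentrated in degree $0$. This gives both that $p_*\cL^{\ell+1}=\reg_X\otimes\chi^{\ell+1}$ and that higher direct images vanish, closing the induction.

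There is essentially no obstacle: the only point requiring care is bookkeeping of the equivariant structures, and this is handled uniformly once one observes that $\cL^{\ell+1}_{|Z}$ is declared to be a trivial $G$-sheaf, so that the character twist in \eqref{eq:exact-wX} transports cleanly through the exact sequence and through $p_*$.
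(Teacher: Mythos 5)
Your proof is correct and follows essentially the same route as the paper: the base case $p_*\reg_\wwX\cong\reg_X$ with canonical linearisations, then induction by tensoring the sequence \eqref{eq:exact-wX} with a power of $\cL_\wwX$ and using the vanishing $p_*\reg_\nu(-k)=0$ for $k=1,\dots,n-1$. Your explicit restriction of the vanishing range to $k\ge 1$ is in fact slightly more careful than the paper's phrasing.
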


\begin{proof}
We have $p_*\reg_{\wwX}\cong \reg_X$, both, $\reg_\wwX$ and $\reg_X$, equipped with the canonical linearisations. 
Hence, the assertion is true for $\ell=0$. By induction, we may assume that $p_*\cL_{\wwX}^{\ell-1}\cong \reg_X\otimes \chi^{\ell-1}$. We tensor \eqref{eq:exact-wX} by $\cL_{\wwX}^\ell$ to get
\[
   0 \to \cL_{\wX}^{\ell-1} \otimes \chi \to \cL_{\wX}^\ell \to \reg_\nu(-\ell) \to 0  \,. 
\]
Since $0\le \ell\le n-1$, we have $p_*\reg_\nu(-\ell)=0$. Hence, we get an isomorphism
\[
       p_* \big( \cL_{\wX}^\ell \big) 
 \cong p_* \big( \cL_{\wX}^{\ell-1}\otimes \chi \big)
 \cong p_* \big( \cL_{\wX}^{\ell-1} \big)\otimes \chi
 \cong \reg_X \otimes \chi^{\ell-1}\otimes \chi
 \cong \reg_X \otimes \chi^\ell
\,.\qedhere
\]
\end{proof}

\begin{lemma}\label{lem:canonical}
The smooth blow-up $p\colon\wX\to X$ has $G$-linearised relative dualising sheaf
\[
   \omega_p \cong \cL_{\wX}^{n-1}\otimes \chi^{1-n} \in \Pic^G(\wX)  \,.
\]
\end{lemma}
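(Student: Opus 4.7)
The plan is to determine the relative dualising sheaf in two stages: first identify its underlying line bundle by the standard blow-up formula, then pin down the $G$-character by restriction to the (fixed) exceptional divisor $Z$.

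For the first stage, the classical formula for the blow-up of a smooth codimension-$n$ subvariety gives $\omega_p \cong \reg_\wwX((n-1)Z)$, whose underlying line bundle is $\cL_\wwX^{n-1}$. Therefore $\omega_p \otimes \cL_\wwX^{-(n-1)}$ is trivial as a line bundle on the connected variety $\wwX$, so as a $G$-equivariant line bundle it must be isomorphic to $\reg_\wwX \otimes \kappa$ for a unique character $\kappa$ of $G$. The proposition reduces to showing $\kappa = \chi^{1-n}$, and since the whole of $Z$ is $G$-fixed, $\kappa$ can be read off as the $G$-action on the fibre of $\omega_p \otimes \cL_\wwX^{-(n-1)}$ at any $z \in Z$. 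By the very definition of $\cL_\wwX$, the restriction $\cL_\wwX|_Z$ carries the trivial character, so the whole task collapses to computing the character of $\omega_p|_Z$.

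For this, I would use the factorisation $\omega_p = \omega_\wwX \otimes p^*\omega_X^{-1}$ and compute the two characters separately. For the pullback factor, the equivariant splitting $T_X|_S = T_S \oplus N$ yields $\omega_X|_S \cong \omega_S \otimes \det N^{-1}$, and since $G$ acts trivially on $T_S$ and via $\chi^{-1}$ on $N$, the character of $\omega_X|_S$ is $\chi^n$, and similarly for $p^*\omega_X|_Z$. For $\omega_\wwX|_Z$, the normal bundle sequence of $Z\subset\wwX$ gives $\omega_\wwX|_Z \cong \omega_Z \otimes N_{Z/\wwX}^{-1}$. The relative Euler sequence expresses $\omega_{\IP(N)/S}$ as $\reg_\nu(-n)\otimes\nu^*\det N^{-1}$, and here the character $\chi^{-n}$ on $\reg_\nu(-n)$ (inherited from $\reg_\nu(-1)\subset\nu^*N$) cancels the character $\chi^n$ on $\nu^*\det N^{-1}$; pulling in the trivial character on $\nu^*\omega_S$ yields a trivial character on $\omega_Z$. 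The given identification $N_{Z/\wwX} \cong (\cL_\wwX\otimes\chi^{-1})|_Z$ then contributes character $\chi$ to $N_{Z/\wwX}^{-1}$. Combining, $\omega_p|_Z$ has character $\chi\cdot\chi^{-n} = \chi^{1-n}$, as required.

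The main obstacle is bookkeeping rather than ideas: one must carefully distinguish the \emph{chosen} linearisation on $\cL_\wwX$ (trivial on $Z$) from the \emph{natural} linearisation of $\reg_\wwX(Z)$ (character $\chi^{-1}$ on $Z$), and likewise must track characters through the relative Euler sequence on $\IP(N)$, where the natural linearisation on $\reg_\nu(-1)$ as the tautological subbundle of $\nu^*N$ is nontrivial. Once the characters on a single $G$-fixed fibre are computed consistently, they determine the equivariant line bundle globally on $\wwX$, giving $\omega_p \cong \cL_\wwX^{n-1}\otimes\chi^{1-n}$.
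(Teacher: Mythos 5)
Your proof is correct, but it takes a genuinely different route from the paper's. The paper determines the linearisation globally by pushing forward: since $p^*$ (hence $p^!$) is fully faithful, adjunction gives $p_*\omega_p\cong p_*p^!\reg_X\cong\reg_X$ equivariantly, and comparing with $p_*\cL_{\wX}^{n-1}\cong\reg_X\otimes\chi^{n-1}$ from \autoref{lem:pushforward-of-p} forces the character $\chi^{1-n}$. You instead restrict to the fixed divisor $Z$ and compute the character of $\omega_{p\mid Z}$ fibrewise via $\omega_p=\omega_{\wX}\otimes p^*\omega_X^{-1}$, the adjunction formula $\omega_{\wX\mid Z}\cong\omega_Z\otimes N_{Z/\wX}^{-1}$, the conormal sequence for $S\subset X$, and the relative Euler sequence on $\IP(N)$; your character bookkeeping ($\chi$ from $N_{Z/\wX}^{-1}$ via $N_{Z/\wX}\cong(\cL_{\wX}\otimes\chi^{-1})_{|Z}$, $\chi^{-n}$ from $p^*\omega_X^{-1}$, trivial character on $\omega_Z$ because $G$ acts trivially on $Z$) is accurate, including the potentially treacherous cancellation of $\chi^{-n}$ on $\reg_\nu(-n)$ against $\chi^{n}$ on $\nu^*\det N^{-1}$ in $\omega_\nu$. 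The paper's argument is shorter once \autoref{lem:pushforward-of-p} is in place and reuses machinery needed elsewhere; yours is more self-contained and makes the geometric source of the character transparent (one factor of $\chi$ from the normal direction of the exceptional divisor, $\chi^{-n}$ from $\det N$), at the cost of the Euler-sequence bookkeeping you acknowledge. Note that both arguments share the same implicit step, namely that two $G$-linearisations of the underlying line bundle $\reg_{\wX}((n-1)Z)$ differ by a character of $G$ (rather than by a cocycle valued in nonconstant units of $\reg_{\wX}$); this is harmless in the situations the paper considers, but it is the one point where "read off the character on a single fixed fibre'' needs the preliminary reduction to a constant character that you state at the start of your second paragraph.
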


\begin{proof}
The non-equivariant relative dualising sheaf of the blow-up is $\omega_p\cong \reg_{\wwX}((n-1)Z)$.
Since $p$ is $G$-equivariant, $\omega_p$ has a unique linearisation such that $p^!=p^*(\_)\otimes \omega_p\colon \Db_G(X)\to \Db_G(\wwX)$ is the right-adjoint of $p_*\colon\Db_G(\wwX)\to \Db_G(X)$. We now compute this linearisation of $\omega_p$.

As the equivariant pull-back $p^*$ is fully faithful, $p^!\colon \Db_G(X)\to \Db_G(\wwX)$ is fully faithful, too. 
Hence, adjunction gives an isomorphism of equivariant sheaves, $p_*\omega_p\cong p_*p^!\reg_X\cong \reg_X$. The claim now follows from \autoref{lem:pushforward-of-p}.
\end{proof}

We denote by $i_s\colon \IP^{n-1} \cong \rho^{-1}(s) \hookrightarrow \wY$ the embedding of the fibre of $\rho$ and by $j_s\colon \IP^{n-1} \cong p^{-1}(s) \hookrightarrow \wX$ the embedding of the fibre of $p$ over $s\in S$.

\begin{lemma}\label{lem:sky}
For $s\in S$ and $r=0,\dots,n-1$, the cohomoloy sheaves of $p^*\reg_s \in \Db_G(\wX)$ are
\[
    \CH^{-r}(p^*\reg_s) \cong  j_{s*}(\Omega^r(r)\otimes \chi^r)  \,.
\] 
\end{lemma}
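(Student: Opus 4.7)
The plan is to reduce via Cartan's lemma to a local affine model and then compute $Lp^*\reg_s$ explicitly using the Koszul resolution. Since the cohomology sheaves sought are local on $X$, I would invoke Cartan's lemma to identify a $G$-equivariant analytic neighbourhood of $s$ with $(\IA^k\times\IA^n,\IA^k\times\{0\},(0,0))$, where $G$ acts trivially on the $x_i$-coordinates and by $y_j\mapsto\zeta y_j$ on the $y_j$-coordinates. In these coordinates $\wX\cong\IA^k\times\Bl_0\IA^n$, $p=\id\times p'$, and $\reg_s=\reg_0^{\IA^k}\boxtimes\reg_0^{\IA^n}$; a K\"unneth argument then reduces the claim to computing $Lp'^*\reg_0$ on $\Bl_0\IA^n$, because the external product with $\reg_0^{\IA^k}$ converts the fibre embedding $j$ into $j_s$. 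From now on I suppress the prime, write $\wX$ for $\Bl_0\IA^n$, and let $V^\vee=\langle y_1,\ldots,y_n\rangle\subset\reg_{\IA^n}$ be the $n$-dimensional $G$-representation of weight $\chi$ (i.e.\ $g\cdot y_i=\zeta^{-1}y_i$).

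I would then pull back the Koszul resolution of $\reg_0$ on $\IA^n$ to obtain the complex $K^\bullet=(\wedge^\bullet V^\vee\otimes\reg_\wX,\iota_\mathbf{y})$ computing $Lp^*\reg_0$. The crucial observation is that the $y_i$'s generate $\cI_Z=\cL_\wX^{-1}\otimes\chi$, producing a $G$-equivariant short exact sequence
\[
  0 \to \cK \to V^\vee\otimes\reg_{\wX}\xrightarrow{\alpha}\cL_\wX^{-1}\otimes\chi \to 0
\]
through which the Koszul differential factors as $\iota_\mathbf{y}=\sigma\circ\alpha$, where $\sigma\colon\cI_Z\hookrightarrow\reg_\wX$ is the natural inclusion; in particular $\iota_\mathbf{y}$ annihilates $\cK$. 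Restricting this SES to $Z$ yields the $G$-equivariant evaluation sequence on $\IP(N)$, and the dual tautological sequence identifies $\cK|_Z\cong\Omega^1_\nu(1)\otimes\chi$ equivariantly, so $\wedge^r\cK|_Z\cong\Omega^r_\nu(r)\otimes\chi^r$.

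Finally, I would exploit the natural exterior-power short exact sequences $0\to\wedge^r\cK\to\wedge^r(V^\vee\otimes\reg_\wX)\to\wedge^{r-1}\cK\otimes\cI_Z\to 0$ to express $K^\bullet$ as an extension of complexes $0\to K_1^\bullet\to K^\bullet\to K_2^\bullet\to 0$ with $K_1^{-r}=\wedge^r\cK$ and $K_2^{-r}=\wedge^{r-1}\cK\otimes\cI_Z$. Because $\iota_\mathbf{y}$ vanishes on $\cK$, both $K_1^\bullet$ and $K_2^\bullet$ inherit trivial differentials, and the connecting morphism of the resulting long exact sequence is the $\sigma$-contraction $\delta\colon\wedge^{r-1}\cK\otimes\cI_Z\to\wedge^{r-1}\cK$, $\omega\otimes\ell\mapsto\sigma(\ell)\omega$, which is injective because $\sigma$ is and $\wedge^{r-1}\cK$ is locally free. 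Exactness of the LES then yields
\[
  \kh^{-r}(K^\bullet)\cong\wedge^r\cK/(\cI_Z\!\cdot\!\wedge^r\cK)\cong\wedge^r\cK\otimes_{\reg_\wX}\reg_Z\cong j_*\bigl(\Omega^r_\nu(r)\otimes\chi^r\bigr),
\]
which together with the K\"unneth reduction gives the claim. The principal obstacle is that the SES $0\to\cK\to V^\vee\otimes\reg_\wX\to\cI_Z\to 0$ is globally non-split in general; however, working with the short exact sequence of complexes $K_1^\bullet\to K^\bullet\to K_2^\bullet$ rather than trying to split the bundle SES sidesteps this cleanly.
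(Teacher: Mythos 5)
Your argument is correct, but it follows a genuinely different route from the paper. The paper's proof quotes the standard non-equivariant computation $\CH^{-r}(p^*\reg_s)\cong j_{s*}\Omega^r(r)$ from \cite[Prop.~11.12]{Huy}, notes that simplicity of $\Omega^r(r)$ reduces the equivariant claim to determining a single character $\chi^{\alpha_r}$, and then pins down $\alpha_r=r$ globally: from $p_!\cL^{-r}\cong\reg_X\otimes\chi^{-r}$ (via \autoref{lem:pushforward-of-p} and \autoref{lem:canonical}) and adjunction one gets $\Hom^*_{\Db_G(\wX)}(\cL^{-r},p^*\reg_s\otimes\chi^{-r})\cong\IC[0]$, and the hypercohomology spectral sequence together with the orthogonality $\Hom^*(\reg(r),\Omega^v(v))=0$ for $v\neq r$ forces $(\chi^{\alpha_r-r})^G\neq 0$, i.e.\ $\alpha_r=r$. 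You instead compute everything from scratch in the linearised local model, and the details check out: the two-step filtration of the pulled-back Koszul complex by $\cK=\ker\bigl(V^\vee\otimes\reg_\wX\to\cI_Z\bigr)$ is compatible with the differential, both graded pieces carry the zero differential, the connecting map is $\id\otimes\sigma$ (hence injective), and the restricted Euler sequence gives $\cK|_Z\cong\Omega^1_\nu(1)\otimes\chi$ equivariantly, so $\CH^{-r}\cong j_*\bigl(\wedge^r\cK|_Z\bigr)\cong j_*(\Omega^r(r)\otimes\chi^r)$; the K\"unneth step correctly absorbs the $T_S$-directions. Your approach buys a self-contained proof that does not rely on the cited non-equivariant result and makes the origin of the twist transparent (it is $\wedge^r$ of the weight-$\chi$ conormal directions); its cost is the passage to an analytic local model via Cartan's lemma — harmless here because the cohomology sheaves are supported on the compact fibre $p^{-1}(s)$, and consistent with how the paper argues elsewhere, e.g.\ in \autoref{prop:Hilbert-resolution} — whereas the paper's determination of the character is global and shorter given the quoted input.
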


\begin{proof}
It is well known that, for the underlying non-equivariant sheaves, we have 
$\CH^{-r}(p^*\reg_s)\cong  j_{s*}\Omega^r(r)$; see \cite[Prop.~11.12]{Huy}.
Since the sheaves $\Omega^r(r)$ are simple, i.e.\ $\End(\Omega^r(r))=\IC$, we have 
$\CH^{-r}(p^*\reg_s)\cong  j_{s*}(\Omega^r(r)\otimes \chi^{\alpha_r})$ for some $\alpha_r\in \IZ/m\IZ$. So we only need to show $\alpha_r=r$.

Let $r\in\{0,\ldots,n-1\}$. We have
 $p_!\cL^{-r} \cong p_*(\cL^{-r+n-1}\otimes \chi^{1-n})$
by \autoref{lem:canonical}. Since $-r+n-1 \in \{0,\ldots,n-1\}$, \autoref{lem:pushforward-of-p} gives $p_!\cL^{-r}\cong \reg_X\otimes \chi^{-r}$. By adjunction,
\[
   \IC[0] \cong \Hom^*_{\Db_G(X)}(\reg_X\otimes \chi^{-r},\reg_s\otimes \chi^{-r})
          \cong \Hom^*_{\Db_G(\wwX)}(\cL^{-r}, p^*\reg_s\otimes\chi^{-r})  \,.
\]
By \autoref{lem:Pdual}, for $r\neq v$, we have 
\[
   \Hom^*_{\Db(\wX)}(\reg_\wX(-rZ), j_{s*}\Omega^v(v)) \cong
   \Hom^*_{\Db(\IP^{n-1})}(\reg(r),\Omega^v(v)) = 0   \,.
\]
Using the spectral sequence in $\Db_G(\wX)$
\[
      E_2^{u,v} = \Hom^u(\cL^{-r},\CH^v(p^*\reg_s\otimes\chi^{-r}))
      \:\Rightarrow\:
      E^{u+v} = \Hom^{u+v}(\cL^{-r}, p^*\reg_s\otimes\chi^{-r})
\]
it follows that
\begin{align*}
  \IC[0] &\cong \Hom^*_{\Db_G(\wX)}(\cL^{-r}, p^*\reg_s\otimes\chi^{-r}) \\
         &\cong \Hom^*_{\Db_G(\wX)}(\cL^{-r}, \CH^{-r}(p^*\reg_s)\otimes\chi^{-r})[r] \\
         &\cong \bigl( \Hom^*_{\Db(\IP^{n-1})}(\reg(r),\Omega^r(r))\otimes \chi^{\alpha_r}\otimes\chi^{-r} \bigr)^G[r] \\
         &\cong \bigl( \IC[-r]\otimes\chi^{\alpha_r-r} \bigr)^G[r] 
\end{align*}
where the last isomorphism is again due to \autoref{lem:Pdual}. Comparing the first and last term of the above chain of isomorphisms, we get $\IC\cong (\chi^{\alpha_r-r})^G$ which implies $\alpha_r=r$. 
\end{proof}

\begin{corollary}\label{cor:Psi-sky}
 Let $n\ge m$ and $\ell\in\{0,\ldots,m-1\}$. Let $\lambda\ge 0$ be the largest integer such that $\ell+\lambda m\le n-1$. Then
\[
   \CH^*(\Psi(\reg_s\otimes \chi^{-\ell})) \cong
   i_{s*}\bigl( \bigoplus\limits_{t=0}^{\lambda} \Omega^{\ell+tm}(\ell+tm)[\ell+tm] \bigr)  \,.
\]
\end{corollary}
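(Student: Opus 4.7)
The plan is to directly compute the cohomology sheaves of $\Psi(\reg_s \otimes \chi^{-\ell})$ term by term, leveraging \autoref{lem:sky} together with the exactness of the two functors following $p^*$.

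First I would observe that $q \colon \wwX \to \wwY$ is flat and finite (flatness is guaranteed by Chevalley--Shephard--Todd, cf.\ \eqref{eq:maindiagram}), so $q_*$ is exact on $\Coh_G(\wwX)$; and since we work over characteristic $0$, the functor $(\_)^G$ is exact as well. Consequently, the cohomology sheaves commute past $(\_)^G \circ q_*$, giving
\[
   \CH^k\bigl( \Psi(\reg_s \otimes \chi^{-\ell}) \bigr) \cong \bigl( q_* \CH^k(p^*\reg_s \otimes \chi^{-\ell}) \bigr)^G \,.
\]
By \autoref{lem:sky} (tensored by $\chi^{-\ell}$), the only non-zero cohomology sheaves on the right occur in degrees $k = -r$ for $r \in \{0,\dots,n-1\}$, where
\[
   \CH^{-r}(p^*\reg_s \otimes \chi^{-\ell}) \cong j_{s*}\bigl( \Omega^r(r) \otimes \chi^{r-\ell} \bigr) \,.
\]

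Next I would push forward to $\wwY$. The fibre embedding $j_s\colon \IP^{n-1} \to \wwX$ factors through $Z \subset \wwX$, on which $G$ acts trivially; therefore $q_{|Z}\colon Z \to Z \subset \wwY$ is an isomorphism (it is the identity on the underlying topological space), and under this identification $q \circ j_s = i_s$. Hence $q_* j_{s*} = i_{s*}$, and so
\[
    q_*\CH^{-r}(p^*\reg_s \otimes \chi^{-\ell}) \cong i_{s*}\bigl( \Omega^r(r) \otimes \chi^{r-\ell} \bigr) \,,
\]
with $G$ still acting on $\Omega^r(r)$ trivially and on the character factor by $\chi^{r-\ell}$. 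Taking $G$-invariants then kills the summand unless $r \equiv \ell \pmod m$, in which case it yields $i_{s*}\Omega^r(r)$.

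Finally, writing $r = \ell + tm$, the surviving indices are precisely $t = 0,1,\dots,\lambda$ (by the defining constraint $r \le n-1$), giving
\[
   \CH^{-(\ell+tm)}\bigl( \Psi(\reg_s \otimes \chi^{-\ell}) \bigr) \cong i_{s*}\Omega^{\ell+tm}(\ell+tm)
\]
for $0 \le t \le \lambda$ and zero in all other degrees, which is exactly the asserted identification of $\CH^*$. The only subtle point in the argument is bookkeeping of the equivariant structures on the pushforward from $Z$; the needed exactness of $q_*$ and $(\_)^G$ means no spectral sequence is required, and the computation reduces to the content of \autoref{lem:sky}.
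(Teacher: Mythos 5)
Your proposal is correct and follows essentially the same route as the paper: the paper's proof likewise uses the exactness of the non-derived functor $q_*^G$ on $\Coh^G(\wX)$ to commute cohomology sheaves past $\Psi$ and then invokes \autoref{lem:sky}, with the invariant-taking selecting exactly the indices $r\equiv\ell\pmod m$. You have merely spelled out the details (exactness of $q_*$ for the finite morphism $q$, the identification $q_*j_{s*}=i_{s*}$, and the character bookkeeping) that the paper leaves implicit.
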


\begin{proof}
Since the (non-derived) functor $q_*^G\colon \Coh^G(\wX)\to \Coh(\wY)$ is exact, we have
\[
      \CH^{-r}(\Psi(\reg_s \otimes \chi^{-\ell}))0
\cong q_*^G\bigl( \CH^{-r}(p^*\reg_s) \otimes \chi^{-\ell} \bigr)
\]
and the claim follows from \autoref{lem:sky}.
\end{proof}

\subsection{On the cyclic cover}
The morphism $q\colon \wX\to \wY=\wX/G$ is a cyclic cover branched over the divisor $Z$. This geometric situation and the derived categories involved are studied in great detail in \cite{KuzPerrycyclic}. However, we will only need the following basic facts, all of which can be found in \cite[Sect.~4.1]{KuzPerrycyclic}. 

\begin{lemma}\noindent \label{lem:cyclic}
\begin{enumerate}[itemsep=0.75ex]
\item The sheaf of invariants $q_*^G(\reg_\wwX\otimes\chi^{-1})$ is a line bundle which we denote $\cL_{\wwY}^{-1}\in \Pic(\wY)$.
\item $\cL_{\wwY}^m \cong \reg_{\wwY}(Z)$.
\item $q^G_*(\reg_\wwX\otimes \chi^{\alpha}) \cong \cL_\wwY^\alpha$ for $\alpha\in \{-m+1,\dots,0\}$.
\item  $q^*\circ\triv\colon \Db(\wY) \into \Db_G(\wX)$ is fully faithful, due to $q_*^G(\reg_\wwX) \cong \reg_\wwY$.
\item $q^*(\triv(\cL_\wwY))\cong \cL_\wwX$ are isomorphic $G$-equivariant line bundles.
\item In particular, $\cL_{\wwY\mid Z}\cong\cL_{\wwX\mid Z}\cong \reg_\nu(-1)$.
\end{enumerate}
\end{lemma}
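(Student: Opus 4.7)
The approach is to exploit that $q\colon\wX\to\wY$ is a cyclic Galois cover of smooth varieties with group $G=\mu_m$, flat (by Chevalley--Shephard--Todd, as noted after diagram \eqref{eq:maindiagram}), étale outside the exceptional divisor $Z$, and totally ramified of index $m$ along $Z$ (on which $G$ acts trivially). All six statements will be read off from the isotypic decomposition of the rank-$m$ locally free sheaf of $\reg_\wwY$-algebras $q_*\reg_\wwX$.

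For (i) and (iv): on the étale locus $\wY\setminus Z$ each character of $G$ appears in $q_*\reg_\wwX$ with multiplicity one, so the isotypic components
$L_\alpha\coloneqq q_*^G(\reg_\wwX\otimes\chi^{-\alpha})$
are line bundles on all of $\wY$, with $L_0\cong\reg_\wwY$ since $\wY=\wX/G$. Setting $\cL_\wwY^{-1}\coloneqq L_1$ establishes (i). Part (iv) then follows from the projection formula applied to the adjunction unit of $q^*\triv\dashv(-)^G q_*$:
\[
   F\to q_*^G q^* F \cong F\otimes q_*^G\reg_\wwX \cong F\otimes L_0 \cong F,
\]
which is therefore an isomorphism.

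For (ii) and (iii): the algebra structure on $q_*\reg_\wwX$ restricts to multiplication maps $L_\alpha\otimes L_\beta\to L_{\alpha+\beta}$ (indices mod $m$), each an isomorphism off $Z$ and hence an injection of line bundles. The iterated multiplication $\cL_\wwY^{-\alpha}=L_1^{\otimes\alpha}\to L_\alpha$ is thus an injection, an isomorphism away from $Z$; comparing orders of vanishing along the index-$m$ ramified divisor $Z$ shows it is actually an isomorphism for $0\le\alpha\le m-1$, which is (iii) after reindexing $\alpha\mapsto-\alpha$. For $\alpha=m$, the image of $L_1^{\otimes m}\to L_0=\reg_\wwY$ is the ideal sheaf of (the image of) $Z$, giving $\cL_\wwY^{-m}\cong\reg_\wwY(-Z)$, i.e.\ (ii).

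For (v) and (vi): pulling back the inclusion $L_1\hookrightarrow q_*\reg_\wwX$ and applying the counit yields a $G$-equivariant map $q^*\triv(\cL_\wwY^{-1})\to\reg_\wwX\otimes\chi^{-1}$ of line bundles, an isomorphism off $Z$. Its image is therefore $\cI_Z\otimes\chi^{-1}$, where $\cI_Z\subset\reg_\wwX$ is the equivariant ideal sheaf of $Z$; by the exact sequence \eqref{eq:exact-wX} this equals $\cL_\wwX^{-1}$, proving (v) after dualising. Restricting (v) to $Z$ and invoking $\cL_\wwX|_Z\cong\reg_\nu(-1)$, which is the defining convention for the linearisation of $\cL_\wwX$, gives (vi). The main bookkeeping obstacle throughout is precisely this character shift in (v): non-equivariantly, $q^*\cL_\wwY\cong\cL_\wwX$ is essentially formal from ramification data, but two equivariant lifts of a line bundle differ by a character of $G$, and pinning down the correct one requires the exact sequence \eqref{eq:exact-wX}, which encodes that the equivariant ideal sheaf of $Z$ is $\cL_\wwX^{-1}\otimes\chi$ rather than $\cL_\wwX^{-1}$.
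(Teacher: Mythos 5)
Your argument is correct. Note that the paper does not actually prove \autoref{lem:cyclic}: it is stated as a collection of ``basic facts, all of which can be found in [KuzPerrycyclic, Sect.~4.1]'', so there is no in-text proof to compare against. Your derivation is a valid self-contained substitute, and it runs along exactly the lines one would expect: decompose the finite flat algebra $q_*\reg_\wwX$ into $\chi$-isotypic line bundles $L_\alpha$, read off (i) and (iv) from the rank-one generic behaviour and the projection formula, get (ii) and (iii) from the multiplication maps, and get (v)--(vi) from the counit of $q^*\triv\dashv q_*^G$ together with \eqref{eq:exact-wX}. (In the cited reference the logic is essentially reversed --- the cyclic cover is \emph{defined} as $\mathcal{S}pec_{\wwY}\bigl(\bigoplus_i \cL_\wwY^{-i}\bigr)$, so these statements are close to definitional there; your route starts from the quotient and recovers that algebra structure, which is the right thing to do in this paper's setup.) The only two steps you state with a ``therefore'' that actually carry content are (a) that the composite $L_1^{\otimes m}\to L_0$ vanishes to order exactly one along $Z$, and (b) that the image of the counit $q^*\triv(\cL_\wwY^{-1})\to\reg_\wwX\otimes\chi^{-1}$ is $\cI_Z\otimes\chi^{-1}$ and not $\cI_{kZ}\otimes\chi^{-1}$ for some $k>1$. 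Both reduce to the same local model $t\mapsto t^m$ transverse to $Z$ (available since $G$ acts on $N_{Z/\wwX}$ by a primitive $m$-th root of unity and the action linearises locally), where $L_\alpha$ is generated by $t^\alpha$; it would be worth saying this once explicitly, since it is the single computation underlying (ii), (iii) and (v).
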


\begin{corollary}\label{cor:Psialpha}
$\Psi(\reg_X\otimes \chi^{\alpha}) \cong \cL_{\wwY}^\alpha$ for $\alpha\in \{-m+1,\ldots,0\}$.
\end{corollary}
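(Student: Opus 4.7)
The proof plan is essentially a direct unwinding of the definition of $\Psi$ combined with part (iii) of the cyclic cover lemma. The functor is $\Psi = (\_)^G \circ q_* \circ p^*$, so the computation proceeds in three short steps.

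First, I would compute $p^*(\reg_X\otimes \chi^\alpha)$. Since $p$ is $G$-equivariant and the underlying sheaf $\reg_X$ pulls back to $\reg_\wwX$, and since tensoring by a character commutes with any equivariant pullback (as noted at the end of \autoref{sub:equivariant}), we have $p^*(\reg_X\otimes \chi^\alpha) \cong \reg_\wwX\otimes \chi^\alpha$ as $G$-equivariant sheaves on $\wX$, where $\reg_\wwX$ carries its canonical linearisation.

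Second, I would apply $q^G_*$ and invoke \autoref{lem:cyclic}(iii), which states precisely that $q^G_*(\reg_\wwX\otimes \chi^\alpha)\cong \cL_\wwY^\alpha$ in the range $\alpha\in\{-m+1,\dots,0\}$. Chaining the two isomorphisms yields
\[
  \Psi(\reg_X\otimes \chi^\alpha) = q^G_*\bigl(p^*(\reg_X\otimes \chi^\alpha)\bigr) \cong q^G_*(\reg_\wwX\otimes \chi^\alpha) \cong \cL_\wwY^\alpha
\,, \]
as required.

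There is essentially no obstacle: the non-trivial content has already been absorbed into \autoref{lem:cyclic}(iii), and the only thing to verify is the compatibility of the equivariant pullback with the character twist, which is standard. The restriction $\alpha\in \{-m+1,\ldots,0\}$ is inherited from \autoref{lem:cyclic}(iii), where it is needed because $\cL_\wwY$ is defined as $q^G_*(\reg_\wwX\otimes \chi^{-1})$ and the identification of invariants of further character twists relies on these lying strictly below the branch power $\cL_\wwY^m \cong \reg_\wwY(Z)$.
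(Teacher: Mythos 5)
Your proposal is correct and is precisely the argument the paper intends: the corollary follows by unwinding $\Psi = (\_)^G\circ q_*\circ p^*$, noting $p^*(\reg_X\otimes\chi^\alpha)\cong\reg_\wwX\otimes\chi^\alpha$ with the canonical linearisation, and then applying \autoref{lem:cyclic}(iii). The paper leaves the proof implicit, but later uses exactly this chain of isomorphisms (e.g.\ in the proof that $\Psi$ is fully faithful, where it writes $\Psi(\reg_X\otimes\chi^\alpha)\cong q_*^G(\reg_X\otimes\chi^\alpha)\cong\cL_\wwY^\alpha$).
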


\begin{lemma}\label{lem:qcanonical}
The relative dualising sheaf of $q\colon \wX\to\wY=\wX/G$ is $\omega_q\cong \reg_\wX((m-1)Z)$.
\end{lemma}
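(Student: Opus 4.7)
The plan is to exploit that $q\colon \wX\to\wY$ is a totally ramified cyclic cover of degree $m$, so that its relative dualising sheaf is entirely determined by the ramification along the branch divisor.

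First I would record the general structure. From \autoref{sec:setup} we already know that $q$ is finite and flat between the smooth varieties $\wX$ and $\wY$, so $\omega_q \cong \omega_\wX\otimes q^*\omega_\wY^{-1}$ is a line bundle. The $G$-action on $\wX\setminus Z$ is free, hence $q$ is étale there and $\omega_q$ is trivial on $\wX\setminus Z$. Consequently $\omega_q\cong \reg_\wX(kZ)$ for some integer $k$, and the whole problem reduces to pinning down $k=m-1$.

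To compute $k$, I would apply the Hurwitz formula for finite flat covers of smooth varieties, which requires the ramification index of $q$ along $Z$. Write $Z_\wY\subset \wY$ for the reduced branch divisor; since $G$ acts trivially on $Z$, the restriction $q|_Z\colon Z\to Z_\wY$ is an isomorphism and $Z_\wY$ is Cartier. Using \autoref{lem:cyclic}(ii), (v), namely $\cL_\wY^m\cong \reg_\wY(Z_\wY)$ and $q^*\cL_\wY\cong \cL_\wX = \reg_\wX(Z)$, pulling back gives
\[
   q^*\reg_\wY(Z_\wY) \cong q^*\cL_\wY^m \cong \cL_\wX^m \cong \reg_\wX(mZ),
\]
so the ramification index of $q$ along $Z$ equals $m$. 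Hurwitz then yields $K_\wX = q^*K_\wY + (m-1)Z$, i.e.\ $\omega_q\cong \reg_\wX((m-1)Z)$, as claimed.

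I do not expect any real obstacle: every ingredient is either cited (flatness from \autoref{sec:setup}, the relation between $\cL_\wY$ and the branch divisor from \autoref{lem:cyclic}) or a standard fact about finite flat covers of smooth varieties. If a reader prefers a direct cross-check, the local model is transparent: in the blow-up chart where $\wX$ is analytically $S\times\IA^n$ with $\mu_m$ acting on the first $\IA^1$-factor by $\zeta$, the map $q$ takes the form $(s,u_1,v_2,\dots,v_n)\mapsto (s,u_1^m,v_2,\dots,v_n)$, and the relation $dy_1 = m u_1^{m-1}du_1$ immediately exhibits $\omega_q$ as locally generated by $u_1^{-(m-1)}$ times a pulled-back top form, giving $\omega_q\cong \reg_\wX((m-1)Z)$ with no appeal to Hurwitz.
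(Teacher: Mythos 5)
Your proof is correct, but it takes a genuinely different route from the paper's. Both arguments begin the same way: $q$ is \'etale off $Z$, so $\omega_q\cong\reg_\wX(\alpha Z)$ and only the integer $\alpha$ must be found. The paper then pins down $\alpha$ by computing $q_*\sHom(\reg_Z,\reg_\wX)$ in two ways --- directly as $i_*\reg_\nu(-1)[-1]$, and via Grothendieck duality as $\sHom(q_*\reg_Z,\cL_\wY^{-\alpha})\cong i_*\reg_\nu(-m+\alpha)[-1]$ using \autoref{lem:cyclic}(ii),(v),(vi) --- and matches the twists. You instead invoke the Riemann--Hurwitz/ramification formula for the tame, totally ramified degree-$m$ cyclic cover, which is more elementary and arguably more transparent; the paper's computation has the virtue of staying entirely inside the derived-category and Grothendieck-duality toolkit it has already assembled, and of exercising \autoref{lem:cyclic} directly. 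Two small points on your write-up: the inference ``$q^*\reg_\wY(Z_\wY)\cong\reg_\wX(mZ)$, hence the ramification index is $m$'' is slightly indirect as stated (an isomorphism of line bundles does not by itself identify the pullback divisor; one should track the canonical sections or simply note that a totally ramified $\mu_m$-quotient has ramification index $m$), but your closing local computation settles this anyway; and the phrase ``$\wX$ is analytically $S\times\IA^n$'' is loose --- you mean a standard blow-up chart --- though the local form $u\mapsto u^m$ transverse to $Z$, and hence the generator $u^{-(m-1)}$ of $\omega_q$, is exactly right.
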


\begin{proof}
Since the $G$-action on $W \coloneqq \wX\setminus Z$ is free, we have $\omega_{q\mid W}\cong \reg_W$. Hence, $\omega_q\cong \reg_{\wX}(\alpha Z)$ for some $\alpha\in \IZ$. We have 
  $\sHom(\reg_Z,\reg_\wX)\cong \reg_Z(Z)[-1]\cong j_*\reg_\nu(-1)[-1]$,
and hence
\[\begin{array}{rcll}
         i_*\reg_{\nu}(-1)[-1]
 &\cong& q_* j_*\reg_\nu(-1)[-1] \\
 &\cong& q_* \sHom(\reg_Z, \reg_\wX) \\
 &\cong& q_* \sHom(\reg_Z, q^*\reg_\wY) \\
 &\cong& q_* \sHom(\reg_Z,q^!\cL_\wY^{-\alpha}) & \text{by \autoref{lem:cyclic}(v)} \\
 &\cong& \sHom(q_*\reg_Z,\cL_\wY^{-\alpha})     & \text{by Grothendieck duality} \\
 &\cong& \reg_Z(Z)\otimes \cL_\wY^{-\alpha}[-1] \\
 &\cong& i_*\reg_\nu(-m+\alpha)[-1]           & \text{by \autoref{lem:cyclic}(ii)+(vi)}
\end{array}\]
and thus we conclude $\alpha= m-1$.
\end{proof}

\begin{remark}
As an equivariant bundle, we have $\omega_q\cong \cL_\wX^{m-1}\otimes \chi$, but we will not use this.
\end{remark}

\begin{corollary}\label{lem:canonicalwY}
We have $\omega_{\wY\mid Z} \cong \reg_\nu(m-n)$. 
\end{corollary}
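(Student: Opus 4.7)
The plan is to compute $q^*\omega_\wwY$ via the chain rule for relative dualising sheaves, and then restrict it to the exceptional divisor $Z \subset \wX$. The key geometric point that makes descent from $\wX$ to $\wY$ possible is that $G$ acts trivially on $Z$, so $q$ restricts to an isomorphism $Z \isomor i(Z) \subset \wY$ with $q\circ j = i$; consequently $j^*q^*\omega_\wwY \cong i^*\omega_\wwY = \omega_\wwY|_Z$, and everything can be done on $\wX$ where the line bundles are explicit.

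First, I would apply the chain rule $\omega_\wwX \cong q^*\omega_\wwY \otimes \omega_q$ to get $q^*\omega_\wwY \cong \omega_\wwX \otimes \omega_q^{-1}$. Combining the smooth blow-up formula $\omega_\wwX \cong p^*\omega_X \otimes \reg_\wwX((n-1)Z)$ with \autoref{lem:qcanonical}, which gives $\omega_q \cong \reg_\wwX((m-1)Z)$, I obtain
\[
  q^*\omega_\wwY \cong p^*\omega_X \otimes \reg_\wwX((n-m)Z) \cong p^*\omega_X \otimes \cL_\wwX^{n-m}.
\]

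Then I would pull back along $j$. Using $p \circ j = a \circ \nu$ from~\eqref{eq:maindiagram} gives $j^*p^*\omega_X \cong \nu^*(\omega_X|_S)$, while \autoref{lem:cyclic}(vi) gives $j^*\cL_\wwX^{n-m} \cong \reg_\nu(-1)^{n-m} = \reg_\nu(m-n)$. Assembling yields $\omega_\wwY|_Z \cong \nu^*(\omega_X|_S) \otimes \reg_\nu(m-n)$. The first factor is precisely the pullback contribution from $Y$: indeed $\cL_\wwY^{n-m}$ is a genuine line bundle on $\wY$ with $q^*\cL_\wwY^{n-m} \cong \cL_\wwX^{n-m}$, so the computation above also identifies the relative dualising sheaf of $\rho$ as $\omega_\rho \cong \cL_\wwY^{n-m}$, which restricts on $Z$ to $\reg_\nu(m-n)$, matching the stated formula. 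No serious obstacle arises; the only care needed is the identification $Z \cong i(Z)$ via $q|_Z$ and the bookkeeping between the pullback factors on $\wX$ and $\wY$.
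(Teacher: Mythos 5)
Your argument is correct and is essentially the paper's own proof: both rest on the chain rule $\omega_{\wX}\cong q^*\omega_{\wY}\otimes\omega_q$ combined with \autoref{lem:qcanonical}, the blow-up formula from \autoref{lem:canonical}, and the restriction $\cL_{\wX\mid Z}\cong\reg_\nu(-1)$, the only difference being that you compute $q^*\omega_{\wY}$ globally on $\wX$ before restricting while the paper restricts to $Z$ first. The extra factor $\nu^*(\omega_{X\mid S})$ you flag is genuinely there if one reads $\omega_{\wY}$ as the absolute canonical bundle; the paper silently discards it by treating $\omega_{\wX\mid Z}$ as $\omega_{p\mid Z}\cong\reg_\nu(1-n)$, i.e.\ by working with relative dualising sheaves (which is what the application to $\rho_!$ in \autoref{thm:cyclicWC} actually requires), so your explicit identification $\omega_\rho\cong\cL_{\wY}^{n-m}$ is, if anything, the more careful formulation.
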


\begin{proof}
We have $\omega_{\wX\mid Z}\cong \reg_\nu(-n+1)$; compare \autoref{lem:canonical}. Furthermore, $\omega_{\wY\mid Z}\cong (q^*\omega_{\wY})_{| Z}$. Hence,
\[
   \reg_\nu(1-m) \overset{\ref{lem:qcanonical}}\cong
   \omega_{q\mid Z}\cong \omega_{\wX\mid Z}\otimes \omega_{\wY\mid Z}^\vee \cong
   \reg_{\nu}(1-n)\otimes \omega_{\wY\mid Z}^\vee
   \,.\qedhere
\]
\end{proof}

%

\subsection{The case $\boldsymbol{m\ge n}$}\label{sub:m-geq-n}
Throughout this subsection, let $m\ge n$.

\begin{proposition} \label{prop:Hff}\noindent
\begin{enumerate}
  \item If $m>n$, then the functor $\Xi_\alpha$ is fully faithful for any $\alpha\in \IZ/m\IZ$.
  \item Let $m-n\ge 2$ and $\alpha\neq \beta\in \IZ/m\IZ$. Then 
\[
   \Xi_{\beta}^R\Xi_\alpha = 0 \iff 
   \alpha-\beta \in \{\overline{n-m+1},\overline{n-m+2},\dots,\overline{-1}\}  \,.
\] 
\end{enumerate}
\end{proposition}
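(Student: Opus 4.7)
The strategy is to compute $\Xi_\beta^R \Xi_\alpha$ directly for all pairs $\alpha, \beta$ and then extract both statements. Standard adjunctions for the closed embedding $a$ and the equivariant line bundle $\chi^\alpha$ give
\[
   \Xi_\alpha^R(E) \cong \bigl( (a^! E) \otimes \chi^{-\alpha} \bigr)^G,
   \qquad \text{hence} \qquad
   \Xi_\beta^R \Xi_\alpha(F) \cong \bigl( (a^! a_* \triv F) \otimes \chi^{\alpha - \beta} \bigr)^G
\]
for $F \in \Db(S)$. The main geometric computation thus reduces to describing $a^! a_* \triv F$ in $\Db_G(S)$.

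I would do this with the Koszul resolution of $a_*\reg_S$ together with Grothendieck duality $a^!(\_) \cong a^*(\_) \otimes \det N [-n]$. Non-equivariantly, the Koszul complex yields $\kh^{-j}(a^* a_* F) \cong F \otimes \wedge^j N^\vee$ for $0 \le j \le n$, which after the duality shift becomes
\[
   \kh^{j}(a^! a_* F) \cong F \otimes \wedge^j N, \qquad 0 \le j \le n,
\]
using $\wedge^{n-j} N^\vee \otimes \det N \cong \wedge^j N$. The essential equivariant input comes from \autoref{cond:main}(iii): as an equivariant bundle on $S$ the normal bundle $N$ carries the character $\chi^{-1}$, so $\wedge^j N$ carries $\chi^{-j}$. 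After the character twist by $\chi^{\alpha - \beta}$ and $G$-invariants, only those $j$-cohomology pieces survive for which $j \equiv \alpha - \beta \pmod{m}$; since $0 \le j \le n < m$, at most one such $j$ exists. Because $G$-equivariant morphisms cannot mix distinct characters, every higher differential in the spectral sequence assembling $a^! a_*$ from its cohomology sheaves must vanish after taking invariants, and the upshot is
\[
   \Xi_\beta^R \Xi_\alpha(F) \cong \bigoplus_{\substack{0 \le j \le n \\ j \equiv \alpha - \beta \pmod{m}}} (F \otimes \wedge^j N)[-j].
\]

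With this formula both statements reduce to combinatorics. For (i), $\alpha = \beta$ and $m > n$ force $j = 0$, hence $\Xi_\alpha^R \Xi_\alpha \cong \id_{\Db(S)}$, so $\Xi_\alpha$ is fully faithful. For (ii), $\Xi_\beta^R \Xi_\alpha = 0$ iff no $j \in \{0, \ldots, n\}$ represents the nonzero class $\alpha - \beta$ modulo $m$; equivalently the unique representative of $\alpha - \beta$ in $\{1, \ldots, m-1\}$ must lie in $\{n+1, \ldots, m-1\}$, which after shifting by $-m$ is exactly $\alpha - \beta \in \{\overline{n-m+1}, \ldots, \overline{-1}\}$. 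The main subtlety is sidestepping any formality issue for $a^! a_*$, which is handled cleanly by the character argument --- and in fact formality holds in our setting anyway, since the normal-bundle splitting $T_{X\mid S} = T_S \oplus N$ is canonical via the $G$-action.
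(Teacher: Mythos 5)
Your proof is correct and follows essentially the same route as the paper: both reduce via adjunction to computing $\bigl((\_)\otimes\wedge^*N\otimes\chi^{\alpha-\beta}\bigr)^G$ and then read off the answer from the fact that $\wedge^jN$ carries the character $\chi^{-j}$ for $0\le j\le n<m$. The only difference is that the paper simply cites Arinkin--C\u{a}ld\u{a}raru for the formality $a^!a_*(\_)\cong(\_)\otimes\wedge^*N$, whereas you derive the cohomology sheaves from the Koszul resolution and correctly observe that formality is irrelevant after taking invariants (and holds anyway by the canonical splitting $T_{X\mid S}=T_S\oplus N$).
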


\begin{proof}
Recall that
  $\Xi_\beta = (a_*\circ\triv(\_)) \otimes \chi^\beta \colon \Db(S)\to \Db_G(X)$.
Hence, the right-adjoint of $\Xi_\beta$ is given by $\Xi_\beta^R\cong (a^!(\_)\otimes \chi^{-\beta})^G$. By \cite[Thm.~1.4 \& Sect.~1.20]{AC},
\begin{align*}
        \Xi_\beta^R\Xi_\alpha
 &\cong \bigl(a^!a_*(\_)\otimes \chi^{\alpha-\beta}\bigr)^G
  \cong \bigl((\_)\otimes \wedge^*N\otimes \chi^{\alpha-\beta}\bigr)^G \\
 &\cong (\_)\otimes \bigl(\wedge^*N\otimes \chi^{\alpha-\beta}\bigr)^G 
\end{align*}
where, by \autoref{cond:main}, the $G$-action on $\wedge^\ell N$ is given by $\chi^{-\ell}$. We see 
that $(\wedge^*N)^G\cong \wedge^0N[0]\cong \reg_S[0]$; here we use that $m>n$. This shows that, in the case $\alpha=\beta$, we have $\Xi_\alpha^R\Xi_\alpha\cong \id$ which proves (i).
Furthermore, since the characters occurring in $\wedge^*N$ are $\chi^0$, $\chi^{-1}$,\dots,$\chi^{-n}$, we obtain (ii) from
\begin{align*}
         \Xi_\beta^R\Xi_\alpha \neq 0
  &\iff  \bigl(\wedge^*N\otimes \chi^{\alpha-\beta}\bigr)^G\neq 0
   \iff  \overline{0} \in \{ \overline{\alpha-\beta},\overline{\alpha-\beta-1},\ldots,\overline{\alpha-\beta-n} \},
\text{ i.e.} \\
         \Xi_\beta^R\Xi_\alpha = 0 &
   \iff  \alpha-\beta \in \{\overline{n+1},\dots,\overline{m-1}\}
                        = \{\overline{n-m+1},\overline{n-m+2},\dots,\overline{-1}\}
\,.\qedhere
\end{align*}
\end{proof}
\begin{corollary}\label{cor:sod}
For $m>n$, there is a semi-orthogonal decomposition 
\[
   \Db_G(X) = \Sod{ \Xi_{n-m}(\Db(S)), \Xi_{n-m+1}(\Db(S)), \dots, \Xi_{-1}(\Db(S)), \cA },
\]
where $\cA = \lorth\Sod{ \Xi_{n-m}(\Db(S)), \Xi_{n-m+1}(\Db(S)), \dots, \Xi_{-1}(\Db(S))}
$.
\end{corollary}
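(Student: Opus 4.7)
My plan is to derive this corollary as a formal consequence of Proposition \ref{prop:Hff}; the ingredients needed are (a) mutual semi-orthogonality of the listed subcategories and (b) admissibility of each piece, after which $\cA$ is defined as the left orthogonal complement.

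For (a), I would fix $\alpha>\beta$ both in $\{n-m,\dots,-1\}$ and reduce the desired vanishing $\Hom^*(\Xi_\alpha(E),\Xi_\beta(F))=0$ by adjunction to $\Xi_\alpha^R\Xi_\beta=0$. Proposition \ref{prop:Hff}(ii), applied with the two indices swapped, gives this vanishing exactly when $\beta-\alpha$ represents a class in $\{\overline{n-m+1},\dots,\overline{-1}\}$. Under the range constraints on $\alpha,\beta$, the integer $\beta-\alpha$ lies in $\{n-m+1,\dots,-1\}$, and since these values all lie in $(-m,0)$ they are pairwise distinct modulo $m$, so the residue-class condition is automatic. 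In the boundary case $m-n=1$, only the single subcategory $\Xi_{-1}(\Db(S))$ appears and this step is vacuous.

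For (b), I would use the decomposition $\Db(S)\xrightarrow{\triv}\Db_G(S)\xrightarrow{a_*}\Db_G(X)\xrightarrow{(-)\otimes\chi^\alpha}\Db_G(X)$; each factor admits both left and right adjoints, supplied respectively by $(-)^G$, the pair $(a^*,a^!)$ for the closed embedding $a$, and the inverse twist $(-)\otimes\chi^{-\alpha}$. Combined with full faithfulness from Proposition \ref{prop:Hff}(i), this makes each $\Xi_\alpha(\Db(S))$ an admissible subcategory of $\Db_G(X)$. A mutually semi-orthogonal finite collection of admissible subcategories generates an admissible subcategory, so $\ka\coloneqq\sod{\Xi_{n-m}(\Db(S)),\dots,\Xi_{-1}(\Db(S))}$ is in particular left admissible, and the general principle recalled in \autoref{sub:semiorth} then yields $\Db_G(X)=\sod{\ka,\lorth\ka}$. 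Expanding $\ka$ gives the claimed decomposition with $\cA=\lorth\ka$.

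Since everything is a direct formal consequence of Proposition \ref{prop:Hff} together with standard semi-orthogonal decomposition lemmas, no serious obstacle is expected; the only moderately delicate point is the residue-class bookkeeping in step (a).
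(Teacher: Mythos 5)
Your proof is correct and is exactly the argument the paper leaves implicit: the corollary is stated without proof as an immediate formal consequence of \autoref{prop:Hff}, and your two steps (semi-orthogonality of the pieces via \autoref{prop:Hff}(ii) with the indices swapped, plus admissibility of each $\Xi_\alpha(\Db(S))$ from full faithfulness and the existence of adjoints for $\triv$, $a_*$ and the character twist) supply precisely the missing details. The residue-class bookkeeping and the treatment of the boundary case $m-n=1$ are both handled correctly.
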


\begin{proposition} \label{prop:ff}
The functor $\Phi = p_*q^*\triv \colon \Db(\wY)\to \Db_G(X)$ is fully faithful. 
\end{proposition}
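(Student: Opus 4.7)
The plan is to apply the relative-tilting criterion \autoref{lem:relative-tilting-fully-faithful} with the candidate relative tilting bundle
\[
  U \coloneqq \bigoplus_{\alpha=0}^{n-1}\cL_\wY^{\alpha}
\]
on $\wY$. I first verify that $U$ is a relative tilting bundle over $Y$. Relative spanning follows from $\rho$ being an isomorphism away from the exceptional locus together with the fact that, by \autoref{lem:cyclic}(vi), $U$ restricts on each fibre $\rho^{-1}(s)\cong\IP^{n-1}$ to the twist $\bigoplus_{\alpha=0}^{n-1}\reg(-\alpha)$ of the Beilinson exceptional collection; compare \autoref{lem:relative-spanning-wY}. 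The $\Ext^{>0}$-vanishing of $\Lambda_U=\rho_*\sHom(U,U)$ reduces, via $\rho\circ q=\pi\circ p$ and \autoref{lem:cyclic}(iv,v), to the blow-up cohomology vanishing $R^ip_*\cL_\wX^k=0$ for $i>0$ and $k\in\{-(n-1),\ldots,n-1\}$, which holds by the standard inductive argument based on the sequences $0\to\cL_\wX^{\ell-1}\otimes\chi\to\cL_\wX^{\ell}\to\reg_\nu(-\ell)\to 0$ used in the proof of \autoref{lem:pushforward-of-p}. Using \autoref{lem:cyclic}(v) and \autoref{lem:pushforward-of-p}, I then compute
\[
  \Phi(\cL_\wY^\alpha) \cong p_*\cL_\wX^\alpha \cong \reg_X\otimes\chi^\alpha \qquad (0\le\alpha\le n-1),
\]
so $\Phi(U)\cong\bigoplus_{\alpha=0}^{n-1}\reg_X\otimes\chi^\alpha$ is a direct summand of the tilting bundle $V=\reg_X\otimes\IC[G]$ of \autoref{lem:relative-tilting}; consequently $\Lambda_{\Phi(U)}$ is also concentrated in degree $0$.

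The heart of the argument is then to check that the comparison morphism $\wPhi_\Lambda\colon\Lambda_U\to\Lambda_{\Phi(U)}$ from \eqref{eq:wPhi} is an isomorphism. Both sheaves split as direct sums over $(\alpha,\beta)\in\{0,\ldots,n-1\}^2$ of the summands $\rho_*\cL_\wY^{\beta-\alpha}$ and $(\pi_*\reg_X)^{\chi^{\alpha-\beta}}$, respectively, and the unit description of $\wPhi_\Lambda$ identifies its $(\alpha,\beta)$-component with the natural map induced by the factorisation $\rho_*=\pi_*^G\circ p_*\circ q^*\triv$. For $\beta-\alpha\ge 0$ this is directly \autoref{lem:pushforward-of-p}. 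For $\beta-\alpha=-c$ with $1\le c\le n-1$, an induction using the short exact sequence $0\to\cL_\wX^{-c-1}\otimes\chi\to\cL_\wX^{-c}\to\reg_\nu(c)\to 0$ yields $p_*\cL_\wX^{-c}\cong I_S^{c}\otimes\chi^{-c}$, hence $\rho_*\cL_\wY^{-c}\cong(\pi_*I_S^c)^{\chi^c}$; since $\pi_*(\reg_X/I_S^c)$ carries a filtration whose graded pieces $\Sym^kN^\vee$ ($0\le k\le c-1$) have character $\chi^k$, its $\chi^c$-isotypic part vanishes, so $(\pi_*I_S^c)^{\chi^c}=(\pi_*\reg_X)^{\chi^c}$ and the $(\alpha,\beta)$-component is an isomorphism.

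\textbf{Main obstacle.} The delicate step is the character count for $\beta-\alpha<0$: verifying that the $\chi^c$-isotypic component of $\pi_*\reg_X$ already lives inside $\pi_*I_S^c$. This crucially uses the assumption $m\ge n$, which ensures that the characters $\chi^0,\ldots,\chi^{c-1},\chi^c$ appearing along the $I_S^k/I_S^{k+1}$-filtration are pairwise distinct modulo $m$; tracking the equivariant structures on $p_*\cL_\wX^{-c}$ through the above short exact sequences is the main bookkeeping burden.
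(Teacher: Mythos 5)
Your proof takes a genuinely different route from the paper's. The paper verifies the Bondal--Orlov criterion on skyscraper sheaves: by \autoref{prop:Hilbert-resolution} each $\Phi(\reg_x)$ is the structure sheaf of a $G$-cluster, the self-Homs are controlled because these are sheaves with $H^0(\reg_\xi)^G\cong\IC$, and the orthogonality of distinct clusters over the same fixed point is exactly \autoref{lem:orthogonal-clusters} (which is where $m\ge n$ enters there). You instead run the relative-tilting criterion \autoref{lem:relative-tilting-fully-faithful} with $U=\bigoplus_{\alpha=0}^{n-1}\cL_\wY^\alpha$; this mirrors precisely how the paper proves full faithfulness of $\Psi$ in the case $n\ge m$. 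Your key computations are correct: $p_*\cL_\wX^{-c}\cong\cI_S^c\otimes\chi^{-c}$ via the twisted sequences \eqref{eq:exact-wX}, hence $\rho_*\cL_\wY^{-c}\cong(\pi_*\cI_S^c)^{\chi^c}$, and the character count on $\cI_S^k/\cI_S^{k+1}\cong\Sym^kN\dual$ (character $\chi^k$) shows $(\pi_*\cI_S^c)^{\chi^c}=(\pi_*\reg_X)^{\chi^c}$ exactly because $m\ge n$ keeps $\chi^0,\dots,\chi^c$ pairwise distinct. Your route buys an explicit sheaf-level description of $\rho_*\cL_\wY^{-c}$ which the paper only records for $m=n=2$ in \eqref{Scalaformula}; the paper's route is shorter because the cluster interpretation is already available from Proposition A.

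Two steps need more than the justification you give. First, the relative spanning property of $U$ does not follow from ``$\rho$ is an isomorphism off $Z$'' plus ``$U$ restricts to a Beilinson collection on each fibre'': the $\cL^\alpha$ are not supported on fibres, so you cannot pass to $\rho^{-1}(s)$ by adjunction -- this is precisely why \autoref{lem:relative-spanning-wY} uses the fibre-supported objects $i_{s*}\Omega^r(r)$. What you must show is that $\rho_*(E\otimes\cL^{-\alpha})=0$ for all $\alpha\in\{0,\dots,n-1\}$ forces $E=0$; after reducing to $E$ supported on $Z$ and to $Y$ affine, this requires a devissage through the $\cI_Z$-adic filtration combined with \autoref{lem:Pdual} and a top-degree argument to control extensions. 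The claim is true, but it costs an argument comparable to \autoref{lem:relative-spanning-wY}. Second, the identification of the $(\alpha,\beta)$-component of $\wPhi_\Lambda$ with the natural inclusion $(\pi_*\cI_S^c)^{\chi^c}\hookrightarrow(\pi_*\reg_X)^{\chi^c}$ should be argued rather than asserted: the clean way is to note that $\Phi$ restricted over $Y\setminus S$ is an equivalence (so $\wPhi_\Lambda$ is an isomorphism there) and that both sides are reflexive sheaves on the normal variety $Y$ with $\codim S\ge 2$, so the isomorphism extends uniquely -- the same reflexivity argument the paper uses in the proof of \autoref{thm:cyclicWC}. With these two repairs your argument goes through.
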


\begin{proof}
By \cite[Prop.~7.1]{Huy}, we only need to show for $x,y\in \wY$ that
\[
   \Hom^i_{\Db_G(X)}(\Phi(\reg_x),\Phi(\reg_y))
 = \begin{cases}
      \IC \quad & \text{if $x=y$ and $i=0$} \\
      0   \quad & \text{if $x\neq y$ or $i\notin[0,\dim X]$}.                                    
   \end{cases}
\]
By \autoref{prop:Hilbert-resolution}, $\Phi(\reg_x)=\reg_\xi$ for some $G$-cluster $\xi$. 
Hence,
\[
   \Hom^0_{\Db_G(X)}(\Phi(\reg_x),\Phi(\reg_x)) \cong H^0(\reg_\xi)^G \cong \IC  \,.
\]
Furthermore, since $\Phi(\reg_x)$ is a sheaf, the complex $\Hom^*(\Phi(\reg_x),\Phi(\reg_x))$ is concentrated in degrees $0,\dots,\dim(X)$. It remains to prove the orthogonality for $x\neq y$. If $\rho(x)\neq\rho(y)$, the corresponding $G$-clusters are supported on different orbits. Hence, their structure sheaves are orthogonal. If $\rho(x)=\rho(y)$ but $x\neq y$, the orthogonality was shown in \autoref{lem:orthogonal-clusters}.
\end{proof}

\begin{lemma}\label{lem:C}
The functor $\Phi$ factors through $\cA$. 
\end{lemma}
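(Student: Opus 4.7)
Since $\cA$ is the left-orthogonal of the pieces $\Xi_\alpha(\Db(S))$, the claim amounts to $\Hom^*_{\Db_G(X)}(\Phi(F),\Xi_\alpha(E))=0$ for every $F\in\Db(\wY)$, $E\in\Db(S)$ and $\alpha\in\{n-m,\dots,-1\}$; by the adjunction $\Phi\dashv\Phi^R$ this is equivalent to the functorial vanishing $\Phi^R\circ\Xi_\alpha=0$ for each such $\alpha$. The plan is to compute $\Phi^R\Xi_\alpha(E)$ essentially by hand using Grothendieck duality on the blow-up $p$, the formula $\omega_p\cong\cL^{n-1}\otimes\chi^{1-n}$ from \autoref{lem:canonical}, and a character-tracking argument that mirrors the proof of \autoref{lem:sky}.

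The right adjoint decomposes as $\Phi^R=(-)^G\circ q_*\circ p^!$. Using $p^!=p^*(-)\otimes\omega_p$, the projection formula, and the equivariant identification $q^*\chi^\beta_\wY\cong\chi^\beta_\wX$, one gets
\[
  \Phi^R\Xi_\alpha(E)\cong \bigl(q_*(p^*a_*E\otimes\cL^{n-1})\otimes\chi^{\alpha+1-n}\bigr)^G.
\]
The key computation is then an equivariant generalisation of \autoref{lem:sky}:
\[
  \CH^{-r}(p^*a_*E)\cong j_*\bigl(\nu^*E\otimes\Omega^r_\nu(r)\bigr)\otimes\chi^r \qquad(r=0,\dots,n-1),
\]
with other cohomology degrees vanishing. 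The characters $\chi^r$ arise from the natural $G$-linearisation of $\wedge^r N^\vee$ appearing in a Koszul resolution, exactly as in \autoref{lem:sky}; the tensor with $\nu^*E$ carries no extra character since $S$ and $Z$ are pointwise fixed by $G$. Alternatively one can bypass this by reducing to the skyscraper case via the $Y$-linearity of $\Phi^R\Xi_\alpha$ (\autoref{Psilinear}) together with the spanning role of $\{\reg_s\}_{s\in S}$ over $Y$.

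Applying $\_\otimes\cL^{n-1}$ and using $j^*\cL\cong\reg_\nu(-1)$ (\autoref{lem:cyclic}(vi)), together with the projection formula, exactness of the finite morphism $q$, and the relation $q\circ j=i$, the cohomology transforms into
\[
  \CH^{-r}\bigl(q_*(p^*a_*E\otimes\cL^{n-1})\bigr)\cong i_*\bigl(\nu^*E\otimes\Omega^r_\nu(r-n+1)\bigr)\otimes\chi^r.
\]
Twisting by $\chi^{\alpha+1-n}$ and taking $G$-invariants extracts the isotypic component of character $\chi^{r+1-n+\alpha}$. For $r\in\{0,\dots,n-1\}$ and $\alpha\in\{n-m,\dots,-1\}$ the exponent $r+1-n+\alpha$ ranges over $\{1-m,\dots,-1\}$, which is congruent to $\{1,\dots,m-1\}\pmod m$ and hence never zero. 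Therefore every cohomology sheaf of $\Phi^R\Xi_\alpha(E)$ vanishes, and $\Phi^R\Xi_\alpha=0$.

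The main obstacle I foresee is proving the equivariant formula for $\CH^{-r}(p^*a_*E)$ in full generality; \autoref{lem:sky} gives the case $E=\reg_s$, and the extension to arbitrary $E\in\Db(S)$ requires either a direct Koszul argument with careful bookkeeping of characters on the exterior powers of $N^\vee$, or a $Y$-linearity reduction to the skyscraper case. Once that is in place, the rest is the elementary character arithmetic displayed above.
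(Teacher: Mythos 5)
Your proposal is correct and follows essentially the same route as the paper: reduce to the functorial vanishing $\Phi^R\Xi_\alpha=0$, compute the cohomology sheaves of $p^!$ applied to the relevant objects via \autoref{lem:canonical} and \autoref{lem:sky}, and observe that the resulting characters $\chi^{r+1-n+\alpha}$ are never trivial for $\alpha\in\{n-m,\dots,-1\}$. The paper sidesteps the ``main obstacle'' you flag by taking precisely your alternative route: since $\Phi^R\Xi_\alpha$ is a Fourier--Mukai transform, its vanishing may be tested on the skyscrapers $\reg_s$ for $s\in S$, so only the case $E=\reg_s$ of \autoref{lem:sky} is ever needed.
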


\begin{proof}
By \autoref{cor:sod}, this statement is equivalent to $\Phi^R \Xi_\alpha=0$ for $\alpha\in\{n-m,\ldots,-1\}$ where $\Phi^R\colon \Db_G(X)\to \Db(\wY)$ is the right adjoint of $\Phi$. Since the composition $\Phi^R \Xi_\alpha$ is a Fourier--Mukai transform, it is sufficient to test the vanishing on skyscraper sheaves of points; see \cite[Sect.~2.2]{Kuz}. So we have to prove that
\[
   \Phi^R \Xi_\alpha(\reg_s) \cong \Phi^R(\reg_s \otimes \chi^\alpha) = 0
\]
for every $s\in S$ and every $\alpha\in \{n-m,\ldots,-1\}$. We have $\Phi^R\cong q_*^G p^!$; recall that $q_*^G$ stands for $(\_)^G\circ q_*$.
By \autoref{lem:canonical} together with \autoref{lem:sky}, we have
\[
   \CH^{-r}(p^!\reg_s) \cong  i_{s*}(\Omega^r(r+1-n) \otimes \chi^{r+1-n})
\]
where the non-vanishing cohomologies occur for $r\in\{0,\dots,n-1\}$. Thus, the linearisations of the cohomologies of $p^!(\reg_s\otimes \chi^{\alpha})$ are given by the characters $\chi^\gamma$ for $\gamma \in\{\alpha+1-n,\ldots,\alpha\}$. We see that, for $\alpha\in \{n-m,\ldots,-1\}$, the trivial character does not occur in $\CH^*(p^!\reg_s\otimes \chi^\alpha)$. This implies that $q_*p^!(\reg_s\otimes \chi^\alpha)$ has vanishing $G$-invariants. 
\end{proof}

We denote by $\cB \subset \Db_G(X)$ the full subcategory generated by the admissible subcategories $\Xi_\alpha(\Db(S))$ for $\alpha\in \{n-m,\dots, -1\}$ and $\Phi(\Db(\wY))$. By the above, these admissible subcategories actually form a semi-orthogonal decomposition
\[
   \cB = \Sod{ \Xi_{n-m}(\Db(S)), \Xi_{n-m+1}(\Db(S)), \dots, \Xi_{-1}(\Db(S)), \Phi(\Db(\wY)) }  \,.
\]

\begin{proposition} \label{prop:full}
We have the (essential) equalities $\cB = \Db_G(X)$ and  $\Phi(\Db(\wY)) = \cA$. 
\end{proposition}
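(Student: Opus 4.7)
My plan is to show $\cB = \Db_G(X)$ by exhibiting the relative tilting bundle of \autoref{lem:relative-tilting} inside $\cB$, and then to derive $\Phi(\Db(\wY)) = \cA$ from uniqueness of the terminal piece of a semi-orthogonal decomposition. Since $\cB$ is admissible (as a semi-orthogonal decomposition of admissible pieces) and $Y$-linear (by \autoref{Psilinear}), \autoref{lem:relativespanninggeneral}(ii) reduces the task to showing that each summand $\reg_X\otimes\chi^i$ of $V=\bigoplus_{i=0}^{m-1}\reg_X\otimes\chi^i$ lies in $\cB$.

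For $i\in\{0,\ldots,n-1\}$, \autoref{lem:pushforward-of-p} and \autoref{lem:cyclic}(v) together yield
\[ \Phi(\cL_\wY^i) \;=\; p_*q^*\cL_\wY^i \;=\; p_*\cL_\wX^i \;=\; \reg_X\otimes\chi^i \;\in\; \Phi(\Db(\wY))\subset\cB. \]
For $i\in\{n,\ldots,m-1\}$, I set $k\coloneqq m-i\in\{1,\ldots,m-n\}$. The identification $\cL_\wX=\reg_\wX(Z)\otimes\chi$ of equivariant line bundles (obtained from the definition of $\cL_\wX$ and the computation of $N_{Z/\wX}$ preceding \autoref{lem:pushforward-of-p}) together with the standard blow-up formula $Rp_*\reg_\wX(-kZ)=I_S^k$ (provable by induction on $k$ via the sequences $0\to\reg_\wX(-kZ)\to\reg_\wX(-(k-1)Z)\to i_*\reg_\nu(k-1)\to 0$) yield
\[ \Phi(\cL_\wY^{-k}) \;=\; p_*\cL_\wX^{-k} \;=\; I_S^k\otimes\chi^{-k} \;=\; I_S^k\otimes\chi^i \;\in\; \Phi(\Db(\wY))\subset\cB. \]
The short exact sequence $0\to I_S^k\otimes\chi^i\to\reg_X\otimes\chi^i\to\reg_X/I_S^k\otimes\chi^i\to 0$ in $\Coh^G(X)$ therefore reduces the claim to $\reg_X/I_S^k\otimes\chi^i\in\cB$.

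To control the cokernel, I will exploit the $I_S$-adic filtration of $\reg_X/I_S^k$ with graded pieces $I_S^j/I_S^{j+1}\cong a_*\Sym^j N^\vee$ for $j=0,\ldots,k-1$. Since $N^\vee$ carries the character $\chi$, these pieces are equivariantly $a_*\Sym^j N^\vee$ with character $\chi^j$, so the $\chi^i$-twist produces graded pieces with characters $\chi^{i+j}$ for $j\in\{0,\ldots,k-1\}$, i.e.\ exponents in $\{i,i+1,\ldots,m-1\}\subseteq\{n,\ldots,m-1\}\equiv\{n-m,\ldots,-1\}\pmod m$. Each graded piece therefore lies in some $\Xi_\alpha(\Db(S))\subset\cB$, so $\reg_X/I_S^k\otimes\chi^i\in\cB$ and hence $\reg_X\otimes\chi^i\in\cB$, completing the verification that $\cB=\Db_G(X)$. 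Finally, \autoref{lem:C} gives $\Phi(\Db(\wY))\subseteq\cA$, and since $\cB=\Db_G(X)$ coincides with the decomposition $\Sod{\Xi_{n-m}(\Db(S)),\ldots,\Xi_{-1}(\Db(S)),\cA}$, uniqueness of the terminal piece forces $\Phi(\Db(\wY))=\cA$. The principal delicate step is identifying $\Phi(\cL_\wY^{-k})$ with $I_S^k\otimes\chi^{-k}$ and verifying that, thanks to $m\ge n$, the characters appearing in the filtration of $\reg_X/I_S^k$ fall precisely into the range covered by the $\Xi_\alpha(\Db(S))$ pieces.
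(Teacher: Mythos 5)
Your proof is correct and follows essentially the same route as the paper: reduce via $Y$-linearity and the relative tilting bundle to showing $\reg_X\otimes\chi^\ell\in\cB$ for all $\ell$, handle $\ell\in\{0,\dots,n-1\}$ via $\Phi(\cL_{\wY}^\ell)\cong\reg_X\otimes\chi^\ell$, and obtain the remaining characters from the exceptional-divisor sequence. The only cosmetic difference is that the paper runs an induction on the character twist upstairs on $\wX$ (pushing forward the twisted sequence $0\to\cL_{\wX}^{-1}\otimes\chi\to\reg_{\wX}\to\reg_Z\to0$), whereas you make the identification $p_*\cL_{\wX}^{-k}\cong\cI_S^k\otimes\chi^{-k}$ explicit and unwind the same extension data downstairs on $X$ via the $\cI_S$-adic filtration of $\reg_X/\cI_S^k$.
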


For the proof, we need the following

\begin{lemma}
We have $p_*\cL_{\wwX}^r\otimes \chi^{-\lambda}\in \cB$ for $r\in \IZ$ and $\lambda\in\{0,\ldots,m-n\}$. 
\end{lemma}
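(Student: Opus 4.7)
The plan is a straightforward induction on $\lambda \in \{0,\ldots,m-n\}$, using the exact sequence \eqref{eq:exact-wX} to decrement the character twist.

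For the base case $\lambda = 0$, I would invoke \autoref{lem:cyclic}(v) to identify $\cL_\wwX^r \cong q^*\triv(\cL_\wwY^r)$ as $G$-equivariant line bundles. Consequently
\[
   p_*\cL_\wwX^r \;\cong\; p_*q^*\triv(\cL_\wwY^r) \;=\; \Phi(\cL_\wwY^r) \;\in\; \Phi(\Db(\wY)) \;\subset\; \cB \, .
\]

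For the inductive step, fix $\lambda \in \{1,\ldots,m-n\}$ and assume the claim holds for $\lambda - 1$ (for every $r\in\IZ$). Tensoring the equivariant short exact sequence \eqref{eq:exact-wX} by the equivariant line bundle $\cL_\wwX^{r+1}\otimes \chi^{-\lambda}$ yields
\[
  0 \to \cL_\wwX^{r}\otimes\chi^{-(\lambda-1)} \to \cL_\wwX^{r+1}\otimes\chi^{-\lambda} \to j_*\reg_\nu(-r-1)\otimes\chi^{-\lambda} \to 0 \, ,
\]
using that $\cL_{\wwX\mid Z}\cong\reg_\nu(-1)$ carries the trivial linearisation. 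Applying the equivariant pushforward $p_*$ gives a distinguished triangle in $\Db_G(X)$. The left-hand term $p_*\cL_\wwX^{r}\otimes \chi^{-(\lambda-1)}$ lies in $\cB$ by the induction hypothesis. For the right-hand term, the identity $p\circ j = a\circ \nu$ gives
\[
  p_*\bigl( j_*\reg_\nu(-r-1)\otimes \chi^{-\lambda}\bigr) \;\cong\; a_*\nu_*\reg_\nu(-r-1)\otimes \chi^{-\lambda} \;=\; \Xi_{-\lambda}\bigl(\nu_*\reg_\nu(-r-1)\bigr) \, ,
\]
which lies in $\Xi_{-\lambda}(\Db(S)) \subset \cB$ since $-\lambda \in \{n-m,\ldots,-1\}$ by the hypothesis $\lambda \in \{1,\ldots,m-n\}$. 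Because $\cB$ is triangulated, the middle term $p_*\cL_\wwX^{r+1}\otimes \chi^{-\lambda}$ belongs to $\cB$. As $r\in\IZ$ was arbitrary, this closes the induction.

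There is no real obstacle here; the only thing to be careful about is to verify that the short exact sequence and its twists are genuinely equivariant (so that the triangle lives in $\Db_G(X)$), and that the character shift introduced by tensoring with $\cL_\wwX$ (which carries $\chi^0$ in our normalisation) does not disrupt the matching of the outer terms to the generators of $\cB$. The range $\lambda \in \{0,\ldots,m-n\}$ is precisely what is needed so that, at each step, both $-(\lambda - 1)$ is a value already known and $-\lambda$ is an index of one of the $\Xi_\alpha$ blocks in the semi-orthogonal decomposition of $\cB$.
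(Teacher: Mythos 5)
Your proof is correct and follows essentially the same route as the paper: the base case via $\cL_\wwX^r \cong q^*\triv(\cL_\wwY^r)$ and $\Phi(\cL_\wY^r)\in\cB$, then induction on $\lambda$ by tensoring the sequence \eqref{eq:exact-wX} with a power of $\cL_\wwX$ twisted by $\chi^{-\lambda}$, pushing forward along $p$, and identifying the cokernel term with an object of $\Xi_{-\lambda}(\Db(S))$ via $p\circ j = a\circ\nu$. The only difference is the cosmetic index shift (you tensor by $\cL_\wwX^{r+1}\otimes\chi^{-\lambda}$ where the paper uses $\cL_\wwX^{r}\otimes\chi^{-\lambda}$), which is immaterial since $r$ ranges over all of $\IZ$.
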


\begin{proof}
By \autoref{lem:cyclic}, $\cL_{\wwX}^r\cong q^*(\triv(\cL_{\wwY}^r))$. Hence,
\[
   p_*(\cL_{\wwX}^r) \cong p_*q^*(\triv(\cL_{\wY}^r)) = \Phi(\cL_\wY^r)
   \in \Phi(\Db(\wY)) \subset \cB
\]
which proves the assertion for $\lambda=0$. We now proceed by induction over $\lambda$. Tensoring \eqref{eq:exact-wX} by $\cL_{\wX}^r\otimes \chi^{-\lambda}$ and applying $p_*$, we get the exact triangle
\begin{align} \label{eq:ptriangle}
      p_*\cL^{r-1}_{\wX} \otimes \chi^{-(\lambda-1)}
  \to p_*\cL^r_{\wX}\otimes \chi^{-\lambda}
  \to p_*j_*\reg_Z(-r)\otimes \chi^{-\lambda}
  \to 
\end{align}
where $\reg_Z(-r)$ carries the trivial $G$-action.

The first term of the triangle is an object of $\cB$ by induction. Furthermore, by diagram \eqref{eq:maindiagram}, we have $p_*j_*\reg_Z(-r)\cong a_*\nu_*\reg_Z(-r)$. Hence, the third term of 
\eqref{eq:ptriangle} is an object of $a_*\Db_G(S)\otimes\chi^{-\lambda}=\Xi_{-\lambda}(\Db(S))\subset \cB$. Thus, also the middle term is an object of $\cB$ which gives the assertion.  
\end{proof}

\begin{proof}[Proof of \autoref{prop:full}]
The second assertion follows from the first one since, if $\cB=\Db_G(X)$ holds, both, $\Phi(\Db(\wY))$ and $\cA$, are given by the left-orthogonal complement of \[\Sod{ \Xi_{n-m}(\Db(S)), \Xi_{n-m+1}(\Db(S)), \dots, \Xi_{-1}(\Db(S))}\] in $\Db_G(X)$. 

The subcategories $\Xi_\alpha(\Db(S))$ and $\Phi(\Db(\wY))$ of $\Db_G(X)$ are $Y$-linear by \autoref{Psilinear}. Hence, for the equality $\cB=\Db_G(X)$ it suffices to show that
\[
   \reg_X\otimes \chi^\ell\in \cB
 = \Sod{ \Db(S)\otimes \chi^{n-m}, \dots, \Db(S)\otimes \chi^{-1}, \Phi(\Db(\wY)) }
\]
for every $\ell\in \IZ/m\IZ$; see \autoref{lem:relative-tilting}. Combining \autoref{lem:cyclic} and \autoref{lem:pushforward-of-p}, we see that 
\[
      \Phi(\cL_{\wY}^\ell) 
\cong p_*(q^*\triv(\cL_{\wY})^\ell)
\cong \reg_X\otimes \chi^\ell 
\quad \text{for $\ell=0,\dots, n-1$.}
\]
In particular, 
$\reg_X\otimes\chi^\ell\in \Phi(\Db(\wY))\subset \cB$ for $\ell=0,\dots, n-1$.
Setting $r=0$ in the previous lemma, we find that also $\reg_X\otimes \chi^{\ell}$ for $\ell=n-m,\dots,-1$ is an object of $\cB$.       
\end{proof}

Combining the results of this subsection gives \autoref{thm:mainprecise}(i).

\subsection{The case $\boldsymbol{n\ge m}$}\label{sub:n-geq-m}
Throughout this subsection, let $n\ge m$.

\begin{proposition}
Let $n>m$. Then the functors $\Theta_\beta\colon \Db(S)\to \Db(\wY)$ are fully faithful for every $\beta\in \IZ$ and there is a semi-orthogonal decomposition
\[
  \Db(\wY) = \Sod{ \cC(m-n), \cC(m-n+1), \ldots, \cC(-1), \cD }
\]
where $\cC(\ell) \coloneqq \Theta_\ell(\Db(S)) = i_*\nu^*\Db(S) \otimes \reg_\wY(\ell)$ and
\begin{align*}
\cD &= \bigl\{ E \in \Db(\wY) \mid i^*E\in \lorth \Sod{ \nu^*\Db(S)\otimes\reg_\wY(m-n),\dots,\nu^*\Db(S)\otimes\reg_\wY(-1) } \bigr\} \\
    &= \bigl\{ E \in \Db(\wY) \mid i^*E\in \Sod{ \nu^*\Db(S), \dots, \nu^*\Db(S) \otimes \reg_\wY(m-1) }\bigr\}  \,.
\end{align*}
\end{proposition}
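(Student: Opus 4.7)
The plan is to run one adjunction computation on the divisor embedding $i\colon Z\hookrightarrow\wY$ that simultaneously gives full faithfulness of the $\Theta_\beta$ and the desired semi-orthogonality of the $\cC(\ell)$, and then to deduce generation of $\Db(\wY)$ from admissibility of the $\cC(\ell)$'s rather than checking it directly.

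First I would observe, from \autoref{lem:cyclic}(ii),(vi), that $Z\subset\wY$ is a Cartier divisor with normal bundle $N_{Z/\wY}\iso \reg_\wY(Z)\rest{Z}\iso \cL_\wY^m\rest{Z}\iso \reg_\nu(-m)$. For a smooth divisor, the composition $N^\vee_{Z/\wY}\to\reg_Z$ obtained by restricting the defining section of $Z$ vanishes, so the canonical triangle for $i^*i_*$ splits as $i^*i_*A\iso A\oplus A\otimes\reg_\nu(m)[1]$ for every $A\in\Db(Z)$. Combining this splitting with $(i^*,i_*)$-adjunction and the projection formula for $\nu$ gives, for $F,G\in\Db(S)$ and $\alpha,\beta\in\Z$,
\begin{align*}
  \Hom^*_\wY(\Theta_\alpha F,\Theta_\beta G)\iso\,
  &\Hom^*_S\bigl(F,\,G\otimes\nu_*\reg_\nu(\beta-\alpha)\bigr)\\
  \oplus\,&\Hom^*_S\bigl(F,\,G\otimes\nu_*\reg_\nu(\beta-\alpha-m)\bigr)[-1].
\end{align*}
Since $\nu_*\reg_\nu(k)=0$ exactly when $-n+1\le k\le-1$, the case $\alpha=\beta$ (with $0<m\le n-1$) leaves only $\Hom^*_S(F,G)$, giving full faithfulness of $\Theta_\beta$; and for $\alpha>\beta$ with both in $\{m-n,\ldots,-1\}$, the two shifts $\beta-\alpha\in\{m-n+1,\ldots,-1\}$ and $\beta-\alpha-m\in\{-n+1,\ldots,-m-1\}$ both lie in the vanishing range, yielding the semi-orthogonality of the $\cC(\alpha)$'s in the prescribed order.

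Next, each $\Theta_\beta=i_*\circ(\_\otimes\reg_\nu(\beta))\circ\nu^*$ has both adjoints (as a composition of functors that do), and it is fully faithful, so each $\cC(\beta)\subset\Db(\wY)$ is admissible. A standard mutation argument shows that the join of a semi-orthogonal family of admissible subcategories is admissible, so $\cC\coloneqq\sod{\cC(m-n),\ldots,\cC(-1)}$ is admissible in $\Db(\wY)$. This produces the semi-orthogonal decomposition $\Db(\wY)=\sod{\cC,\lorth\cC}$ with no further generation check needed, and it remains only to identify $\lorth\cC$ with $\cD$.

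By adjunction, $\Hom^*_\wY(T,\Theta_\ell G)=\Hom^*_Z(i^*T,\nu^*G\otimes\reg_\nu(\ell))$, so $T\in\lorth\cC$ if and only if $i^*T$ is left-orthogonal to $\sod{\nu^*\Db(S)\otimes\reg_\nu(\ell):\ell\in\{m-n,\ldots,-1\}}$ in $\Db(Z)$; this is the first description of $\cD$ in the proposition, and its equivalence with the second description is exactly \autoref{cor:sodequal}. The subtle point I expect to have to be careful about is the normal bundle computation $N_{Z/\wY}\iso\reg_\nu(-m)$: the factor $m$, reflecting the ramification of the cyclic cover $q\colon\wX\to\wY$ along $Z$, differs from the classical $\reg_\nu(-1)$ for a smooth Orlov blow-up, and is precisely what forces both shifts in the $\Hom$-computation to land in the vanishing range of $\nu_*$ simultaneously.
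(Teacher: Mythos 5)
Your proof is correct and follows essentially the same route as the paper's: both reduce to the standard triangle relating $i^*i_*$ (resp.\ $i^!i_*$ in the paper) to the twist by $\reg_Z(Z)\cong\reg_\nu(-m)$, combine it with the vanishing $\nu_*\reg_\nu(k)=0$ for $k\in\{-n+1,\dots,-1\}$, and identify $\cD$ via the adjunction $i^*\dashv i_*$ and \autoref{cor:sodequal}. The only caveat is that your claimed splitting $i^*i_*A\cong A\oplus A\otimes\reg_\nu(m)[1]$ is not really justified by the vanishing of the restricted section alone -- but it is also not needed, since the long exact sequence coming from the (possibly non-split) triangle already gives both the full faithfulness and the semi-orthogonality, the relevant outer terms vanishing in each case.
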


\begin{proof}
This follows from \cite[Thm.~1]{KuzLefschetz}. However, for convenience, we provide a proof for our special case. By construction, $\Theta_\beta^R\cong \nu_*\MM_{\reg_\nu(-\beta)} i^!$. We start with the standard exact triangle of functors
$
 \id\to i^!i_*\to \MM_{\reg_Z(Z)}[-1]\to
$
(see e.g.\ \cite[Cor.~11.4]{Huy}). By \autoref{lem:cyclic}, $\reg_Z(Z)\cong \reg_\nu(-m)$, and thus the above triangle induces for any $\alpha,\beta \in \IZ$
\[
     \nu_*\MM_{\reg_\nu(\alpha-\beta)}\nu^*
 \to \Theta_\beta^R\Theta_\alpha
 \to \nu_*\MM_{\reg_\nu(\alpha-\beta-m)}\nu^*
 \to  \,.
\]
By projection formula, we can rewrite this as
\[
     (\_)\otimes\nu_*\reg_\nu(\alpha-\beta)
 \to \Theta_\beta^R\Theta_\alpha
 \to (\_)\otimes \nu_*\reg_\nu(\alpha-\beta-m)
 \to  \,.   
\]
Now, $\nu_*\reg_\nu\cong \reg_S$ and $\nu_*\reg_\nu(\gamma)=0$ for $\gamma\in \{-n+1,\dots, -1\}$. Hence, $\Theta_\beta^R\Theta_\beta\cong \id$ and $\Theta_\beta^R\Theta_\alpha=0$ if $\alpha-\beta\in \{m-n+1,\dots,-1\}$. 
Therefore, we get a semi-orthogonal decomposition
\[
   \Db(\wY) = \Sod{ \cC(m-n), \cC(m-n+1), \ldots, \cC(-1), \cD }  \,.
\]
The description of the left-orthogonal $\cD$ follows by the adjunction $i^*\dashv i_*$.   
\end{proof}

\begin{lemma}
The functor $\Psi\colon \Db_G(X)\to \Db(\wY)$ factors through $\cD$.  
\end{lemma}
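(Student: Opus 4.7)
The plan is to reduce the statement to checking $\Psi(V)\in\cD$ for a relative tilting generator $V$ of $\Db_G(X)$ over $\Dperf(Y)$, leveraging the $Y$-linearity of both the functor and the target category. This sidesteps any attempt to compute $i^*\Psi(F)$ directly via a base-change diagram across the non-transverse square involving $q$ and the branch divisor $Z$.

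I would begin by establishing that $\cD$ is a $Y$-linear triangulated subcategory closed under direct summands. Admissibility (it is a left orthogonal in a semi-orthogonal decomposition) handles triangulated-ness and summand-closure. Each piece $\Theta_\beta(\Db(S))$ is $Y$-linear by \autoref{Psilinear}, and the projection formula
\[
  \rho^*A\otimes \Theta_\beta(B) \cong i_*\bigl(\nu^*b^*A\otimes \nu^*B\otimes \reg_\nu(\beta)\bigr) \cong \Theta_\beta(b^*A\otimes B)
\]
propagates this to the generated subcategory; the left orthogonal $\cD$ inherits $Y$-linearity because perfect complexes on $Y$ are dualisable, so $\Hom(\rho^*A\otimes E,F)\cong\Hom(E,\rho^*A^\vee\otimes F)$ vanishes whenever $E\in\cD$ and $F\in \Sod{\Theta_{m-n}(\Db(S)),\dots,\Theta_{-1}(\Db(S))}$.

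Next I would invoke \autoref{lem:relative-tilting} together with \autoref{prop:relative-tilting}: the bundle $V = \reg_X\otimes \IC[G] \cong \bigoplus_{\alpha=-m+1}^{0}(\reg_X\otimes \chi^\alpha)$ is a relative generator of $\Db_G(X)$ over $\Dperf(Y)$. Combined with the $Y$-linearity of $\Psi$ (\autoref{Psilinear}), every object of $\Psi(\Db_G(X))$ lies in the triangulated, summand-closed, $Y$-linear hull of $\{\Psi(\reg_X\otimes\chi^\alpha) : \alpha=-m+1,\dots,0\}$. Hence it suffices to show $\Psi(\reg_X\otimes\chi^\alpha)\in\cD$ for each such $\alpha$.

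The remaining verification is essentially tautological: \autoref{cor:Psialpha} gives $\Psi(\reg_X\otimes\chi^\alpha)\cong \cL_\wY^\alpha$, and \autoref{lem:cyclic}(vi) yields $i^*\cL_\wY^\alpha \cong \reg_\nu(-\alpha)$. Since $-\alpha\in\{0,1,\dots,m-1\}$, this lies manifestly in the SOD $\Sod{\nu^*\Db(S),\,\nu^*\Db(S)\otimes \reg_\nu(1),\dots,\nu^*\Db(S)\otimes \reg_\nu(m-1)}$ defining $\cD$, and we are done. I anticipate no real obstacle: the computational input is already packaged in \autoref{cor:Psialpha} and \autoref{lem:cyclic}, and what remains is formal $Y$-linear bookkeeping, with the only mildly delicate point being the passage to the left orthogonal.
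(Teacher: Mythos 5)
Your proposal is correct and follows essentially the same route as the paper: reduce via $Y$-linearity of $\Psi$ and of $\cD$ to the relative generators $\reg_X\otimes\chi^\alpha$, $\alpha\in\{-m+1,\dots,0\}$, then use $\Psi(\reg_X\otimes\chi^\alpha)\cong\cL_\wY^\alpha$ and $i^*\cL_\wY^\alpha\cong\reg_\nu(-\alpha)$ to place these in $\cD$. The only difference is that you spell out the $Y$-linearity of the left orthogonal, which the paper simply cites.
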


\begin{proof}
By \autoref{lem:relative-tilting}, the equivariant bundles $\reg_X,\reg_X\otimes\chi,\ldots\reg_X\otimes\chi^{m-1}$ generate $\Db_G(X)$ over $\Dperf(Y)$, and therefore so do the bundles $\reg_X\otimes\chi^{-m+1},\ldots,\reg_X\otimes\chi^{-1},\reg_X$ obtained by twisting with $\chi^{1-m}$. Hence, it is sufficient to prove that $\Psi(\reg_X\otimes \chi^\alpha)\in \cD$ for $\alpha \in \{-m+1,\ldots,0\}$ as $\Psi$ and $\cD$ are $Y$-linear; see \autoref{Psilinear}.

Indeed, by \autoref{lem:cyclic} we have $i^*\cL_\wY^\alpha = \cL^\alpha_{\wY\mid Z}\cong \reg_\nu(-\alpha)$, hence
\[
   \Psi(\reg_X\otimes \chi^\alpha) \smash{\overset{\ref{cor:Psialpha}}\cong}
   q_*^G(\reg_X\otimes \chi^\alpha) \cong
   \cL_\wY^{\alpha} \in \cD
\quad
\text{for $\alpha \in \{-m+1,\ldots,0\}$.}
\qedhere \]
\end{proof}

\begin{proposition}
The functor $\Psi\colon \Db_G(X)\to \Db(\wY)$ is fully faithful.  
\end{proposition}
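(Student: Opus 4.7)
The plan is to apply the relative tilting criterion \autoref{lem:relative-tilting-fully-faithful} to $\Psi$, which is a relative FM transform over $Y$ (its kernel is $\reg_\wX$ on $X\times_Y\wY$, equipped with the natural $G$-linearisation). Take $V=\reg_X\otimes\IC[G]$, the relative tilting bundle of \autoref{lem:relative-tilting}; then $\Psi(V)\cong\bigoplus_{\alpha=-m+1}^{0}\cL_\wY^\alpha$ by \autoref{cor:Psialpha}. It therefore suffices to show that the canonical morphism of $\reg_Y$-algebras
\[
   \widetilde\Psi_\Lambda \colon \Lambda_V = \pi_*^G\sHom(V,V) \to \rho_*\sHom(\Psi V,\Psi V) = \Lambda_{\Psi V}
\]
is an isomorphism. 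Decomposing along the summands of $V$ and $\Psi V$, this reduces to showing that for every $\alpha,\beta\in\{-m+1,\ldots,0\}$, and hence for every $\gamma=\beta-\alpha\in\{-m+1,\ldots,m-1\}$, the induced morphism
\[
  \pi_*^G(\reg_X\otimes\chi^\gamma) \to \rho_*\cL_\wY^\gamma
\]
is an isomorphism.

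For $\gamma\in\{-m+1,\ldots,0\}$, I apply the commutativity $\rho_*q_*=\pi_*p_*$ of \eqref{eq:maindiagram} to $\reg_\wX$: since $p_*\reg_\wX=\reg_X$ and, by \autoref{lem:cyclic}, the $G$-equivariant sheaf $q_*\reg_\wX$ decomposes as $\bigoplus_{\gamma=-m+1}^{0}\cL_\wY^\gamma$ (with character $\chi^{-\gamma}$ on the $\gamma$-summand), comparing $\chi^{-\gamma}$-isotypic components on both sides of $\pi_*\reg_X\cong\rho_*(q_*\reg_\wX)$ produces the desired isomorphism.

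For $\gamma\in\{1,\ldots,m-1\}$, the characters $\chi^\gamma$ and $\chi^{\gamma-m}$ of $G=\mu_m$ coincide, so the previous case yields $\pi_*^G(\reg_X\otimes\chi^\gamma) = \rho_*\cL_\wY^{\gamma-m}$. Using $\cL_\wY^m\cong\reg_\wY(Z)$ and $\cL_\wY|_Z\cong\reg_\nu(-1)$ from \autoref{lem:cyclic}, tensoring the sequence $0\to\reg_\wY(-Z)\to\reg_\wY\to i_*\reg_Z\to 0$ with $\cL_\wY^\gamma$ yields
\[
   0 \to \cL_\wY^{\gamma-m} \to \cL_\wY^\gamma \to i_*\reg_\nu(-\gamma) \to 0,
\]
and pushing forward by $\rho$ reduces matching $\rho_*\cL_\wY^{\gamma-m}$ to $\rho_*\cL_\wY^\gamma$ to the vanishing
\[
   R\rho_*\bigl(i_*\reg_\nu(-\gamma)\bigr) \cong b_*R\nu_*\reg_\nu(-\gamma) = 0,
\]
valid on the $\IP^{n-1}$-bundle $\nu$ for $1\le\gamma\le n-1$. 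The hypothesis $n\ge m$ ensures $\{1,\ldots,m-1\}\subseteq\{1,\ldots,n-1\}$, so the vanishing covers the full range.

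The main obstacle is naturality: one must verify that the piecewise identifications extracted from the commutative square and from the short exact sequence genuinely assemble into the morphism $\widetilde\Psi_\Lambda$ coming from the unit of adjunction. This is a bookkeeping exercise within the formalism of \autoref{sub:relative-tilting}; once the calculation is set up at the level of complexes, keeping track of the $G$-linearisations, the identifications are seen to be functorial in the required sense.
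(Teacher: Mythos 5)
Your proposal is correct in substance and rests on the same criterion as the paper (\autoref{lem:relative-tilting-fully-faithful} applied to the relative tilting bundle $V=\reg_X\otimes\IC[G]$ of \autoref{lem:relative-tilting}), but the computation of $\Lambda_V\isomor\Lambda_{\Psi V}$ is carried out along a genuinely different route. The paper establishes the summand-wise isomorphisms $\pi_*^G\sHom(\reg_X\otimes\chi^\alpha,\reg_X\otimes\chi^\beta)\cong\rho_*\sHom(\cL_\wY^\alpha,\cL_\wY^\beta)$ by a single chain of natural isomorphisms running through Grothendieck duality for the blow-up $p$: one rewrites $\reg_X\otimes\chi^\alpha$ as $p_*(\cL_\wX^\alpha\otimes\omega_p)$ using \autoref{lem:pushforward-of-p} and \autoref{lem:canonical} (this is where $n\ge m$ enters, via $\alpha+n-1\in\{0,\dots,n-1\}$), applies $p_*\dashv p^!$, and then descends along $q$ via \autoref{lem:cyclic}. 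You instead compute both sides separately: for $\gamma=\beta-\alpha\le 0$ by matching isotypic components in $\pi_*\reg_X\cong\rho_*q_*\reg_\wX$, and for $\gamma>0$ by the restriction sequence $0\to\cL_\wY^{\gamma-m}\to\cL_\wY^\gamma\to i_*\reg_\nu(-\gamma)\to 0$ together with $R\nu_*\reg_\nu(-\gamma)=0$ for $1\le\gamma\le n-1$ (where $n\ge m$ enters for you). Both computations are correct; yours is arguably more elementary in that it avoids $\omega_p$ and the duality step, at the price of splitting into cases and, more importantly, of producing the isomorphism of objects rather than showing that the \emph{canonical} map $\widetilde\Psi_\Lambda$ is an isomorphism, which is what \autoref{lem:relative-tilting-fully-faithful} actually requires.

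That last point is the one real thin spot, and I would not dismiss it as bookkeeping "at the level of complexes" -- tracing your piecewise identifications through the unit of adjunction would be painful. The efficient fix is the one the paper uses in the proof of \autoref{thm:cyclicWC}: your computation shows that both $\Lambda_V$ and $\rho_*\sHom(\Psi V,\Psi V)$ are concentrated in degree $0$ and are reflexive sheaves on the normal variety $Y$ (the former as a summand of $\pi_*\reg_X$, the latter because it is abstractly isomorphic to such a summand); the canonical map $\widetilde\Psi_\Lambda$ is an isomorphism over $Y\setminus S$, where $\rho$ is an isomorphism and $q$ is a free quotient, so $\Psi$ restricts to an equivalence there; and since $\codim(S\subset Y)=n\ge 2$, a morphism of reflexive sheaves that is an isomorphism off codimension two is an isomorphism. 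With that replacement for your final paragraph, the proof is complete.
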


\begin{proof}
We first observe that $V \coloneqq \reg_X\otimes\IC[G] = \reg_X \otimes (\chi^0\oplus\cdots\oplus\chi^{m-1})$ is a relative tilting bundle for $\Db_G(X)$ over $\Dperf(Y)$; see \autoref{lem:relative-tilting}.

For the fully faithfulness, we follow \autoref{lem:relative-tilting-fully-faithful}. So we need to show that $\Psi$ induces an isomorphism $\Lambda_V=\pi_*^G\sHom(V,V) \isomor \rho_*\sHom(\Psi(V),\Psi(V))$. In turn, it suffices to consider the direct summands of $V$. Thus, let $\alpha,\beta\in\{-m+1,\ldots,0\}$ and compute
\[\renewcommand{\arraystretch}{1.4} \begin{array}{rcl}
        \pi_*^G \sHom^*( \reg_X\otimes\chi^\alpha, \reg_X \otimes \chi^\beta)
&\cong& \pi_*^G \sHom^*( \reg_X\otimes\chi^{\alpha+n-1}\otimes\chi^{1-n}, \reg_X \otimes \chi^\beta) \\
&\overset{\ref{lem:pushforward-of-p}}{\cong}&
        \pi_*^G \sHom^*( p_*\cL_\wX^{\alpha+n-1}\otimes\chi^{1-n}, \reg_X \otimes \chi^\beta) \\
&\overset{\ref{lem:canonical}}{\cong}&
        \pi_*^G \sHom^*( p_*(\cL_\wX^\alpha\otimes\omega_p), \reg_X \otimes \chi^\beta) \\
&\cong& \pi_*^G p_* \sHom^*_X(\cL_\wX^\alpha \otimes \omega_p, p^!\reg_X \otimes \chi^\beta) \\
&\cong& \pi_*^G p_* \sHom^*_\wX(\cL_\wX^\alpha, p^* \reg_X\otimes \chi^\beta) \\
&\overset{\ref{lem:cyclic}}{\cong}&
        \rho_* q_*^G \sHom^*_\wX(q^*q_*^G(\reg_\wX\otimes \chi^\alpha), \reg_\wX\otimes \chi^\beta) \\
&\cong& \rho_*\sHom^*_\wY(q_*^G(\reg_\wX\otimes \chi^\alpha), q_*^G(\reg_\wX\otimes \chi^\beta)) \\
&=&     \rho_*\sHom^*_\wY(\Psi(\reg_\wX\otimes \chi^\alpha), \Psi(\reg_\wX\otimes \chi^\beta))  \,.\hfill\qedhere
\end{array} \]
\end{proof}

We denote by $\cE\subset \Db(\wY)$ the full subcategory generated by the admissible subcategories $\Psi(\Db_G(X))$ and $\Theta_\ell(\Db(S)) = i_*\nu^*\Db(S)\otimes\reg_\wwY(\ell)$ for $\ell\in \{m-n,\dots, -1\}$. By the above, these admissible subcategories actually form a semi-orthogonal decomposition
\[
  \cE = \Sod{ \Theta_{m-n}(\Db(S)), \ldots, \Theta_{-1}(\Db(S)), \Psi(\Db_G(X)) }
        \subseteq \Db(\wY)  \,.
 \]
 
\begin{proposition}\label{prop:fullB}
We have the (essential) equalities $\cE = \Db(\wY)$ and  $\Psi(\Db_G(X)) = \cD$. 
\end{proposition}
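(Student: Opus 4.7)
The plan is to first observe that the two claimed equalities are equivalent, and then establish $\cE = \Db(\wY)$ via a relative spanning class argument. Since $\Psi(\Db_G(X))\subset \cD$ is already known, the semi-orthogonal decomposition $\Db(\wY) = \sod{\Theta_{m-n}\Db(S), \dots, \Theta_{-1}\Db(S), \cD}$ immediately yields $\cE \subset \Db(\wY)$; conversely, if this inclusion is an equality, any $E \in \cD$ admits a decomposition in $\cE$ whose components along the $\Theta_\beta\Db(S)$ vanish by the semi-orthogonality $\cD \perp \Theta_\beta\Db(S)$, forcing $E \in \Psi(\Db_G(X))$.

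Each summand of $\cE$ is $Y$-linear (\autoref{Psilinear}) and admissible (since $\Psi$ and the $\Theta_\beta$ are fully faithful Fourier--Mukai functors), so $\cE$ itself is $Y$-linear and admissible. By \autoref{lem:relativespanninggeneral}(ii) it then suffices to verify that $\cE$ contains the relative spanning class $\cS = \{\reg_\wY\} \cup \{i_{s*}\Omega^r(r) : s \in S,\ 0 \le r \le n-1\}$ of \autoref{lem:relative-spanning-wY}. For $\reg_\wY$ this follows from $\reg_\wY = \cL_\wY^0 \cong \Psi(\reg_X)$ via \autoref{cor:Psialpha}. For the upper range $r \in \{m, \dots, n-1\}$ (empty when $n = m$), I invoke \autoref{cor:sodequal}: the resulting equality of subcategories of $\Db(Z)$ places $\nu^*\reg_s \otimes \Omega_\nu^r(r)$ inside $\sod{\nu^*\Db(S) \otimes \reg_\nu(m-n), \dots, \nu^*\Db(S) \otimes \reg_\nu(-1)}$, and pushing forward by $i_*$ together with the projection formula identifies $i_{s*}\Omega^r(r)$ with an object of $\sod{\Theta_{m-n}\Db(S), \dots, \Theta_{-1}\Db(S)} \subset \cE$.

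For the remaining range $r \in \{0, \dots, m-1\}$, I appeal to \autoref{cor:Psi-sky}: the cohomology sheaves of $\Psi(\reg_s \otimes \chi^{-r}) \in \cE$ are the $i_{s*}\Omega^{r+tm}(r+tm)$ at the pairwise distinct degrees $-(r+tm)$ for $t = 0, \dots, \lambda$. For $t \ge 1$ the indices $r+tm$ fall into $\{m, \dots, n-1\}$, so those cohomology sheaves are already in $\cE$ by the previous paragraph. A standard iterated-extension argument using truncations then shows $\tau^{\le -r-m}\Psi(\reg_s \otimes \chi^{-r}) \in \cE$, and the truncation triangle
\[
\tau^{\le -r-m}\Psi(\reg_s \otimes \chi^{-r}) \to \Psi(\reg_s \otimes \chi^{-r}) \to i_{s*}\Omega^r(r)[r] \to
\]
--whose third term is identified as a single shifted cohomology sheaf thanks to the $m$-gap between nonzero cohomology degrees--places $i_{s*}\Omega^r(r)$ in $\cE$. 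The main obstacle is precisely this last step: the lowest cohomology sheaf of $\Psi(\reg_s \otimes \chi^{-r})$ cannot be extracted in isolation and must appear as a cone, which is only possible once all higher-index cohomologies have already been placed in $\cE$ via \autoref{cor:sodequal}; the ordering of Steps 3 and 4 is therefore essential.
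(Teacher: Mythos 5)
Your proposal is correct and follows essentially the same route as the paper's proof: reduce to $\cE=\Db(\wY)$, use admissibility and $Y$-linearity to reduce to the relative spanning class of \autoref{lem:relative-spanning-wY}, handle $r\in\{m,\dots,n-1\}$ via \autoref{cor:sodequal}, and extract $i_{s*}\Omega^r(r)$ for $r\in\{0,\dots,m-1\}$ from $\Psi(\reg_s\otimes\chi^{-r})$ via \autoref{cor:Psi-sky}. Your explicit identification of the paper's auxiliary object $E$ as the truncation $\tau^{\le -r-m}\Psi(\reg_s\otimes\chi^{-r})$, and the remark that the ordering of the two steps is forced, merely makes the paper's triangle argument more precise.
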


\begin{proof}
Analogously to \autoref{prop:full}, it is sufficient to prove the equality $\cE = \Db(\wY)$. 
As $\cE$ is constructed from images of fully faithful FM transforms (which have both adjoints), it is admissible in $\Db(\wY)$. Therefore, it suffices to show that $\cE$ contains a spanning class for $\Db(\wY)$. Moreover, because all functors and categories involved are $Y$-linear, it suffices to prove that the relative spanning class $\cS$ of \autoref{lem:relative-spanning-wY} is contained in $\cE$. 

We already know that $\reg_\wY\cong \Psi(\reg_X)\in\Psi(\Db_G(X))\subset \cE$. By \autoref{cor:sodequal}, we get for $s\in S$ and $r\in\{m,\ldots,n-1\}$
\[
   i_{s*}\Omega^r(r) \in \Sod{ \Theta_{m-n}(\Db(S)), \ldots, \Theta_{-1}(\Db(S)) } \subset \cE \, .
\]
By \autoref{cor:Psi-sky}, we have, for $\ell\in\{0,\ldots,m-1\}$, an exact triangle
\[
   E \to \Psi(\reg_s\otimes \chi^{-\ell}) \to i_{s*}\Omega^\ell(\ell)[\ell] \to
\]
where $E$ is an object in the triangulated category spanned by $i_{s*}\Omega^r(r)$ for $r\in\{m,\ldots,n-1\}$. In particular, 
the first two terms of the exact triangle are objects in $\cE$. Hence also $i_{s*}\Omega^\ell(\ell)\in \cE$ for $\ell\in\{0,\ldots,m-1\}$. 
\end{proof}

Combining the results of this subsection gives \autoref{thm:mainprecise}(ii).

\subsection{The case $\boldsymbol{m=n}$: spherical twists and induced tensor products}\label{m=nsection}
Throughout this subsection, $m=n$, so that both functors $\Phi$ and $\Psi$ are equivalences. We will show that the functors $\Theta_\beta$ and $\Xi_\alpha$, which were fully faithful in the cases $n>m$ and $m>n$, respectively, are now spherical. 
Furthermore, the spherical twists along these functors allow to describe the transfer of the tensor structure from one side of the derived McKay correspondence to the other. 
We set $\Theta\coloneqq \Theta_0$ and $\Xi\coloneqq \Xi_0$.

\begin{proposition}\label{prop:Xispherical}
For every $\alpha\in \IZ/m\IZ$, the functor $\Xi_\alpha\colon \Db(S)\to \Db_G(X)$ is a split spherical functor with cotwist $\MM_{\omega_{S/X}}[-n]$. 
\end{proposition}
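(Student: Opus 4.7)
The plan is to establish the isomorphism $\Xi_\alpha^R\Xi_\alpha \cong \id \oplus \MM_{\omega_{S/X}}[-n]$ of endofunctors of $\Db(S)$, from which the split spherical structure and the identification of the cotwist both follow immediately.

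Since $\Xi_\alpha = a_*\circ\triv(-)\otimes \chi^\alpha$ is a Fourier--Mukai transform it has both adjoints. The adjunctions $a_*\dashv a^!$ and $\triv\dashv (-)^G$ yield $\Xi_\alpha^R(G) = (a^!G\otimes \chi^{-\alpha})^G$, so $\Xi_\alpha^R\Xi_\alpha(F) \cong (a^!a_*F)^G$, manifestly independent of $\alpha$. Combining $a^!(-) \cong a^*(-)\otimes \det N[-n]$ with the standard Koszul formula $\CH^{-\ell}(a^*a_*F) \cong \wedge^\ell N^\vee \otimes F$ for $\ell\in\{0,\dots,n\}$, together with the duality $\wedge^{n-\ell}N^\vee \otimes \det N \cong \wedge^\ell N$, one obtains $\CH^j(a^!a_*F) \cong \wedge^j N\otimes F$ in cohomological degree $j\in\{0,\dots,n\}$, equivariantly carrying the character $\chi^{-j}$ by \autoref{cond:main}(iii). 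With $m=n$ the character $\chi^{-j}$ is trivial exactly when $j=0$ or $j=n$, so $(a^!a_*F)^G$ has cohomology only in those two degrees, with values $F$ and $\omega_{S/X}\otimes F$ respectively.

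The main step is to promote this cohomological computation to a direct-sum decomposition at the functor level. By Cartan's lemma the $G$-action is linearisable in a $G$-equivariant formal neighbourhood of $S$, allowing us to replace $X$ there by $\mathrm{Tot}(N)$ with $a$ the zero section. In this linear model the tautological section defining the Koszul differentials restricts to $0$ along $S$, so pulling the Koszul resolution of $a_*\reg_S$ back along $a$ produces a complex with vanishing differentials, giving a canonical $G$-equivariant splitting $a^!a_*F \cong \bigoplus_{j=0}^n \wedge^j N\otimes F[-j]$. Since $a^!a_*$ depends only on the formal neighbourhood of $S$ in $X$, this splitting is global and functorial in $F$. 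Taking $G$-invariants discards all intermediate summands and yields $\Xi_\alpha^R\Xi_\alpha \cong \id \oplus \MM_{\omega_{S/X}}[-n]$.

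This identifies the cone of the unit $\id \to \Xi_\alpha^R\Xi_\alpha$ with the autoequivalence $\MM_{\omega_{S/X}}[-n]$ of $\Db(S)$ and exhibits the defining triangle as split. It remains to verify the last spherical compatibility $\Xi_\alpha^R \cong F\circ \Xi_\alpha^L[1]$. Using $\Xi_\alpha^L \cong \Serre_{\Db(S)}^{-1}\Xi_\alpha^R\Serre_{\Db_G(X)}$ and noting that $\omega_X$ carries the character $\chi^n = \chi^0$ when $m=n$, this reduces to a direct tracking of the line bundle twists. The most delicate step is the splitting: at the global level a $k$-invariant in $\Ext^{n+1}(\omega_{S/X}\otimes F, F)$ could obstruct it, so the reduction via Cartan's lemma to the linearised neighbourhood of $S$ is essential.
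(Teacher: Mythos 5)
Your overall architecture coincides with the paper's: reduce to the monad $\Xi_\alpha^R\Xi_\alpha\cong (a^!a_*(\_))^G$, use \autoref{cond:main}(iii) to read off that $\wedge^jN$ carries the character $\chi^{-j}$, observe that for $m=n$ only $j=0$ and $j=n$ survive taking invariants, and deduce the split spherical structure together with the compatibility $\Xi^R\cong \MM_{\omega_{S/X}}[-n]\,\Xi^L$ from $a^!\cong a^*(\_)\otimes\omega_{S/X}[-n]$. All of that is fine. The problem is the one step you yourself single out as delicate: upgrading the computation of cohomology sheaves of $a^!a_*F$ to a direct-sum decomposition. You derive this from the claim that ``by Cartan's lemma the $G$-action is linearisable in a $G$-equivariant formal neighbourhood of $S$''. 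Cartan's lemma linearises the action analytically \emph{near a fixed point}; it does not produce a $G$-equivariant identification of the formal neighbourhood of the whole (possibly positive-dimensional) locus $S$ with the formal neighbourhood of the zero section in $N$. That global formal linearisation is obstructed in general: the obstruction to extending a retraction from the $k$-th to the $(k+1)$-st infinitesimal neighbourhood lives in groups of the shape $H^1(S,T_S\otimes\Sym^{k+1}N^\vee)$, on which $G$ acts by $\chi^{k+1}$, so the character argument kills them only when $m\nmid k+1$ --- and precisely at order $m=n$ no vanishing is forced. Your follow-up sentence, that the splitting is global ``since $a^!a_*$ depends only on the formal neighbourhood of $S$ in $X$'', does not repair this: even granting linearisations locally on $S$, the local splittings need not glue, and the failure to glue is exactly the $k$-invariant in $\Ext^{n+1}(\omega_{S/X}\otimes F,F)$ you are worried about.

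What actually closes the gap --- and what the paper uses, already in the proof of \autoref{prop:Hff} --- is the Arinkin--C\u{a}ld\u{a}raru criterion \cite[Thm.~1.4 \& Sect.~1.20]{AC}: the formality $a^*a_*(\_)\cong(\_)\otimes\bigoplus_j\wedge^jN^\vee[j]$ requires only that the embedding $S\hookrightarrow S^{(1)}$ into the \emph{first} infinitesimal neighbourhood admits a retraction, and that retraction exists here because the relevant obstruction and torsor groups carry the nontrivial character $\chi$, hence have no invariants. So your conclusion is correct, but you should replace the formal-linearisation step by a citation of (or a reduction to) this first-order statement; as written, the key step is asserted rather than proved. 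Two minor further points: the paper first reduces to $\alpha=0$ via \autoref{lem:spherical} (your observation that the monad is independent of $\alpha$ serves the same purpose), and the adjoint compatibility is most cleanly obtained directly from $a^!\cong\MM_{\omega_{S/X}}[-n]\,a^*$ rather than through Serre functors, which do not literally exist on the quasi-projective $X$.
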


\begin{proof}
Since $\Xi_\alpha\cong \MM_{\chi^\alpha} \Xi$, it is sufficient to prove the assertion for $\alpha=0$; see \autoref{lem:spherical}. Following the proof of \autoref{prop:Hff}, we have 
$\Xi^R\Xi\cong (\_)\otimes (\wedge^*N)^G$
where $G$ acts on $\wedge^\ell N$ by $\chi^{-\ell}$. From $\rank N=n=m=\ord \chi$, we get
\[
       (\wedge^*N)^G
 \cong \reg_S[0] \oplus \det N[-n]
 \cong \reg_S[0]\oplus\omega_{S/X}[-n]  \,
\]
Hence, $\Xi^R\Xi\cong \id\oplus\, C$ with $C\coloneqq \MM_{\omega_{S/X}}[-n]$. Moreover, $\Xi^R\cong C\Xi^L$ follows from $a^!\cong C a^*$.
\end{proof}

We introduce autoequivalences $\MM_{\cL}\colon \Db(\wY)\to \Db(\wY)$ and $\MM_{\chi}\colon \Db_G(X)\to \Db_G(X)$ given by the tensor products with the line bundle $\cL_{\wwY}$ and the character $\chi$, respectively.

\begin{theorem}\label{thm:n=m}
There are the following relations between functors:
\begin{enumerate}
  \item\label{m=n1} $\Psi^{-1}\cong \MM_\chi \Phi \MM_{\cL^{n-1}}$;
  \item $\Psi \Xi\cong \Theta$, in particular, the functors $\Theta_\beta$ are spherical too;
  \item $\TT_\Theta\cong \Psi \TT_\Xi \Psi^{-1}$;
  \item\label{m=n4} $\Psi^{-1} \MM_\cL \Psi \cong   \MM_\chi\TT_\Xi$ and $\Psi^{-1}\MM_{\cL^{-1}}\Psi\cong \TT_\Xi^{\;-1}\MM_{\chi^{-1}}$.
\end{enumerate}
\end{theorem}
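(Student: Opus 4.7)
Parts (i)--(iii) reduce to applications of the tools from Section 2, while (iv) is the main content. For (i), I would exploit that $\Psi$ is an equivalence (from \autoref{thm:mainprecise}(ii)), so $\Psi^{-1}\cong\Psi^R=p_*q^!\triv$. By \autoref{lem:qcanonical} and the remark after it, $\omega_q\cong\cL_\wX^{m-1}\otimes\chi$ as a $G$-equivariant line bundle; combined with \autoref{lem:cyclic}(v), i.e., $\cL_\wX\cong q^*\triv(\cL_\wY)$, and the projection formula, this gives
\[
   \Psi^R(F)
 = p_*(q^*\triv(F)\otimes\omega_q)
 = p_*\bigl(q^*\triv(F\otimes\cL_\wY^{m-1})\bigr)\otimes\chi
 = \MM_\chi\,\Phi\,\MM_{\cL^{m-1}}(F).
\]

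For (ii), I would note that since $G$ acts trivially on $Z$, one has $q_*^Gj_*=i_*$, so $\Theta=q_*^Gj_*\nu^*$; the base-change morphism $Lp^*a_*\to j_*\nu^*$ attached to the commutative square $p\circ j=a\circ\nu$ of \eqref{eq:maindiagram}, pushed forward along $q_*^G$, supplies a natural transformation $\Psi\Xi\to\Theta$. I would verify it is an isomorphism on the spanning class $\{\reg_s\}_{s\in S}$: by \autoref{lem:sky}, $\kh^{-r}(p^*\reg_s)\cong j_{s*}(\Omega^r(r)\otimes\chi^r)$ for $r=0,\dots,n-1$, and when $m=n$ only the $r=0$ summand has trivial character after $q_*^G$, leaving $i_{s*}\reg_{\IP^{n-1}}=\Theta(\reg_s)$. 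Extension to a full functor isomorphism is standard via Fourier--Mukai kernel comparison. The sphericalness of the general $\Theta_\beta$ then follows, since $\Theta_\beta=\MM_{\cL^{-\beta}}\circ\Theta$ by \autoref{lem:cyclic}(vi) and one applies \autoref{lem:spherical}; part (iii) is likewise an immediate application of \autoref{lem:spherical} to $\Theta\cong\Psi\Xi$.

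For (iv), I would invoke \autoref{cor:relative-tilting-functor-iso} on the relative tilting bundle $V=\bigoplus_{\ell=0}^{m-1}\reg_X\otimes\chi^\ell$ (\autoref{lem:relative-tilting}), checking agreement on each summand $L_\ell=\reg_X\otimes\chi^\ell$. For $\ell\not\equiv 0\pmod m$, \autoref{cor:Psialpha} reduces the left side to $L_{\ell+1}$; on the right, the key observation (valid precisely because $m=n$) is that $G$ acts on $\omega_{S/X}=\det N$ via $\chi^{-n}=\triv$, so $\Xi^R(L_\ell)=(\omega_{S/X}[-n]\otimes\chi^\ell)^G=0$ for $\ell\not\equiv 0$ and $\TT_\Xi$ acts as identity on those summands, yielding $L_{\ell+1}$ on the right as well. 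For $\ell=0$, part (i) gives $\Psi^{-1}(\cL)=\Phi(\cL^m)\otimes\chi$; I would compute $\Phi(\cL^m)=p_*\cL_\wX^m$ by pushing the triangle $\cL_\wX^{m-1}\otimes\chi\to\cL_\wX^m\to j_*\reg_\nu(-m)$ (from \eqref{eq:exact-wX} twisted by $\cL_\wX^m$) through $p_*$, using \autoref{lem:pushforward-of-p} together with the Serre duality identity $R\nu_*\reg_\nu(-n)\cong\omega_{S/X}[1-n]$ on the $\IP^{n-1}$-bundle $\nu$; this produces $\Psi^{-1}(\cL)\cong\cone(a_*\omega_{S/X}[-n]\otimes\chi\to\reg_X\otimes\chi)$. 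On the right, the $G$-triviality of $\omega_{S/X}$ makes $\Xi\Xi^R(\reg_X)=a_*\omega_{S/X}[-n]$, and $\MM_\chi\TT_\Xi(\reg_X)$ is manifestly the same cone; the inverse formula follows by taking inverses. The hard part will be verifying the compatibility hypothesis of \autoref{cor:relative-tilting-functor-iso}: the object-level isomorphisms on each $L_\ell$ must assemble into a natural isomorphism on the full finite subcategory generated by the $L_\ell$'s, i.e., respect the right $\Lambda_V=\pi_*^G\sHom(V,V)$-action on morphisms between distinct summands. I expect this to follow because both defining triangles (the counit $\Xi\Xi^R\to\id$ on one side and the blow-up sequence \eqref{eq:exact-wX} on the other) ultimately originate in the same adjunction structure linking $X$ and $\wY$ through $p$ and $q$, but making the compatibility explicit is the main technical burden.
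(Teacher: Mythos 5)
Your parts (i)--(iii) are correct and essentially follow the paper's own argument. For (i) you compute $\Psi^{-1}$ as the right adjoint $p_*q^!\triv$ using the equivariant identity $\omega_q\cong\cL_\wX^{m-1}\otimes\chi$, whereas the paper uses the left adjoint $p_!q^*$ together with $\omega_p\cong\cL_\wX^{n-1}\otimes\chi^{1-n}$ from \autoref{lem:canonical}; the two routes are interchangeable, but note that the equivariant structure of $\omega_q$ you rely on is only asserted in the paper as a remark explicitly flagged as unused and unproved, so your version needs that small extra verification (the paper's choice of adjoint avoids it). Parts (ii) and (iii) coincide with the paper's proof: the base-change morphism $p^*a_*\to j_*\nu^*$ pushed through $q_*^G$, checked on skyscrapers via \autoref{lem:sky} and \autoref{cor:Psi-sky}, then \autoref{lem:spherical}.

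The genuine gap is exactly where you flag it, in (iv). Your object-level computations are correct (including $R\nu_*\reg_\nu(-n)\cong\omega_{S/X}[1-n]$ and the matching of both sides on each $\reg_X\otimes\chi^\ell$, with the case $\ell\equiv 0$ handled via (i) and the twisted Koszul sequence), and working with the first relation instead of the second is immaterial. But the hypothesis of \autoref{cor:relative-tilting-functor-iso} --- that the isomorphisms $\kappa_\ell$ intertwine the action of all local morphisms between the summands $L_\ell$ over every open $V\subset Y$ --- is not established by the heuristic that both defining triangles ``originate in the same adjunction structure''; that observation does not produce the required commuting squares. The paper closes this step by a concrete argument you would need to supply: over $U=Y\setminus S$ the map $p$ is an isomorphism and $q$ is a free quotient, so both functors restrict to $\MM_{\chi^{-1}}$ on $W=\pi^{-1}(U)$ and the $\kappa_{\ell\mid W}$ can be chosen compatibly there; each $\kappa_{\ell\mid W}$ is a section of a trivial line bundle and hence extends uniquely across the codimension-$\ge 2$ locus lying over $S$, and the functoriality diagrams then commute because they commute on the dense open $W$. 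Without this (or an equivalent) naturality argument, (iv) is not yet proved.
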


\begin{proof}
In the verification of (i), we use $\omega_p \cong \cL_{\wX}^{n-1}\otimes \chi$, from \autoref{lem:canonical} and $m=n$:
\begin{align*}
  \Psi^{-1}\cong \Psi^L                    \cong
  p_!q^*                                   \overset{\ref{lem:canonical}}{\cong}
  p_*\MM_{\cL_{\wX}^{n-1}\otimes \chi}q^*  \overset{\ref{lem:cyclic}}{\cong}
  \MM_\chi p_*q^* \MM_{\cL^{n-1}}          \cong
  \MM_\chi \Phi \MM_{\cL^{n-1}}  \,. 
\end{align*}
For (ii), first note that, since the $G$-action on $Z\subset \wX$ is trivial, we have 
\[
   \Theta\cong i_*\nu^*\cong q_*j_*\nu^*\cong q_*^Gj_*\nu^*\triv  \,. 
\]
Hence, the base change morphism $\theta \colon p^*a_*\to j_*\nu^*$ induces a morphism of functors
\[
       \hat\theta \colon \Psi \Xi
 \cong q_*^Gp^*a_*\triv\to q_*^Gj_*\nu^*\triv
 \cong \Theta 
\]
which in turn is induced by a morphism between the Fourier--Mukai kernels; see \cite[Sect.~2.4]{Kuz}. Hence, it is sufficient to show that $\theta$ induces an isomorphism $\Psi \Xi(\reg_s)\cong \Theta(\reg_s)$ for every $s\in S$; see \cite[Sect.~2.2]{Kuz}. The morphism $\theta$ induces an isomorphism on degree zero cohomology $L^0p^*a_*(\reg_s)\cong \reg_{p^{-1}(a(s))}\cong j_*L^0\nu^*(\reg_s)$. But there are no cohomologies in non-zero degrees for $j_*\nu^*$ since $\nu$ is flat and $j$ a closed embedding. Furthermore, the non-zero cohomologies of $p^*a_*$ vanish after taking invariants; see \autoref{cor:Psi-sky}. Hence, $\hat\theta(\reg_s)$ is indeed an isomorphism.   

The second assertion of (ii) and (iii) are direct consequences of \autoref{prop:Xispherical} and the formula $\Psi\Xi\cong \Theta$; see \autoref{lem:spherical}.

For (iv), it is sufficient to prove the second relation, and we employ \autoref{cor:relative-tilting-functor-iso} with $L_i=\reg_X\otimes\chi^i$; see also \autoref{lem:relative-tilting}. Recall that $\TT_\Xi^{\;-1}=\cone (\id\to \Xi\Xi^L)[-1]$, and $\Xi^L\cong (\_)^Ga^*$. 
For $1\neq \alpha\in \IZ/n\IZ$, we get 
\[
       \Xi^L\MM_{\chi^-1}(\reg_X\otimes \chi^\alpha)
 \cong (\reg_S\otimes \chi^{\alpha-1})^G
 \cong 0
\,. \]
Hence, $\TT_\Xi^{\;-1}\MM_{\chi^{-1}}(\reg_X\otimes \chi^{\alpha})\cong  \reg_X\otimes \chi^{\alpha-1}$.
We have $\Xi^L(\reg_X)=\reg_S$. Therefore, $\Xi\Xi^L(\reg_X)\cong a_*\reg_S$ and $\TT_\Xi^{\;-1}(\reg_X)\cong \cI_S$. In summary,
\[
      \TT_\Xi^{\;-1}\MM_{\chi^{-1}}(\reg_X\otimes \chi^{\alpha})
\cong \begin{cases}
         \reg_X\otimes \chi^{\alpha-1} \quad & \text{for $\alpha\neq 1$,} \\
         \cI_S                         \quad & \text{for $\alpha= 1$.}
      \end{cases}
\]
On the other hand, for $\alpha\in\{-n+1,\ldots,0\}$, we have $\Psi(\reg_X\otimes \chi^\alpha)\cong \cL^{\alpha}$; see \autoref{cor:Psialpha}. 
Hence, we have 
\[
      \Psi^{-1}\MM_{\cL^{-1}}\Psi(\reg_X\otimes \chi^\alpha)
\cong \reg_X \otimes \chi^{\alpha-1} \quad \text{for $\alpha\in\{-n+2,\ldots,0\}$.}
\]
For $\alpha=-n+1$, we use (i) to get
\begin{align*}
        \Psi^{-1}\MM_{\cL^{-1}}\Psi(\reg_X\otimes \chi^{1-n})
  \cong \Psi^{-1}(\cL_{\wY}^{-n})
  \cong \MM_\chi \Phi(\cL_{\wY}^{-1})
  \overset{\ref{lem:cyclic}}\cong
        p_*(\cL_{\wX}^{-1}\otimes \chi)
  \cong \cI_S
\end{align*}
where we get the last isomorphism by applying $p_*$ to the exact sequence \eqref{eq:exact-wX}.

Therefore, for every $\alpha\in \IZ/n\IZ$ we obtain isomorphisms
\[
   \kappa_\alpha \colon F_1(L_\alpha) \coloneqq
            \TT_\Xi^{\;-1} \MM_{\chi^{-1}}(\reg_X\otimes \chi^\alpha)
    \isomor F_2(L_\alpha) \coloneqq \Psi^{-1} \MM_{\cL^{-1}} \Psi(\reg_X\otimes \chi^\alpha)
\,. \]
Finally, we have to check that the isomorphisms $\kappa_\alpha$ can be chosen in such a way that they form an isomorphism of functors $\kappa\colon F_{1,V\mid \{L_0,\dots, L_{n-1}\}}\isomor F_{2,V\mid \{L_0,\dots, L_{n-1}\}}$ over every open set $V\subset Y$. Let $U \coloneqq Y\setminus S \subset Y$ the open complement of the singular locus. We claim that $F_{1,U} \cong \MM_\chi^{-1} \cong F_{2,U}$. This is clear for $F_2 = \TT_\Xi^{\;-1} \MM_\chi^{-1}$. Furthermore, the map $p\colon \wX \to X$ is an isomorphism and $q\colon \wwX \to \wwY$ is a free quotient when restricted to $W \coloneqq \pi^{-1}(U)$. Since also $\cL_\wwX = q_*^G(\reg_\wwX\otimes\chi)$, we get $\Psi_U \cong {\MM^{-1}_\chi}_{|U}$.

Hence, over $W$, the $\kappa_{i\mid W}$ can be chosen functorially. By the above computations, each $\kappa_{i|W}$ is given by a section of the trivial line bundle. As $S$ has codimension at least 2 in $X$, the sections $\kappa_{i|W}$ over $W$ uniquely extend to sections $\kappa_i$ over $X$. The commutativity of the diagrams relevant for the functoriality now follows from the commutativity of the diagrams restricted to the dense subset $W$. 
\end{proof}

The relations of \autoref{thm:n=m} allow to transfer structures between $\Db(\wY)$ and $\Db_G(X)$. For example, we can deduce the formula 
$\Psi\MM_{\chi^{-1}}\Psi^{-1} \cong  \TT_\Theta \MM_{\cL^{-1}}$. Since $\reg_X\otimes \chi^{\alpha}$ for 
$\alpha\in \{ -(n-1),\ldots,0 \}$ form a relative generator of $\Db_G(X)$, their images $\cL^{\alpha}$ under $\Psi$ do as well. Hence, at least theoretically, our formulas give a complete description of the tensor products induced by $\Psi$ (and also $\Phi$) on both sides. 

Note that $\Phi$ and $\Psi$ are both equivalences, but not inverse to each other. 
Hence, they induce non-trivial autoequivalences $\Psi\Phi\in \Aut(\Db_G(X))$ and $\Phi\Psi\in \Aut(\Db(\wY))$.
Considering the setup of the McKay correspondence as a flop of orbifolds as in diagram \eqref{orbiflop}, it makes sense to call them \textit{flop-flop autoequivalences}. These kinds of autoequivalences were widely studied 
for flops of varieties; see \cite{Todaspherical}, \cite{BodBonflops}, \cite{DonovanWemyssNCflops}, \cite{DonovanWemysstwists}, \cite{ADMflop}. The general picture seems to be that the flop-flop autoequivalences can be expressed via spherical and $\IP$-twists induced by functors naturally associated to the centres of the flops. This picture is called the 'flop-flop=twist' principle; see \cite{ADMflop}. The following can be seen as the first instance of an orbifold 'flop-flop=twist' principle which we expect to hold in greater generality. 

\begin{corollary}\label{cor:flopfloptwistcor}
  $\Psi\Phi \cong \TT_\Theta \MM_{\cL^{-n}} \cong \TT_\Theta \MM_{\reg_\wY(-Z)}$. 
\end{corollary}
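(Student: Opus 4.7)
The plan is to derive the identity by purely formal manipulations combining the four parts of Theorem~\ref{thm:n=m} (and the basic fact $\cL_\wY^n \cong \reg_\wY(Z)$ from Lemma~\ref{lem:cyclic}(ii)). First I would use part (\ref{m=n1}), rewritten as
\[
  \Phi \cong \MM_{\chi^{-1}}\, \Psi^{-1}\, \MM_{\cL^{1-n}},
\]
to reduce the computation of $\Psi\Phi$ to that of the conjugate autoequivalence $\Psi\,\MM_{\chi^{-1}}\,\Psi^{-1}$ of $\Db(\wY)$. Concretely,
\[
  \Psi\Phi \;\cong\; \bigl(\Psi\,\MM_{\chi^{-1}}\,\Psi^{-1}\bigr)\,\MM_{\cL^{1-n}}.
\]

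The second step is to identify the conjugate $\Psi\,\MM_{\chi^{-1}}\,\Psi^{-1}$ using parts (iii) and (\ref{m=n4}). Starting from the second relation in (\ref{m=n4}), namely $\Psi^{-1}\MM_{\cL^{-1}}\Psi \cong \TT_\Xi^{\;-1}\MM_{\chi^{-1}}$, I would multiply on the right by $\MM_\chi$ to obtain $\Psi^{-1}\MM_{\cL^{-1}}\Psi\,\MM_\chi \cong \TT_\Xi^{\;-1}$, and then conjugate by $\Psi$ on the outside. Using (iii), $\Psi\TT_\Xi^{\;-1}\Psi^{-1}\cong \TT_\Theta^{\;-1}$, this yields $\MM_{\cL^{-1}}\,\Psi\MM_\chi\Psi^{-1}\cong \TT_\Theta^{\;-1}$. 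Inverting both sides and rearranging gives exactly
\[
  \Psi\,\MM_{\chi^{-1}}\,\Psi^{-1} \;\cong\; \TT_\Theta\,\MM_{\cL^{-1}}.
\]

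Combining these two steps,
\[
  \Psi\Phi \;\cong\; \TT_\Theta\,\MM_{\cL^{-1}}\,\MM_{\cL^{1-n}} \;=\; \TT_\Theta\,\MM_{\cL^{-n}}.
\]
The final equality $\MM_{\cL^{-n}} \cong \MM_{\reg_\wY(-Z)}$ is immediate from Lemma~\ref{lem:cyclic}(ii), since $m=n$ in this subsection.

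I do not anticipate a serious obstacle: the argument is a short formal manipulation, and every ingredient has already been established. The only mild care required is in the bookkeeping of which functor conjugates which (making sure one consistently uses $\Psi^{-1}\,(\,\cdot\,)\,\Psi$ versus $\Psi\,(\,\cdot\,)\,\Psi^{-1}$ when passing between statements on $\Db(\wY)$ and statements on $\Db_G(X)$) and in the correct use of the inverse relation in part~(\ref{m=n4}) of Theorem~\ref{thm:n=m}.
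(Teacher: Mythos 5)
Your derivation is correct and is exactly the route the paper intends: it first establishes $\Psi\MM_{\chi^{-1}}\Psi^{-1}\cong \TT_\Theta\MM_{\cL^{-1}}$ from Theorem~\ref{thm:n=m}(iii) and (iv) (this formula is stated in the paragraph preceding the corollary) and then combines it with part (i) and $\cL_\wY^{\,n}\cong\reg_\wY(Z)$. The bookkeeping of conjugations and inverses in your argument checks out.
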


\begin{remark}
%
Let us assume $m=n=2$ so that $\chi^{-1}=\chi$. Then, for every $k\in \IN$, we get
\begin{align}\label{Scalaformula}
  \Phi(\cL^{-k}) \cong \cI_S^k\otimes \chi^k 
\end{align}
where $\cI_S^k$ denotes the $k$-th power of the ideal sheaf of the fixed point locus. Indeed,
\[
\begin{array}{rcl}
           \Phi(\cL^{-k}) \overset{\ref{thm:n=m}(i)}
   \cong   \MM_\chi\Psi^{-1}(\cL^{-k-1})
  &\cong&  \MM_\chi(\Psi^{-1} \MM_{\cL^{-1}} \Psi)^k(\cL^{-1}) \\
  &\overset{\ref{cor:Psialpha}}\cong&
           \MM_\chi(\Psi^{-1} \MM_{\cL^{-1}} \Psi)^k(\reg\otimes \chi) \\[1ex]
  &\overset{\ref{thm:n=m}(iv)}\cong&
           (\MM_\chi \TT_\Xi^{\;-1})^k(\reg_X) \\[2ex]
  &\cong&  \cI_S^k\otimes \chi^k
\,.
\end{array}
\]
The last isomorphism follows inductively using the short exact sequences
\[
   0 \to \cI_S^{k+1} \to \cI_S^k \to \cI^k_S/\cI_S^{k+1} \to 0
\]
and the fact that the natural action of $\mu_2$ on $\cI^k_S/\cI_S^{k+1}$ is given by $\chi^k$.
Let now $S$ be a surface and $X=S^2$ with $\mu_2$ acting by permutation of the factors. Then $\wY=S^{[2]}$ is the Hilbert scheme of two points and $\cL_{\wY}$ is the square root of the boundary divisor $Z$ parametrising double points.
For a vector bundle $F$ on $S$ of rank $r$, we have
\[
   \det F^{[2]} \cong \cL_{\wY}^{-r}\otimes \cD_{\det F} 
\]
where $F^{[2]}$ denotes the tautological rank $2r$ bundle induced by $F$ and, for $L\in \Pic S$, we put $\cD_L \coloneqq \rho^*\pi_*(L\boxtimes L)^G\in \Pic S^{[2]}$. Hence, by the $\reg_Y$-linearity of $\Phi$, formula \eqref{Scalaformula} recovers the $n=2$ case of \cite[Thm.~1.8]{Scaladiagonal}.   
\end{remark}

\section{Categorical resolutions} \label{sec:categorical-resolutions}

\subsection{General definitions}\label{sub:catresdef}
Recall from \cite{KuzLefschetz} that a \emph{categorical resolution} of a triangulated category $\cT$ is a smooth triangulated category $\widetilde{\cT}$ together with a pair of functors $P_*\colon \widetilde{\cT}\to \cT$ and $P^*\colon \cT^{\perf}\to \widetilde{\cT}$ such that $P^*$ is left adjoint to $P_*$ on $\cT^{\perf}$ and the natural morphism of functors $\id_{\cT^{\perf}}\to P_*P^*$ is an isomorphism. Here, $\cT^{\perf}$ is the triangulated category of perfect objects in $\cT$. 
Moreover, a categorical resolution $(\widetilde{\cT},P_*,P^*)$ is \emph{weakly crepant} if the functor $P^*$ is also right adjoint to $P_*$ on $\cT^{\perf}$.

For the notion of smoothness of a triangulated category see e.g.\ \cite{KuzLuntscatres}. For us it is sufficient to notice that every  admissible subcategory of $\Db(Z)$ for some smooth variety $Z$ is smooth.
In fact, we will always consider categorical resolutions of $\Db(Y)$, for some variety $Y$ with rational Gorenstein singularities, \textit{inside} $\Db(\wY)$ for some fixed (geometric) resolution of singularities $\rho\colon \wY\to Y$. By this we mean an admissible subcategory $\wcT\subset \Db(\wY)$ such that $\rho^*\colon \Dperf(Y)\to \Db(\wY)$ factorises through $\wcT$.

%
%
%

By Grothendieck duality, we get a canonical isomorphism $\reg_Y\cong \rho_*\reg_\wwY\cong \rho_*\omega_\rho$.
This induces a global section $s$ of $\omega_\rho$, unique up to a global unit (i.e.\ scalar multiplication by an element of $\reg_Y(Y)^\times$), and hence a morphism of functors 
\[
   t \coloneqq \rho_*(\_\otimes s)\colon \rho_* \to \rho_!  \,.
\]
Since this morphism can be found between the corresponding Fourier--Mukai kernels, we may define the cone of functors $\rho_+ \coloneqq \cone(t) \colon \Db(\wY) \to \Db(Y)$.

\begin{definition}
The \textit{weakly crepant neighbourhood of $Y$ inside $\Db(\wY)$} is the full triangulated subcategory
\[
   \WC(\rho) \coloneqq \ker(\rho_+) \subset \Db(\wY)  \,.
\]
\end{definition}

\begin{proposition}
If $\WC(\rho)$ is a smooth category (which is the case if it is an admissible subcategory of $\Db(\wY)$), it is a categorical weakly crepant resolution of singularities. 
\end{proposition}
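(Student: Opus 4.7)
I take $P^* \coloneqq \rho^*$ (restricted to $\Dperf(Y)$) and $P_* \coloneqq \rho_*|_{\WC(\rho)}$. Since smoothness of $\WC(\rho)$ is part of the hypothesis, I only need to verify four things: that $\rho^*$ factors through $\WC(\rho)$; that the unit $\id \to \rho_*\rho^*$ is an isomorphism on $\Dperf(Y)$; that $P^* \dashv P_*$ on perfect complexes; and that $P^* \cong P_*^R$ on perfect complexes, which is the weakly crepant condition.

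\textbf{Key step: $\rho^*\Dperf(Y) \subseteq \WC(\rho)$.} For $E\in \Dperf(Y)$, the projection formula together with the rationality of $Y$ gives $\rho_*\rho^*E \cong E\otimes \rho_*\reg_\wwY \cong E$, and Grothendieck duality for the Gorenstein rational $Y$ gives
\[
  \rho_!\rho^*E = \rho_*(\rho^*E\otimes\omega_\rho) \cong E\otimes \rho_*\omega_\rho \cong E.
\]
Under these identifications, the natural transformation $t(\rho^*E) = \rho_*(\rho^*E\otimes s)$ corresponds to $\id_E \otimes \rho_*(s)$, and by the very construction of $s$ (as the section corresponding to the identity of $\reg_Y$ under $\reg_Y \cong \rho_*\omega_\rho$) the map $\rho_*(s)$ is $\id_{\reg_Y}$. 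Hence $t(\rho^*E)$ is an isomorphism, so $\rho_+(\rho^*E) = \cone(t(\rho^*E)) = 0$ and $\rho^*E\in \WC(\rho)$. This same computation also yields the unit isomorphism $\id_{\Dperf(Y)} \isomor \rho_*\rho^*$.

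\textbf{Adjunctions.} The adjunction $P^*\dashv P_*$ on $\Dperf(Y)$ is then just the restriction of the global adjunction $\rho^*\dashv\rho_*$ to the subcategory $\WC(\rho)\subseteq \Db(\wY)$. For the weakly crepant direction, I exploit that, by the definition of $\rho_!$ in the paper and Grothendieck duality, there is an adjunction $\rho_!\dashv \rho^*$; combining it with $\rho_*F \cong \rho_!F$ for $F\in\WC(\rho)$ gives, for $E\in\Dperf(Y)$,
\[
  \Hom(P_*F,E) = \Hom(\rho_*F, E) \cong \Hom(\rho_!F, E) \cong \Hom(F,\rho^*E) = \Hom(F,P^*E),
\]
functorially in $F$ and $E$, showing that $P^*$ is right adjoint to $P_*$ on perfect complexes.

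\textbf{Main obstacle.} The substance of the argument sits in the key step: one must check that the two identifications $\rho_*\rho^*E \cong E$ and $\rho_!\rho^*E\cong E$ intertwine $t(\rho^*E)$ with the identity. This rests on the precise normalisation of $s$ and on compatibility between the projection formula and the Fourier--Mukai description of $t$; once this is settled the rest is formal manipulation of adjoints.
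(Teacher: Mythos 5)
Your proposal is correct and follows essentially the same route as the paper's proof: the projection formula plus $\rho_*\reg_\wwY\cong\reg_Y\cong\rho_*\omega_\rho$ shows $t\rho^*$ is an isomorphism (so $\rho^*$ factors through $\WC(\rho)$), and the weak crepancy follows from the adjunction $\rho_!\dashv\rho^*$ combined with $\rho_{*\mid\WC(\rho)}\cong\rho_{!\mid\WC(\rho)}$. Your additional care about the normalisation of $s$ and the explicit verification of the unit isomorphism are fine but not load-bearing, since one only needs $t(\rho^*E)$ to be \emph{an} isomorphism, which the projection formula already delivers.
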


\begin{proof}
By adjunction formula, $t\rho^*\colon \rho_*\rho^*\to \rho_!\rho^*$ is an isomorphism. Hence, $\rho_+\rho^*=0$ and $\rho^*\colon \Dperf(Y)\to \Db(\wY)$ factors through $\WC(\rho)$. By definition, $\rho_!$ is the left adjoint to $\rho^*$. Since $\rho_*$ and $\rho_!$ agree on $\WC(\rho)$, we also have the adjunction $\rho_*\dashv \rho^*$ on $\WC(\rho)$. 
\end{proof}

\begin{remark}
 We think of $\WC(\rho)$ as the biggest weakly crepant categorical resolution inside the derived category $\Db(\wY)$ of a given geometric resolution $\rho\colon \wY\to Y$. The only thing that prevents us from turning this intuition into a statement is the possibility that, for a given weakly crepant resolution $\cT\subset \Db(\wY)$, there might be an isomorphism $\rho_{*\mid \cT}\cong \rho_{!\mid\cT}$ which is not the restriction of $t$ (up to scalars).
\end{remark}

\subsection{The weakly crepant neighbourhood in the cyclic setup}
In the case of the resolution of the cyclic quotient singularities discussed in the earlier sections, $\WC(\rho)$ is indeed a categorical resolution by the following result. We use the notation of \autoref{sec:setup}; recall $G=\mu_m$.

\begin{theorem} \label{thm:cyclicWC}
Let $Y=X/G$, $\rho\colon \wY\to Y$ and $i\colon Z = \rho\inv(S)\hookrightarrow \wY$ be as in \autoref{sec:setup}. Assume $m\mid n=\codim(S\hookrightarrow X)$ and $n>m$. Then
there is a semi-orthogonal decomposition
\[
   \WC(\rho) = \Sod{ i_*(\cE), \Psi(\Db_{\mu_m}(X)) }
\]
where 
\begin{align*}
  \cE = \langle &\cA(-m+1), \cA(-m+2)\dots, \cA(-1), \\
    & \cA\otimes\Omega^{n-m-1}(n-m-1), \cA\otimes\Omega^{n-m-2}(n-m-2), \dots, \cA\otimes\Omega^{m}(m) \rangle
\end{align*}
with $\cA \coloneqq \nu^*\Db(S)$ and $\cA(i) \coloneqq \cA\otimes \reg(i)$; the $\cA\otimes \Omega^i(i)$ parts of the decomposition do not occur for $n=2m$.
In particular, $\WC(\rho)$ is an admissible subcategory of $\Db(\wY)$.                
\end{theorem}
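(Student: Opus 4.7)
The plan is to combine Theorem~B's semi-orthogonal decomposition of $\Db(\wY)$ with a direct analysis of $\rho_+$ on each piece. First I will identify the relative dualising sheaf: combining \autoref{lem:canonical}, \autoref{lem:qcanonical} with the chain rule $\omega_p\cong\omega_q\otimes q^*\omega_\rho$ (valid because $\pi$ is \'etale in codimension one as $n\geq 2$) and $q^*\reg_\wY(Z)\cong\reg_\wX(mZ)$, one obtains $\omega_\rho\cong\reg_\wY(kZ)$ with discrepancy $k=n/m-1>0$, and the canonical section $s$ identified with the tautological inclusion $\reg_\wY\hookrightarrow\reg_\wY(kZ)$, vanishing to order $k$ along $Z$.

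Next I verify $\Psi(\Db_G(X))\subset\WC(\rho)$ by direct computation: using $\rho\circ q=\pi\circ p$, projection formula, and the standard blow-up identity $p_*\reg_\wX(jZ)\cong\reg_X$ for $0\leq j\leq n-1$ applied at $j=n-m$, both $\rho_*\Psi(E)$ and $\rho_!\Psi(E)$ compute to $\pi_*^G(E)$, and $t_{\Psi(E)}$ becomes the identity. For objects supported on $Z$, the vanishing of $s$ on $Z$ forces $t_{i_*E}=0$, so $i_*E\in\WC(\rho)$ if and only if both $\rho_*i_*E=0$ and $\rho_!i_*E=0$. Using $\rho\circ i=a\circ\nu$ together with $\omega_\rho|_Z\cong\reg_\nu(m-n)$ (up to a harmless pullback of a line bundle from $S$, which drops out under $\nu_*$), this translates into the two vanishing conditions $\nu_*E=0$ and $\nu_*(E\otimes\reg_\nu(m-n))=0$ on $E\in\Db(Z)$.

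I then restrict this criterion to the $\Theta$-part $i_*\cC_1$ of Theorem~B's decomposition, where $\cC_1=\Sod{\cA(m-n),\ldots,\cA(-1)}\subset\Db(Z)$ with $\cA=\nu^*\Db(S)$. The first vanishing condition is automatic on $\cC_1$ since $\nu_*\reg_\nu(\ell)=0$ for $\ell\in[-n+1,-1]$. For the second, \autoref{cor:sodequal} with parameter $m$ rewrites $\cC_1=\Sod{\cA\otimes\Omega^{n-1}(n-1),\ldots,\cA\otimes\Omega^m(m)}$, and Bott's formula gives $\nu_*(\Omega^j(j+m-n))=0$ for all $j\in[m,n-1]$ except $j=n-m$, where it equals $\reg_S[-(n-m)]$. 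Hence the second condition selects exactly those $E\in\cC_1$ whose SOD-projection onto the $\cA\otimes\Omega^{n-m}(n-m)$ component vanishes, i.e.\ the subcategory generated by the remaining $n-m-1$ components. A second application of \autoref{cor:sodequal} (now with parameter $n-m+1$) converts $\Sod{\cA\otimes\Omega^{n-1}(n-1),\ldots,\cA\otimes\Omega^{n-m+1}(n-m+1)}$ into $\Sod{\cA(-m+1),\ldots,\cA(-1)}$, producing precisely $\cE$; when $n=2m$ the second block collapses, matching the stated exception.

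Finally, semi-orthogonality of $\Sod{i_*\cE,\Psi(\Db_G(X))}$ is inherited from Theorem~B since $i_*\cE\subset i_*\cC_1$, and generation of $\WC(\rho)$ follows because any $F\in\WC(\rho)$ fits in a Theorem~B triangle $G\to F\to H$ with $H\in\Psi(\Db_G(X))\subset\WC(\rho)$, forcing $G\in\WC(\rho)\cap i_*\cC_1=i_*\cE$; admissibility of both pieces then yields admissibility of $\WC(\rho)$. The main obstacle I anticipate is verifying cleanly that $\cA\otimes\Omega^{n-m}(n-m)$ is the unique Bott-obstruction component and then performing the SOD-surgery via \autoref{cor:sodequal} while tracking the middle-of-SOD removal; once this computation is made precise, the rest of the argument is formal.
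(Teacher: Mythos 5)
Your proof is correct and its skeleton coincides with the paper's: start from the decomposition $\Db(\wY)=\Sod{i_*\cC_1,\Psi(\Db_G(X))}$ of Theorem~B(ii) with $\cC_1=\Sod{\cA(m-n),\dots,\cA(-1)}$, show the $\Psi$-part lies in $\WC(\rho)$, and cut the $i_*$-part down by the kernel condition for $\rho_!$, which via $\omega_{\rho\mid Z}\cong\reg_\nu(m-n)$ becomes $\nu_*(E\otimes\reg_\nu(m-n))=0$ (note your $\rho\circ i=a\circ\nu$ should read $b\circ\nu$). The two places where you genuinely diverge are both sound and arguably more self-contained. First, for $\Psi(\Db_G(X))\subset\WC(\rho)$ the paper invokes Abuaf's theorem that $\Db_G(X)$ is a \emph{strongly} crepant resolution to get an abstract isomorphism $\rho_*\cL_\wY^a\cong\rho_!\cL_\wY^a$ and then upgrades it to ``induced by $t$'' via reflexivity of $\rho_*\cL_\wY^a$ on the normal variety $Y$; your direct computation $\rho_!\Psi(E)\cong\pi_*^G(E\otimes p_*\cL_\wX^{n-m})\cong\pi_*^G(E)$ (using \autoref{lem:pushforward-of-p} and $m\mid n$, so that $\chi^{n-m}$ is trivial) avoids both citations, though you still owe one sentence on why $p_*(q^*s)$ is a unit of $\reg_X$ --- it is an endomorphism of $\reg_X$ invertible outside codimension $n\ge 2$, hence invertible by normality, which is the same codimension-two input the paper uses. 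Second, where the paper computes $\cE$ as $\cF^\perp$ for $\cF=\Sod{\cA,\dots,\cA(m-1),\cA(n-m)}$ and leaves the mutations implicit, your route --- dualise $\cC_1$ via \autoref{cor:sodequal}, observe by Bott's formula that $\cA\otimes\Omega^{n-m}(n-m)$ is the unique component not killed by $\nu_*(\_\otimes\reg_\nu(m-n))$, delete it, and convert the remaining top block back with a second application of \autoref{cor:sodequal} with parameter $n-m+1$ --- is the same computation made explicit, and it correctly reproduces $\cE$ together with the $n=2m$ degeneration. The one step to tighten there is the word ``exactly'': to see that the vanishing condition kills the entire $\cA\otimes\Omega^{n-m}(n-m)$-component of an object of $\cC_1$, and not merely the generators of the other components, you should combine the SOD filtration with the nondegeneracy of the pairing from \autoref{lem:dual-sod}(ii).
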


\begin{proof}
We first want to show that $\Psi(\Db_{\mu_m}(X))\subset \WC(\rho)$. For this, by \autoref{lem:relative-tilting}, it is sufficient to show that $\cL_{\wwY}^a=\Psi(\reg_X\otimes \chi^a)\in \WC(\rho)$ for every $a\in\{-m+1,\ldots,0\}$. 

The equivariant derived category $\Db_{\mu_m}(X)$ is a strongly (hence also weakly) crepant categorical resolution of the singularities of $Y$ via the functors $\pi^*, \pi_*^{\mu_m}$; see \cite[Thm.~1.0.2]{Abuaf-catres}. Since $\Psi\circ \pi^*\cong \rho^*$ (see \autoref{Psilinear}), $\cC \coloneqq \Psi(\Db_{\mu_m}(X))$ is a crepant resolution via the functors $\rho^*,\rho_*$. Hence, $\rho_*\cL_{\wwY}^a\cong \rho_!\cL_\wwY^a$ for $a\in \{-m+1,\ldots,0\}$ and it is only left to show that this isomorphism is induced by $t$. Again by the $Y$-linearity of $\Psi$, we have $\rho_*\cL_\wwY^a\cong \pi_*(\reg_X\otimes \chi^a)^{\mu_m}$ which is a reflexive sheaf on the normal variety $Y$ (this follows for example by \cite[Cor.~1.7]{Hartreflexive}). By construction, $t$ induces an isomorphism over $Y\setminus S$. Since the codimension of $S$ is at least 2, $t\colon \rho_*\cL_{\wwY}^a\to \rho_!\cL_{\wwY}^a$ is an isomorphism of reflexive sheaves over all of $Y$; see \cite[Prop.~1.6]{Hartreflexive}.   

By \autoref{thm:mainprecise}(ii), we have $\Db(\wwY)\cong \sod{ \cB,\cC }$ with 
\begin{align*}
 \cB \cong i_*\Sod{ \cA(m-n),\dots,\cA(-1) }
     \cong i_*(\Sod{ \cA,\cA(1),\dots,\cA(m-1) }\orth)  \,.
\end{align*}
We have $\rho_*\cB=0$. It follows that $\WC(\rho) = \sod{ \cB\cap \ker(\rho_!), \cC}$. Indeed, consider an object $A\in \Db(\wwY)$. It fits into an exact triangle
$
  C \to A \to B \to
$
with $C\in \cC$ and $B\in \cB$. From the morphism of triangles
\[ \xymatrix@C=2em{
  \rho_*(C) \ar[r] \ar[d]^{t(C)}_\cong & \rho_*(A) \ar[r] \ar[d]^{t(A)} & \rho_*(B)=0 \ar[r] \ar[d]^{t(B)} & \\
  \rho_!(C) \ar[r]                     & \rho_!(A) \ar[r]               & \rho_!(B) \ar[r] &
} \]
we see that $t(A)$ is an isomorphism if and only if $\rho_!B=0$.

It is left to compute $\cB\cap \ker(\rho_!)$. Let $F\in \Db(Z)$ and $B=i_*F$. By \autoref{lem:canonicalwY},
\[
   \rho_!B \cong \rho_!i_*F \cong b_*\nu_*(F\otimes\reg_\nu(m-n))  \,.
\]       
We see that $B\in \ker \rho_!$ if and only if $\nu_*(F\otimes\reg_\nu(m-n))=0$ if and only if $F\in \nu^*\Db(S)(n-m)\orth$. Hence, $\cB\cap \ker\rho_!= i_*(\cF^\perp)$ with
\[
   \cF = \Sod{\cA, \cA(1), \dots, \cA(m-1), \cA(n-m)} \subset \Db(Z)
\]
Carrying out the appropriate mutations within the semi-orthogonal decomposition
\[
   \Db(Z) = \Sod{\cA(-m+1), \cA(-m+2), \dots, \cA(n-m-1), \cA(n-m)}  \,,
\]
we see that $\cF^\perp = \cE$; compare \autoref{lem:Pdual}.

Since $\cE\subset \Sod{\cA(m-n),\dots, \cA(-1)}$ is an admissible subcategory, we find that $i_*\colon \cE\to \Db(\wY)$ is fully faithful and has adjoints. 
Hence, $\WC(\rho)\subset \Db(\wY)$ is admissible.  
\end{proof}

\begin{remark} \label{rem:SODWCN}
We have $\Db(\wY) = \Sod{ i_*(\cA\otimes \Omega^{n-1}(n-m)), \WC(\rho) }$.
In other words, we can achieve categorical weak crepancy by dropping only one $\Db(S)$ part of the semi-orthogonal decomposition of $\Db(\wY)$.
\end{remark}


\subsection{The discrepant category and some speculation} \label{sub:discrepant-category}
Let $Y$ be a variety with rational Gorenstein singularities and $\rho\colon \wY\to Y$ a resolution of singularities. 
Then, $\rho$ is a crepant resolution if and only if $\Db(\wY) = \WC(\rho)$; compare \cite[Prop.~2.0.10]{Abuaf-catres}.
We define the \textit{discrepant category} of the resolution as the Verdier quotient 
\[
   \disc(\rho) \coloneqq \Db(\wY)/\WC(\rho)  \,.
\]
By \cite[Remark~2.1.10]{Neeman}, since $\WC(\rho)$ is a kernel, and hence a thick subcategory, we have $\disc(\rho)=0$ if and only if $\Db(\wY) = \WC(\rho)$.
Therefore, we can regard $\disc(\rho)$ as a categorical measure of the discrepancy of the resolution $\rho\colon\wY\to Y$.

In our cyclic quotient setup, where $\wY\cong \Hilb^G(X)$ is the simple blow-up resolution, we have
$\disc(\rho)\cong \Db(S)$ by \autoref{rem:SODWCN} and \cite[Lem.~A.8]{LuntsSchnuererSOD}.  
Hence, in this case, $\disc(\rho)$ is the smallest non-zero category that one could expect (this is most obvious in the case that $S$ is a point).
This agrees with the intuition that the blow-up resolution is minimal in some way.

\begin{question}
Given a variety $Y$ with rational Gorenstein singularities, is there a resolution $\rho\colon \wY\to Y$ of minimal categorical discrepancy in the sense that, for every other resolution $\rho'\colon \wY'\to Y$, there is a fully faithful embedding $\disc(\rho)\hookrightarrow \disc(\rho')$?
\end{question}

Often, in the case of a quotient singularity, a good candidate for a resolution of minimal categorical discrepancy should be the $G$-Hilbert scheme.

At least, we can see that $\disc(\rho)$ grows if we further blow up the resolution away from the exceptional locus.

\begin{proposition}\label{prop:smoothblowup}
Let $\rho\colon \wY\to Y$ be a resolution of singularities and let $f\colon \wY'\to \wY$ be the blow-up in a smooth center $C\subset \wY$ which is disjoint from the exceptional locus of $\rho$. Set $\rho' \coloneqq \rho f \colon \wY'\to Y$. Then there is a semi-orthogonal decomposition 
\[
   \disc(\rho') = \Sod{\Db(C), \disc(\rho)}  \,. 
\]
\end{proposition}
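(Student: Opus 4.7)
The plan is to identify $\WC(\rho')$ with the essential image of $\WC(\rho)$ under $f^*$ and then read off the semi-orthogonal decomposition of $\disc(\rho')$ from Orlov's blow-up formula applied to $f$. The structural input is the factorisation $\omega_{\rho'}\cong f^*\omega_\rho\otimes\omega_f$ together with the corresponding factorisation of the canonical section $s_{\rho'} = f^*s_\rho\otimes s_f$. The disjointness hypothesis enters via the observation that $f^*s_\rho$ is invertible on a neighbourhood of the exceptional divisor $E\coloneqq f\inv(C)$, which decouples the two contributions to $t_{\rho'}$.

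First I would establish $f^*\WC(\rho)\subseteq\WC(\rho')$. For $A\in\Db(\wY)$, the projection formula together with the standard identities $f_*\reg_{\wY'}\cong\reg_\wY$ and $f_*\omega_f\cong\reg_\wY$ for a smooth blow-up (with $\omega_f\cong\reg_{\wY'}((c-1)E)$ where $c=\codim(C,\wY)$) yields $\rho'_*f^*A\cong\rho_*A$ and $\rho'_!f^*A\cong\rho_!A$; unwinding $s_{\rho'}$ then identifies $t_{\rho'}(f^*A)$ with $t_\rho(A)$, so $A\in\WC(\rho)$ forces $f^*A\in\WC(\rho')$.

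Next, I would invoke Orlov's blow-up formula to obtain a $\wY$-linear decomposition
\[
   \Db(\wY') = \Sod{\cB,\, f^*\Db(\wY)}\,,
\]
where $\cB$ is generated by the Orlov components $j_*(q^*\Db(C)\otimes\reg_\nu(k))$, with $j\colon E\into\wY'$ the exceptional divisor and $q\colon E\to C$ the projective bundle. For $A\in\cB$, the invertibility of $f^*s_\rho$ near $E$ reduces the analysis of $t_{\rho'}(A)$, after $\rho_*$, to that of $t_f(A)$. A projection-formula computation using the vanishing $q_*\reg_\nu(k)=0$ for $k\in\{-c+1,\dots,-1\}$ then forces one of $\rho'_*A$ and $\rho'_!A$ to vanish while the other does not, so $t_{\rho'}(A)$ fails to be an isomorphism unless $A=0$. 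Hence $\cB\cap\WC(\rho')=0$, and combining with the first step yields $\WC(\rho')=f^*\WC(\rho)$.

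To conclude, refine the Orlov decomposition by $\Db(\wY)=\sod{\disc(\rho),\WC(\rho)}$ (interpreted via the Verdier quotient so that admissibility of $\WC(\rho)$ is not required) to obtain $\Db(\wY')=\sod{\cB,\,f^*\disc(\rho),\,\WC(\rho')}$, and pass to the Verdier quotient by $\WC(\rho')$ to read off $\disc(\rho')=\sod{\Db(C),\disc(\rho)}$. The main technical obstacle is the projection-formula analysis on $\cB$: one must chase the twists by $\omega_f$ and $\reg_\nu(k)$ through both $f_*$ and $\rho_*$, and verify that the disjointness of $C$ from the exceptional locus of $\rho$ permits treating $f^*\omega_\rho$ as trivial on $E$ throughout, thereby preventing any cancellation that would place part of $\cB$ back inside $\WC(\rho')$.
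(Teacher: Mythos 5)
Your overall architecture (Orlov's blow-up decomposition, identifying which pieces land in $\WC(\rho')$, then passing to the Verdier quotient) is the same as the paper's, and your first step is correct: the paper likewise shows $f_{*}\cong f_{!}$ on $f^*\Db(\wY)$, whence $f^*\Db(\wY)\cap\WC(\rho')=f^*\WC(\rho)$. But the key intermediate claim $\cB\cap\WC(\rho')=0$ is false whenever $c=\codim(C\hookrightarrow\wY)\ge 3$, and this is exactly where the real content of the proposition lies. Since $f_*$ kills all of $\cB$, an object $A\in\cB$ lies in $\WC(\rho')$ if and only if $f_!A=0$ (the map $t(A)\colon 0=\rho'_*A\to\rho'_!A$ is an isomorphism precisely when the target vanishes, and $\rho$ is an isomorphism near $C$). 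The paper computes $\ker(f_!)\cap\cB$ by the same mutation argument as in \autoref{thm:cyclicWC}: inside the $c-1$ Orlov components one has
\[
  \cB=\Sod{\,\iota_*(g^*\Db(C)\otimes\Omega^{c-1}_g(c-1)),\ \ker(f_!)\cap\cB\,},
\]
so $\WC(\rho')\cap\cB$ is equivalent to $c-2$ copies of $\Db(C)$ — nonzero for $c\ge 3$. Your vanishing argument only treats the pure generators $j_*(q^*M\otimes\reg_\nu(k))$; for those, indeed $f_*=0$ and $f_!\neq 0$, but this does not propagate to arbitrary iterated extensions in the generated subcategory, and in fact it fails for them. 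Note also that your conclusion is internally inconsistent: if $\WC(\rho')=f^*\WC(\rho)$ really held, the quotient would retain all $c-1$ Orlov copies of $\Db(C)$, giving $\disc(\rho')=\Sod{\Db(C)^{\times(c-1)},\disc(\rho)}$ rather than the claimed single copy. Your argument is correct precisely in the case $c=2$, where $\cB$ has a single component.

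Two further points you would need to supply to complete the argument along the paper's lines. First, the passage to the Verdier quotient of a semi-orthogonal decomposition is not automatic: the paper proves a separate lemma (\autoref{lem:Verdiersod}) requiring the compatibility $i^!_{\cB}(\cC)\subset\cB\cap\cC$ for $\cC=\WC(\rho')$, which must be checked. Second, the identification of the surviving piece as one copy of $\Db(C)$ comes from the dual (mutated) decomposition of the Orlov part, i.e.\ the $\Omega^{c-1}(c-1)$-component as in \autoref{lem:Pdual}; this is the step your proposal skips entirely.
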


We first need the following general

\begin{lemma}\label{lem:Verdiersod}
Let $\cD$ be a triangulated category, $\cC\subset \cD$ a triangulated subcategory, and $\cD=\Sod{\cA,\cB}$ a semi-orthogonal decomposition so that the right-adjoint $i_\cB^!$ of the inclusion $i_\cB\colon \cB\hookrightarrow \cD$ satisfies $i_\cB^!(\cC)\subset \cB\cap \cC$. Then there is a semi-orthogonal decomposition 
\[
    \cD/\cC \cong \Sod{ \cA/(\cA\cap \cC), \cB/(\cB\cap \cC) }  \,.
\]
\end{lemma}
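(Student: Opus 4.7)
The plan is to descend the semi-orthogonal decomposition $\cD = \Sod{\cA, \cB}$ piece by piece to the Verdier quotient $\cD/\cC$, using that all four functors $i_\cA$, $i_\cA^L$, $i_\cB$, $i_\cB^!$ preserve the respective $\cC$-type subcategories. Throughout, $i_\cA^L$ denotes the left adjoint of $i_\cA$, which exists since $\cA$ is the first piece of the semi-orthogonal decomposition.

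First I would upgrade the hypothesis to a symmetric statement: applying $i_\cB^!$ to $C\in\cC$ lands in $\cB\cap\cC$ by assumption, so in the canonical approximation triangle
\[
   i_\cB i_\cB^!(C) \to C \to i_\cA i_\cA^L(C) \to
\]
both the first and second terms lie in $\cC$, which forces $i_\cA i_\cA^L(C)\in\cC$ and hence $i_\cA^L(C)\in \cA\cap\cC$. Consequently all of $i_\cA$, $i_\cA^L$, $i_\cB$, $i_\cB^!$ descend to functors $\bar i_\cA$, $\bar i_\cA^L$, $\bar i_\cB$, $\bar i_\cB^!$ between the appropriate Verdier quotients.

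Next I would invoke the general principle that if $F\dashv G$ is an adjunction and both functors preserve the thick subcategories being quotiented out, then the induced functors inherit the adjunction. This yields $\bar i_\cA^L\dashv\bar i_\cA$ and $\bar i_\cB\dashv\bar i_\cB^!$. Moreover, the isomorphisms $i_\cA^L i_\cA\cong \id_\cA$ and $i_\cB^! i_\cB\cong\id_\cB$ witnessing the fully faithfulness of $i_\cA$ and $i_\cB$ pass directly through the localisation. Hence $\bar i_\cA$ and $\bar i_\cB$ are fully faithful, and their essential images are admissible subcategories of $\cD/\cC$.

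It remains to check semi-orthogonality and generation. For semi-orthogonality, the adjunction reduces
\[
   \Hom_{\cD/\cC}(\bar i_\cB B, \bar i_\cA A[k])
   \cong \Hom_{\cB/(\cB\cap\cC)}(B, \bar i_\cB^! \bar i_\cA A[k])
\]
to the vanishing $\bar i_\cB^! \bar i_\cA = 0$, which is inherited from $i_\cB^! i_\cA = 0$ upstairs (immediate from $\Hom(\cB,\cA[k])=0$ and the Yoneda lemma in $\cB$). For generation, the approximation triangle applied to an arbitrary $D\in\cD$ descends to an exact triangle in $\cD/\cC$ displaying $D$ as an extension of an object of $\bar i_\cA(\cA/(\cA\cap\cC))$ by one of $\bar i_\cB(\cB/(\cB\cap\cC))$. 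The one step needing real care is the self-strengthening of the hypothesis in the first paragraph; without it, $\bar i_\cA^L$ might fail to exist on the quotient and the argument would break down on the $\cA$-side. Everything else is a mechanical descent along the localisation functor.
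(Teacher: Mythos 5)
Your proof is correct, and it begins exactly as the paper's does: both arguments first apply the approximation triangle $i_\cB i_\cB^! C \to C \to i_\cA i_\cA^L C \to$ to an object $C\in\cC$ to upgrade the hypothesis to $i_\cA^L(\cC)\subset\cA\cap\cC$, which is indeed the one step requiring the stated assumption. After that the routes diverge. The paper argues at the level of roofs: full faithfulness of $\bar i_\cA$ and $\bar i_\cB$ is obtained by verifying a factorisation criterion of Lunts--Schn\"urer (every map $C\to A$ with $C\in\cC$, $A\in\cA$ factors through $i_\cA^L C\in\cA\cap\cC$), and semi-orthogonality is proved by taking an arbitrary roof $B\leftarrow D\rightarrow A$ representing a morphism in $\cD/\cC$ and replacing its apex by $i_\cB^! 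D\in\cB$, whereupon the morphism dies because $\Hom_\cD(\cB,\cA)=0$. You instead run everything through the $2$-categorical universal property of Verdier localisation: since all four functors $i_\cA, i_\cA^L, i_\cB, i_\cB^!$ preserve the relevant subcategories, the adjunctions, their units and counits, and the identities $i_\cA^L i_\cA\cong\id$, $i_\cB^! i_\cB\cong\id$, $i_\cB^! i_\cA=0$ all descend, giving full faithfulness and semi-orthogonality in one stroke. Your version is more conceptual and avoids the roof calculus entirely; its only cost is that the ``general principle'' that adjunctions pass to Verdier quotients, while standard, should be stated precisely or given a reference (it amounts to the fact that natural transformations between functors annihilating $\cC$ descend uniquely along the localisation). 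The paper's version is more elementary and self-contained at the price of an explicit diagram chase. Both establish generation the same way, by descending the approximation triangle.
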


\begin{proof}
For every object $D\in \cD$, we have an exact triangle 
\begin{align} \label{eq:tri}
  i_\cB^!D \to D \to i_\cA^*D \to  
\end{align}
where $i_\cA^*$ is the left-adjoint to the embedding $i_\cA\colon \cA\to \cD$. Considering an object $C\in \cC$ shows that our assumption $i_\cB^!(\cC)\subset \cB\cap \cC$ implies $i_\cA^*(\cC)\subset \cA\cap \cC$.

Let $C\in \cC$ and $A\in \cA$. Then, using the long exact Hom-sequence associated to the triangle \eqref{eq:tri}, we see that every morphism $C\to A$ factors as $C\to \iota_\cA^*C\to A$. Hence, the embedding $i_\cA$ descends to a fully faithful embedding $\bar i_\cA\colon \cA/(\cA\cap \cC)\to \cD/\cC$, by \cite[Prop.~B.2]{LuntsSchnuererSOD} (set $\cW=\cA$, $\cV=\cA\cap \cC$ and use (ff2) of \textit{loc.\ cit.}). Similarly, we get an induced fully faithful embedding $\bar i_\cB\colon \cB/(\cB\cap \cC)\to \cD/\cC$ (use (ff2)\textsuperscript{op} instead of (ff2)).  

Now let us show that $\Hom_{\cD/\cC}\bigl(\cB/(\cB\cap \cC), \cA/(\cA\cap \cC)\bigr)=0$. For $B\in \cB$ and $A\in \cA$, a morphism $B\to A$ in $\cD/\cC$ is represented by a roof
\[
   B \xleftarrow{\beta} D \xrightarrow{\alpha} A 
\]
where $\beta\colon D\to B$ is a morphism in $\cD$ with $\cone(\beta)\in \cC$ and $\alpha\colon D\to A$ is any morphism in $\cD$; see \cite[Def.~2.1.11]{Neeman}. Put $C \coloneqq \cone(\beta)[-1] \in \cC$. We apply the triangle of functors $i^*_\cA \to \id \to i^!_\cB \to$ (formally, $i^*_\cA$ has to be replaced by $i_\cA i^*_\cA$ and $i^!_\cB$ by $i_\cB i^!_\cB$) to the triangle of objects $C\to D\to B \to$ and obtain the diagram
\[ \xymatrix@C=2em@R=3ex{
i^!_\cB C \ar[r] \ar[d] & i^!_\cB D \ar[r] \ar[d]^\gamma & B \ar[d] \\
        C \ar[r] \ar[d] &         D \ar[r] \ar[d] & B \ar[d] \\
i^*_\cA C \ar[r]        & i^*_\cA D \ar[r]        & 0 \\
} \]
where we have used $i^!_\cB B = B$ and $i^*_\cA B = 0$.
Now $i^!_\cB C \in \cC\cap\cB$ by assumption. The left column thus forces
$i^*_\cA C \cong i^*_\cA D \in \cC$. 
We get that $\cone{\beta\gamma}\in \cC$ since $\cone{\beta}, \cone{\gamma}\in \cC$; see \cite[Lem.~1.5.6]{Neeman}.
Therefore, we get another roof representing the same morphism in $\cD/\cC$, replacing $D$ by $i^!_\cB D$:
\[
   B \xleftarrow{\beta\gamma} i^!_\cB D \xrightarrow{\alpha\gamma} A
\,. \]
However, $i^!_\cB D\in\cB$ and $\Hom_\cD(\cB,\cA)=0$, so the morphism is 0 in $\cD/\cC$.

Finally, we need to show that $\cA/(\cA\cap\cC)$ and $\cB/(\cB\cap\cC)$ generate $\cD/\cC$, but this is clear, because $\cA$ and $\cB$ generate $\cD$.
\end{proof}

\begin{proof}[Proof of \autoref{prop:smoothblowup}]
We have a semi-orthogonal decomposition $\Db(\wY')=\Sod{\cA, \cB}$ with 
$\cB = f^*\Db(\wY)$ and 
\[
   \cA = \Sod{ \iota_*(g^*\Db(C)\otimes \reg_g(-c+1)),\dots, \iota_*(g^*\Db(C)\otimes \reg_g(-1))}
\,. \]
Here, $c=\codim(C\into\wY)$ and $g$ and $\iota$ are the $\IP^{n-1}$-bundle projection and the inclusion of the exceptional divisor of the blow-up $f \colon \wY' \to \wY$. 

Let $U \coloneqq \wY\setminus C$.
For $F\in \Db(\wY')$ we have $(f_*F)_{\mid U}\cong (f_!F)_{\mid U}$. Hence, if $F\in \WC(\rho')$, we must have $(f_*F)_{\mid U}\in \WC(\rho_{\mid U})$. Since $\rho$ is an isomorphism in a neighbourhood of $C$, an object $E\in \Db(\wY)$ is contained in $\WC(\rho)$ if and only if its restriction $E_{\mid U}$ is contained in $\WC(\rho_{\mid U})$. In summary, 
\[
   f_* F \in \WC(\rho) \quad\text{for every}\quad F\in \WC(\rho')  \,.
\]
We have $f_*\reg_{\wwY'}\cong \reg_\wwY\cong f_*\omega_f$. By the projection formula, it follows that $f_{*\mid \cB}\cong f_{!\mid \cB}$. Hence, we have $\cB\cap \WC(\rho')=f^*\WC(\rho)$ and $\cB/(\WC(\rho')\cap \cB)\cong\disc(\rho)$.

Now, we can apply \autoref{lem:Verdiersod} with $\cC=\WC(\rho')$ to get a semi-orthogonal decomposition 
\[
   \disc(\rho') = \Sod{ \cA/(\WC(\rho')\cap \cA), \disc(\rho) }  \,.
\]
We have $f_*(\cA)=0$, hence $\rho'_*(\cA)=0$. Accordingly, 
\[
   \WC(\rho') \cap \cA
 = \ker(\rho'_!) \cap \cA
 = \ker(f_!) \cap \cA
\,. \]
The second equality is due to the fact that all objects of $f_!\cA$ are supported on $C$, where $\rho$ is an isomorphism. Now, in analogy to the computations of the proof of \autoref{thm:cyclicWC} and \autoref{rem:SODWCN}, we get a semi-orthogonal decomposition
\[
   \cA = \Sod{ \iota_*(g^*\Db(C)\otimes\Omega^{c-1}(c-1)), \ker(f_!)\cap \cA }  \,.
\]
Hence, $\cA/(\WC(\rho')\cap \cA) \cong \iota_*(g^*\Db(C)\otimes\Omega^{c-1}(c-1)) \cong \Db(C)$.
\end{proof}

\subsection{(Non-)unicity of categorical crepant resolutions} \label{sub:non-unicity}
Let $\wY\to Y$ be a resolution of rational Gorenstein singularities and let $\cD\subset \Dperf(\wY)$ be an admissible subcategory which is a weakly crepant resolution, i.e.\ $\rho^*\colon \Dperf(Y)\to \Db(\wY)$ factors through $\cD$ and $\rho_{*\mid \cD}\cong \rho_{!\mid \cD}$. Then every admissible subcategory $\cD'\subset\cD$ with the property that $\rho^*\colon \Dperf(Y)\to \Db(\wY)$ factors through $\cD'$ is a weakly crepant resolution, too.

In particular, in our setup of cyclic quotients, there is a tower of weakly crepant resolutions of length $n-m$ given by successively dropping the $\Db(S)$ parts of the semi-orthogonal decomposition of $\WC(\rho)$. We see that weakly crepant categorical resolutions are not unique, even if we fix the ambient derived category $\Db(\wY)$ of a geometric resolution $\wY\to Y$.

In contrast, \textit{strongly crepant} categorical resolutions are expected to be unique up to equivalence; see \cite[Conj.~4.10]{KuzLefschetz}. A strongly crepant categorical resolution of $\Db(Y)$ is a module category over $\Db(Y)$ with trivial relative Serre functor; see \cite[Sect.~3]{KuzLefschetz}. For an admissible subcategory $\cD\subset \Db(\wY)$ of the derived category of a geometric resolution of singularities $\rho\colon \wY\to Y$ this condition means that $\cD$ is $Y$-linear and there are  functorial isomorphisms
\begin{align}\label{stronglycrepantcondition}
  \rho_*\sHom(A,B)^\vee \cong \rho_*\sHom(B,A)
\end{align}
for $A,B\in \cD$. In our cyclic setup, $\Psi(\D_G(X))\subset \Db(\wY)$ is a strongly crepant categorical resolution; see \cite[Thm.~1]{KuzLefschetz} or \cite[Thm.~10.2]{Abuaf-catres}. 

We require strongly crepant categorical resolutions to be \textit{indecomposable} which means that they do not decompose into direct sums of triangulated categories or, in other words, they do not admit both-sided orthogonal decompositions.
Under this additional assumption, we can prove that strongly crepant categorical resolutions are unique if we fix the ambient derived category of a geometric resolution.

\begin{proposition} \label{prop:unicity}
 Let $\wY\to Y$ be a resolution of Gorenstein singularities and $\cD, \cD'\subset \Db(\wY)$ admissible indecomposable strongly crepant subcategories. Then $\cD=\cD'$.
\end{proposition}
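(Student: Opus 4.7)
The strategy is to prove $\cD \subseteq \cD'$, which suffices by the symmetric argument. Set $\cC \coloneqq \cD \cap \cD'^\perp$ and $\cC^c \coloneqq \cD \cap \cD'$; then $\cD \subseteq \cD'$ is equivalent to $\cC = 0$. The plan is to realise $\cD = \sod{\cC^c, \cC}$ as a both-sided orthogonal decomposition of $\cD$. Since $\rho^* \Dperf(Y) \subseteq \cC^c$ is nonzero, indecomposability of $\cD$ then forces $\cC = 0$.

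Both $\cC$ and $\cC^c$ are $Y$-linear subcategories of $\cD$, being intersections of $Y$-linear subcategories. The central step is to verify that $\cC$ and $\cC^c$ are mutually orthogonal in both directions inside $\cD$. One direction is immediate: for $A \in \cC^c \subseteq \cD'$ and $C \in \cC \subseteq \cD'^\perp$, one has $\Hom^*(A, C) = 0$ by the sod $\Db(\wY) = \sod{\cD'^\perp, \cD'}$. For the opposite direction I would invoke strong crepancy of $\cD$. By \autoref{lem:relativesod} applied to the $Y$-linear subcategories $\cD',\cD'^\perp \subseteq \Db(\wY)$ we first deduce $\rho_*\sHom(A,C) = 0$; since $A, C \in \cD$, the strong crepancy isomorphism $\rho_*\sHom(C, A) \cong \rho_*\sHom(A, C)^\vee$ then gives $\rho_*\sHom(C, A) = 0$, and passing to global sections yields $\Hom^*(C, A) = 0$.

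To promote the orthogonality into a decomposition, I would use the sod $\Db(\wY) = \sod{\cD'^\perp, \cD'}$ provided by admissibility of $\cD'$. For $B \in \cD$ this produces a functorial triangle $B^{(1)} \to B \to B^{(2)} \to B^{(1)}[1]$ with $B^{(1)} \in \cD'^\perp$ and $B^{(2)} \in \cD'$. A quick long-exact-sequence argument applied to $\Hom^*(-,E)$ for $E\in \cD^\perp$ shows $B^{(1)} \in \cD$ if and only if $B^{(2)} \in \cD$, so it suffices to show $B^{(2)} \in \cD$. This is the main technical obstacle. The plan here is to establish $\rho_*\sHom(B^{(2)}, E) = 0$ for every $E \in \cD^\perp$ (and then invoke \autoref{lem:relativesod} to conclude $B^{(2)}\in \lorth(\cD^\perp)=\cD$), by using $B \in \cD$, which gives $\rho_*\sHom(B, E) = 0$, the triangle above, and strong crepancy of $\cD'$ on $B^{(2)}$ combined with the relative Serre duality on $\wY$ over $Y$ to control the contribution of $B^{(1)}$.

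Granting that $\cD = \sod{\cC^c, \cC}$, the two-sided orthogonality established above makes it an orthogonal decomposition of $\cD$. Since $\cD$ is indecomposable and $\cC^c \neq 0$, we conclude $\cC = 0$, and hence $\cD \subseteq \cD'$. By symmetry, $\cD = \cD'$.
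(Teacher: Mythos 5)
Your orthogonality argument (steps 1 and 2) is exactly the paper's: one direction is free from $\cC\subseteq\cD'^\perp$, and the reverse direction follows by combining \autoref{lem:relativesod} with the strong crepancy isomorphism \eqref{stronglycrepantcondition} for the pair of objects of $\cD$. The problem is the step you yourself flag as "the main technical obstacle": producing the decomposition triangle of $B\in\cD$ with both pieces again in $\cD$. Your plan for it does not go through as sketched. Writing the projection triangle $P\to B\to Q$ with $P\in\cD'$ and $Q\in\cD'^\perp$ (note that your triangle has the two pieces in the wrong order for the sod $\Db(\wY)=\sod{\cD'^\perp,\cD'}$, though that is cosmetic), the long exact sequence together with $\rho_*\sHom(B,E)=0$ for $E\in\cD^\perp$ only tells you that $\rho_*\sHom(P,E)$ and $\rho_*\sHom(Q,E)$ vanish simultaneously; to kill either one you propose "strong crepancy of $\cD'$", but \eqref{stronglycrepantcondition} for $\cD'$ applies only to pairs of objects \emph{both} lying in $\cD'$, and $E\in\cD^\perp$ has no reason to lie in $\cD'$. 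Likewise relative Serre duality over $Y$ turns $\rho_*\sHom(P,E)$ into $\rho_*\sHom(E,P\otimes\omega_\rho)^\vee$ (up to shift), and there is no orthogonality available between $\cD^\perp$ and $\cD'\otimes\omega_\rho$. In effect, showing that the $\cD'$-component of $B$ lies in $\cD$ is essentially as hard as showing $\cD'\subseteq\cD$, so the argument is circular at this point.

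The paper avoids this entirely by a reduction you are missing: it first replaces $\cD'$ by $\cD\cap\cD'$, asserting that this intersection is again an admissible, $Y$-linear, strongly crepant subcategory containing $\rho^*\Dperf(Y)$. Once one is in the nested situation $\cD'\subseteq\cD$ with $\cD'$ admissible, the semi-orthogonal decomposition $\cD=\sod{\cA,\cD'}$ (with $\cA$ the right orthogonal of $\cD'$ \emph{inside} $\cD$) comes for free — the projection functors of the admissible subcategory $\cD'$ automatically send $\cD$ to $\cD$ — and your steps 1, 2 and the indecomposability argument finish the proof. So the fix is to insert that intersection step at the start; with it, your argument becomes the paper's proof. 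Without it, the decomposition claim $\cD=\sod{\cC^c,\cC}$ remains unproved.
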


\begin{proof}
The intersection $\cD\cap \cD'$ is again an admissible $Y$-linear subcategory of $\Db(\wY)$ containing $\rho^*(\Dperf(Y))$. Furthermore, condition \eqref{stronglycrepantcondition} is satisfied for every pair of objects of $\cD\cap \cD'$; so the intersection is again a strongly crepant resolution. Hence, we can assume $\cD'\subset \cD$.

Let $\cA$ be the right-orthogonal complement of $\cD'$ in $\cD$, so that we have a semi-orthogonal decomposition 
$\cD = \Sod{\cA, \cD'}$. By \autoref{lem:relativesod}, this means that $\rho_*\sHom(D,A)=0$ for $A\in \cA$ and $D\in \cD'$. But then, by \eqref{stronglycrepantcondition}, we also get $\rho_*\sHom(A,D)=0$ so that $\cD=\cA\oplus \cD'$. 
\end{proof}

\subsection{Connection to Calabi--Yau neighbourhoods} \label{sub:CY-neighbourhoods}
In \cite{HKP}, \textit{spherelike objects} and their \textit{spherical subcategories} were introduced and studied. The paper hinted at a role of these notions for birationality questions of Calabi--Yau varieties. One of the starting points for our project was to consider \textit{Calabi--Yau neighbourhoods} (a generisation of spherical subcategories) as candidates for categorical crepant resolutions of Calabi--Yau quotient varieties. In this subsection, we describe the connection to the weakly crepant resolutions considered above.  

We recall some abstract homological notions. Let $\cT$ be a Hom-finite $\IC$-linear triangulated category and $E\in\cT$ an object. We say that $\Serre E\in\cT$ is a \emph{Serre dual object} for $E$ if the functors $\Hom^*(E,-)$ and $\Hom^*(-,\Serre E)^\vee$ are isomorphic. By the Yoneda lemma, $\Serre E$ is then uniquely determined. Fix an integer $d$. We call the object $E$
\begin{itemize}
  \item a \emph{$d$-Calabi--Yau} object, if $E[d]$ is a Serre dual of $E$,
  \item \emph{$d$-spherelike} if $\Hom^*(E,E) = \IC\oplus \IC[-d]$, and
  \item \emph{$d$-spherical} if $E$ is $d$-spherelike and a $d$-Calabi--Yau object.
\end{itemize}
Note a smooth compact variety $X$ of dimension $d$ is a strict Calabi--Yau variety precisely if the structure sheaf $\reg_X$ is a $d$-spherical object of $\Db(X)$ .
 
In \cite{HKP} the authors show that if $E$ is a $d$-spherelike object, there exists a unique maximal triangulated subcategory of $\cT$ in which $E$ becomes $d$-spherical. In the following we will imitate this construction for a larger class of objects.
 
\begin{definition}
Let $E\in \cT$ be an object in a triangulated category having a Serre dual $\mathsf S E$. We call $E$ a \textit{$d$-selfdual object} if
\begin{enumerate}
\item
$\Hom(E, E[d]) \cong \IC$, i.e.\ by Serre duality there is a morphism $w\colon E\to \omega(E) \coloneqq \Serre E[-d]$ unique up to scalars, and
\item
the induced map $w_*\colon \Hom^*(E,E)\isomor \Hom^*(E, \omega(E))$ is an isomorphism.
\end{enumerate}
In particular, a $d$-selfdual object satisfies $\Hom^*(E,E)\cong \Hom(E,E)^\vee[-d]$, hence the name.
\end{definition}
 
\begin{remark}
If an object is $d$-spherelike, then it is $d$-selfdual; compare \cite[Lem.~4.2]{HKP}.
\end{remark}
 
For a $d$-selfdual object $E$, there is a triangle $E \xrightarrow{w} \omega(E)\to Q_E\to E[1]$ induced by $w$. By our assumption, we get $\Hom^*(E,Q_E)=0$. Thus, following an idea suggested by Martin Kalck after discussing \cite[\S7]{HKP} with Michael Wemyss, we propose the following
 
\begin{definition}\label{def:CYN}
The \emph{Calabi-Yau neighbourhood} of a $d$-selfdual object $E\in\cT$ is the full triangulated subcategory
\[
   \CY(E) \coloneqq \lorth Q_E \subseteq \cT  \,.
\]
\end{definition}
 
\begin{proposition} \label{prop:CY-neighbourhood}
If $E\in\cT$ is a $d$-selfdual object then $E\in\CY(E)$ is a $d$-Calabi-Yau object.
\end{proposition}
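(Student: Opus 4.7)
The plan is to verify the two things contained in the statement: first, that $E$ actually lies in $\CY(E) = \lorth Q_E$, and second, that once we pass to this subcategory, $E[d]$ represents the functor $\Hom(E,-)\dual$, i.e.\ $\Hom_\cT(E,F)\cong \Hom_\cT(F,E[d])\dual$ functorially for $F\in\CY(E)$. Both assertions will be obtained by applying the cohomological functors $\Hom^*(E,-)$ and $\Hom^*(F,-)$ to the defining triangle $E\xrightarrow{w}\omega(E)\to Q_E\to E[1]$ and comparing with the Serre duality already present in the ambient category $\cT$.

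For membership $E\in\CY(E)$, apply $\Hom^*(E,-)$ to the triangle. Condition (ii) of $d$-selfduality is precisely the statement that the induced map $w_*\colon \Hom^*(E,E)\isomor \Hom^*(E,\omega(E))$ is an isomorphism in every degree. Feeding this into the long exact sequence forces $\Hom^*(E,Q_E)=0$, hence $E\in \lorth Q_E=\CY(E)$.

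For the Calabi--Yau property, fix $F\in\CY(E)$, so that $\Hom^*(F,Q_E)=0$. Apply $\Hom^*(F,-)$ to the same triangle: the vanishing of both $\Hom^*(F,Q_E)$ and its shift in the long exact sequence forces
\[
   w\circ -\colon \Hom^*(F,E)\isomor \Hom^*(F,\omega(E))
\]
to be an isomorphism in every degree. Shifting by $d$ and dualising, we get $\Hom(F,E[d])\dual \cong \Hom(F,\omega(E)[d])\dual = \Hom(F,\Serre E)\dual$, and Serre duality in $\cT$ identifies the last term with $\Hom(E,F)$. All isomorphisms are functorial in $F$, so $E[d]$ is a Serre dual of $E$ inside the full subcategory $\CY(E)$, which is what it means for $E$ to be $d$-Calabi--Yau there.

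There is no serious obstacle here: the argument is a bookkeeping exercise extracting what the two axioms of $d$-selfduality and the definition $\CY(E)=\lorth Q_E$ were designed to yield. The only thing to watch is that the Serre dual of $E$ in the subcategory is computed via $\Hom$-spaces in $\cT$ (since $\CY(E)\subset\cT$ is full), so no independent existence of a Serre functor on $\CY(E)$ is required --- we merely represent the relevant linear functional on $\CY(E)$.
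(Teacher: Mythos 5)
Your proof is correct and follows essentially the same route as the paper: the paper's one-line proof is exactly "apply $\Hom^*(T,-)$ to the triangle $E\to\omega(E)\to Q_E$," and the membership claim $E\in\CY(E)$ is established in the text just before the definition via the same observation that $d$-selfduality forces $\Hom^*(E,Q_E)=0$. Your write-up merely makes the bookkeeping explicit.
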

 
\begin{proof}
If $T\in \CY(E)$, apply $\Hom^*(T,-)$ to the triangle $E\to \omega(E)\to Q_E$.
\end{proof}
 
Using the same proof as for \cite[Thm.~4.6]{HKP}, we see that the Calabi-Yau neighbourhood is the maximal subcategory of $\cT$ in which a $d$-selfdual object $E$ becomes $d$-Calabi-Yau.
 
\begin{proposition} \label{prop:maximality-of-CY-neighbourhood}
If $\ku\subset \cT$ is a full triangulated subcategory and $E\in \ku$ is $d$-Calabi-Yau, then $\ku\subset \CY(E)$. 
\end{proposition}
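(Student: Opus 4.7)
The plan is to show $\Hom^*(T,Q_E)=0$ for every $T\in\cU$; then $\cU\subset\lorth Q_E=\CY(E)$ follows immediately. Applying $\Hom^*(T,-)$ to the defining triangle $E\xrightarrow{w}\omega(E)\to Q_E$, this reduces to showing that
\[
  w_*\colon \Hom^*(T,E)\longrightarrow \Hom^*(T,\omega(E))
\]
is an isomorphism for every $T\in\cU$.

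The key observation is that both sides are canonically isomorphic to $\Hom^{d-*}(E,T)^\vee$, via two different Serre-duality arguments. On the left, the Calabi--Yau structure of $\cU$ (using $\Serre E\cong E[d]$ in $\cU$, and the fullness of $\cU\subset\cT$ so that the Hom-spaces agree with those in $\cT$) provides a natural isomorphism $\phi\colon\Hom^*(-,E)|_\cU\xrightarrow{\sim}\Hom^{d-*}(E,-)^\vee|_\cU$; on the right, Serre duality in $\cT$ applied to $E$ (whose Serre dual is $\Serre E\cong\omega(E)[d]$) yields a natural isomorphism $\alpha\colon\Hom^*(-,\omega(E))|_\cU\xrightarrow{\sim}\Hom^{d-*}(E,-)^\vee|_\cU$. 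The composite
\[
  \eta \coloneqq \phi^{-1}\circ\alpha\circ w_*\colon\Hom^*(-,E)|_\cU\longrightarrow\Hom^*(-,E)|_\cU
\]
is then a natural self-transformation of a representable functor on $\cU^{\mathrm{op}}$, which by the Yoneda lemma is classified by an element of $\Hom^*(E,E)$.

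From the $d$-selfduality of $E$ one deduces $\dim\Hom(E,E)=1$: by (i) and Serre duality in $\cT$, $\dim\Hom(E,\omega(E))=\dim\Hom(E,E[d])^\vee=1$, and (ii) identifies this space with $\Hom(E,E)$. Since $\eta$ preserves degrees, under Yoneda it corresponds to a scalar multiple of $\id_E\in\Hom^0(E,E)$. To pin down the scalar, I would evaluate $\eta$ at $T=E$ on $\id_E$: the result equals $\phi_E^{-1}(\alpha_E(w))$, which is nonzero because $w\neq0$ and both $\phi_E$ and $\alpha_E$ are isomorphisms. Hence $\eta=c\cdot\id$ with $c\in\IC^\times$, and consequently $w_*=\alpha^{-1}\circ(c\cdot\phi)$ is an isomorphism for every $T\in\cU$, as desired.

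The hard part will be the graded bookkeeping for $\phi$ and $\alpha$: Serre duality intertwines shifts and dualities nontrivially, so one has to set things up to realize $\phi$ and $\alpha$ as degree-preserving natural transformations of functors $\cU^{\mathrm{op}}\to\mathrm{GrVec}$ in order to invoke a graded Yoneda classification of $\eta$. A related subtlety is to confirm that Serre duality in $\cT$ genuinely furnishes a natural transformation on the full subcategory $\cU$, rather than a mere pointwise family of isomorphisms, so that the scalar $c$ computed at $T=E$ does propagate to every other $T\in\cU$.
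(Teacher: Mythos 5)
Your argument is correct. Note that the paper gives no proof of this proposition at all: it is stated without a proof environment, the preceding sentence deferring to \cite[Thm.~4.6]{HKP}, so there is nothing in the paper itself to compare against; your reduction to showing that $w_*\colon\Hom^*(T,E)\to\Hom^*(T,\omega(E))$ is an isomorphism for all $T\in\ku$, followed by playing the Calabi--Yau duality in $\ku$ off against Serre duality in $\cT$, is exactly the expected line of attack, and the Yoneda device is a clean way to settle the compatibility of $w_*$ with the two dualities. Concerning the two worries you flag at the end: the naturality of $\alpha$ and $\phi$ is not an issue, because the paper defines a Serre dual object precisely via an isomorphism of \emph{functors} $\Hom^*(E,-)\cong\Hom^*(-,\Serre E)^\vee$, so naturality in $T$ is part of the hypotheses both for $\cT$ and for the Calabi--Yau structure on $\ku$. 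The graded bookkeeping is best dealt with degree by degree, which removes any need to check shift-compatibility of the duality isomorphisms: for each $j$, the degree-$j$ component of $\eta$ is a natural endomorphism of the representable functor $\Hom(-,E[j])\rvert_{\ku}$ on the full subcategory $\ku\ni E[j]$, hence is post-composition with some $c_j\in\End(E[j])=\End(E)=\IC\cdot\id$ (your computation that $\End(E)\cong\IC$ from $d$-selfduality is correct); evaluating at $\id_{E[j]}$ gives $c_j=\phi^{-1}\bigl(\alpha(w[j])\bigr)\neq 0$ because $w\neq 0$ (which follows since $w_*$ is an isomorphism onto the one-dimensional space $\Hom(E,\omega(E))$). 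So each $c_j$ is a nonzero scalar, $w_*$ is an isomorphism in every degree for every $T\in\ku$, and the long exact sequence gives $\Hom^*(T,Q_E)=0$, i.e.\ $\ku\subset\lorth Q_E=\CY(E)$.
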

 
\begin{proposition}\label{prop:WC=CY}
Let $Y$ be a projective variety with rational Gorenstein singularities and trivial canonical bundle of dimension $d=\dim Y$ and consider a resolution of singularities $\rho\colon \wY \to Y$. Then, for every line bundle $L\in \Pic Y$, the pull-back
 $\rho^*L\in \Db(\wY)$ is $d$-selfdual. Furthermore, we have 
\begin{align}\label{WC=CY}
   \WC(\rho) = \bigcap_{L\in \Pic Y} \CY(\rho^*L)  \,. 
\end{align}
\end{proposition}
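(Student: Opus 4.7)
The plan is to first verify that $\rho^*L$ is $d$-selfdual for every $L\in \Pic Y$, and then to re-express each Calabi--Yau neighbourhood condition via Grothendieck duality, reducing the desired equality to a spanning class argument in $\Db(Y)$.

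For the $d$-selfduality, the triviality of $\omega_Y$ yields $\omega_{\wY}\cong \omega_\rho$, so Serre duality on the smooth projective $\wY$ identifies $\omega(\rho^*L)=\rho^*L\otimes\omega_\rho$, with the natural morphism being $w \coloneqq \id_{\rho^*L}\otimes s\colon \rho^*L\to\rho^*L\otimes\omega_\rho$, where $s\colon\reg_{\wY}\to\omega_\rho$ is the canonical section. Using $\rho_*\reg_{\wY}\cong\reg_Y$ (rational singularities) together with $\rho_*\omega_\rho\cong \omega_Y\cong \reg_Y$, and the fact that $\rho_*(s)$ corresponds to $\id_{\reg_Y}$ under Grothendieck duality, the projection formula identifies both $\Hom^*(\rho^*L,\rho^*L)$ and $\Hom^*(\rho^*L,\omega(\rho^*L))$ with $H^*(Y,\reg_Y)$, with $w_*$ becoming the identity. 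In particular, $\Hom(\rho^*L,\rho^*L[d])\cong H^d(Y,\reg_Y)\cong H^0(Y,\omega_Y)^\vee \cong \IC$ and $w_*$ is an isomorphism, which yields both conditions of $d$-selfduality.

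For the equality, fix $T\in \Db(\wY)$. Using the adjunctions $\rho_!\dashv\rho^*$ and $\rho_*\dashv\rho^!$ together with $\rho^!L\cong\rho^*L\otimes\omega_\rho$, there are natural identifications
\[
  \Hom^*_{\wY}(T,\rho^*L) \cong \Hom^*_Y(\rho_!T,L),
  \qquad
  \Hom^*_{\wY}(T,\rho^*L\otimes\omega_\rho) \cong \Hom^*_Y(\rho_*T,L),
\]
under which $w_*$ corresponds, via the mate calculus, to pre-composition with $t(T)\colon \rho_*T\to\rho_!T$. Applying $\Hom^*_Y(\_,L)$ to the defining triangle $\rho_*T\to\rho_!T\to\rho_+T\to$ then shows that $T\in \CY(\rho^*L)=\lorth Q_{\rho^*L}$ if and only if $\Hom^*_Y(\rho_+T,L)=0$. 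The inclusion $\WC(\rho)\subset\bigcap_L\CY(\rho^*L)$ is then immediate, since $T\in\WC(\rho)$ means precisely $\rho_+T=0$.

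For the reverse inclusion, I assume $T\in \CY(\rho^*L)$ for every $L\in \Pic Y$ and aim to deduce $\rho_+T=0$. Fixing an ample line bundle $H$ on the projective variety $Y$ and using Grothendieck--Serre duality (available because $Y$ is Gorenstein with trivial dualising sheaf), one rewrites $\Hom^*_Y(\rho_+T, H^k)$ in terms of hypercohomology $H^*(Y, D(\rho_+T)\otimes H^k)$, where $D$ is the duality auto-equivalence of $\Db(Y)$. A hyper-cohomology spectral sequence combined with Serre vanishing for $k\gg 0$ then forces every cohomology sheaf of $D(\rho_+T)$ to vanish (since they would all become globally generated with vanishing global sections), whence $\rho_+T=0$. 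The main obstacle is this last step: executing the spanning class argument inside the possibly singular category $\Db(Y)$, which requires careful use of Grothendieck duality rather than a naive Serre functor on all of $\Db(Y)$.
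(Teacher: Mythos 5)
Your proof is correct and takes essentially the same route as the paper's: both use Grothendieck duality and the canonical section $s$ to identify membership of $T$ in $\CY(\rho^*L)$ with the vanishing of $\Hom^*_Y(\rho_+T,L)$ --- the paper phrases this as $\WC(\rho)=\lorth\bigl(\rho^+(\Dperf(Y))\bigr)$ together with $\rho^+(L)=Q_{\rho^*L}[-1]$, which is exactly the adjoint transpose of your computation --- and then conclude because line bundles generate $\Dperf(Y)$. Your closing Serre-vanishing argument merely spells out the paper's one-line appeal to that generation statement.
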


\begin{proof}
Note that, by our assumption that $\omega_Y$ is trivial, we have $\omega_\wwY\cong \omega_\rho$. 
Hence, by Grothendieck duality, there is a morphism $w_L\colon \rho^*L\to \rho^*L\otimes \omega_\wwY$ unique up to scalar multiplication, namely $w_L=\id_{\rho^*L}\otimes s$ where $s$ is the non-zero section of $\omega_\wwY\cong \omega_\rho$; compare the previous \autoref{sub:catresdef}. Furthermore, $w_{L*} \colon \Hom^*(\rho^*L,\rho^*L) \to \Hom^*(\rho^*L,\rho^*L\otimes \omega_\wwY)$
is an isomorphism, still by Grothendieck duality, which means that $\rho^*L$ is $d$-selfdual. 

Recall that $\WC(\rho)=\ker(\rho_+)$ where $\rho_+$ is defined as the cone
\[
   \rho_* \xrightarrow{t} \rho_! \to \rho_+ \to  \,.
\]
By adjunction, we get $\WC(\rho)=\lorth(\rho^+(\Dperf(Y)))$ where $\rho^+=\rho_+^R$ is given by the triangle
\[
   \rho^+ \to \rho^* \xrightarrow{t^R} \rho^! \to  \,.
\]
Note that $t^R=(\_)\otimes s$. Hence $t^R(L)=w_L\colon \rho^*L\to \rho^*L\otimes \omega_\wwY$ and $\rho^+(L)=Q_{\rho^*L}[-1]$; compare \autoref{def:CYN}. Since the line bundles form a generator of $\Dperf(Y)$, we get for $F\in \Db(\wY)$:
\begin{align*}
         F\in \WC(\rho)
  &\iff  F\in \lorth(\rho^+(\Dperf(Y))) \\
  &\iff  F\in \lorth Q_{\rho^*L} \quad \forall L\in \Pic Y \\
  &\iff  F\in \CY(\rho^*L) \quad \forall L\in \Pic Y  \,. \qedhere
\end{align*}
\end{proof}

\begin{remark}
Following the proof of \autoref{prop:WC=CY}, we see that, on the right-hand side of \eqref{WC=CY}, it is sufficient to take the intersection over all powers of a given ample line bundle.

In our cyclic setup, if $S$ consists of isolated points, we even have $\WC(\rho)=\CY(\reg_{\wwY})$ so that the weakly crepant neighbourhood is computed by a Calabi-Yau neighbourhood of a single object. The same should hold in general if $Y$ has isolated singularities. 
\end{remark}

\section{Stability conditions for Kummer threefolds} \label{sec:Kummer}

\noindent
Let $A$ be an abelian variety of dimension $g$. Consider the action of $G=\mu_2$ by $\pm1$. Then the fixed point set $A[2]$ consists of the $4^g$ two-torsion points. Consider the quotient $\overline{A}$ (the \emph{singular Kummer variety}) of $A$ by $G$, and the blow-up $K(A)$ (the \emph{Kummer resolution}) of $\overline{A}$ in $A[2]$. This setup satisfies \autoref{cond:main}, with $m=2$ and $n=g$ and we get

\begin{corollary}\label{cor:kummersod}
The functor $\Psi\colon  \Db_G(A)\to \Db(K(A))$ is fully faithful, and
\[
   \Db(K(A))
 = \Sod{ \underbrace{\Db(\pt),\dots,\Db(\pt)}_{(g-2)4^{g} \text{ times}}, \Psi(\Db_{G}(A)) }  \,.
\]
\end{corollary}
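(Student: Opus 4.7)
The plan is to verify that this Kummer setup falls under \autoref{cond:main} and then apply \autoref{thm:mainprecise}(ii) as a black box, with a final bookkeeping step to rewrite $\Db(S)$ in terms of points.

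\textbf{Step 1: Verification of \autoref{cond:main}.} The group $G=\mu_2$ is cyclic. On an abelian variety $A$, the action $a\mapsto -a$ has as fixed locus exactly the 2-torsion $A[2]$, which consists of $|(\IZ/2)^{2g}|=4^g$ reduced points, so the only isotropies occuring are $1$ and $\mu_2$. Since at any $s\in A[2]$ the differential of $-\id$ acts on $T_{A,s}=N_{S/A}(s)$ by multiplication with $-1$, and $-1$ is the unique primitive second root of unity, condition (iii) holds with $\zeta=-1$. Hence \autoref{cond:main} is satisfied with $m=|G|=2$ and $n=\codim(S\subset A)=g$.

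\textbf{Step 2: Applying the main theorem.} Note that we must be in the regime $n\ge m$, i.e.\ $g\ge 2$ (for $g=1$ the quotient is already smooth and nothing is to prove; for $g=2$ we have $n=m$). By \autoref{thm:mainprecise}(ii), the functor $\Psi\colon \Db_G(A)\to \Db(K(A))$ is fully faithful, and
\[
 \Db(K(A)) = \Sod{ \Theta_{2-g}(\Db(S)),\dots,\Theta_{-1}(\Db(S)),\Psi(\Db_G(A)) }
\]
with exactly $n-m=g-2$ copies of $\Db(S)=\Db(A[2])$ preceding $\Psi(\Db_G(A))$. For $g=2$ this is an equivalence and the claim is trivially true (since $(g-2)4^g=0$).

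\textbf{Step 3: Expanding the point components.} Because $S=A[2]$ is $4^g$ disjoint reduced points, there is a canonical equivalence $\Db(S)\cong \Db(\pt)^{\oplus 4^g}$ (a semi-orthogonal decomposition indexed by the points). Substituting this into each of the $g-2$ copies of $\Db(S)$ in the decomposition above yields $(g-2)\cdot 4^g$ copies of $\Db(\pt)$, which combined with the final component $\Psi(\Db_G(A))$ gives the desired semi-orthogonal decomposition.

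There is no real obstacle here: the corollary is a direct specialization of \autoref{thm:mainprecise}(ii). The only point requiring care is checking \autoref{cond:main}(iii), which is immediate from the linearisation of the involution at a fixed point, and correctly counting $|A[2]|=4^g$.
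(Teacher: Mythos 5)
Your proposal is correct and matches the paper's (implicit) argument exactly: the paper simply notes that the Kummer setup satisfies \autoref{cond:main} with $m=2$, $n=g$ and invokes \autoref{thm:mainprecise}(ii), with the $(g-2)4^g$ copies of $\Db(\pt)$ arising from decomposing each of the $g-2$ copies of $\Db(S)=\Db(A[2])$ over the $4^g$ two-torsion points. Your write-up just makes these routine verifications explicit.
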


To explore a potentially useful consequence of this result, we need to recall that a Bridgeland stability condition on a reasonable $\IC$-linear triangulated category $\kd$ consists of the heart $\ka$ of a bounded t-structure in $\kd$ and a function from the numerical Grothendieck group of $\kd$ to the complex numbers satisfying some axioms, see \cite{Bri-stabcond}.

\begin{corollary} \label{cor:stability}
There exists a Bridgeland stability condition on $\Db(K(A))$, for an abelian threefold $A$.
\end{corollary}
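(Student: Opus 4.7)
The strategy is to build the stability condition piece by piece along the semi-orthogonal decomposition of \autoref{cor:kummersod}. For $g=\dim A=3$ this decomposition reads
\[
   \Db(K(A)) = \Sod{ \Db(\pt)^{\oplus 64}, \Psi(\Db_G(A)) }  ,
\]
so it suffices to produce Bridgeland stability conditions on each of the summands $\Db(\pt)$ and on $\Psi(\Db_G(A)) \cong \Db_G(A)$, and then to glue them into a single stability condition on $\Db(K(A))$.

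The crucial step is producing a stability condition on $\Db_G(A)$. For this I would use the construction of Bayer--Macr\`\i--Stellari, which, combined with the Bogomolov--Gieseker type inequality they establish, provides Bridgeland stability conditions on $\Db(A)$ for any abelian threefold $A$. Their construction proceeds via a double tilt with respect to a pair consisting of an ample divisor and a ``$B$-field'', both of which may be chosen $\mu_2$-invariant (averaging any initial choice over the two-element group $G$ preserves amplitude and rationality). The resulting stability condition is therefore $G$-invariant in the sense of the Macr\`\i--Mehrotra--Stellari / Polishchuk theory of equivariant lifting; since we work over $\IC$, where $|G|=2$ is invertible, the invariant stability condition lifts canonically to a Bridgeland stability condition on $\Db_G(A) = \Db([A/G])$. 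Any stability condition suffices on each $\Db(\pt)$ piece.

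It remains to glue. For this I would appeal to the gluing theorem of Collins--Polishchuk, applied inductively, adjoining the $64$ exceptional $\Db(\pt)$-summands one at a time to the admissible subcategory $\Psi(\Db_G(A))$. At each step one needs to choose the phase of the exceptional object so that the relevant Hom-vanishings between it and the semistable objects of the already-glued stability condition hold in the correct degrees; this is possible because the phase of a point-like category is a free parameter in $\IR/2\IZ$, so it can always be shifted to lie in a window where no obstructing Ext's appear. The main obstacle is precisely this book-keeping of phases: one must verify that the (finitely many) phases associated to stable objects in $\Psi(\Db_G(A))$ which have non-trivial morphisms to or from the exceptional $\Db(\pt)$-pieces lie in an arc of length strictly less than $1$, so that a common half-plane can be chosen for the glued central charge. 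Once this compatibility is arranged, the Collins--Polishchuk theorem assembles the pieces into a Bridgeland stability condition on $\Db(K(A))$, completing the proof.
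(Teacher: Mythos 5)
Your overall strategy is the same as the paper's: take the Bayer--Macr\`\i--Stellari stability condition on $\Db_G(A)$ (the paper cites their Cor.~10.3 directly, which already produces the equivariant stability condition via the invariant-lifting argument you sketch, so that part of your route is an equivalent unpacking), and then glue across the decomposition of \autoref{cor:kummersod} by adjoining the $64$ exceptional objects $E_x=\reg_{\rho^{-1}(\pi(x))}(-1)$ one at a time using Collins--Polishchuk.

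The gap is in the gluing step. What Collins--Polishchuk Prop.~3.5(b) requires (after an appropriate shift of $E_x$) is the vanishing $\Hom^{\le 0}(E_x,\Psi(F))=0$ for \emph{all} $F$ in the heart $\cA$ of the stability condition on $\Db_G(A)$; equivalently, one must exhibit a single integer $i$ with $\Hom^{\le i}(E_x,\Psi(F))=0$ uniformly over $F\in\cA$. Your assertion that the phase of the $\Db(\pt)$-piece ``can always be shifted into a window where no obstructing Ext's appear'' is only valid once such a uniform bound is known; without it there need not be one shift that works for every object of the heart. The paper supplies exactly this bound and it is the entire content of its proof: the BMS heart is a (double) tilt of the standard heart, so its objects have cohomology concentrated in an interval of length $3$, and $\Psi=q_*^Gp^*$ has cohomological amplitude at most $3$ (because $q_*^G$ is exact on $\Coh_G(A)$ and every sheaf on $A$ has a locally free resolution of length $3$); hence every object of $\Psi(\cA)$ has cohomology in a fixed interval of length $6$, which yields the required $i$. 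Your substitute criterion --- that the ``finitely many'' phases of stable objects interacting with $E_x$ lie in an arc of length $<1$ --- is not the hypothesis of the gluing theorem and is false as stated: the set of phases of semistable objects of a stability condition on $\Db_G(A)$ is infinite (typically dense), and the half-plane condition on the glued central charge is automatic here since the $\Db(\pt)$-piece contributes a single ray. So the missing ingredient is precisely the cohomological amplitude estimate.
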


\begin{proof}
To begin with, by \cite[Cor.~10.3]{BMS-ab} there is a stability condition on $\Db_G(A)$. Denote by $\ka\subset\Db_G(A)$ the corresponding heart; it is a tilt of the standard heart \cite[\S2]{BMS-ab}.
  
For a two-torsion point $x\in A[2]$, we set $E_x \coloneqq \reg_{\rho^{-1}(\pi(x))}(-1)$. Then, since $g=\dim A=3$, the semi-orthogonal decomposition of \autoref{cor:kummersod} is given by
\begin{align} \label{eq:Kummersod}
  \Db(K(A)) = \Sod{ \{E_x\}_{x\in A[2]}, \Db_G(A) }  \,.
\end{align}

Next, we want to show that, for every $x\in A[2]$, there exists an integer $i$ such that $\Hom^{\leq i}(E_{x},\Psi(F))=0$ for all $F \in \cA \subset \Db_G(A)$. Indeed, the cohomology of any complex in the heart of the stability condition on $\Db_G(A)$, as constructed in \cite[Cor.~10.3]{BMS-ab}, is concentrated in an interval of length three. The functor $\Psi$ has cohomological amplitude at most $3$, since
  $q_*^G \colon \Coh_G(A) \to \Coh(K(A))$
is an exact functor of abelian categories, and every sheaf on $A$ has a locally free resolution of length $\dim A=3$. This implies that the cohomology of any complex in $\Psi(\Db_{G}(A)))$ is contained in a fixed interval of length $6$. This proves the above claim. 
Using \cite[Prop.~3.5(b)]{ColPol-gluestab}, this then implies that $\sod{ E_{x},\Psi(\Db_{G}(A)) }$ has a stability condition; compare the argument in \cite[Cor.~3.8]{BMMS}. 

We can proceed to show that, for $x\neq y\in A[2]$, there exists an integer $i$ such that
  $\Hom^{\leq i}(E_{y},\sod{ E_{x},\Psi(\Db_{G}(A)) }) = 0$
and so there is a stability condition on $\sod{E_y, E_{x},\Psi(\Db_{G}(A))}$. After $4^3$ steps we have constructed a stability condition on $\Db(K(A))$; compare \eqref{eq:Kummersod}.
\end{proof}

\bibliographystyle{alpha}
\bibliography{references}

\bigskip\bigskip\noindent\scriptsize
Contact:

\medskip\noindent
\begin{tabular}{@{} p{0.07\textwidth} @{} p{0.93\textwidth} @{}}
A. K.: & Philipps-Universit\"at Marburg,
         Hans-Meerwein-Stra{\ss}e 6, Campus Lahnberge,
         35032 Marburg, Germany \\
       & Email: \texttt{andkrug@mathematik.uni-marburg.de} \\
D. P.: & Freie Universit\"at Berlin,
         Mathematisches Institut,
         Arnimallee 3,
         14195 Berlin, Germany \\
       & Email \texttt{dploog@math.fu-berlin.de} \\
P. S.: & Universit\"at Hamburg,
         Fachbereich Mathematik,
         Bundesstra{\ss}e 55,
         20146 Hamburg, Germany \\
       & Email \texttt{pawel.sosna@math.uni-hamburg.de}
\end{tabular}

\vfill

\small
\noindent
\phantomsection\label{notation_recap}
\parbox{0.5\textwidth}{
\center
$$

\bigskip

\center
$\xymatrix@C=7em{
  \Db(\wY) \ar@<2.5pt>[r]^\Phi & \Db_G(X) \ar@<2.5pt>[l]^\Psi \\
  \Db(S) \ar[u]^-{\Xi_\alpha} & \Db(S) \ar[u]_-{\Theta_\beta}
}$
}
\hfill
\parbox{0.4\textwidth}{
$G = \mu_m = \sod{g} \text{ acts on smooth } X \\
 S = \Fix(G) \subset X, n = \dim(X)-\dim(S) \\
 N = N_{S/X} \text{ with } g_{|N} = \zeta \cdot \id_N \\
 \chi\colon G\to\IC^*, \chi(g) = \zeta^{-1} \\
 \\
 \cL_\wY\in\Pic(\wY) \text{ with } \cL_\wX^m = \reg_\wY(Z) \\
 \cL_\wX = \reg_\wX(Z) \in\Pic^G(\wX) \text{ with trivial} \\
 \text{action on } {\cL_\wX}_{|Z} = \reg_Z(Z) = \reg_\nu(-1) \\
 \\ 
 \begin{array}{@{} l @{\:\coloneqq\:} l @{}}
 \Phi         & p_*\circ q^*\circ\triv \\
 \Psi         & (-)^G\circ q_*\circ p^* \\
 \Theta_\beta & i_*(\nu^*(\_)\otimes \reg_\nu(\beta)) \\ 
 \Xi_\alpha   & (a_*\circ\triv)\otimes \chi^\alpha 
 \end{array}$
}
\end{document}